\documentclass[a4paper,11pt]{article}
\usepackage{amssymb}
\usepackage{amsthm}
\usepackage[utf8]{inputenc}
\usepackage[T1]{fontenc}
\usepackage{lmodern}
\usepackage{graphicx}
\usepackage[a4paper]{geometry}
\usepackage{url}
\usepackage{float}
\usepackage[Rejne]{fncychap}
\usepackage{amsmath}
\usepackage[colorlinks=true,urlcolor=blue,linkcolor=blue,citecolor=red]{hyperref}
\usepackage{changepage}
\newcommand{\R}{\mathbb{R}}

\newcommand{\Conf}{\mathrm{Conf}}
\newcommand{\Conv}{\mathrm{Conv}}

\newcommand{\Mink}{\mathrm{Mink}}

\newcommand{\eeu}{\widetilde{Ein}_{1,n-1}}

\begin{document}

\newtheorem{theorem}{Theorem}

\newtheorem{definition}{Definition}

\newtheorem{lemma}{Lemma}

\newtheorem{fact}{Fact}

\newtheorem{criterion}{Criterion}

\newtheorem{remark}{Remark}

\newtheorem{example}{Example}

\newtheorem{proposition}{Proposition}

\newtheorem{corollary}{Corollary}

\begin{center}
\Large \textbf{Enveloping space of a globally hyperbolic conformally flat spacetime}
\end{center}

\begin{center}
Rym SMAÏ 
\end{center}

\paragraph*{Abstract.} We prove that any simply-connected globally hyperbolic conformally flat spacetime $V$ can be conformally embedded in a bigger conformally flat spacetime, called \textbf{enveloping space} of $V$, containing all the conformally flat Cauchy-extensions of $V$, in particular its $\mathcal{C}_0$-maximal extension. As a result, we establish a new proof of the existence and the uniqueness of the $\mathcal{C}_0$-maximal extension of a globally hyperbolic conformally flat spacetime. Furthermore, this approach allows us to prove that $\mathcal{C}_0$-maximal extensions respect inclusion.

\section{Introduction}

The notion of maximal extension of a globally hyperbolic spacetime arises from the resolution of Einstein equations in general relativity. This physical theory suggests that our universe is modelized by a Lorentzian manifold $(M,g)$ of dimension $4$ where the metric $g$ satisfies some PDEs, the so-called \emph{Einstein equations}. A way to solve them is to require that $M$ is homeomorphic to $S \times \R$ where $S$ is a Riemannian manifold. This allows to define a Cauchy problem where the initial data is the Riemannian manifold $(S,h)$ equipped with a $(2,0)$-tensor II. A solution is a Lorentzian metric $g$ on $S \times \R$ such that the restriction of $g$ to $S \times \{0\}$ is $h$ and II is the shape operator of this hypersurface. It turns out that a necessary condition to have such a solution is that $h$ and II satisfy \emph{the constraint equations}. Conversely, Choquet-Bruhat and Geroch \cite[Section 2]{Choquet-Bruhat} proved that when the constraint equations are satisfied, a local solution exists. Two natural questions arise: is it possible to extend this solution to a maximal one? If yes, is it unique up to isometry? Choquet-Bruhat and Geroch answered positively to both questions (see \cite[Theorem 3]{Choquet-Bruhat}).  

The solutions of the Cauchy problem associated to Einstein equations turn out to be \emph{globally hyperbolic} (abbrev. GH) (see Definition \ref{def: global hyperbolicity}). More generally, Geroch \cite{Geroch1970} proved that any GH spacetime admits an embedded Riemannian hypersurface which intersects every inextensible causal curve exactly once, called \emph{Cauchy hypersurface}. It turns out that all smooth Cauchy hypersurfaces of a GH spacetime are diffeomorphic. 

There is a natural partial ordering on GH spacetimes: given two GH spacetimes, $M$ and $N$, we say that $N$ is a Cauchy-extension of $M$ if there exists an isometric embedding from $M$ to $N$ sending every Cauchy hypersurface of $M$ on a Cauchy hypersurface of $N$. Such an embedding is called \emph{an isometric Cauchy-embedding}. We can ask again in this general setting the questions of the existence and the uniqueness, up to isometry, of a maximal extension. The answer to both questions is yes within a \emph{rigid category}\footnote{See e.g. \cite[Definitions 2 and 4]{Salvemini2013Maximal}. Spacetimes of constant curvature are examples of rigid categories.} of spacetimes. Actually, the spacetimes which are solution of a Cauchy problem associated to Einstein equations constitute a rigid category and it turns out that the arguments of Choquet-Bruhat and Geroch could be adapted to any other rigid category.

In this paper, we are interested in the notion of maximality in the setting of \emph{conformally flat} spacetimes of dimension $n \geq 3$. The morphisms preserving these structures are the \emph{conformal} diffeomorphisms. In \cite[Section 3.1]{Salvemini2013Maximal}, C. Rossi adapted to this setting the definition of the ordering relation on GH spacetimes by considering conformal Cauchy-embeddings instead of isometric Cauchy-embeddings. A GH conformally flat spacetime $M$ is then said to be \emph{$\mathcal{C}_0$-maximal} if any conformal Cauchy-embedding from $M$ to any GH conformally flat spacetime is surjective. C. Rossi proved that any GH conformally flat spacetime admits a $\mathcal{C}_0$-maximal extension, unique up to conformal diffeomorphism (see \cite[Sections 3.2 \& 3.3]{Salvemini2013Maximal}). Her proof is mainly based on Zorn lemma and so does not give any description of the $\mathcal{C}_0$-maximal extension. In this paper, we propose a new approach which allows us to give a constructive proof of the existence and the uniqueness of the $\mathcal{C}_0$-maximal extension. Indeed, given a simply-connected GH conformally flat spacetime $M$, we construct a bigger conformally flat spacetime $E(M)$ in which $M$ and all its conformally flat Cauchy-extensions embeds conformally. The $\mathcal{C}_0$-maximal extension of $M$ turns out to be the Cauchy development of a Cauchy hypersurface of $M$ in $E(M)$. The images in $E(M)$ of the previous embeddings satisfy the nice property of being \emph{causally convex}. A subset $U$ of a spacetime is causally convex if any causal curve joining two points of $U$ is contained in $U$. While convexity is a metric notion, causal convexity is a conformal notion. Let us add that causal convexity is a strong property in a GH spacetime: it is a classical fact that any causally convex open subset of a GH spacetime is GH.

\begin{theorem}\label{intro: enveloping space theorem}
Let $M$ be a simply-connected globally hyperbolic conformally flat spacetime. There exists a conformally flat spacetime $E(M)$ with the following properties:
\begin{enumerate}
\item $E(M)$ fibers trivially over a conformally flat Riemannian manifold $\mathcal{B}$ diffeomorphic to any Cauchy hypersurface of $M$;
\item $M$ embeds conformally in $E(M)$ as a causally convex open subset;
\item all the conformally flat Cauchy-extensions of $M$ embeds conformally in $E(M)$ as causally convex open subsets. In particular, the $\mathcal{C}_0$-maximal extension of $M$ is the Cauchy development of a Cauchy hypersurface of $M$ in $E(M)$.
\end{enumerate}
Such a spacetime $E(M)$ is called \textbf{an enveloping space} of $M$.
\end{theorem}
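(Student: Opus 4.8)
The plan is to realize $E(M)$ as an explicit open subset of the universal cover $\eeu=\R\times S^{n-1}$ of the Einstein universe, equipped with the conformal class of $-dt^2+g_{S^{n-1}}$, cut out by the ``vertical shadow'' of a developed Cauchy hypersurface. Since $M$ is simply connected and conformally flat of dimension $n\geq 3$, Liouville's theorem furnishes a developing map, and simple connectivity lets us lift it to a conformal local diffeomorphism $\dev\colon M\to\eeu$. Fix a smooth Cauchy hypersurface $S\subset M$; since $M\cong S\times\R$, the hypersurface $S$ is simply connected.

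The technical core is the claim that \emph{$\dev|_S$ is an embedding onto an acausal topological hypersurface $\widetilde S\subset\eeu$ which is a graph over an open subset of $S^{n-1}$}: writing $\pi\colon\eeu\to S^{n-1}$ for the projection, $\pi|_{\widetilde S}$ is a diffeomorphism onto an open set $\Omega\subseteq S^{n-1}$ and $\widetilde S=\{(\varphi(x),x):x\in\Omega\}$ for some $1$-Lipschitz $\varphi$. This is where global hyperbolicity of $M$ is genuinely used: a developing map is only a local conformal diffeomorphism and need not respect global causality, but one shows that $\dev|_S$ is injective with acausal image by lifting causal curves and using that $S$ is acausal and edgeless, after which the graph property follows because each orbit of the complete timelike field $\partial_t$ meets an acausal set at most once and $\pi(\widetilde S)$ is open by invariance of domain. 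I expect this lemma — the global control of $\dev$ over a globally hyperbolic $M$ — to be the main obstacle.

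Granting it, set $\mathcal B:=\Omega$ with the conformally flat Riemannian metric induced from $S^{n-1}$, and $E(M):=\pi^{-1}(\Omega)\subset\eeu$. Then $E(M)$ is open in a conformally flat spacetime, hence conformally flat, and $\pi|_{E(M)}\colon E(M)\to\mathcal B$ is a trivial $\R$-fibration with $\mathcal B\cong\widetilde S\cong S$; this gives (1). For (2), one checks $\dev(M)\subseteq E(M)$ as follows. Let $p\in M$; since $S$ is Cauchy, $p\in D^+_M(S)$ or $p\in D^-_M(S)$, say the former. Given a past-inextensible causal curve $\gamma$ of $\eeu$ issued from $\dev(p)$, lift it to a maximal past causal curve $\widetilde\gamma$ of $M$ starting at $p$; as $\dev$ is a local diffeomorphism and $\gamma$ is past-inextensible, $\widetilde\gamma$ escapes every compact subset of $M$, hence is past-inextensible in $M$, so it meets $S$ (because $p\in D^+_M(S)$) and therefore $\gamma$ meets $\widetilde S$. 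Thus $\dev(p)$ lies in the Cauchy development $D_{\eeu}(\widetilde S)$; since a vertical line over any point of $S^{n-1}\setminus\Omega$ misses the graph $\widetilde S$, we get $D_{\eeu}(\widetilde S)\subseteq\pi^{-1}(\Omega)=E(M)$. Moreover $\dev|_M\colon M\to D_{\eeu}(\widetilde S)$ is a conformal local diffeomorphism between globally hyperbolic spacetimes restricting to a diffeomorphism $S\xrightarrow{\ \sim\ }\widetilde S$ of Cauchy hypersurfaces, so by the rigidity of globally hyperbolic developments it is a conformal Cauchy-embedding, in particular injective, and its image — globally hyperbolic, with Cauchy hypersurface $\widetilde S$, sitting inside the Cauchy development $D_{\eeu}(\widetilde S)$ of $\widetilde S$ — is causally convex there, hence in $E(M)$ (verifying this last point carefully is the second delicate step). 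This is (2).

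For (3), let $N$ be a conformally flat Cauchy-extension of $M$ via a conformal Cauchy-embedding $\iota\colon M\hookrightarrow N$. Then $N\cong S\times\R$ is simply connected, and, normalizing its developing map so that $\dev_N\circ\iota=\dev$, the Cauchy hypersurface $\iota(S)$ of $N$ develops onto the same graph $\widetilde S$, hence over the same $\Omega$; running the construction for $N$ therefore yields the same enveloping space and a conformal embedding $\dev_N\colon N\hookrightarrow E(M)$ onto a causally convex open set, compatible with $\dev$ through $\iota$. By the argument of (2) applied to $N$, $\dev_N(N)\subseteq D_{\eeu}(\widetilde S)=:E_0$; thus $E_0$, an open (so conformally flat) globally hyperbolic subset of $E(M)$ with Cauchy hypersurface $\widetilde S$, contains the images of $M$ and of every conformally flat Cauchy-extension of $M$, and it is the Cauchy development in $E(M)$ of the Cauchy hypersurface $\widetilde S$ of $M$. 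Finally, if $E_0\hookrightarrow N'$ is any conformal Cauchy-embedding into a globally hyperbolic conformally flat spacetime, then $N'$ is a Cauchy-extension of $M$, so $\dev_{N'}(N')\subseteq E_0$ by the above, forcing $N'=E_0$; hence $E_0$ is $\mathcal C_0$-maximal, and any $\mathcal C_0$-maximal extension, embedding in $E_0$ and being maximal, coincides with it. This proves (3) and re-establishes the existence and uniqueness of the $\mathcal C_0$-maximal extension.
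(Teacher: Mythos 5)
Your construction hinges on the claim that $\dev|_S$ is an embedding onto an acausal graph $\widetilde S$ over an \emph{open subset} $\Omega$ of $\mathbb{S}^{n-1}$, so that $E(M)$ can be taken to be the open set $\pi^{-1}(\Omega)\subset\eeu$. This claim is false in general, and it is precisely the point the paper's construction is designed to circumvent. A developing map of a simply-connected conformally flat manifold is only a local (conformal) diffeomorphism; it is not a covering onto its image, and it can fail to be injective even on a Cauchy hypersurface. Your proposed justification --- lifting causal curves of $\eeu$ to $M$ and invoking acausality of $S$ --- does not give injectivity: two distinct points of $S$ may develop to the same point of $\eeu$ (or to causally related points) without any causal curve of $M$ witnessing this, since the causal relation in $\eeu$ between $D(p)$ and $D(q)$ need not be realized by a curve that lifts to $M$. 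A concrete counterexample: take a locally injective but globally non-injective conformal immersion $f:\mathbb{D}\to\mathbb{S}^{n-1}$ of a disk (e.g.\ a thickened strip wrapping several times around an equator), form the pullback $E=\{(p,b)\in\eeu\times\mathbb{D}:\pi(p)=f(b)\}$, and let $M$ be the Cauchy development in $E$ of the acausal zero-section over $\mathbb{D}$. This $M$ is simply connected, globally hyperbolic and conformally flat, yet its Cauchy hypersurface develops non-injectively onto $\mathbb{S}^{n-1}$; no open subset of $\eeu$ can serve as its enveloping space.

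This is why the paper does \emph{not} place $E(M)$ inside $\eeu$: it takes the leaf space $\mathcal{B}$ of the timelike foliation $D^*\partial_t$, observes that $\pi\circ D$ descends to a map $d:\mathcal{B}\to\mathbb{S}^{n-1}$ that is only a \emph{local} homeomorphism, and defines $E(M)$ as the pullback $\{(p,b):\pi(p)=d(b)\}$ of the trivial bundle $\eeu\to\mathbb{S}^{n-1}$ along $d$. This ``unrolls'' the non-injectivity of the developing map; the injectivity statements that are actually available (and that the paper uses, via Rossi's thesis) hold only on chronological futures/pasts of points, not globally on a Cauchy hypersurface. Your construction coincides with the paper's exactly in the special case where $d$ happens to be injective (e.g.\ Minkowski or de Sitter space, or the Cauchy-compact case where $d$ is forced to be a homeomorphism of $\mathbb{S}^{n-1}$), but as written it does not prove the theorem. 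The remainder of your argument (parts (2) and (3)) is structurally parallel to the paper's --- identifying the maximal extension with the Cauchy development of $\widetilde S$ --- but it inherits the gap, since it repeatedly assumes that $M$ and all its Cauchy-extensions develop injectively into $\eeu$.
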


This result still holds for the larger class of \emph{developable} GH conformally flat spacetimes (see Definition~\ref{def: developable}). In Section \ref{sec: Causally convex subsets of E(M)}, we describe causally convex open subsets of an enveloping space $E(M)$ then, in Section \ref{sec: Eikonal functions}, we characterize those which are $\mathcal{C}_0$-maximal. \\

A consequence of Theorem \ref{intro: enveloping space theorem} is the following result:

\begin{corollary}
Any globally hyperbolic conformally flat spacetime admits a $\mathcal{C}_0$-maximal extension, unique up to conformal diffeomorphism.
\end{corollary}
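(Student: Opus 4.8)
The plan is to reduce the general statement to Theorem~\ref{intro: enveloping space theorem} by (i)~passing to the universal cover to gain simple-connectedness, then (ii)~descending the construction back down via the deck group. Let $M$ be a globally hyperbolic conformally flat spacetime, not necessarily simply connected, and let $\pi\colon\widetilde M\to M$ be its universal cover, equipped with the pulled-back conformally flat structure. First I would record that $\widetilde M$ is again globally hyperbolic: a Cauchy hypersurface $S\subset M$ lifts to a Cauchy hypersurface $\widetilde S\subset\widetilde M$ (since covering maps are causal local isometries and lift inextensible causal curves to inextensible causal curves), so $\widetilde M$ is a simply-connected GH conformally flat spacetime and Theorem~\ref{intro: enveloping space theorem} applies to it, producing an enveloping space $E(\widetilde M)$ with $\widetilde M\hookrightarrow E(\widetilde M)$ a causally convex open subset.

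Next I would build the candidate $\mathcal C_0$-maximal extension of $\widetilde M$ itself: by part~(3) of the theorem, the Cauchy development $D$ in $E(\widetilde M)$ of (the image of) a Cauchy hypersurface of $\widetilde M$ is the $\mathcal C_0$-maximal extension of $\widetilde M$; in particular $\widetilde M\subset D$ as a causally convex open subset, and $D$ is itself GH conformally flat and $\mathcal C_0$-maximal. The deck transformation group $\Gamma=\pi_1(M)$ acts on $\widetilde M$ by conformal automorphisms preserving the Cauchy foliation; the crux of the argument is to extend this $\Gamma$-action to a properly discontinuous, free, conformal action on $D$. For this I would appeal to the uniqueness half of Theorem~\ref{intro: enveloping space theorem} / the rigidity of $\mathcal C_0$-maximal extensions: each $\gamma\in\Gamma$ is a conformal Cauchy-embedding $\widetilde M\to\widetilde M\subset D$, hence extends (by maximality of $D$ and the functoriality that the theorem's constructive proof provides — the ``$\mathcal C_0$-maximal extensions respect inclusion'' statement in the abstract) to a conformal automorphism $\widehat\gamma$ of $D$; uniqueness of the extension gives $\widehat{\gamma_1\gamma_2}=\widehat\gamma_1\widehat\gamma_2$, so $\gamma\mapsto\widehat\gamma$ is a homomorphism. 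Freeness and proper discontinuity of the $\Gamma$-action on $D$ I would get from the same properties on $\widetilde M$ together with the fact that $D$ is a Cauchy development of a hypersurface on which $\Gamma$ already acts freely and properly discontinuously (a fixed point or accumulation in $D$ would propagate along causal curves to $\widetilde M$'s Cauchy hypersurface).

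Then $M_{\max}:=D/\Gamma$ is a GH conformally flat spacetime containing $M=\widetilde M/\Gamma$ as a causally convex open subset via a conformal Cauchy-embedding, and it is $\mathcal C_0$-maximal: any conformal Cauchy-embedding $j\colon M_{\max}\to N$ lifts to the universal covers as a conformal Cauchy-embedding $\widetilde j\colon D\to\widetilde N$, which is surjective by $\mathcal C_0$-maximality of $D$; since $j$ and $\widetilde j$ have the same image up to covering, $j$ is onto. For uniqueness, given two $\mathcal C_0$-maximal extensions $M_1,M_2$ of $M$, pass to universal covers $\widetilde M_1,\widetilde M_2$, which are $\mathcal C_0$-maximal extensions of $\widetilde M$, hence both conformally identified with $D$ by the uniqueness clause of Theorem~\ref{intro: enveloping space theorem}; this identification intertwines the two $\Gamma$-actions (again by uniqueness of extensions of each $\gamma$), so it descends to a conformal diffeomorphism $M_1\to M_2$ over $M$.

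The main obstacle I anticipate is the equivariance step: showing that the $\Gamma$-action on $\widetilde M$ genuinely extends to $D$ and that the extension is properly discontinuous and free. The extension of individual maps and the homomorphism property are clean consequences of the uniqueness/functoriality of the enveloping-space construction, but verifying proper discontinuity on the (possibly much larger) maximal extension $D$ — equivalently, ruling out that distinct $\Gamma$-orbits collide after Cauchy-development — is where the real work lies; I would handle it by exploiting that every point of $D$ lies on an inextensible causal curve meeting the image of $\widetilde M$'s Cauchy hypersurface, pushing any bad behaviour in $D$ down to bad behaviour on that hypersurface, where it is excluded by freeness and proper discontinuity of the original deck action.
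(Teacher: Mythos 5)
Your overall architecture (pass to the universal cover, build the maximal extension of $\tilde{M}$ inside the enveloping space, extend the deck action, quotient) is exactly the paper's, and your mechanism for extending each $\gamma$ to a conformal automorphism $\hat{\gamma}$ of $D=\mathcal{C}(\tilde{S})$ --- uniqueness of the $\mathcal{C}_0$-maximal extension of $\tilde{M}$ plus rigidity of conformal maps in dimension $\geq 3$ --- is a legitimate alternative to the paper's explicit construction. The genuine gap is in the step you yourself flag as the hard one: freeness and proper discontinuity of the extended action on $D$. ``Propagating a fixed point along causal curves down to $\tilde{S}$'' does not work as stated: if $\hat{\gamma}(p)=p$, what you actually obtain is that $\gamma$ preserves the \emph{shadow} $O(p)=J^{\mp}(p)\cap\tilde{S}$, a compact subset of $\tilde{S}$, and a fixed-point-free homeomorphism can perfectly well preserve a compact set (the antipodal map of a sphere does); proper discontinuity of the original deck action does not rule this out either, since deck groups of non-aspherical manifolds may contain finite-order elements acting freely. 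The paper closes this step with two inputs your sketch does not supply: (i) shadows of points off $\tilde{S}$ are topological $(n-1)$-disks (Lemma \ref{lemma: the shadows are topological disks}, proved by placing everything in an affine Minkowski chart), so Brouwer's theorem forces $\gamma$ to fix a point of $\tilde{S}$, contradicting freeness there; and (ii) for properness, one chooses $x_i\in O(p_i)$ and uses that all these shadows lie in a single compact shadow $O(p)$ with $p\in I^+(p_\infty)$ to contradict properness of $\Gamma$ on $\tilde{S}$ (Proposition \ref{prop: properness of the action}). Without the shadow machinery (Proposition \ref{prop: shadows} together with Lemma \ref{lemma: the shadows are topological disks}) the quotient $D/\Gamma$ is not yet a spacetime.

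A secondary issue: in your uniqueness argument you assert that the universal covers $\tilde{M}_1,\tilde{M}_2$ of two maximal extensions are themselves $\mathcal{C}_0$-maximal extensions of $\tilde{M}$, hence both identified with $D$. That the universal cover of a $\mathcal{C}_0$-maximal spacetime is again $\mathcal{C}_0$-maximal is precisely the non-obvious converse the paper warns about and only establishes \emph{after} the existence proof (Corollary \ref{cor: maximality of the universal covering}), so as written this is circular. The repair is the paper's route: embed $\tilde{M}_i$ into $D$ via Proposition \ref{propo: conf. flat ext. of V embeds in E(V)}, descend to a Cauchy embedding $M_i\to\Gamma\backslash D$, and invoke maximality of $M_i$ to conclude surjectivity.
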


Now we ask the following question. Let $V$ be a globally hyperbolic conformally flat spacetime and let $U$ be a causally convex open subset of $V$. Does the $\mathcal{C}_0$-maximal extension of $U$ embed conformally in the $\mathcal{C}_0$-maximal extension of $V$?

The $\mathcal{C}_0$-maximal extensions of $U$ and $V$ are \emph{a priori} abstract objects which depends on the Cauchy hypersurfaces of $U$ and $V$ respectively. These last ones are completely independant so the question above is not tautological. We prove in Section \ref{sec: Maximal extensions respect inclusion} that the answer is yes, in other words, that $\mathcal{C}_0$-maximal extensions preserve inclusion.

\begin{theorem}\label{intro: max. ext. respect inclusion}
Let $V$ be a globally hyperbolic conformally flat spacetime and let $U$ be a causally convex open subset of $V$. Then, the $\mathcal{C}_0$-maximal extension of $U$ is conformally equivalent to a causally convex open subset of the $\mathcal{C}_0$-maximal extension of $V$.
\end{theorem}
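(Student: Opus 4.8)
The plan is to reduce everything to statements about enveloping spaces, where the inclusion $U \subseteq V$ can be tracked concretely. First I would pass to universal covers: let $\widetilde{V}$ be the universal cover of $V$ and let $\widetilde{U}$ be the preimage of $U$ in $\widetilde{V}$ (more precisely, a connected component of it, or better, I should argue that the relevant Cauchy development is simply connected). Since $U$ is causally convex in $V$, each component of its preimage is causally convex in $\widetilde{V}$, hence globally hyperbolic, and is a cover of $U$. The deck group $\Gamma = \pi_1(V)$ acts on $\widetilde{V}$ preserving the preimage of $U$; the $\mathcal{C}_0$-maximal extension of $V$ is obtained from that of $\widetilde{V}$ by quotienting by $\Gamma$ (using uniqueness of the maximal extension and functoriality of the construction), and similarly for $U$ with the appropriate stabilizer subgroup. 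So it suffices to prove the theorem for simply-connected $V$, then descend.

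Assume now $V$ is simply connected. Apply Theorem \ref{intro: enveloping space theorem} to get an enveloping space $E(V)$, in which $V$ sits as a causally convex open subset. Since $U$ is causally convex in $V$ and $V$ is causally convex in $E(V)$, transitivity of causal convexity gives that $U$ is a causally convex open subset of $E(V)$; in particular $U$ is itself globally hyperbolic and conformally flat. The key claim is then: \emph{$E(V)$ is (or contains) an enveloping space of $U$.} Concretely, I would take a Cauchy hypersurface $\Sigma_U$ of $U$, push it into $E(V)$, and let $E(U)$ be a suitable neighborhood of $\Sigma_U$ in $E(V)$ — for instance the union of all causally convex conformally flat open subsets of $E(V)$ admitting $\Sigma_U$ as a Cauchy hypersurface, or directly the fibered piece of $E(V)$ over the base $\mathcal{B}_U$ corresponding to $\Sigma_U$. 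One must check this satisfies the three properties of Theorem \ref{intro: enveloping space theorem} relative to $U$: the trivial fibration is inherited from that of $E(V)$ restricted over the image of $\Sigma_U$ in $\mathcal{B}$; $U$ embeds as a causally convex open subset because it already does so in $E(V)$ and lies in this neighborhood; and any conformally flat Cauchy-extension $U'$ of $U$ embeds in $E(V)$ — here I would invoke the Cauchy-extension being glued along $\Sigma_U$ and use the developability/maximality machinery behind Theorem \ref{intro: enveloping space theorem} to extend the embedding $\Sigma_U \hookrightarrow E(V)$ to $U' \hookrightarrow E(V)$, landing inside the chosen neighborhood by causal convexity. Granting this, property (3) of the enveloping space identifies the $\mathcal{C}_0$-maximal extension of $U$ with the Cauchy development $D_{E(V)}(\Sigma_U)$ of $\Sigma_U$ in $E(V)$, while the $\mathcal{C}_0$-maximal extension of $V$ is $D_{E(V)}(\Sigma_V)$ for a Cauchy hypersurface $\Sigma_V$ of $V$. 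Choosing $\Sigma_U \subseteq \Sigma_V$ (possible since $U$ is causally convex in $V$, so a Cauchy hypersurface of $V$ can be arranged to restrict to one of $U$), monotonicity of Cauchy developments gives $D_{E(V)}(\Sigma_U) \subseteq D_{E(V)}(\Sigma_V)$, and a Cauchy development of a subset of a Cauchy hypersurface is always causally convex. This yields the conclusion.

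The main obstacle I anticipate is the key claim that a neighborhood of $\Sigma_U$ in $E(V)$ genuinely serves as an enveloping space of $U$ — equivalently, that \emph{every} conformally flat Cauchy-extension of $U$, not merely those arising inside $V$, embeds into $E(V)$ compatibly. The enveloping space construction of Theorem \ref{intro: enveloping space theorem} depends a priori only on (the universal cover of) $U$ through its Cauchy hypersurface and the developing-map data, so two enveloping spaces built from the same Cauchy hypersurface should agree; the real work is checking that the developing map of $U$, as a subset of $V$, is the restriction of the developing map used to build $E(V)$, so that the $\mathbb{E}in_{1,n-1}$-structures and the fibered pictures are compatible. This is where I would need to unwind the construction in Section \ref{sec: Causally convex subsets of E(M)} rather than use Theorem \ref{intro: enveloping space theorem} as a black box. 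A secondary technical point is the descent step: verifying that the deck group actions on the enveloping spaces and on the maximal extensions are compatible, so that the simply-connected result quotients correctly — this should follow from uniqueness of the $\mathcal{C}_0$-maximal extension and naturality, but deserves a careful statement.
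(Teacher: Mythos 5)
Your proposal follows essentially the same route as the paper: reduce to the simply-connected case by passing to the universal cover, a connected component $U'$ of the preimage of $U$, and its stabilizer in $\pi_1(V)$; then, in the simply-connected case, identify $E(U)$ with the union of fibers of $E(V)$ over an open subset of the base (the paper's Lemma~\ref{lemma: E(U) included in E(V)}, whose content is precisely your observation that the developing map and timelike foliation of $U$ are the restrictions of those of $V$), and compare the Cauchy developments of Cauchy hypersurfaces of $U$ and $V$ inside $E(V)$. The only cosmetic difference is at the final containment step, where the paper avoids your extension of $\Sigma_U$ to a Cauchy hypersurface $\Sigma_V$ of $V$ by arguing directly that any inextensible causal curve meeting $\Sigma_U$ meets $V$, hence (by causal convexity of $V$ in $E(V)$) meets $\Sigma_V$.
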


\subsection*{Overview of the paper} In Section \ref{sec: GH spacetimes}, we introduce the premiminary material on causality of spacetimes. We focus in particular on \emph{globally hyperbolic} spacetimes and we recall some of their main properties. Section \ref{sec: geometry of Einstein universe} deals with the model space of conformally flat Lorentzian structure, the so-called \emph{Einstein universe}. After a quick description of its geometry, we characterize \emph{causally convex open subsets} of its universal cover (see Sections \ref{sec: causally convex open subsets of Ein} and \ref{sec: duality in ein}). We devote Section \ref{sec: enveloping space} to the proof of Theorem~\ref{intro: enveloping space theorem}: we construct an enveloping space (see Section \ref{sec: def enveloping space}) and we describe its causally convex globally hyperbolic open subsets (see Section \ref{sec: Causally convex subsets of E(M)}). In Section \ref{sec: maximal ext.}, we propose a new proof of the existence and the uniqueness of the maximal extension of a globally hyperbolic conformally flat spacetime, using the notion of enveloping space. Section \ref{sec: Maximal extensions respect inclusion} is devoted to the proof of Theorem \ref{intro: max. ext. respect inclusion}. Lastly, we establish a link between the notion of $\mathcal{C}_0$-maximality and the notion of \emph{eikonal functions} in Section~\ref{sec: Eikonal functions}.

% Plan de l'article:
% I- Globally hyperbolic spacetimes: 1. Preliminaries on spacetimes, 2. Global hyperbolicity 3. Shadows.
% II- Geometry of Einstein universe: 1. The Klein model, 2. Spatio-temporal decomposition, 3. Causal structure of the universal cover  4. Conformal group
% III- Causally convex open subsets of Einstein universe
% VI- C_0-maximal extensions of conformally flat globally hyperbolic spacetimes: 1. Enveloping space of a simply connected conformally flat GH spacetime 2. Causally convex open subsets of the enveloping space 3.Existence and uniqueness of the C_0-maximal extension
% V- C_0-maximal extensions respect inclusion
% VI- Eikonal functions: 1. Eikonal functions on the sphere, 2. Eikonal functions on a conformally flat Riemannian manifold

\subsection*{Acknowlegment} I would like to thank my PhD advisor Thierry Barbot for all the discussions that make this work possible, for his remarks and his help. I am also grateful to Charles Frances for his interests in my work and his relevant comments to improve a first version of this paper. This work of the Interdisciplinary Thematic Institute IRMIA++, as part of the ITI 2021-2028 program of the University of Strasbourg, CNRS and Inserm, was supported by IdEx Unistra (ANR-10-IDEX-0002), and by SFRI-STRAT’US project (ANR-20-SFRI-0012) under the framework of the French Investments for the Future Program. 

\section{Globally hyperbolic spacetimes} \label{sec: GH spacetimes}

\subsection{Preliminaries on spacetimes} 

The aim of this preliminary section is to introduce the concept of \emph{causality} in a Lorentzian manifold and briefly recall some basic causal notions as \emph{causal curves}, \emph{future and past} of points, \emph{lightcones}, \emph{achronal and acausal} subsets, etc. 

Throughout this paper, we denote by $(p,q)$ the signature of a non-degenerate quadratic form $Q$ where $p$ and $q$ are respectively the number of negative and positive coefficients in the polar decomposition of $Q$.

\paragraph{Spacetimes.} A \emph{Lorentzian metric} on a manifold of dimension $n$ is a non-degenerate symmetric $2$-tensor $g$ of signature $(1,n-1)$. A manifold equipped with a Lorentzian is called \emph{Lorentzian}. 

In a Lorentzian manifold $(M,g)$, we say that a non-zero tangent vector $v$ is \emph{timelike, lightlike, spacelike} if $g(v,v)$ is respectively negative, zero, positive. The set of timelike vectors is the union of two convex open cones. When it is possible to make a continuous choice of a connected component in each tangent space, the manifold $M$ is said \emph{time-orientable}. The timelike vectors in the chosen component are said \emph{future-directed} while those in the other component are said \emph{past-directed}. A \emph{spacetime} is an oriented and time-oriented Lorentzian manifold. 

\paragraph{Future, past.} In a spacetime $M$, a differential curve is \emph{timelike, lightlike, spacelike} if its tangent vectors are timelike, lightlike, spacelike. It is \emph{causal} if its tangent vectors are either timelike or lightlike. 

Given a point $p$ in $M$, the \emph{future} (resp. \emph{chronological future}) of $p$, denoted $J^+(p)$ (resp. $I^+(p)$), is the set of endpoints of future-directed causal (resp. timelike) curves starting from $p$. More generally, the future (resp. chronological future) of a subset $A$ of $M$, denoted $J^+(A)$ (resp. $I^+(A)$), is the union of $J^+(a)$  (resp. $I^+(a)$) where $a \in A$. 

An open subset $U$ of $M$ is a spacetime and the intrinsic causality relations of $U$ imply the corresponding ones in $M$. We denote $J^+(A,U)$ (resp. $I^+(A,U)$) the future (resp. chronological future) in the manifold $U$ of a set $A \subset U$. Then, $I^+(A, U) \subset I^+(A) \cap U$.

Dual to the preceding definitions are corresponding \emph{past} versions. In general, \emph{past} definitions and proofs follows from  future versions (and vice versa) by reversing time-orientation.

\paragraph{Diamonds.} We call \emph{diamond of $M$} any intersection $J^-(p) \cap J^+(q)$, where $p, q \in M$ such that $p \in J^+(q)$. We denote it $J(p,q)$. Given two points $p, q \in M$ such that $p \in I^+(q)$, the interior of the diamond $J(p,q)$ is the intersection $I^-(p) \cap I^+(q)$ and is denoted $I(p,q)$ (see \cite[Lemma 6, p.404]{oneill}).

\paragraph{Achronal, acausal subsets.} A subset $A$ of a spacetime $M$ is called \emph{achronal} (resp. \emph{acausal}) if no timelike (resp. causal) curve intersects $A$ more than once.

\paragraph{Causal convexity.} In Riemannian geometry, it is often useful to consider open neighborhoods which are \emph{geodesically convex}. In Lorentzian geometry, there is, in addition, \emph{a causal convexity} notion.  A subset $U$ of $M$ is said \emph{causally convex} if for every $p, q \in U$, any causal curve of $M$ joining $p$ to $q$ is contained in $U$. Equivalently, if every diamond $J(p,q)$ of $M$ with $p, q \in U$ is contained in $U$. It is easy to check that the intersection of two causally convex subsets is causally convex.

\paragraph{Cauchy developments.} Let $A$ be an achronal subset of $M$. \emph{The future} (resp. \emph{past}) \emph{Cauchy development} of $A$, denoted $\mathcal{C}^+(A)$ (resp. $\mathcal{C}^-(A)$), is the set of points $p$ of $M$ such that every past-inextensible (resp. future-inextensible) causal curve through $p$ meets $A$. \emph{The Cauchy development of $A$} is the union of $\mathcal{C}^+(A)$ and $\mathcal{C}^-(A)$, denoted $\mathcal{C}(A)$.

\subsection{Global hyperbolicity}

\begin{definition}
A spacetime $M$ is said \emph{strongly causal} if for every point $p \in M$ and every neighborhood $U$ of $p$, there exists a neighborhood $V$ of $p$ contained in $U$, which is causally convex in $M$.
\end{definition}

\begin{definition}\label{def: global hyperbolicity}
A spacetime $M$ is said \emph{globally hyperbolic} (abbrev. GH) if the two following conditions hold:
\begin{enumerate}
\item $M$ is strongly causal.
\item all diamonds of $M$ are compact.
\end{enumerate}
\end{definition}

It was proved by Sanchez in \cite{bernal2007globally} that the first condition can be weaken to \emph{$M$ is causal}, that is $M$ contains no causal loop.

A classical result of Geroch \cite{Geroch1970}, later improved by Bernal and Sanchez \cite{BernalSanchez}, gives a characterization of global hyperbolicity involving the notion of \emph{Cauchy hypersurface}.

\begin{definition}
A \emph{topological} (resp. \emph{smooth}) \emph{Cauchy hypersurface} is an achronal topological hypersurface (resp. an embedded Riemannian hypersurface) that is met by exactly once by every inextensible causal curve of $M$.
\end{definition}

\begin{theorem}[\cite{Geroch1970}]
A spacetime $M$ is globally hyperbolic if and only if it contains a topological Cauchy hypersurface.
\end{theorem}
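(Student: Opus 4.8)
This is Geroch's theorem; I sketch both implications. Assume first that $M$ contains a topological Cauchy hypersurface $S$. Since every point of $M$ lies on an inextensible causal curve and each such curve meets $S$ exactly once, $M=\mathcal{C}(S)$, and the plan is to verify the two clauses of Definition~\ref{def: global hyperbolicity}. For strong causality, argue by contradiction: if it failed at some $p$, a limit curve argument on causal curves that return ever closer to $p$ while leaving a fixed neighborhood produces either an almost-closed causal curve or a causal curve partially imprisoned near $p$; extended to an inextensible causal curve, it would meet $S$ more than once, contradicting achronality of $S$. For compactness of diamonds, take $p\in J^+(q)$ and a sequence $r_k\in J(q,p)$, each lying on a causal curve $\gamma_k$ from $q$ to $p$; these cannot escape to infinity, for any inextensible extension of a subsegment must hit $S$, which together with the limit curve lemma confines the $\gamma_k$ to a compact region and lets them subconverge, so $\{r_k\}$ accumulates in the closed set $J(q,p)$. (Equivalently, one may invoke the classical fact that the interior of the Cauchy development of an achronal set is globally hyperbolic, applied here to $M=\mathcal{C}(S)$.)

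Conversely, assume $M$ globally hyperbolic; the plan is Geroch's volume-function construction. Fix a Borel measure $\mu$ on $M$, equivalent to Lebesgue measure in every chart, with $\mu(M)=1$, and set
\[
t^-(p)=\mu\bigl(J^-(p)\bigr),\qquad t^+(p)=\mu\bigl(J^+(p)\bigr).
\]
Along a future-directed causal curve $J^-$ only grows and $J^+$ only shrinks, so $t^-$ is nondecreasing and $t^+$ nonincreasing; strong causality upgrades this to \emph{strict} monotonicity: for $p\in J^+(q)$ with $p\neq q$, a small causally convex neighborhood $U$ of $p$ avoiding $q$ makes $I^-(p)\cap U$ a nonempty open set disjoint from $J^-(q)$ --- a causal curve from a point of $I^-(p)\cap U$ through $q$ to $p$ would force $q\in U$ by causal convexity --- whence $t^-(q)<t^-(p)$, and dually $t^+(q)>t^+(p)$.

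The two substantive facts to establish are: (i) $t^-$ and $t^+$ are continuous on $M$; and (ii) along any future-inextensible causal curve $\gamma$, $t^+(\gamma(s))\to 0$ as $s$ tends to the future endpoint, and dually $t^-(\gamma(s))\to 0$ along a past-inextensible one. Granting these, put $\tau=\ln(t^-/t^+)$. By strict monotonicity $\tau$ strictly increases along every future-directed causal curve, and by (ii) it runs from $-\infty$ to $+\infty$ along every inextensible causal curve. Hence each level set $S_c=\tau^{-1}(c)$ meets every inextensible causal curve exactly once --- at least once by the intermediate value theorem, at most once by strict monotonicity --- it is achronal, since a timelike curve joining two of its points would violate strict monotonicity, and it is edgeless; parametrizing a neighborhood of a point $x\in S_c$ by the flow of a timelike vector field and using that each nearby orbit meets $S_c$ in a single point exhibits $S_c$ locally as a graph, hence as a topological hypersurface. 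Thus $S_c$ is a topological Cauchy hypersurface.

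Fact (i) is the delicate point. Lower semicontinuity of $t^-$ follows easily from openness of the chronology relation. Upper semicontinuity is where global hyperbolicity is essential: for $p_k\to p$ the causal curves realizing $x\in J^-(p_k)$ all lie in a fixed compact diamond --- since $J^-(p_k)\cap J^+(x)\subseteq J(p',x)$ once $p'$ is chosen slightly to the future of $p$ --- so a limit curve argument gives $\limsup_k J^-(p_k)\subseteq J^-(p)$, whence $t^-(p_k)\to t^-(p)$ by dominated convergence, using $\mu(M)<\infty$. Fact (ii) rests on the same ingredients: if $q\in\bigcap_s J^+(\gamma(s))$ then $\gamma$ would eventually be trapped in the compact diamond $J(q,\gamma(s_0))$, impossible for an inextensible causal curve in a strongly causal spacetime; hence $\bigcap_s J^+(\gamma(s))=\emptyset$ and $t^+(\gamma(s))\to 0$ by continuity of $\mu$ from above. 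The heart of the proof is thus the careful deployment of these limit-curve and no-imprisonment arguments in a globally hyperbolic spacetime, together with the measure-theoretic bookkeeping needed to pass limits through $\mu$.
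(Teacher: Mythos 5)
The paper does not prove this statement at all: it is quoted as a classical theorem of Geroch, with a citation and no argument, so there is no internal proof to compare against. Your sketch reproduces the standard proof from the cited source --- the Cauchy-development/limit-curve argument for the ``Cauchy hypersurface $\Rightarrow$ GH'' direction (equivalently, O'Neill's theorem that the interior of a Cauchy development is globally hyperbolic, which this paper itself invokes elsewhere) and Geroch's volume-function $\tau=\ln(t^-/t^+)$ for the converse --- and it is correct at the level of detail given; the only cosmetic remark is that Geroch defines the volume functions with $\mu(I^\mp(p))$ rather than $\mu(J^\mp(p))$, which makes measurability and lower semicontinuity immediate, though in a globally hyperbolic spacetime the two versions agree because $\partial I^\mp(p)$ is an achronal boundary and hence has measure zero.
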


Bernal and Sanchez \cite{BernalSanchez} improved this result by proving the existence of a \emph{smooth} Cauchy hypersurface.

Topological (resp. smooth) Cauchy hypersurfaces of a globally hyperbolic spacetime are homeomorphic (resp. diffeomorphic). Therefore, one can set the following definition.

\begin{definition}
A globally hyperbolic spacetime is said \emph{Cauchy-compact} (or \emph{spatially compact}) if it admits a compact Cauchy hypersurface.
\end{definition}

A remarkable property of globally hyperbolic spacetimes is that causal convexity implies global hyperbolicity.

\begin{proposition}\label{prop: causally convex implies GH}
Let $M$ be a globally hyperbolic spacetime. Then, any causally convex open subset of $M$ is globally hyperbolic.
\end{proposition}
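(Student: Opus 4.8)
The plan is to show that a causally convex open subset $U$ of a globally hyperbolic spacetime $M$ is itself globally hyperbolic by verifying the two conditions of Definition~\ref{def: global hyperbolicity} directly, transferring each from $M$ to $U$. First I would deal with strong causality (equivalently, causality, by the Bernal--Sanchez refinement): since $M$ is strongly causal, every point $p \in U$ has arbitrarily small $M$-causally convex neighborhoods $V$; intersecting each such $V$ with $U$ gives a neighborhood of $p$ that is causally convex in $M$, and it is routine to check that a subset of $U$ which is causally convex in $M$ is a fortiori causally convex in $U$ (a causal curve of $U$ is a causal curve of $M$). Since these intersections can be taken inside any prescribed neighborhood of $p$, $U$ is strongly causal.

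Next I would handle the compactness of diamonds. Let $p, q \in U$ with $p \in J^+(q, U)$. Because the intrinsic causal relations of $U$ imply those of $M$, we have $p \in J^+(q)$ in $M$, so the diamond $J(p,q) = J^-(p) \cap J^+(q)$ computed in $M$ is defined and, by global hyperbolicity of $M$, compact. The key point is that causal convexity of $U$ forces $J(p,q) \subset U$: any point $r \in J(p,q)$ lies on a causal curve of $M$ from $q$ to $r$ concatenated with one from $r$ to $p$, i.e. on a causal curve of $M$ joining the two points $q, p \in U$, hence $r \in U$. Moreover the $U$-diamond $J(p,q,U) = J^-(p,U) \cap J^+(q,U)$ coincides with $J(p,q)$: the inclusion $\subseteq$ is immediate from $J^\pm(\cdot,U) \subseteq J^\pm(\cdot)$, and the reverse inclusion holds because once $J(p,q) \subset U$, every $M$-causal curve realizing membership in $J(p,q)$ already lies in $U$ and so witnesses membership in the $U$-diamond. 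Thus $J(p,q,U)$ is a closed subset of the compact set $J(p,q)$ (closed because it equals $J(p,q)$, which is compact hence closed), and therefore compact.

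Having verified both conditions, I would conclude that $U$ is globally hyperbolic. The main obstacle, and the place to be careful, is the identification $J(p,q,U) = J(p,q)$ and the attendant bookkeeping between ``causal relations of $U$'' and ``causal relations of $M$'': one must use causal convexity in exactly the right direction, namely to push the $M$-diamond into $U$ and thereby upgrade $M$-causal curves to $U$-causal curves. Everything else is a direct transfer of the defining properties, using only that an open subset of a spacetime is a spacetime and that its causal curves are causal curves of the ambient spacetime.
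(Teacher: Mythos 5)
Your proof is correct and follows essentially the same route as the paper's: verify the two defining conditions directly, with the key observation that causal convexity forces the diamonds of $U$ to coincide with the diamonds of $M$ contained in $U$, hence compact. The only cosmetic difference is that the paper invokes the Sanchez refinement to check mere causality (no causal loops, inherited trivially from $M$), whereas you verify strong causality directly by shrinking $M$-causally convex neighborhoods into $U$; both are valid.
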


\begin{proof}
Since $M$ is globally hyperbolic, there is no causal loop in $U$. Since $U$ is causally convex, the diamonds of $U$ are exactly the diamonds of $M$ contained in $U$. Thus, they are compact.
\end{proof}

\subsection{Shadows}

In this section, we show that the causal structure of globally hyperbolic spacetimes is encoded by compact subsets of a Cauchy hypersurface, called \emph{shadows.}\footnote{This terminology has been introduced by C. Rossi in her thesis \cite[Chapitre 4]{salveminithesis}.} 

Let $M$ be a globally hyperbolic spacetime and let $S \subset M$ be a Riemannian Cauchy hypersurface.

\begin{definition}\label{def: shadow}
Let $p \in M$. We call \emph{shadow of $p$ on $S$}, denoted by $O(p,S)$, the set of points in $S$ which are causally related to $p$. When there is no confusion about the Cauchy hypersurface $S$, we will simply write $O(p)$ instead of $O(p,S)$. 
\end{definition}

If $p \in I^\pm(S)$ , then $O(p,S) = J^\mp(p) \cap S$; if $p \in S$, $O(p,S)$ is reduced to $\{p\}$. Thus, by \cite[Lemma 40, p.423]{oneill}), shadows are \emph{compact}. 

The main interest of the notion \emph{shadows} is given by the following proposition proved by C. Rossi in her thesis (see \cite[Prop. 2.6, chap. 4]{salveminithesis}).

\begin{proposition}\label{prop: shadows}
Suppose $S$ is not compact. Then, two points $p$ and $q$ of $M$ in the chronological future of $S$ coincide if and only if their shadows on $S$ are equal. \qed
\end{proposition}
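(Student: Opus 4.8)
The plan is to prove the substantive implication: if $O(p,S)=O(q,S)$, then $p=q$ (the converse is trivial). Since $p,q\in I^+(S)$ we have $O(p,S)=J^-(p)\cap S$ and $O(q,S)=J^-(q)\cap S$; write $O$ for this common compact set. I first record two standard facts valid for any $r\in I^+(S)$: because the spacelike hypersurface $S$ is transverse to null directions and contains no open piece of a null hypersurface, the closed set $\partial I^-(r)\cap S$ is nowhere dense in $S$, and a short argument then shows that the interior of $J^-(r)\cap S$ taken in $S$ is exactly $I^-(r)\cap S$ and its frontier in $S$, written $\partial_S(\cdot)$, is exactly $\partial I^-(r)\cap S$. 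Applying this to $r=p$ and to $r=q$ and using $J^-(p)\cap S=O=J^-(q)\cap S$ gives $W:=I^-(p)\cap S=I^-(q)\cap S$ and $E:=\partial I^-(p)\cap S=\partial I^-(q)\cap S$, with $O=W\sqcup E$. As $p\in I^+(S)$ we have $W\neq\emptyset$; as $S$ is noncompact and $O$ compact, $O$ is a proper subset of $S$, so $E$ is nonempty. Now assume for contradiction that $p\neq q$.

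\emph{Case 1: $p$ and $q$ are causally related}, say $q\in J^+(p)$. If $q\in I^+(p)$, the push-up lemma gives $J^-(p)\subseteq I^-(q)$, so $O=J^-(p)\cap S\subseteq I^-(q)\cap S\subseteq J^-(q)\cap S=O$; hence $O=I^-(q)\cap S$ is open in $S$, and being compact and nonempty it is a proper clopen subset of the connected manifold $S$, which is absurd. If $q\in J^+(p)\setminus I^+(p)$, then $p$ and $q$ lie on a unique null geodesic $\ell$; for $z\in J^-(p)\setminus I^-(q)$, concatenating a causal curve from $z$ to $p$ with the null segment from $p$ to $q$ yields a causal curve from $z$ to $q$, and since $z\notin I^-(q)$ this curve must be an unbroken null geodesic, so $z\in\ell$. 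Extended to be inextensible, $\ell$ meets the Cauchy hypersurface $S$ in a single point $y_0$, so $O\setminus(I^-(q)\cap S)\subseteq\{y_0\}$; writing $W':=I^-(q)\cap S$ (open in $S$, $W'\subseteq O$) we get $\partial_S W'\subseteq\{y_0\}$. If $\partial_S W'=\emptyset$ then $W'$ is clopen and nonempty, so $W'=S$, contradicting noncompactness; if $\partial_S W'=\{y_0\}$ then $S\setminus\{y_0\}$ is the disjoint union of the nonempty open sets $W'$ and $S\setminus O$, contradicting that $S$ is connected of dimension $\geq 2$.

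\emph{Case 2: $p$ and $q$ are not causally related.} The idea is to show, from the local geometry of the light cones along $E$, that $p$ and $q$ nevertheless lie on a common null geodesic, which reduces to Case 1. Using the structure theory of achronal boundaries, one finds a point $\hat y\in E$ near which both $\partial I^-(p)$ and $\partial I^-(q)$ are smooth null hypersurfaces and $E$ (their common trace on $S$) is a smooth spacelike codimension-two submanifold. Exactly two null hypersurfaces pass through such a submanifold, say $N_+$ and $N_-$ (ruled by the two families of null geodesics issuing from it), so near $\hat y$ both $\partial I^-(p)$ and $\partial I^-(q)$ lie in $\{N_+,N_-\}$. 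Near $\hat y$ each of these boundaries is the smooth boundary of $I^-(p)$ (resp. $I^-(q)$), so $I^-(p)$ (resp. $I^-(q)$) is locally one of its two sides, and that side meets $S$ precisely in the component $W$ of $S\setminus E$; a computation in coordinates adapted to $S$ shows that the component cut out in this way determines which of $N_+,N_-$ is used. Hence $\partial I^-(p)=\partial I^-(q)$ near $\hat y$, so the generator of $\partial I^-(p)$ through $\hat y$ — the null geodesic from $\hat y$ to $p$ — coincides with the generator of $\partial I^-(q)$ through $\hat y$ — the null geodesic from $\hat y$ to $q$; being one geodesic, it passes through both $p$ and $q$, so they are causally related. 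By Case 1 this is impossible. Therefore $p=q$.

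The routine part is Case 1 together with the first-paragraph identifications, each a standard consequence of $S$ being an achronal spacelike hypersurface. The main obstacle is Case 2: one must (i) invoke the structure of the non-differentiability (crease) locus of an achronal null boundary to secure a point $\hat y$ that is simultaneously a smooth point of $\partial I^-(p)$ and of $\partial I^-(q)$ and at which the slice $E$ is a smooth spacelike submanifold, and (ii) prove the local rigidity statement that a null hypersurface which contains a prescribed spacelike codimension-two submanifold and bounds a past set inducing a prescribed component of its complement in $S$ is unique. I expect this local null geometry along $E$ to be the delicate step; in the conformally flat setting of this paper the light cones are locally modelled on those of the Einstein universe, which should make such control routine.
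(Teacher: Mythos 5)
The paper does not actually prove this proposition; it is quoted from C.~Rossi's thesis. So I can only judge your argument on its merits, and it has two problems, one minor and one serious.

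The minor one is in your first paragraph: the identity $\mathrm{int}_S\bigl(J^-(r)\cap S\bigr)=I^-(r)\cap S$ is false in general. It fails precisely where the past light cone of $r$ refocuses on $S$. For instance, in $\eeu$ the set $J^-(r)$ near the conjugate point $c=\sigma^{-1}(r)$ is the complement of $I^+(c)$; since any spacelike hypersurface through $c$ is locally disjoint from $I^+(c)$ by achronality, a full $S$-neighborhood of $c$ lies in $J^-(r)$ even though $c\in\partial I^-(r)$. Consequently you cannot conclude $I^-(p)\cap S=I^-(q)\cap S$ and $\partial I^-(p)\cap S=\partial I^-(q)\cap S$ from equality of the shadows; only the closure statement $\overline{I^-(r)\cap S}^{\,S}=J^-(r)\cap S$ survives. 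This is repairable (e.g.\ every point of the frontier of $O$ in $S$, which is nonempty since $O$ is compact and $S$ is not, lies in $\partial I^-(p)\cap\partial I^-(q)$), and your Case~1 never uses the false identity, so Case~1 is correct as written.

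The serious gap is the one you flag yourself at the end of Case~2: the existence of a point $\hat y\in E$ at which \emph{both} $\partial I^-(p)$ and $\partial I^-(q)$ are smooth null hypersurfaces. ``Structure theory of achronal boundaries'' gives Lipschitz regularity and differentiability almost everywhere on the $(n-1)$-dimensional boundary, but $E$ is a codimension-two set, and nothing prevents the past cut loci of $p$ and of $q$ from covering all of $E$; no measure-zero statement on the boundary controls a codimension-two slice. Without that point your argument does not close, and everything downstream (the two null hypersurfaces $N_\pm$, the side selection, the common generator) depends on it. The standard way to avoid this entirely is to work \emph{below} $S$ rather than on it: if $z\in I^-(q)\cap I^-(S)$, any timelike curve from $z$ to $q$ crosses $S$ at a point $y\in I^-(q)\cap S\subset O=J^-(p)\cap S$, whence $z\in I^-(y)\subset I^-(p)$; by symmetry $I^-(p)\cap I^-(S)=I^-(q)\cap I^-(S)$. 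This common set is a nonempty \emph{open} proper subset of the connected open set $I^-(S)$ (properness uses non-compactness of $S$), so $\partial I^-(p)$ and $\partial I^-(q)$ coincide on a nonempty relatively open subset of an achronal boundary, where Rademacher's theorem does apply: at a common differentiability point the tangent hyperplane is null, contains a unique null direction, and the generators to $p$ and to $q$ are both tangent to it, hence are the same null geodesic. Then $p$ and $q$ are causally related and your Case~1 finishes the proof. I recommend restructuring Case~2 along these lines; as written, the step you call delicate is genuinely missing.
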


By Proposition \ref{prop: shadows}, the shadows on $S$ characterize completely the points of the globally hyperbolic spacetime $M$. This allows to reduce, in some situations, the study of the spacetime to the study of compact subsets of a Riemannian manifold.

\section{Geometry of Einstein universe} \label{sec: geometry of Einstein universe}

In this section, we introduce the model space of conformally flat Lorentzian structures, the so-called \emph{Einstein universe}, and we describe its causal structure.

\subsection{The Klein model}

Let $\R^{2,n}$ be the vector space $\R^{n+2}$ of dimension $(n+2)$ equipped with the nondegenerate quadratic form $q_{2,n}$ of signature $(2,n)$ given by
\begin{align*}
q_{2,n}(u,v,x_1,\ldots, x_n) &= -u^2 - v^2 + x_1^2 + \ldots + x_n^2
\end{align*}
in the coordinate system $(u,v,x_1,\ldots,x_n)$ associated to the canonical basis of $\R^{n+2}$.

\begin{definition}
\emph{Einstein universe} of dimension $n$, denoted by $\mathsf{Ein}_{1,n-1}$, is the space of isotropic lines of $\R^{2,n}$ with respect to the quadratic form $q_{2,n}$, namely
\begin{align*}
\mathsf{Ein}_{1,n-1} &= \{[x] \in \mathbb{P}(\R^{2,n}):\ q_{2,n}(x) = 0\}.
\end{align*}
\end{definition}

In practice, it is more convenient to work with the double cover of the Einstein universe, denoted by $Ein_{1,n-1}$:
\begin{align*}
Ein_{1,n-1} &= \{[x] \in \mathbb{S}(\R^{2,n}):\ q_{2,n}(x) = 0\}
\end{align*}
where $\mathbb{S}(\R^{2,n})$ is the sphere of rays, namely the quotient of $\R^{2,n} \backslash \{0\}$ by positive homotheties.

\subsection{Spatio-temporal decomposition of Einstein universe}

The choice of a timelike plane of $\R^{2,n}$, i.e. a plane on which the restriction of $q_{2,n}$ is negative definite, defines a spatio-temporal decomposition of Einstein universe:

\begin{lemma}
Any timelike plane $P \subset \R^{2,n}$ defines a diffeomorphism between $\mathbb{S}^{n-1} \times \mathbb{S}^1$ and $Ein_{1,n-1}$.
\end{lemma}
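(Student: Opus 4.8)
The plan is to exhibit an explicit diffeomorphism by using the timelike plane $P$ to split $\R^{2,n}$ orthogonally. First I would write $\R^{2,n} = P \oplus P^\perp$, where $q_{2,n}$ restricted to $P$ is negative definite (signature $(2,0)$) and, since $q_{2,n}$ is nondegenerate of signature $(2,n)$, its restriction to $P^\perp$ is positive definite (signature $(0,n)$). Thus any vector $x \in \R^{2,n}$ decomposes uniquely as $x = x_- + x_+$ with $x_- \in P$ and $x_+ \in P^\perp$, and $q_{2,n}(x) = q_{2,n}(x_-) + q_{2,n}(x_+) = -\|x_-\|_P^2 + \|x_+\|_{P^\perp}^2$, where $\|\cdot\|_P$ and $\|\cdot\|_{P^\perp}$ denote the Euclidean norms induced by $-q_{2,n}|_P$ and $q_{2,n}|_{P^\perp}$ respectively.

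Next I would translate the defining conditions of $Ein_{1,n-1}$ into this decomposition. A ray $[x]$ lies in $Ein_{1,n-1} = \{[x] \in \mathbb{S}(\R^{2,n}) : q_{2,n}(x) = 0\}$ iff $\|x_-\|_P = \|x_+\|_{P^\perp}$; normalizing a representative on the unit sphere of rays means $\|x_-\|_P^2 + \|x_+\|_{P^\perp}^2 = 1$, so both equal $\tfrac{1}{\sqrt 2}$. Hence $\sqrt2\, x_- $ ranges over the unit circle of the Euclidean plane $(P, -q_{2,n}|_P) \cong \R^2$, i.e. over $\mathbb{S}^1$, and $\sqrt2\, x_+$ ranges over the unit sphere of $(P^\perp, q_{2,n}|_{P^\perp}) \cong \R^n$, i.e. over $\mathbb{S}^{n-1}$. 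This gives the map
\begin{equation*}
\Phi \colon Ein_{1,n-1} \longrightarrow \mathbb{S}^{n-1} \times \mathbb{S}^1, \qquad [x] \longmapsto \left( \sqrt 2\, x_+ , \sqrt 2\, x_- \right),
\end{equation*}
which is well defined on rays because rescaling a representative to lie on the unit sphere of rays is unique up to the positive homothety already quotiented out, and a negative homothety is excluded since we are on $\mathbb{S}(\R^{2,n})$. I would then write down the inverse, sending $(\xi, \eta) \in \mathbb{S}^{n-1}\times\mathbb{S}^1 \subset P^\perp \times P$ to the ray of $\tfrac{1}{\sqrt2}(\eta + \xi)$, and check that both $\Phi$ and $\Phi^{-1}$ are smooth — this is immediate since the orthogonal projections onto $P$ and $P^\perp$ are linear, hence smooth, and the normalization factors are constant.

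The only genuine thing to verify carefully is that $\Phi$ is a diffeomorphism rather than merely a smooth bijection: one checks it is a local diffeomorphism (e.g. via the explicit smooth inverse just described, or by a rank/dimension count, both sides having dimension $n$), which together with bijectivity concludes the proof. I do not expect any real obstacle here; the mild subtlety worth a sentence is keeping track of the factor of $\sqrt2$ and of the fact that we work with $\mathbb{S}(\R^{2,n})$ (rays) so that the sign ambiguity in choosing a unit representative is genuinely removed — passing instead to $\mathsf{Ein}_{1,n-1} = \mathbb{P}(\R^{2,n}) \cap \{q_{2,n}=0\}$ would collapse the $\mathbb{S}^1$ factor to the antipodal quotient, consistent with $Ein_{1,n-1}$ being the double cover.
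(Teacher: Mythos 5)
Your proof is correct and follows essentially the same route as the paper: both use the orthogonal splitting $\R^{2,n}=P^\perp\oplus P$ with the induced positive definite forms and identify a null ray with its pair of normalized components, the only cosmetic difference being that you describe the map from $Ein_{1,n-1}$ to $\mathbb{S}^{n-1}\times\mathbb{S}^1$ (with the $1/\sqrt2$ normalization) while the paper describes its inverse via the restriction of the canonical projection to the set $\{q_{P^\perp}(x)=q_P(y)=1\}$. Your added remarks on well-definedness on rays and on the projective quotient are fine and consistent with the paper.
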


\begin{proof}
Consider the orthogonal splitting $\R^{2,n} = P^\perp \oplus P$ and call $q_{P^\perp}$ and $q_{P}$ the positive definite quadratic form induced by $\pm q_{2,n}$ on $P^\perp$ and $P$ respectively. The restriction of the canonical projection $\R^{2,n} \backslash \{0\}$ on $\mathbb{S}(\R^{2,n})$ to the set of points $(x,y) \in P^\perp \oplus P$ such that $q_{P^\perp}(x) = q_P(y) = 1$ defines a map from $\mathbb{S}^{n-1} \times \mathbb{S}^1$ to $Ein_{1,n-1}$. It is easy to check that this map is a diffeomorphism.
\end{proof}

For every timelike plane $P \subset \R^{2,n}$, the quadratic form $q_{2,n}$ induces a Lorentzian metric $g_P$ on $\mathbb{S}^{n-1} \times \mathbb{S}^1$ given by
\begin{align*}
g_P &= d\sigma^2 (P) - d\theta^2(P)
\end{align*}
where $d\sigma^2(P)$ is the round metric on $\mathbb{S}^{n-1} \subset (P^\perp,q_{P^\perp})$ induced by $q_{P^\perp}$  and $d\theta^2(P)$ is the round metric on $\mathbb{S}^{1} \subset (P,q_P)$ induced by $q_P$.

An easy computation shows that if $P' \subset \R^{2,n}$ is another timelike plane, the Lorentzian metric $g_{P'}$ is conformally equivalent to $g_P$, i.e. $q_P$ and $g_P'$ are proportionnal by a positive smooth function on $\mathbb{S}^{n-1} \times \mathbb{S}^1$. As a result, Einstein universe is naturally equipped with a conformal class of Lorentzian metrics. This Lorentzian conformal structure induces causality on Einstein universe. Indeed, changing the metric in the conformal class consists in multiplying by a positive function and so does not change the sign of the norm of a tangent vector. The causal structure of Einstein universe is trivial: any point is causally related to any other one (see e.g. \cite[Cor. 2.10, Chap. 2]{salveminithesis}).

Let us point out that in general geodesics are not well-defined in a conformal spacetime. Indeed, a computation of the Levi Civita connexion shows that geodesics are not preserved by conformal changes of metrics. Nevertheless, lightlike geodesics are preserved as \emph{non-parametrized} curves (see e.g. \cite[Théorème 3]{francesarticle}). 

\subsection{Causal structure of the universal cover} \label{sec: euu}

Let $\eeu$ be the universal cover of $Ein_{1,n-1}$. When $n \geq 3$, every diffeomorphism between $Ein_{1,n-1}$ and $\mathbb{S}^{n-1} \times \mathbb{S}^1$ lifts to a diffeomorphism between $\eeu$ and $\mathbb{S}^{n-1} \times \R$. The pull-back by the projection $\mathbb{S}^{n-1} \times \R \to \mathbb{S}^{n-1} \times \mathbb{S}^1$ of the conformal class $[d\sigma^2 - d\theta^2]$ on $\mathbb{S}^{n-1} \times \mathbb{S}^1$ defined previously is the conformal class of the Lorentzian metric $d\sigma^2 - dt^2$ where $dt^2$ is the usual metric on $\R$. This induces a natural conformally flat Lorentzian structure on $\eeu$. 

\begin{definition}
We call \emph{spatio-temporal decomposition of $\eeu$} any conformal diffeomorphism between $\eeu$ and $\mathbb{S}^{n-1} \times \R$. 
\end{definition}

In what follows, we fix a spatio-temporal decomposition and we identify $\eeu$ to $\mathbb{S}^{n-1} \times \R$.\\

The fundamental group of $Ein_{1,n-1}$ is isomorphic to $\mathbb{Z}$, generated by the transformation $\delta: \eeu \to \eeu$ defined by $\delta(x,t) = (x, t + 2\pi)$. That of $\mathsf{Ein}_{1,n-1}$ is generated by the transformation $\sigma: \eeu \to \eeu$ such that $\sigma^2 = \delta$, i.e. the map defined by $\sigma(x,t) = (-x, t + \pi)$.

\begin{definition}
Two points $p$ and $q$ of $\eeu$ are said to be \emph{conjugate} if one is the image under $\sigma$ of the other.
\end{definition}

While the causal structure of $Ein_{1,n-1}$ is trivial, the causal structure of $\eeu$ is rich. We give a brief description below. We direct to \cite[Chap. 2]{salveminithesis} for more details.\\

Lightlike geodesics of $\eeu$ are the curves which can be written, up to reparametrization, as $(x(t), t)$ where $x: I \to \mathbb{S}^{n-1}$ is a geodesic of $\mathbb{S}^{n-1}$ defined on an interval $I$ of $\R$. The inextensible ones are those for which $x$ is defined on $\R$. 

It turns out that the photons going through a point $(x_0, t_0)$ have common intersections at the points $\sigma^k(x_0,t_0)$, for $k \in \mathbb{Z}$; and are pairwise disjoint outside these points. The lightcone of a point $(x_0, t_0)$ is the set of points $(x,t)$ such that $d(x,x_0) = |t - t_0|$ where $d$ is the distance on the sphere $\mathbb{S}^{n-1}$ induced by the round metric. It disconnects $\eeu$ in three connected components:
\begin{itemize}
\item The \emph{chronological future} of $(x_0, t_0)$: this is the set of points $(x,t)$ of $\mathbb{S}^{n-1} \times \R$ such that $d(x,x_0) < t - t_0$.
\item The \emph{chronological past} of $(x_0, t_0)$: this is the set of points $(x,t)$ of $\mathbb{S}^{n-1} \times \R$ such that $d(x,x_0) < t_0 - t$.
\item The set of points non-causally related to $(x_0, t_0)$, i.e. the set of points $(x,t)$ of $\mathbb{S}^{n-1} \times \R$ such that $d(x,x_0) > |t - t_0|$. This is exactly the interior of the diamond of vertices $\sigma(x_0, t_0)$ and $\sigma^{-1}(x_0,t_0)$. It is conformally diffeomorphic to Minkowski spacetime (see e.g. \cite[Lemma 2.38 and Corollary 2.43]{smai2022anosov}) and is called \emph{affine chart}. We denote it $\Mink_0(x_0,t_0)$.
\end{itemize}

There are two other affine charts associated to the point $(x_0,t_0)$, namely:
\begin{itemize}
\item the set of points non-causally related to $\sigma(x_0, t_0)$, contained in the chronological future of $(x_0, t_0)$, denoted $\Mink_+(x_0,t_0)$;
\item the set of points non-causally related to $\sigma^{-1}(x_0,t_0)$, contained in the chronological past of $(x_0, t_0)$, denoted $\Mink_-(x_0,t_0)$. 
\end{itemize}

The universal cover $\eeu$ is globally hyperbolic: any sphere $\mathbb{S}^{n-1} \times \{t\}$, where $t \in \R$, is a Cauchy hypersurface.

\subsection{Conformal group} 

The subgroup $O(2,n) \subset Gl_{n+2}(\R)$ preserving $q_{2,n}$, acts conformally on $Ein_{1,n-1}$. When $n \geq 3$, the conformal group of $Ein_{1,n-1}$ is \emph{exactly} $O(2,n)$. This is a consequence of the following result, which is an extension to Einstein universe, of a classical theorem of Liouville in Euclidean conformal geometry (see e.g. \cite{francesarticle}):

\begin{theorem}\label{Liouville theorem}
Let $n \geq 3$. Any conformal transformation between two open subsets of $Ein_{1,n-1}$ is the restriction of an element of $O(2,n)$.
\end{theorem}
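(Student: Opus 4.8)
The plan is to reduce the statement to the classical Liouville theorem in Euclidean conformal geometry, exploiting the fact that $Ein_{1,n-1}$ contains conformally flat affine charts isometric to Minkowski space, and that $O(2,n)$ acts transitively enough on these charts. First I would fix a conformal transformation $f : U \to U'$ between open subsets of $Ein_{1,n-1}$. Since the statement is local, I would cover $U$ by small open sets, each contained in some affine chart $\Mink_0(p)$ conformally diffeomorphic to Minkowski spacetime $\M^{1,n-1}$, and each with image contained in some affine chart of $U'$; it suffices to prove that $f$ agrees with an element of $O(2,n)$ on each such piece, and then a connectedness/analyticity argument (elements of $O(2,n)$ that agree on a nonempty open set are equal, since they are projective transformations) lets me glue these local identifications into a single global one.

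The heart of the matter is then the Minkowski-space statement: any conformal map between connected open subsets of $\M^{1,n-1}$ ($n \geq 3$) is the restriction of an element of the Lorentzian Möbius group, i.e.\ the group generated by Lorentz transformations, translations, homotheties and the inversion $x \mapsto x/q(x)$. This is the Lorentzian analogue of the classical Liouville rigidity theorem. I would either cite it directly from \cite{francesarticle} (where it is stated as Théorème~3, or nearby), or sketch the standard proof: a conformal map $f$ satisfies $f^* g = e^{2\varphi} g$; differentiating the conformal Killing-type equations that the Jacobian must satisfy, one shows in dimension $\geq 3$ that the Schwarzian-type third-order obstruction vanishes, forcing $\varphi$ to be (at worst) of the special form arising from an inversion, and hence $f$ to be a Möbius transformation. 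The key dimensional input is exactly $n \geq 3$: in dimension $2$ the conformal group is infinite-dimensional and the theorem fails.

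Finally I would check that the Lorentzian Möbius group of an affine chart is precisely the stabilizer in $O(2,n)$ of the point at infinity of that chart, so that a Möbius transformation of $\Mink_0(p)$ extends to a global conformal transformation of $Ein_{1,n-1}$ lying in $O(2,n)$; combined with transitivity of $O(2,n)$ on affine charts this handles the case where source and target charts differ. The main obstacle is not conceptual but bookkeeping: one must be careful that the local pieces, their affine charts, and the glued transformation are mutually compatible, and that the ``agree on an open set $\Rightarrow$ equal'' principle for $O(2,n)$ is applied on a \emph{connected} domain (replacing $U$ by a connected component if necessary, which is harmless since the statement is local/componentwise). I expect the Euclidean/Lorentzian Liouville input to be quoted rather than reproved, so the real content of the argument in this paper is the passage from Minkowski charts to all of $Ein_{1,n-1}$.
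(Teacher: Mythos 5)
The paper does not actually prove this statement: it is quoted as a known result, attributed to the Lorentzian Liouville theorem of Frances (\cite{francesarticle}), and no argument is given. Your proposal therefore cannot match "the paper's proof"; instead it sketches the standard derivation, and that sketch is essentially correct: cover the (connected component of the) domain by pieces lying in affine charts, invoke the Lorentzian Liouville theorem on Minkowski space to identify $f$ locally with a Möbius transformation, check that each Möbius generator of a chart is induced by an explicit element of $O(2,n)$ under the conformal embedding of $\M^{1,n-1}$ into the quadric, and glue using the fact that two elements of $O(2,n)$ agreeing on a nonempty open subset of $Ein_{1,n-1}$ coincide (the cone over such a subset spans $\R^{2,n}$, and a $q_{2,n}$-orthogonal map fixing every ray in it must be the identity on the sphere of rays). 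Note, however, that like the paper you ultimately defer the analytic core --- the rigidity of conformal maps of $\M^{1,n-1}$ in dimension $\geq 3$ --- to the literature, so your argument only reproves the (routine) passage from Minkowski charts to $Ein_{1,n-1}$. One small imprecision: the stabilizer in $O(2,n)$ of the point at infinity of an affine chart is only the similarity group (Poincaré group extended by homotheties), not the full Möbius group, since the inversion moves that point; the correct statement is merely that each Möbius generator extends to an element of $O(2,n)$, which is what your gluing actually needs.
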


It is a classical fact that every conformal diffeomorphism of $Ein_{1,n-1}$ lifts to a conformal diffeomorphism of $\eeu$. Conversely, by Theorem \ref{Liouville theorem}, every conformal transformation of $\eeu$ defines a unique conformal transformation of the quotient space $Ein_{1,n-1} = \eeu / <\delta>$.

Let $\Conf(\eeu)$ denote the group of conformal transformations of $\eeu$. Let $j: \Conf(\eeu) \longrightarrow O(2,n)$ be the natural projection. This is a surjective group morphism whose kernel is generated by $\delta$. 

\subsection{Causally convex open subsets of Einstein universe} \label{sec: causally convex open subsets of Ein}

In this section, we characterize causally convex open subsets of $\eeu$ in a spatio-temporal decomposition $\mathbb{S}^{n-1} \times \R$. We denote by $d$ the distance on $\mathbb{S}^{n-1}$ induced by the round metric.

\begin{proposition} \label{lemma: causally convex in ein 1}
Let $\Omega$ be a causally convex open subset of $\eeu$. Then, there exist two $1$-Lipschitz functions $f^+$ and $f^-$ from an open subset $U$ of $\mathbb{S}^{n-1}$ to $\overline{\R}$ such that the following hold:
\begin{itemize}
\item $f^- < f^+$ on $U$;
\item the extensions of $f^+$ and $f^-$ to $\partial U$ coincide.
\item $\Omega$ is the set of points $(x,t)$ of $\eeu$ such that $f^-(x) < t < f^+(x)$.
\end{itemize}
\end{proposition}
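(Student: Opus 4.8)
The plan is to describe $\Omega$ via its fibers over $\mathbb{S}^{n-1}$ and to extract the two boundary functions from the causal convexity constraint. First I would set $U = \pi(\Omega)$, where $\pi: \mathbb{S}^{n-1}\times\R \to \mathbb{S}^{n-1}$ is the projection; this is open since $\pi$ is an open map. For $x \in U$, let $\Omega_x = \{t \in \R : (x,t) \in \Omega\}$ be the fiber, a nonempty open subset of $\R$. The key first claim is that each $\Omega_x$ is an interval: if $(x,t_0)$ and $(x,t_1)$ both lie in $\Omega$ with $t_0 < t_1$, then the vertical segment $\{x\}\times[t_0,t_1]$ is a causal (indeed timelike) curve in $\eeu$ joining two points of $\Omega$, hence contained in $\Omega$ by causal convexity. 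So we may define $f^+(x) = \sup \Omega_x \in (-\infty,+\infty]$ and $f^-(x) = \inf \Omega_x \in [-\infty,+\infty)$, and then $\Omega \cap \pi^{-1}(x) = \{x\}\times (f^-(x), f^+(x))$, which gives the third bullet and, since $\Omega_x$ is nonempty open, the inequality $f^-(x) < f^+(x)$ on $U$.

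Next I would prove the $1$-Lipschitz property of $f^+$ (the argument for $f^-$ is symmetric by time reversal). Fix $x, y \in U$. Suppose for contradiction that $f^+(y) > f^+(x) + d(x,y)$; I want to produce a causal curve violating convexity. Choose $t$ with $f^+(x) + d(x,y) < t < f^+(y)$ and $t < f^+(y)$; then $(y,t) \in \Omega$. Take a minimizing geodesic $x(s)$ in $\mathbb{S}^{n-1}$ from $y$ to $x$ parametrized by arclength on $[0,d(x,y)]$, and consider the curve $\gamma(s) = (x(s), t - s)$ in $\eeu$. By the description of lightlike geodesics in Section~\ref{sec: euu}, this is a past-directed lightlike curve. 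It starts at $(y,t) \in \Omega$ and ends at $(x, t - d(x,y))$; since $t - d(x,y) > f^+(x)$... wait — this lands above $f^+(x)$, which is not in $\Omega$, so instead I should run the geodesic the other way or pick the level carefully: choose $t$ with $f^+(x) < t - d(x,y)$ is impossible. The correct setup: since $f^+(x) < t - d(x,y)$ fails, I instead note that $(x, t')$ for $t'$ slightly below $f^+(x)$ is in $\Omega$, and the lightlike curve from $(x,t')$ going \emph{forward} along the geodesic toward $y$ reaches $(y, t' + d(x,y))$; if $t' + d(x,y) > f^+(y)$ while the curve must stay in $\Omega$ near its start, we get the contradiction once we also check the curve leaves $\Omega$. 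The cleanest formulation: causal convexity forces $J^+((x,t'))\cap J^-((y,t'')) \subset \Omega$ whenever both endpoints are in $\Omega$; taking $t' \uparrow f^+(x)$ and $t'' \uparrow f^+(y)$ and using that $(y, t'+d(x,y))$ is causally between them when $t'' \geq t' + d(x,y)$ yields $f^+(y) \le f^+(x) + d(x,y)$ after a limiting argument. I would write this as: for any $(x,t') \in \Omega$, the point $(y, t' + d(x,y))$ lies in $J^+((x,t'))$, hence is in $\overline{\Omega}$, giving $t' + d(x,y) \le f^+(y)$; letting $t' \to f^+(x)$ gives the Lipschitz bound. This also shows the bound is two-sided after swapping $x$ and $y$, so $|f^+(x) - f^+(y)| \le d(x,y)$ when both are finite, and a separate easy argument handles the case $f^+(y) = +\infty$ (then $f^+(x) = +\infty$ too, by the same inclusion).

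For the second bullet — that the extensions of $f^+$ and $f^-$ to $\partial U$ agree — I would argue as follows. A $1$-Lipschitz function on $U$ with values in $\overline{\R}$ extends continuously (as an $\overline{\R}$-valued map) to $\overline U$; call the extensions $\overline{f^\pm}$. Fix $x_0 \in \partial U$ and suppose $\overline{f^-}(x_0) < \overline{f^+}(x_0)$, choosing real $a < b$ with $\overline{f^-}(x_0) < a < b < \overline{f^+}(x_0)$. For $x \in U$ near $x_0$ we then have $f^-(x) < a < b < f^+(x)$, so $\{x\}\times[a,b] \subset \Omega$; letting $x \to x_0$, the point $(x_0, a)$ lies in $\overline\Omega$. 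But then one checks that a whole neighborhood of $(x_0,(a+b)/2)$ is forced into $\Omega$ by causal convexity together with openness: points slightly off $x_0$ in $\Omega$ have fibers containing $[a,b]$, and the diamonds between such points and their reflections cover an open neighborhood of $(x_0,\cdot)$, contradicting $x_0 \notin U$. I expect this last step — rigorously turning "$\overline\Omega$ contains a vertical segment over a boundary point" into "$\Omega$ itself contains an open set over that boundary point, contradiction" — to be the main obstacle, since it requires carefully using both causal convexity (to propagate inclusion from nearby fibers) and the openness of $\Omega$; the interval and Lipschitz parts are comparatively routine. I would handle it by fixing two points $(x_1, c), (x_2, c) \in \Omega$ with $x_1, x_2$ on opposite sides of $x_0$ along a geodesic through $x_0$ and $c$ strictly between $a$ and $b$, so that the timelike diamond $I((x_1,c-\epsilon),(x_2,c+\epsilon))$, which is open and contains $(x_0,c)$, must lie in $\Omega$ by causal convexity, forcing $x_0 \in U$.
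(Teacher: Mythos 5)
Your overall architecture matches the paper's proof (fibers of $\Omega$ over $\mathbb{S}^{n-1}$ are open intervals by causal convexity of vertical segments; the graphs of $f^\pm$ must be achronal; the boundary extensions agree because otherwise a timelike segment sits in $\partial\Omega$, contradicting that the open diamond between two points of $\overline{\Omega}$ lies in $\Omega$). The interval step and the final step on $\partial U$ are essentially correct — your diamond $I\bigl((x_1,c-\epsilon),(x_1,c+\epsilon)\bigr)$ argument is exactly the content of the paper's Lemma~\ref{lemma: the closure of a causally convex is causally convex}. However, the Lipschitz step, which you yourself flag as shaky, contains a genuine error in its final formulation. You assert that for $(x,t')\in\Omega$ the point $(y,t'+d(x,y))$ lies in $J^+((x,t'))$ and \emph{hence} in $\overline{\Omega}$. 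The inclusion $J^+(p)\subset\overline{\Omega}$ for $p\in\Omega$ is false (take $\Omega$ a small diamond); causal convexity only controls points lying on causal curves \emph{between two points of} $\Omega$. Worse, the inequality you extract, $f^+(x)+d(x,y)\le f^+(y)$, is a \emph{lower} bound on $f^+(y)$, and symmetrizing it in $x$ and $y$ yields $d(x,y)\le 0$ — so the claimed conclusion cannot be what you derived. The $1$-Lipschitz property is the \emph{upper} bound $f^+(y)\le f^+(x)+d(x,y)$, and it must be proved by contradiction: if $f^+(y)-f^+(x)>d(x,y)$, then $(x,f^+(x))\in I^-\bigl((y,f^+(y))\bigr)$, and since both are limits of points of $\Omega$ one finds $p'\in I^+\bigl((x,f^+(x))\bigr)\cap\Omega$ (near $(y,f^+(y))$) and $q'\in I^-\bigl((x,f^+(x))\bigr)\cap\Omega$ (on the fiber over $x$), so that $(x,f^+(x))\in I(p',q')\subset\Omega$, contradicting $f^+(x)=\sup\Omega_x$. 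This is the paper's Fact~2 (achronality of the graph of $f^+$).

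A secondary gap: you dispose of the infinite case by ``the same inclusion,'' so it inherits the same flaw. The statement needs the dichotomy that if $f^+(x)=+\infty$ for one $x\in U$ then $f^+\equiv+\infty$ (otherwise ``$1$-Lipschitz with values in $\overline{\R}$'' is meaningless). The correct argument (the paper's Fact~1) is that if $\Omega$ contains a future timelike half-line $\alpha$, then $\Omega$ is future-complete: for $p\in\Omega$ and $q\in I^+(p)$, the future-inextensible curve $\alpha$ meets $I^+(q)$ at some $q'\in\Omega$, and then $q\in J(q',p')$ for $p'\in\Omega\cap I^-(p)$, so $q\in\Omega$ by causal convexity. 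You should add both repairs before the proof can be considered complete.
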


The proof of Proposition \ref{lemma: causally convex in ein 1} uses the following lemma.

\begin{lemma} \label{lemma: the closure of a causally convex is causally convex}
For every points $p$ and $q$ in the closure of $\Omega$ such that $p \in I^+(q)$, the intersection $I^-(p) \cap I^+(q)$ is contained in $\Omega$. 
\end{lemma}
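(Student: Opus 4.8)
\textbf{Proof plan for Lemma \ref{lemma: the closure of a causally convex is causally convex}.}

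The plan is to reduce the statement about points in $\overline{\Omega}$ to the defining property of causal convexity, which concerns points actually in $\Omega$, by approximating $p$ and $q$ from inside $\Omega$ along timelike curves. First I would fix $p, q \in \overline{\Omega}$ with $p \in I^+(q)$ and an arbitrary point $r \in I^-(p) \cap I^+(q)$; the goal is to show $r \in \Omega$. Since $I^+$ and $I^-$ are open sets and $r$ lies in the \emph{open} diamond $I^-(p) \cap I^+(q)$, I would like to perturb $p$ to a nearby point $p' \in \Omega$ and $q$ to a nearby point $q' \in \Omega$ while keeping $r \in I^-(p') \cap I^+(q')$. Concretely, because $r \in I^-(p)$, there is a future-directed timelike curve from $r$ to $p$; pushing slightly past $p$ along a timelike curve, or using that $p \in \overline{\Omega}$ together with the openness of $I^+(r)$, I can find $p' \in \Omega \cap I^+(r)$. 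Symmetrically I can find $q' \in \Omega \cap I^-(r)$. Then $r \in I^+(q') \cap I^-(p')$ with $p', q' \in \Omega$, and in particular $p' \in I^+(q')$, so by causal convexity of $\Omega$ the whole diamond $J(p',q')$ — which contains the open diamond $I(p',q') \ni r$ — lies in $\Omega$. Hence $r \in \Omega$.

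The key steps, in order, are: (1) reduce to showing an arbitrary $r \in I^-(p) \cap I^+(q)$ belongs to $\Omega$; (2) use openness of chronological futures/pasts plus $p, q \in \overline{\Omega}$ to produce $p' \in \Omega \cap I^+(r)$ and $q' \in \Omega \cap I^-(r)$; (3) observe $r$ lies in the open diamond $I(p', q')$ determined by two points of $\Omega$; (4) invoke causal convexity of $\Omega$ (in the form: every diamond $J(p',q')$ with endpoints in $\Omega$ is contained in $\Omega$, as recalled in the paragraph on causal convexity) to conclude $r \in \Omega$.

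The main obstacle is step (2): one must be careful that $p' \in I^+(r)$ can genuinely be chosen inside $\Omega$ and not merely in $\overline{\Omega}$. The point is that $p \in I^+(r)$ and $I^+(r)$ is open, so $I^+(r)$ is an open neighborhood of $p$; since $p \in \overline{\Omega}$, this neighborhood meets $\Omega$, giving $p' \in \Omega \cap I^+(r)$. The symmetric argument using that $I^-(r)$ is an open neighborhood of $q$ meeting $\Omega$ gives $q'$. One should also note that $p' \in I^+(r) \subset I^+(q')$ (since $r \in I^+(q')$ and $I^+$ is transitive with the chronological relation), so the hypothesis $p' \in I^+(q')$ needed to form the diamond $J(p',q')$ is satisfied, and $r \in I^-(p') \cap I^+(q') = I(p',q')$ by \cite[Lemma 6, p.404]{oneill}. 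Everything else is a direct application of the definitions recalled in Section \ref{sec: GH spacetimes}.
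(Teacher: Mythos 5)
Your proof is correct and follows essentially the same route as the paper: both arguments use the openness of $I^+(r)$ and $I^-(r)$ as neighborhoods of $p$ and $q$ respectively, together with $p,q \in \overline{\Omega}$, to produce points $p', q' \in \Omega$ with $r \in I(p',q')$, and then conclude by causal convexity of $\Omega$. The only cosmetic difference is that the paper phrases the approximation via sequences converging to $p$ and $q$, whereas you select the approximating points directly from the intersection of $\Omega$ with the open sets $I^+(r)$ and $I^-(r)$.
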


\begin{proof}
Let $p, q \in \overline{\Omega}$ such that $p \in I^+(q)$. There exist two sequences $\{p_i\}$ and $\{q_i\}$ of elements of $\Omega$ such that $\lim p_i = p$ and $\lim q_i = q$. Let $r \in I(p,q)$. Then, $I^+(r)$ is an open neighborhood of $p$ and $I^-(r)$ is an open neighborhood of $q$. As a result, there exists an integer $i_0$ such that $p_{i_0} \in I^+(r)$ and $q_{i_0} \in I^-(r)$. It follows that $r \in I(p_{i_0}, q_{i_0}) \subset \Omega$.
\end{proof}

\begin{proof}[Proof of Proposition \ref{lemma: causally convex in ein 1}]
Let $U$ be the projection of $\Omega$ on the sphere $\mathbb{S}^{n-1}$. Since $\Omega$ is causally convex, the intersection of $\Omega$ with any timelike line $\{x\} \times \R$, where $x \in U$, is connected, i.e. it is a segment $\{x\} \times ]f^-(x), f^+(x)[$. This defines two functions $f^+$ and $f^-$ from $U$ to $\overline{\R}$ such that $\Omega$ is the set of points $(x,t)$ such that $f^-(x) < t < f^+(x)$.

\begin{description}
\item[Fact $1$.] \emph{If there exists $x \in U$ such that $f^+(x) = +\infty$ then $f^+ \equiv + \infty$.} 

This is equivalent to prove that $\Omega$ is future-complete, i.e. $I^+(\Omega) \subset \Omega$, as soon as it contains a timelike half-line $\alpha= \{x\} \times [t, +\infty[$. Let $p \in \Omega$ and let $q \in I^+(p)$. Since $\alpha$ is future-inextensible, it intersects $I^+(q)$. Let $q'$ be a point in this intersection. Then, $q \in J(q',p')$ where $p' \in \Omega \cap I^-(p)$. Thus, $q \in \Omega$.

\item[Fact $2$.] \emph{If $f^+$ is finite, it is $1$-Lipschitz.}

This is equivalent to prove that the graph of $f^+$ is achronal. Suppose there exist two distinct points $p, q$ in the graph of $f^+$ such that $p \in I^+(q)$. Since $p \in \partial \Omega$, we have $I^+(q) \cap \Omega \neq \emptyset$. Then, $q \in I(p',q')$ where $p' \in I^+(q) \cap \Omega$ and $q' \in \Omega \cap I^-(q)$. Hence, $q \in \Omega$. Contradiction.

\item[Fact $3$.] \emph{If $f^+$ and $f^-$ are finite and $\partial U$ is non-empty, then the extensions of $f^+$ and $f^-$ to $\partial U$ are equal.}

Let $\overline{f^+}$ (resp. $\overline{f^-}$) be the extension of $f^+$ (resp. $f^-$) to $\overline{U}$. Suppose there exists $x \in U$ such that $\overline{f^-}(x) < \overline{f^+}(x)$. Then, the timelike segment $\{x\} \times ]\overline{f^-}(x), \overline{f^+}(x)[$ is contained in the boundary of $\Omega$. This contradicts Lemma \ref{lemma: the closure of a causally convex is causally convex}.
\end{description}
\end{proof}

Conversely:

\begin{proposition} \label{lemma: causally convex in ein 2}
Let $f^+, f^- : U \subset \mathbb{S}^{n-1} \to \overline{\R}$ be two $1$-Lipschitz functions defined on an open subset $U$ of $\mathbb{S}^{n-1}$ such that:
\begin{itemize}
\item $f^- < f^+$ on $U$;
\item the extensions of $f^+$ and $f^-$ to $\partial U$ coincide.
\end{itemize}
Then, the set of points $(x,t)$ of $\eeu$ such that $f^-(x) < t < f^+(x)$, named $\Omega$, is causally convex in $\eeu$.
\end{proposition}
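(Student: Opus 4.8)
The plan is to show that $\Omega$ is causally convex by contradiction: suppose there is a future-directed causal curve $\gamma$ of $\eeu$ joining two points $p, q \in \Omega$ which leaves $\Omega$. Since the two endpoints lie in $\Omega$, there is a point $r = (x_0, t_0)$ on $\gamma$ with $r \in \partial\Omega$ (or, working slightly more carefully, a sub-arc of $\gamma$ whose interior lies outside $\overline\Omega$ with endpoints on $\partial\Omega$; but the first point where $\gamma$ exits $\overline\Omega$ already suffices). Because causal curves in the spatio-temporal decomposition $\mathbb{S}^{n-1}\times\R$ are $1$-Lipschitz graphs in $t$ over arcs in $\mathbb{S}^{n-1}$, the projection of $\gamma$ to $\mathbb{S}^{n-1}$ is a $1$-Lipschitz path, and along $\gamma$ the $t$-coordinate is nondecreasing with $|\dot t| \geq |\dot x|$. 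I will use this together with the $1$-Lipschitz hypotheses on $f^\pm$ to derive a contradiction.

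The key step is a local one: I claim that if $r = (x_0,t_0) \in \partial\Omega$, then $r$ cannot be an interior point of a causal curve whose endpoints lie in $\Omega$. There are two cases according to whether $x_0 \in U$ or $x_0 \in \partial U$. If $x_0 \in \partial U$, then by the hypothesis the extensions of $f^+$ and $f^-$ to $\partial U$ agree, so the fiber $\{x_0\}\times\R$ meets $\overline\Omega$ in the single point $(x_0, \overline{f^+}(x_0)) = (x_0,\overline{f^-}(x_0))$ (when $f^\pm$ are finite near $x_0$; the infinite case is handled by Fact~$1$ of the previous proof, which forces $\Omega$ to be a full slab and the statement is immediate). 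Near such a boundary point, any point $(x,t) \in \Omega$ satisfies $f^-(x) < t < f^+(x)$, and since $f^\pm$ are $1$-Lipschitz and agree at $x_0$, one gets $|t - \overline{f^\pm}(x_0)| < $ something comparable to $d(x,x_0)$ — this pinches $\Omega$ into the complement of the causal cones of $r$, from which no causal curve through $r$ can re-enter $\Omega$ on the other side. If instead $x_0 \in U$, say $t_0 = f^+(x_0)$ (the case $t_0 = f^-(x_0)$ is dual), then the graph of $f^+$ is achronal since $f^+$ is $1$-Lipschitz; a future-directed causal curve through $r = (x_0, f^+(x_0))$ immediately enters $I^+(r)$, which, I will check, is disjoint from $\Omega$ near $r$ because every point of $\Omega$ has $t < f^+$ and $f^+$ is $1$-Lipschitz. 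Hence $\gamma$ cannot have its future endpoint $q$ in $\Omega$ once it touches the upper graph, and symmetrically it cannot have its past endpoint $p$ in $\Omega$ once it touches the lower graph.

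Concretely, the cleanest way to package this is: parametrize $\gamma = (x(s), t(s))$, $s \in [0,1]$, with $\gamma(0) = p$, $\gamma(1) = q$, causal and future-directed, so $t$ is nondecreasing and $d(x(s), x(s')) \leq t(s') - t(s)$ for $s \leq s'$. Let $s^* = \sup\{s : \gamma([0,s]) \subset \Omega\}$; if $\gamma \not\subset \Omega$ then $s^* < 1$ and $r := \gamma(s^*) \in \partial\Omega$, with $t(s^*) = f^+(x(s^*))$ (it cannot be $f^-(x(s^*))$ since $t$ is nondecreasing and $\gamma$ was in $\Omega$ for $s < s^*$; the boundary-of-$U$ case is absorbed into this by the coincidence of extensions). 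Now for any $s > s^*$ with $x(s) \in U$ we have $t(s) \geq t(s^*) = f^+(x(s^*)) \geq f^+(x(s)) - d(x(s^*), x(s)) \geq f^+(x(s)) - (t(s) - t(s^*))$, which gives $t(s) \geq f^+(x(s)) - (t(s) - t(s^*))$; combined with monotonicity $t(s) \geq t(s^*)$ this forces $t(s) \geq f^+(x(s))$ for all such $s$, in particular $\gamma(s) \notin \Omega$, contradicting $q = \gamma(1) \in \Omega$. (A little care is needed at the step where I claim $t(s) \geq f^+(x(s))$: one should note $t(s) - f^+(x(s)) \geq t(s^*) - f^+(x(s^*)) - [d(x(s^*),x(s)) - (t(s)-t(s^*))] \geq 0$ since both bracketed terms behave monotonically — this is where the two $1$-Lipschitz conditions interlock.) The main obstacle I anticipate is precisely handling the boundary points $x_0 \in \partial U$ and the possibility $f^\pm \in \{\pm\infty\}$ uniformly: there the fiber degenerates, and one must argue that a causal curve touching $\partial\Omega$ over $\partial U$ is "trapped" on the lightcone of that boundary point and cannot come back — this is morally the same achronality argument but requires invoking the coincidence of the extensions of $f^+$ and $f^-$ rather than just the $1$-Lipschitz bound, so I would treat it as a separate short lemma or reduce it to the finite interior case by approximation.
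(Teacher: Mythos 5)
Your argument is correct and is essentially the paper's proof: you locate the first exit point of the causal curve on the graph of $\overline{f^+}$ and then combine the causality inequality $d(x(s^*),x(s)) \le t(s)-t(s^*)$ with the $1$-Lipschitz bound on $f^+$ to show the curve can never re-enter $\Omega$, which is exactly the chain $d(x,y)\le s-f^+(x)< f^+(y)-f^+(x)\le d(x,y)$ that the paper isolates as Lemma \ref{lemma: characterisation of the boundary of a causally convex subset of Ein}. The only cosmetic differences are that you run the inequality inline along the curve rather than as a separate lemma, and you treat exit points over $\partial U$ by the Lipschitz continuity of the extension $\overline{f^+}$ where the paper uses a sequential limiting argument; the case splitting on $f^\pm$ being infinite matches the paper's as well.
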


The proof of Proposition \ref{lemma: causally convex in ein 2} uses the following lemma.

\begin{lemma} \label{lemma: characterisation of the boundary of a causally convex subset of Ein}
Let $\overline{f^+}$ be the extension of $f^+$ to $\overline{U}$. Then, for every point $p$ in the graph of $\overline{f^+}$, the causal future of $p$ is disjoint from $\Omega$.
\end{lemma}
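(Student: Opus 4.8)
The statement to prove is Lemma~\ref{lemma: characterisation of the boundary of a causally convex subset of Ein}: if $\overline{f^+}$ is the extension of $f^+$ to $\overline{U}$, then for every point $p$ in the graph of $\overline{f^+}$, the causal future $J^+(p)$ is disjoint from $\Omega$.

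\medskip

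The plan is to argue by contradiction, using the explicit description of the causal structure of $\eeu=\mathbb{S}^{n-1}\times\R$ recalled in Section~\ref{sec: euu}: a point $(y,s)$ lies in $J^+(x,t)$ exactly when $d(y,x)\le s-t$. First I would take $p=(x_0,\overline{f^+}(x_0))$ with $x_0\in\overline U$ and suppose for contradiction that some $(y,s)\in J^+(p)\cap\Omega$. Then $y\in U$, $f^-(y)<s<f^+(y)$, and $d(y,x_0)\le s-\overline{f^+}(x_0)$. I would split into the two cases $x_0\in U$ and $x_0\in\partial U$.

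\medskip

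If $x_0\in U$, then $\overline{f^+}(x_0)=f^+(x_0)$ and the $1$-Lipschitz property of $f^+$ gives $f^+(y)\le f^+(x_0)+d(y,x_0)\le f^+(x_0)+(s-f^+(x_0))=s$, contradicting $s<f^+(y)$. If $x_0\in\partial U$, the hypothesis that the extensions of $f^+$ and $f^-$ to $\partial U$ coincide gives $\overline{f^+}(x_0)=\overline{f^-}(x_0)$; I would then use that $f^-$ extended to $\overline U$ is still $1$-Lipschitz (limits of $1$-Lipschitz functions are $1$-Lipschitz, so both $\overline{f^+}$ and $\overline{f^-}$ are $1$-Lipschitz on $\overline U$) to get $f^-(y)\ge \overline{f^-}(x_0)-d(y,x_0)=\overline{f^+}(x_0)-d(y,x_0)\ge \overline{f^+}(x_0)-(s-\overline{f^+}(x_0))$. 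Hmm — that last bound reads $f^-(y)\ge 2\overline{f^+}(x_0)-s$, which is not immediately a contradiction. The cleaner route here is instead: $s\ge \overline{f^+}(x_0)+d(y,x_0)$, while $1$-Lipschitzness of $\overline{f^+}$ on $\overline U$ gives $f^+(y)=\overline{f^+}(y)\le\overline{f^+}(x_0)+d(y,x_0)\le s$, again contradicting $s<f^+(y)$. So the single uniform argument is: for \emph{any} $x_0\in\overline U$, $1$-Lipschitzness of $\overline{f^+}$ forces $f^+(y)\le\overline{f^+}(x_0)+d(y,x_0)\le s$.

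\medskip

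The only genuine point requiring care — and what I expect to be the main (minor) obstacle — is justifying that the extension $\overline{f^+}$ to $\overline U$ is well-defined and still $1$-Lipschitz: $f^+$ is $1$-Lipschitz on $U$, hence uniformly continuous, and $\mathbb{S}^{n-1}$ is complete, so $f^+$ extends uniquely and continuously to $\overline U$, and the Lipschitz inequality passes to the closure by continuity. (One should also note $\overline{f^+}$ remains finite on $\partial U$ under the standing hypotheses, or handle $\pm\infty$ separately; but in the finite case this is immediate, and the statement of the ambient Proposition~\ref{lemma: causally convex in ein 2} is used precisely with finite boundary values.) Once this is in place, the inequality chain above closes the argument with no further computation.
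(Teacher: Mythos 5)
Your argument is correct, and for the interior case ($x_0\in U$) it is literally the paper's computation: $d(y,x_0)\le s-f^+(x_0)<f^+(y)-f^+(x_0)\le d(y,x_0)$. Where you diverge is the boundary case $x_0\in\partial U$. The paper does \emph{not} extend the Lipschitz estimate to $\overline U$; instead it approximates $p=(x_0,\overline{f^+}(x_0))$ by points $p_i=(x_i,f^+(x_i))$ with $x_i\in U$, picks $q'\in I^+(q)\cap\Omega$ using that $\Omega$ is open, notes $I^-(q')$ is an open neighborhood of $p$, hence $q'\in I^+(p_i)$ for $i$ large, and contradicts the already-established interior case. Your route — observing that the $1$-Lipschitz inequality (with respect to the global round distance $d$, which is the standing hypothesis and is what the interior computation already uses) passes to the continuous extension on $\overline U$, so that $f^+(y)\le\overline{f^+}(x_0)+d(y,x_0)\le s$ uniformly for all $x_0\in\overline U$ — is a clean one-line replacement that avoids the sequence argument entirely and treats both cases at once. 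What the paper's approximation argument buys is that it only needs achronality of the graph over $U$ plus openness of $\Omega$, i.e.\ it is the kind of soft causal argument that transfers to settings where one does not have an explicit metric formula; what yours buys is brevity and a single uniform inequality. Your self-correction away from the first attempt via $f^-$ was the right call (that bound indeed leads nowhere), and your caveat about finiteness of $\overline{f^+}$ matches how the lemma is actually invoked in the proof of Proposition~\ref{lemma: causally convex in ein 2}.
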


\begin{proof}
Let $x \in U$ and set $p = (x, f^+(x))$. Suppose there exists $(y,s) \in \Omega \cap J^+(p)$. Then, $d(x,y) \leq s - f^+(x) < f^+(y) - f^+(x) \leq d(x,y)$. Contradiction. Then, $J^+(p) \cap \Omega = \emptyset$.

Now, let $x \in \partial U$ and let $\{x_i\}$ be a sequence of elements of $U$ such that $x = \lim x_i$. Set $p_i = (x_i, f^+(x_i))$. Suppose there exists $q \in J^+(p) \cap \Omega$. Since $\Omega$ is open, there exists $q' \in I^+(q) \cap \Omega$. By transitivity, $q' \in I^+(p)$. Then, $I^-(q')$ is an open neighborhood of $p$. Since $\lim p_i = p$, we deduce that $p_i \in I^-(q')$ for $i$ big enough. Equivalently, $q' \in I^+(p_i)$. Thus, $I^+(p_i) \cap \Omega \neq \emptyset$. Contradiction. 
\end{proof}

There is an analogue statement for the extension of $f^-$ to $\overline{U}$, denoted $\overline{f^-}$, with the reverse time-orientation.

\begin{proof}[Proof of Proposition \ref{lemma: causally convex in ein 2}]
Since $f^\pm$ is $1$-Lipschitz, if $f^\pm$ is infinite in a point of $U$, it is infinite on $U$. If $f^+ \equiv + \infty$ and $f^- \equiv - \infty$, we have $U = \mathbb{S}^{n-1}$ and $\Omega = \eeu$. 

Suppose $f^+ < +\infty$ and $f^- \equiv -\infty$. Let $p, q \in \Omega$ such that $q \in J^+(p)$. Let $\gamma$~be a future causal curve of $\eeu$ joining $p$ to $q$. Suppose that $\gamma \nsubseteq \Omega$. Then, $\gamma$ intersects the boundary of $\Omega$, reduced in this case to the graph of $f^+$, in a point $r = (x, f^+(x))$ where $x \in U$. By Lemma \ref{lemma: characterisation of the boundary of a causally convex subset of Ein}, $J^+(r)$ is disjoint from $\Omega$. Then, the segment of $\gamma$ joining $r$ to $q$ is contained in $J^+(r)$. Thus, $q \not \in \Omega$. Contradiction.

Suppose now that $f^+$ and $f^-$ are finite. If $\partial U$ is empty, the proof is similar to the previous case. Otherwise, we call $f$ the common extension of $f^+$ and $f^-$ to $\partial U$. In this case, the boundary of $\Omega$ is the union of the graphs of $f^+, f^-$ and $f$. By Lemma \ref{lemma: characterisation of the boundary of a causally convex subset of Ein}, the points of $\Omega$ are not causally related to any point in the graph of $f$. Therefore, the previous arguments still hold.
\end{proof}

Now, we describe Cauchy hypersurfaces of causally convex open subsets of $\eeu$. Let 
\begin{align*}
\Omega &:= \{(x,t) \in U \times \R:\ f^-(x) < t < f^+(x)\}
\end{align*}
be a causally convex open subset of $\eeu$ where $U$ is an open subset of $\mathbb{S}^{n-1}$ and $f^+, f^-$ are the functions from $U$ to $\overline{\R}$ given by Proposition \ref{lemma: causally convex in ein 1}.

\begin{proposition}\label{prop: Cauchy hypersurfaces of causally convex subsets of Ein}
Let $h$ be a $1$-Lipschitz real-valued function defined on $U$ such that its extension to $\partial U$ coincide with that of $f^+$ and $f^-$ and $f^- < h < f^+$ on $U$. Then, the graph of $h$ is a topological Cauchy hypersurface of $\Omega$.
\end{proposition}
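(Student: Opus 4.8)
The plan is to check the three defining properties of a topological Cauchy hypersurface for the graph $\Sigma:=\{(x,h(x)):x\in U\}$: that it is an embedded topological hypersurface, that it is achronal, and that it is met by every inextensible causal curve of $\Omega$. The first two are immediate. Since $f^-<h<f^+$ on $U$ we have $\Sigma\subset\Omega$; as the graph of a continuous function over the open set $U\subseteq\mathbb{S}^{n-1}$, $\Sigma$ is an embedded topological hypersurface, and it is closed in $\Omega$ because $\Omega$ projects onto $U$, so a sequence of points of $\Sigma$ converging in $\Omega$ has its $\mathbb{S}^{n-1}$-projection converging inside $U$, hence (by continuity of $h$) its limit in $\Sigma$. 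For achronality I would use the description of the causal relations of $\eeu$ from Section~\ref{sec: euu}: two points $(x,t)$, $(y,s)$ are chronologically related iff $d(x,y)<|t-s|$; since $h$ is $1$-Lipschitz, two distinct points of $\Sigma$ can never satisfy this inequality, so no timelike curve meets $\Sigma$ twice (and any causal curve meets $\Sigma$ in a connected subset of a single photon), which is the uniqueness half of the statement.

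The core of the proof is that every inextensible causal curve meets $\Sigma$. Let $\gamma\colon(a,b)\to\Omega$ be a future-directed inextensible causal curve and write $\gamma(\lambda)=(x(\lambda),\tau(\lambda))$. Along $\gamma$ the coordinate $\tau$ is strictly increasing, and since $\gamma$ is causal its spatial part is $\tau$-Lipschitz, $d(x(\lambda),x(\mu))\le|\tau(\lambda)-\tau(\mu)|$; consequently each composite $f^\pm\circ x$ is $\tau$-Lipschitz. The idea is to apply the intermediate value theorem to the continuous function $\phi(\lambda):=\tau(\lambda)-h(x(\lambda))$, so I need to control $\phi$ at the two ends of $\gamma$, which amounts to locating the endpoints of $\gamma$ in $\overline{\Omega}$. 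Suppose first $f^\pm$ are finite (hence bounded, as $1$-Lipschitz functions on $U\subseteq\mathbb{S}^{n-1}$). Then $\tau$ is bounded and increasing, so it converges to some $T$ as $\lambda\to b$, and then $x(\lambda)$ is Cauchy and converges to some $x_\infty\in\overline U$ with $\overline{f^-}(x_\infty)\le T\le\overline{f^+}(x_\infty)$. I would then argue that $\gamma(\lambda)\to(x_\infty,f^+(x_\infty))$ with $x_\infty\in U$. This is the only point requiring real care: I must exclude that $\gamma$ escapes through the ``pinch'' lying over $\partial U$ (where the graphs of $f^+$ and $f^-$ meet) and that it limits onto the lower boundary. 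Both are ruled out by the observation that, $f^-\circ x$ being $\tau$-Lipschitz and $\tau$ increasing, the quantity $\tau(\lambda)-f^-(x(\lambda))$ is non-decreasing, while it is positive since $\gamma\subset\Omega$: if $x_\infty\in\partial U$ then $\overline{f^+}(x_\infty)=\overline{f^-}(x_\infty)$, which forces $\tau(\lambda)-f^-(x(\lambda))\to 0$; and if $x_\infty\in U$ with $T<f^+(x_\infty)$ then either $(x_\infty,T)\in\Omega$, contradicting inextensibility of $\gamma$, or $T=f^-(x_\infty)$, again forcing $\tau(\lambda)-f^-(x(\lambda))\to 0$ --- all impossible. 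Symmetrically, using that $f^+(x(\lambda))-\tau(\lambda)$ is positive and non-increasing, the past end of $\gamma$ converges to $(x_{-\infty},f^-(x_{-\infty}))$ with $x_{-\infty}\in U$. In the remaining cases $f^+\equiv+\infty$ or $f^-\equiv-\infty$ one has $U=\mathbb{S}^{n-1}$, and the same monotonicity arguments give $\tau(\lambda)\to+\infty$, resp. $\tau(\lambda)\to-\infty$, at the corresponding end.

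It then remains to conclude. With $x_\infty\in U$ one gets $\lim_{\lambda\to b}\phi(\lambda)=f^+(x_\infty)-h(x_\infty)>0$ since $h<f^+$ on $U$ (and $\phi(\lambda)\to+\infty$ when $f^+\equiv+\infty$, using that $h$ is bounded on the compact sphere $\mathbb{S}^{n-1}$); likewise $\lim_{\lambda\to a}\phi(\lambda)=f^-(x_{-\infty})-h(x_{-\infty})<0$ (or $-\infty$). By the intermediate value theorem there is $\lambda_0\in(a,b)$ with $\tau(\lambda_0)=h(x(\lambda_0))$, i.e. $\gamma(\lambda_0)\in\Sigma$. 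Together with the achronality established above, this shows that $\Sigma$ is a topological Cauchy hypersurface of $\Omega$. I expect the endpoint analysis of the second paragraph --- specifically, ruling out that an inextensible causal curve can slip out of $\Omega$ through the boundary locus sitting over $\partial U$ --- to be the only step needing care; once it is settled, everything reduces to the intermediate value theorem plus routine bookkeeping of the degenerate cases where $U=\mathbb{S}^{n-1}$.
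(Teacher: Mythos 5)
Your proof is correct, but it takes a genuinely different route from the paper's. The paper reduces to \emph{timelike} curves via O'Neill's Definition 28 and Lemma 29, and then deduces the crossing property from Lemma \ref{lemma: causal boundary of causally convex subsets of Ein}: an inextensible timelike curve meeting $\Omega$ must leave it through its boundary, the part of $\partial\Omega$ lying over $\partial U$ is causally disconnected from $\Omega$ (Lemma \ref{lemma: characterisation of the boundary of a causally convex subset of Ein}), and achronality of each graph forces the curve to cross both of them; the same soft argument applied to the causally convex domain bounded by the graphs of $f^-$ and $h$ then gives a crossing of the graph of $h$. You instead run a self-contained coordinate argument: the monotonicity of $\tau-f^{\mp}\circ x$ along a future causal curve pins down the endpoints of an inextensible causal curve of $\Omega$ as lying on the graphs of $f^{\pm}$ over $U$ (in particular the curve cannot escape through the pinch over $\partial U$), and the intermediate value theorem applied to $\tau-h\circ x$ produces the intersection. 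What your approach buys: it is quantitative and independent of the two lemmas above, it treats causal (not merely timelike) curves for the existence half, and it handles explicitly the degenerate cases $f^{\pm}\equiv\pm\infty$, which the paper's proof leaves implicit (Lemma \ref{lemma: causal boundary of causally convex subsets of Ein} assumes finiteness). What the paper's approach buys is brevity, since all the work has already been done in the preceding lemmas. One shared caveat: for the ``at most once'' half, achronality only controls timelike curves; a causal curve can meet the graph of a $1$-Lipschitz $h$ along a null segment, so the conclusion must be read through O'Neill's timelike-curve definition of a Cauchy hypersurface (as the paper does by citing Definition 28 and Lemma 29). Your parenthetical remark that the intersection with a causal curve is contained in a single photon is the right observation, but it should be phrased as the reduction to timelike curves rather than as ``the uniqueness half''.
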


Proposition \ref{prop: Cauchy hypersurfaces of causally convex subsets of Ein} is a consequence of the following lemma.

\begin{lemma}\label{lemma: causal boundary of causally convex subsets of Ein}
Suppose $f^+$ and $f^-$ are finite. Then, every inextensible timelike curve of $\eeu$ that intersects $\Omega$ meet each of the graphs of $f^+$ and $f^-$.
\end{lemma}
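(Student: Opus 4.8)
The plan is to reduce everything to showing that $\gamma$ meets the graph of $f^+$; the statement for $f^-$ then follows by reversing the time-orientation. First I would normalize the curve. If $\gamma$ is future-directed and timelike, then in the decomposition $\eeu=\mathbb{S}^{n-1}\times\R$ its time-coordinate is strictly increasing, and inextensibility forces it to sweep out all of $\R$ (otherwise its spatial part, which lives in the compact sphere and moves strictly slower than light, would converge and $\gamma$ would be extendable). So I may reparametrize and write $\gamma(t)=(x(t),t)$, $t\in\R$, where $x\colon\R\to\mathbb{S}^{n-1}$ satisfies the \emph{strict} inequality $d(x(t),x(t'))<|t-t'|$ for $t\neq t'$, since a timelike vector has spatial speed strictly less than $1$. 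Fix $t_0$ with $\gamma(t_0)\in\Omega$, so $x(t_0)\in U$ and $f^-(x(t_0))<t_0<f^+(x(t_0))$.

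Next I would track the curve forward with the two auxiliary functions $\psi(t)=f^+(x(t))-t$ and $\chi(t)=t-f^-(x(t))$, defined as long as $x(t)\in U$. Let $T^+\in(t_0,+\infty]$ be the supremum of the $t$ for which $x$ maps $[t_0,t]$ into $U$. On $[t_0,T^+)$ both functions are continuous, and the $1$-Lipschitz property of $f^\pm$ combined with $d(x(t),x(t'))<|t-t'|$ shows that $\psi$ is strictly decreasing and $\chi$ strictly increasing; moreover $\psi(t_0)>0$ and $\chi(t_0)>0$. I claim $\psi$ vanishes somewhere on $[t_0,T^+)$. Granting this, at a zero $t^*$ we have $x(t^*)\in U$ and $f^+(x(t^*))=t^*$, so $\gamma(t^*)=(x(t^*),f^+(x(t^*)))$ lies on the graph of $f^+$, as required.

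To establish the claim I argue by contradiction, assuming $\psi>0$ throughout $[t_0,T^+)$. If $T^+=+\infty$, then $f^+(x(t))>t$ for all $t\ge t_0$, so $f^+(x(t))\to+\infty$; picking $t_k\to+\infty$ with $x(t_k)\to x_\infty\in\overline U$ (compactness of $\mathbb{S}^{n-1}$) and using that the unique $1$-Lipschitz extension $\overline{f^+}$ of $f^+$ to $\overline U$ is continuous and real-valued, one gets $f^+(x(t_k))\to\overline{f^+}(x_\infty)<+\infty$, a contradiction. If $T^+<+\infty$, then $x(T^+)\in\partial U$ (it lies in $\overline U$ and, were it in $U$, openness of $U$ would contradict the definition of $T^+$), and letting $t\uparrow T^+$ the monotonicity and positivity yield $\overline{f^+}(x(T^+))=T^++\lim\psi(t)\ge T^+$ and $\overline{f^-}(x(T^+))=T^+-\lim\chi(t)<T^+$; since $\overline{f^+}$ and $\overline{f^-}$ coincide on $\partial U$, this gives $T^+\le\overline{f^+}(x(T^+))=\overline{f^-}(x(T^+))<T^+$, a contradiction. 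Hence $\psi$ has a zero on $[t_0,T^+)$.

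I expect the delicate point to be the case $T^+<+\infty$: one must ensure that a timelike curve starting inside $\Omega$ cannot escape through the "lateral" part of $\partial\Omega$ (the graph over $\partial U$) before meeting the upper graph. This is exactly where the hypotheses that $f^+$ and $f^-$ are finite with a common boundary extension, together with the strict shortness of the spatial projection of a timelike curve, are used; the first two facts are otherwise harmless book-keeping. (Along the way one also checks, using the same Lipschitz estimates, that $\chi>0$ on $[t_0,T^+)$, so $\gamma$ genuinely stays in $\Omega$ until time $T^+$, although this is not logically needed for the argument above.) Applying the identical reasoning to the past of $\gamma$ — equivalently, to $f^-$ after reversing the time-orientation — shows that $\gamma$ also meets the graph of $f^-$, which completes the proof.
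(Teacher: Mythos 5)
Your proof is correct, but it follows a more computational route than the paper's. The paper argues topologically: since $f^+$ is finite, an inextensible timelike curve meeting $\Omega$ must leave $\Omega$ both in the future and in the past, hence meets $\partial\Omega$ twice; the boundary decomposes into the graphs of $f^+$, $f^-$ and of the common extension $f$ over $\partial U$; Lemma \ref{lemma: characterisation of the boundary of a causally convex subset of Ein} rules out the lateral piece (a point of $\Omega$ cannot be causally related to a point of the graph of $f$, yet any two points on the same causal curve are causally related), and achronality of each graph (Fact 2 of Proposition \ref{lemma: causally convex in ein 1}) forces the two exit points to lie on \emph{different} graphs. You instead parametrize the curve as $(x(t),t)$ with $d(x(t),x(t'))<|t-t'|$ and run an intermediate-value argument on the strictly monotone functions $\psi(t)=f^+(x(t))-t$ and $\chi(t)=t-f^-(x(t))$; your analysis at the first exit time $T^+$ from $U$, where $\overline{f^+}=\overline{f^-}$ would force $T^+<T^+$, is exactly the quantitative counterpart of the paper's appeal to Lemma \ref{lemma: characterisation of the boundary of a causally convex subset of Ein}. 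What your version buys is self-containedness (no reliance on the earlier lemmas, and the needed boundedness of $f^\pm$ on the compact sphere is made explicit, a point the paper leaves implicit when asserting that $\gamma$ ``leaves $\Omega$''); what the paper's version buys is brevity and reusability of the boundary-structure lemmas, which are invoked again elsewhere. Both arguments are sound.
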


\begin{proof}
Let $\gamma$ be an inextensible timelike curve of $\eeu$ that intersects $\Omega$. Then, $\gamma$ intersects the boundary of $\Omega$. If $\partial U$ is empty, $\partial \Omega$ is the union of the graphs of $f^+$ and $f^-$. Otherwise, $\partial \Omega$ is the union of the graphs of $f^+$, $f^+$ and $f$ where $f$ is the common extension of $f^+$ and $f^-$ to $\partial U$. Since the points of $\Omega$ are not causally related to any point of the graph of $f$ (see Lemma \ref{lemma: characterisation of the boundary of a causally convex subset of Ein}), we deduce that in both cases, $\gamma$ intersects the graph of $f^+$ or the graph of $f^-$.

Suppose $\gamma$ meets the graph of $f^+$. Since $\Omega$ is not past-complete, $\gamma$ leaves $\Omega$ and so intersects again the its boundary. Since the graph of $f^+$ is achronal, $\gamma$ could not intersects the graph of $f^+$ a second time. Thus, $\gamma$ intersects the graph of $f^-$.
\end{proof}

\begin{proof}[Proof of Proposition \ref{prop: Cauchy hypersurfaces of causally convex subsets of Ein}]
Let $\gamma$ be an inextensible timelike curve of $\Omega$. Since $\Omega$ is causally convex, $\gamma$ is the intersection of $\Omega$ with an inextensible timelike curve $\tilde{\gamma}$ of $\eeu$. By Lemma \ref{lemma: causal boundary of causally convex subsets of Ein}, $\tilde{\gamma}$ meets the graph of $h$. Moreover, since the graph of $h$ is achronal, $\tilde{\gamma}$ intersects it exactly once. The proposition follows from \cite[Definition 28 and Lemma 29, p. 415]{oneill}.
\end{proof}

\subsection{Duality} \label{sec: duality in ein}

In this section, we highlight a particular class of causally convex open subsets of $\eeu$ involving a notion of \emph{duality} in Einstein universe.

\paragraph{Duality in the Klein model.} Recall that a subset of $\mathbb{S}(\R^{2,n})$ is said to be \emph{convex} if it is the projectivization of a convex subset of $\R^{2,n}$. The convex hull of a subset $A$ of $\mathbb{S}(\R^{2,n})$ is the smallest convex containing $A$. 

Let $\Lambda \subset Ein_{1,n-1}$. Let us denote $\Conv(\Lambda)$ the convex hull of $\Lambda$ in $\mathbb{S}(\R^{2,n})$. The \emph{dual convex cone} of $\Lambda$ in $\mathbb{S}(\R^{n+2})$ is
\begin{align*}
\Conv^*(\Lambda) &= \{\mathrm{x} \in \mathbb{S}(\R^{2,n}):\ <\mathrm{x}, \mathrm{y}>_{2,n} < 0\ \forall \mathrm{y} \in \Conv(\Lambda)\}.
\end{align*}

\begin{definition}
We call \emph{dual of $\Lambda$} the intersection of $Ein_{1,n-1}$ with the dual cone $\Conv^*(\Lambda)$.
\end{definition}

Notice that 
\begin{align*}
\Conv^*(\Lambda) \cap Ein_{1,n-1} &= \{\mathrm{x} \in Ein_{1,n-1}:\ <\mathrm{x}, \mathrm{y}>_{2,n} < 0\ \forall \mathrm{y} \in \Lambda\}.
\end{align*} 

\paragraph{Duality in the universal cover.} Let $\pi: \eeu \to Ein_{1,n-1}$ be the universal covering map. 

\begin{lemma}\label{lemma: duality}
Let $\Lambda \subset \eeu$. The restriction of the projection $\pi$ to the set of points which are non-causally related to any point of $\Lambda$ is injective. Furthermore, its image is contained in the dual of the projection of $\Lambda$ in $Ein_{1,n-1}$. If in addition, $\Lambda$ is acausal, we have equality.
\end{lemma}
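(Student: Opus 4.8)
The statement has three parts: injectivity of $\pi$ on the "non-causal" set $Z(\Lambda) := \{p \in \eeu : p \text{ is non-causally related to every point of } \Lambda\}$; the inclusion $\pi(Z(\Lambda)) \subseteq \mathrm{dev}$ of $\pi(\Lambda)$; and equality when $\Lambda$ is acausal. My plan is to work in a fixed spatio-temporal decomposition $\eeu \cong \mathbb{S}^{n-1}\times\R$ and translate everything into the combinatorics of the covering $\pi$, whose deck group is generated by $\delta(x,t)=(x,t+2\pi)$, together with the concrete causal description recalled in Section \ref{sec: euu}: two points are non-causally related iff $d(x,x_0) > |t-t_0|$. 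The first observation is that if $p, q \in Z(\Lambda)$ with $\pi(p)=\pi(q)$ but $p\neq q$, then $q = \delta^k(p)$ for some $k\neq 0$; since for every $p$ one has $\delta^k(p) \in I^\pm(p)$ for all $k\neq 0$ (the timelike line $\{x\}\times\R$ meets both chronological cones of $p$), and since $Z(\Lambda)$ is non-empty only if $\Lambda\neq\emptyset$, we get a point $\lambda\in\Lambda$ non-causally related to both $p$ and $\delta^k(p)$. The contradiction comes from the fact that the region non-causally related to a given point $p$ is exactly the affine chart $\Mink_0(p)$, which contains no pair of $\delta$-conjugate points — equivalently, $\Mink_0(p)$ injects into $Ein_{1,n-1}$. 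So $\lambda$ and $\delta^k\lambda$ would both lie in $Z(\{p\})=\Mink_0(p)$, impossible for $k\neq 0$. This gives injectivity.

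For the inclusion into the dual, I would pass to the Klein model via $\pi$. A point of $Z(\Lambda)$ projects to a point $[\mathrm x]\in Ein_{1,n-1}$ which is non-causally related (in $Ein_{1,n-1}$, after choosing the right lifts) to each $[\mathrm y]$ with $\mathrm y$ a lift of a point of $\Lambda$. The key dictionary is the standard one: for isotropic $\mathrm x, \mathrm y \in \R^{2,n}$, the sign of $\langle \mathrm x,\mathrm y\rangle_{2,n}$ detects the causal relation of $[\mathrm x],[\mathrm y]$ in $Ein_{1,n-1}$ — non-causally related corresponds to being able to choose representatives with $\langle \mathrm x,\mathrm y\rangle_{2,n} < 0$ (for all lifts in a consistent normalization). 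Granting this, $\pi(p)\in Z(\Lambda)$ forces $\langle \mathrm x, \mathrm y\rangle_{2,n} < 0$ for every $\mathrm y \in \pi(\Lambda)$, hence, by linearity, for every $\mathrm y$ in the convex hull $\Conv(\pi(\Lambda))$ as well; therefore $[\mathrm x]\in \Conv^*(\pi(\Lambda))\cap Ein_{1,n-1}$, which is by definition the dual of $\pi(\Lambda)$. I should be a little careful that "non-causally related in $\eeu$ to all lifts of $\Lambda$" implies the sign condition against *all* lifts $\delta^k\mathrm y$ simultaneously; but on the vector-space level $\delta$ acts (through $j$) by an element of $O(2,n)$, and the sign of $\langle \mathrm x,\delta^k\mathrm y\rangle$ is precisely what encodes the causal relation between $\pi(p)$ and the $k$-th lift, so the universal-cover non-causality hypothesis is exactly the full family of inequalities needed.

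For the equality when $\Lambda$ is acausal, the plan is to go backwards: take $[\mathrm x]\in Ein_{1,n-1}$ with $\langle \mathrm x,\mathrm y\rangle_{2,n} < 0$ for all $\mathrm y\in\pi(\Lambda)$, and produce a lift $p\in\eeu$ with $\pi(p)=[\mathrm x]$ lying in $Z(\Lambda)$. Acausality of $\Lambda$ is what makes this possible: it guarantees that the set of lifts of $\Lambda$ is "thin" in the time direction, so that — after the right choice of fundamental domain around each point of $\Lambda$ — the region non-causally related to all of $\Lambda$ is connected and projects onto the whole dual, rather than breaking into pieces. Concretely I would argue that $Z(\Lambda) = \bigcap_{\lambda\in\Lambda} \Mink_0(\lambda)$ is then a non-empty causally convex open set (intersection of affine charts), apply Lemma \ref{lemma: duality}'s already-proved injectivity to see $\pi|_{Z(\Lambda)}$ is an embedding, and use the sign dictionary in the reverse direction to check surjectivity onto the dual: given $[\mathrm x]$ in the dual, the inequalities $\langle \mathrm x,\delta^k\mathrm y\rangle$ have a sign pattern in $k$ forced by acausality that pins down a unique $k_0$ for which the lift lands in every $\Mink_0(\lambda)$. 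I expect this last step — showing that acausality exactly rules out the "pieces" phenomenon and yields surjectivity — to be the main obstacle; the injectivity and the inclusion are comparatively formal once the causal-vs-sign dictionary is in hand, whereas equality requires genuinely using that $\Lambda$ has no two causally related points so that a single consistent choice of lifts works globally.
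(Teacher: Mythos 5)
Your injectivity argument and the inclusion into the dual are essentially correct, and in fact more hands-on than the paper's, which simply quotes an external corollary for the facts that each affine chart $\Mink_0(\lambda)$ meets every fiber of $\pi$ at most once and projects onto $\{\mathrm{y}\in Ein_{1,n-1}:\ \langle \pi(\lambda),\mathrm{y}\rangle_{2,n}<0\}$. One side remark: your sentence about $\delta$ acting ``through $j$ by an element of $O(2,n)$'' is off --- $\delta$ generates the \emph{kernel} of $j$, so the expression $\langle\mathrm{x},\delta^k\mathrm{y}\rangle_{2,n}$ is not meaningful. But the worry it was meant to address is a non-issue: the inclusion only requires, for each $\lambda\in\Lambda$, the single inequality $\langle\pi(p),\pi(\lambda)\rangle_{2,n}<0$ coming from $\lambda\in\Mink_0(p)$, together with the observation (already made in the paper) that negativity against $\pi(\Lambda)$ implies negativity against its convex hull.

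The genuine gap is in the equality for acausal $\Lambda$, exactly where you announce the main obstacle. Given $\mathrm{x}$ in the dual of $\pi(\Lambda)$, for each $\lambda\in\Lambda$ there is a \emph{unique} lift $\delta^{k(\lambda)}p_0$ of $\mathrm{x}$ lying in $\Mink_0(\lambda)$; what must be proved is that $k(\lambda)$ is independent of $\lambda$, and you only assert that acausality ``pins down a unique $k_0$'' without an argument. This is precisely the content the paper outsources to a lifting result for negative subsets (every negative subset of $Ein_{1,n-1}$ lifts to an acausal subset of $\eeu$, and the lift can be chosen to contain a prescribed acausal partial lift). Your plan can in fact be completed by hand: if $p=(x,t)$ is non-causally related to $\lambda=(a,s)$, if $\delta^k p$ is non-causally related to $\mu=(b,u)$, and if $\lambda$ and $\mu$ are non-causally related (which acausality of $\Lambda$ guarantees for distinct points), then the three inequalities $d(x,a)>|t-s|$, $d(x,b)>|t+2\pi k-u|$, $d(a,b)>|s-u|$ sum to $d(x,a)+d(x,b)+d(a,b)>2\pi|k|$, whereas the perimeter of any triangle in the round sphere $\mathbb{S}^{n-1}$ is at most $2\pi$ (since $d(a,b)\leq(\pi-d(a,x))+(\pi-d(x,b))$); hence $k=0$, all the $k(\lambda)$ coincide, and the common lift lies in every $\Mink_0(\lambda)$. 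Without this (or an equivalent) argument, the surjectivity onto the dual is unproved, so the proposal as written does not establish the last assertion of the lemma.
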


\begin{proof}
Set $\Omega := \eeu \backslash (J^+(\Lambda) \cup J^-(\Lambda))$. By definition, $\Omega$ is the intersection of the affine charts $\Mink_0(p)$ where $p \in \Lambda$. As a result, the restriction of $\pi$ to $\Omega$ is injective and its image is contained in the dual of $\pi(\Lambda)$ (see \cite[Corollary 2.43]{smai2022anosov}). 

Suppose $\Lambda$ is acausal. Then, $\pi(\Lambda)$ is \emph{negative}, i.e. for every $\mathrm{x}, \mathrm{y} \in \pi(\Lambda)$, we have $<x, y>_{2,n} < 0$ for every representant $x, y \in \R^{2,n}$ of $\mathrm{x}, \mathrm{y}$ (see \cite[Lemma 10.13]{andersson2012}). 

Let $\mathrm{x} \in Ein_{1,n-1}  \cap \Conv^*(\pi(\Lambda))$. Set $\widehat{\Lambda_0} :=  \{x\} \cup \pi(\Lambda)$. By definition, $\widehat{\Lambda_0}$ is a negative subset of $Ein_{1,n-1}$. By \cite[Proposition 2.47]{smai2022anosov}, there exists an acausal subset $\Lambda_0$ of $\eeu$ which projects on $\widehat{\Lambda_0}$. Furthermore, the proof of \cite[Proposition 2.47]{smai2022anosov} shows that we can choose such a $\Lambda_0$ such that it contains $\Lambda$. As a consequence, $\Lambda_0$ is the union of a lift $p$ of $\mathrm{x}$ and $\Lambda$. Since $\Lambda_0$ is acausal, $p$ is non-causally related to any point of $\Lambda$. The lemma follows. 
\end{proof}

Lemma \ref{lemma: duality} motivates the following definition.

\begin{definition}
Let $\Lambda$ be a subset of $\eeu$. We call \emph{dual} of $\Lambda$, denoted by $\Lambda^\circ$, the set of points which are non-causally related to any point of $\Lambda$. 
\end{definition}

\begin{lemma}\label{lemma: the dual of a closed subset is open}
Let $\Lambda$ be a subset of $\eeu$ such that its dual is non-empty. Then, the dual of $\Lambda$ is causally convex. If in addition $\Lambda$ is closed, its dual is open.
\end{lemma}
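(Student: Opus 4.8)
\textbf{Proof plan for Lemma \ref{lemma: the dual of a closed subset is open}.}

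The plan is to prove the two assertions separately, relying on the characterizations of causal convexity already established. For causal convexity of the dual $\Lambda^\circ$, I would first recall, as in the proof of Lemma \ref{lemma: duality}, that $\Lambda^\circ = \eeu \setminus (J^+(\Lambda) \cup J^-(\Lambda)) = \bigcap_{p \in \Lambda} \Mink_0(p)$, where $\Mink_0(p)$ is the affine chart associated to $p$, i.e.\ the interior of the diamond with vertices $\sigma(p)$ and $\sigma^{-1}(p)$. Each $\Mink_0(p)$ is the interior of a diamond, hence causally convex in $\eeu$; and an arbitrary intersection of causally convex subsets is causally convex (this is the same elementary argument noted in the excerpt for finite intersections, which works verbatim for infinite ones: a causal curve joining two points of the intersection lies in each factor). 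Therefore $\Lambda^\circ$, being non-empty by hypothesis, is a causally convex subset of $\eeu$.

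For the second assertion, suppose $\Lambda$ is closed and let $p \in \Lambda^\circ$; I want a neighborhood of $p$ inside $\Lambda^\circ$. The point is that $p \notin J^+(\Lambda) \cup J^-(\Lambda)$, and I must show this persists under small perturbations of $p$, which amounts to showing $J^+(\Lambda) \cup J^-(\Lambda)$ is closed. Working in the spatio-temporal decomposition $\mathbb{S}^{n-1} \times \R$, causal relation is given explicitly: $(x,t) \in J^+(y,s)$ iff $d(x,y) \le t - s$. So $q = (x,t) \in J^+(\Lambda)$ iff there exists $(y,s) \in \Lambda$ with $d(x,y) \le t-s$. If $q_i \to q$ with $q_i \in J^+(\Lambda)$, pick witnesses $(y_i,s_i) \in \Lambda$ with $d(x_i,y_i) \le t_i - s_i$; since $\mathbb{S}^{n-1}$ is compact, $d(x_i,y_i)$ is bounded, hence $s_i$ is bounded below, and it is bounded above by $t_i$ which converges, so after extracting a subsequence $(y_i,s_i) \to (y,s)$, which lies in $\Lambda$ because $\Lambda$ is closed, and passing to the limit gives $d(x,y) \le t-s$, i.e.\ $q \in J^+(\Lambda)$. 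Thus $J^+(\Lambda)$ is closed, and symmetrically $J^-(\Lambda)$ is closed, so their union is closed and $\Lambda^\circ$ is open.

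The main thing to be careful about is the compactness argument extracting a convergent subsequence of witnesses: one needs the spatial factor $\mathbb{S}^{n-1}$ to be compact (true here) to bound $d(x_i,y_i)$, and then the temporal coordinates $s_i$ are trapped in a bounded interval. An alternative, more synthetic route for openness would be to note that for $p \in \Lambda^\circ$ one has $\Lambda \subset \Mink_0(p)$, and then use that $\Mink_0$ varies ``upper-semicontinuously'' with its center together with compactness-type control near the lightcone of $p$; but the explicit metric computation above is cleaner and avoids any delicate limiting behavior at the boundary of affine charts. I do not expect a genuine obstacle here — the statement is essentially a packaging of the explicit description of causality on $\eeu$ plus the stability of closed causal futures under the compactness of the Cauchy hypersurface.
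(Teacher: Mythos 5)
Your causal-convexity argument is correct and is essentially the paper's in disguise: the paper applies transitivity of the causal relation directly (if an interior point of a causal curve joining two points of $\Lambda^\circ$ were causally related to some $\lambda \in \Lambda$, then so would be one of the endpoints), and this is exactly the content of your statement that each $\Mink_0(p)$ is causally convex and that arbitrary intersections of causally convex sets are causally convex. No issue there.

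The openness argument has a genuine gap at the compactness-extraction step. From $d(x_i,y_i)\le t_i-s_i$ you obtain $s_i\le t_i-d(x_i,y_i)\le t_i$, which is an \emph{upper} bound on $s_i$; the boundedness of the spherical distance gives nothing in the direction of a lower bound, and for a general closed $\Lambda$ the witnesses can indeed have $s_i\to-\infty$ (take $\Lambda$ unbounded below in the $t$-coordinate, e.g.\ a full fiber $\{y\}\times\R$), so the convergent subsequence you extract need not exist. The paper closes this hole before it opens: it first argues that if $\Lambda$ is closed and non-compact, i.e.\ unbounded in $t$, then $J^+(\Lambda)\cup J^-(\Lambda)$ is all of $\eeu$ and the dual is empty, contradicting the hypothesis; hence $\Lambda$ is compact, the witnesses are automatically confined to a compact set, $J^\pm(\Lambda)$ is closed, and $\Lambda^\circ$ is open. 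Your proof never uses the hypothesis that the dual is non-empty in the openness step, and that is precisely the missing ingredient. The repair is short: either insert the compactness observation above, or handle the escaping case directly by noting that if $s_i\to-\infty$ along a subsequence then for $i$ large one has $t-s_i\ge\pi\ge d(x,y_i)$, so the limit point $(x,t)$ already lies in $J^+(y_i,s_i)\subset J^+(\Lambda)$. With either patch your argument becomes correct and is essentially equivalent to the paper's.
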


\begin{proof}
Let $p$ and $q$ be two points in the dual $\Lambda^\circ$, joined by a causal curve~$\gamma: I \subset \R \to M$. Suppose there is $t \in I$ such that $\gamma(t) \not \in \Lambda^\circ$, in other words $\gamma(t)$ is causally related to a point $\lambda \in \Lambda$. By transitivity, it follows that $p$ or $q$ is causally related to $\lambda$. Contradiction.

Suppose $\Lambda$ is closed. If $\Lambda$ is not compact, it would contain a causal curve inextensible in the future or in the past. Then, $J^+(\Lambda) \cup J^-(\Lambda)$ would be the whole space $Ein_{1,n-1}$ and $\Lambda^\circ$ would be empty. Therefore, $\Lambda$ is compact. It follows that $J^\pm(\Lambda)$ is closed. Hence, $\Lambda^\circ$ is open. 
\end{proof}

Now, we characterize duals of achronal closed subsets of $\eeu$ in a spatio-temporal decomposition $\mathbb{S}^{n-1} \times \R$.\\

Let $\Lambda$ be a closed achronal subset of $\eeu$. It is the graph of a $1$-Lipschitz real-valued function $f$ defined on a closed subset $\Lambda_0$ of the sphere $\mathbb{S}^{n-1}$. Let $f^+, f^-$ be the real-valued functions defined for every $x \in \mathbb{S}^{n-1}$ by:
\begin{align*}
f^+(x) &= \inf_{x_0 \in \Lambda_0} \{f(x_0) + d(x,x_0)\} \\
f^-(x) &= \sup_{x_0 \in \Lambda_0} \{f(x_0) - d(x,x_0)\}.
\end{align*}
Notice that $f^-(x) < f^+(x)$ for every $x \in \mathbb{S}^{n-1} \backslash \Lambda_0$ and that $f^+$ and $f^-$ are equal to $f$ on~$\Lambda_0$.

\begin{proposition} \label{prop: characterization of the dual}
The dual of $\Lambda$ is the set of points $(x,t)$ of $\eeu$ such that $f^-(x) < t < f^+(x)$.
\end{proposition}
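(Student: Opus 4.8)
The plan is to prove the two inclusions separately, using the duality characterization already available in the Klein model (Lemma~\ref{lemma: duality}) together with the distance formula for lightcones recalled in Section~\ref{sec: euu}, namely that $(x,t)$ and $(y,s)$ are causally related in $\eeu$ if and only if $d(x,y) \leq |t-s|$. Since $\Lambda$ is closed and achronal, it is the graph of a $1$-Lipschitz function $f$ on a closed set $\Lambda_0 \subset \mathbb{S}^{n-1}$, and by definition a point $p=(x,t)$ lies in $\Lambda^\circ$ if and only if it is non-causally related to every point $(x_0,f(x_0))$ with $x_0 \in \Lambda_0$, i.e. $d(x,x_0) > |t - f(x_0)|$ for all $x_0 \in \Lambda_0$.

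First I would unpack this last condition. For a fixed $x_0$, the inequality $d(x,x_0) > |t-f(x_0)|$ is equivalent to $f(x_0) - d(x,x_0) < t < f(x_0) + d(x,x_0)$. Requiring this simultaneously for all $x_0 \in \Lambda_0$ gives exactly
\begin{align*}
\sup_{x_0 \in \Lambda_0}\bigl(f(x_0) - d(x,x_0)\bigr) \;<\; t \;<\; \inf_{x_0 \in \Lambda_0}\bigl(f(x_0) + d(x,x_0)\bigr),
\end{align*}
that is, $f^-(x) < t < f^+(x)$. The only subtlety is that the passage from "strict inequality for each $x_0$" to "strict inequality for the sup/inf" is not automatic: one needs that the supremum and infimum are attained, or at least that no limiting equality sneaks in. Here I would use that $\Lambda_0$ is closed and that $x_0 \mapsto f(x_0) \pm d(x,x_0)$ is continuous; if $\Lambda_0$ is compact the extrema are attained and the equivalence is immediate. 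When $\Lambda_0$ is not compact, I would instead argue that a point in $\Lambda^\circ$ has a neighborhood disjoint from $J^+(\Lambda)\cup J^-(\Lambda)$ — which follows because $\Lambda^\circ$ is open (Lemma~\ref{lemma: the dual of a closed subset is open}, noting that a nonempty dual forces $\Lambda$ compact anyway, so in fact $\Lambda_0$ is always compact here) — so the strict inequalities survive the passage to sup and inf.

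Concretely: the inclusion "$\Lambda^\circ \subseteq \{f^- < t < f^+\}$" is the easy direction — if $p=(x,t)\in\Lambda^\circ$ then $d(x,x_0) > |t-f(x_0)|$ for every $x_0$, hence $t < f(x_0)+d(x,x_0)$ and $t > f(x_0)-d(x,x_0)$ for every $x_0$, so $t \leq f^+(x)$ and $t \geq f^-(x)$; strictness comes from compactness of $\Lambda_0$ (guaranteed since $\Lambda^\circ \neq \emptyset$). For the reverse inclusion, suppose $f^-(x) < t < f^+(x)$; then for every $x_0 \in \Lambda_0$ we have $t < f^+(x) \leq f(x_0)+d(x,x_0)$ and $t > f^-(x) \geq f(x_0)-d(x,x_0)$, which together give $|t-f(x_0)| < d(x,x_0)$, i.e. $p$ is non-causally related to $(x_0,f(x_0))$; since this holds for all $x_0\in\Lambda_0$, we get $p\in\Lambda^\circ$.

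The main obstacle, and the only place requiring care, is the strictness/attainment issue described above — making sure that $f^-(x)<t<f^+(x)$ with strict inequalities really is equivalent to the family of strict inequalities defining $\Lambda^\circ$, rather than merely to the non-strict version. I expect this to be dispatched cleanly by first observing (via Lemma~\ref{lemma: the dual of a closed subset is open}) that a nonempty dual forces $\Lambda$ to be compact, so $\Lambda_0$ is compact and the infimum/supremum defining $f^\pm$ are attained; then the equivalence is a one-line consequence of the lightcone distance formula. Everything else is bookkeeping with the definitions of $f^+$ and $f^-$.
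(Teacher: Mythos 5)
Your proof is correct and follows essentially the same route as the paper: unpack the non-causal-relation condition into the two-sided inequality for each $x_0\in\Lambda_0$, pass to the supremum and infimum, and use compactness of $\Lambda_0$ to see that the extrema are attained so the inequalities remain strict, the reverse inclusion being immediate. The only cosmetic difference is that you derive compactness of $\Lambda_0$ via Lemma~\ref{lemma: the dual of a closed subset is open}, whereas it is automatic since $\Lambda_0$ is a closed subset of the compact sphere $\mathbb{S}^{n-1}$.
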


\begin{proof}
Let $(x,t) \in \mathbb{S}^{n-1} \times \R$ be a point in the dual of $\Lambda$. By definition, $(x,t)$ is non-causally related to any point $(x_0, f(x_0))$ where $x_0 \in \Lambda_0$. In other words, for every $x_0 \in \Lambda_0$, we have $d(x,x_0) > |t - f(x_0)|$, i.e. $f(x_0) - t < d(x,x_0) < f(x_0) + t$. Hence,
\begin{align*}
\sup_{x_0 \in \Lambda_0} \{f(x_0) - d(x,x_0)\} \leq t \leq \inf_{x_0 \in \Lambda_0} \{f(x_0) + d(x,x_0)\}.
\end{align*}
Since $\Lambda_0$ is compact, the supremum and infimum above are attained; the previous inequalities are then strict. Thus, we obtain $f^-(x) < t < f^+(x)$. The converse inclusion is clear.
\end{proof}

\section{Enveloping space of a simply connected GH conformally flat spacetime} \label{sec: enveloping space}

\subsection{Conformally flat spacetimes}

A spacetime is called \emph{conformally flat} if it is locally conformal to Minkowski spacetime. Einstein universe is conformally flat since any point of Einstein universe admits a neighborhood conformally equivalent to Minkowski spacetime (see Section \ref{sec: euu}). It follows that any spacetime locally modeled on Einstein universe, i.e. equipped with a $(O(2,n), Ein_{1,n-1})$-structure, is conformally flat. Conversely, by Theorem \ref{Liouville theorem}, any conformally flat spacetime of dimension $n \geq 3$ admits a $(O(2,n), Ein_{1,n-1})$-structure. We deduce the following statement.

\begin{proposition}\label{conformally flat and (G,X)-structure}
A conformally flat Lorentzian structure on a manifold $M$ of dimension $n \geq 3$ is equivalent to a $(O_0(2,n),Ein_{1,n-1})$-structure. \qed
\end{proposition}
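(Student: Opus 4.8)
The plan is to deduce Proposition~\ref{conformally flat and (G,X)-structure} from Theorem~\ref{Liouville theorem} (the Liouville-type rigidity statement) by the standard dictionary between geometric structures modeled on $(G,X)$ and atlases with transition maps in $G$. First I would recall that a $(O_0(2,n), Ein_{1,n-1})$-structure on $M$ is, by definition, a maximal atlas of charts into open subsets of $Ein_{1,n-1}$ whose transition maps are restrictions of elements of $O_0(2,n)$; since such elements act conformally on $Ein_{1,n-1}$ and $Ein_{1,n-1}$ carries the canonical conformally flat Lorentzian structure described in Section~\ref{sec: euu}, pulling this structure back through the charts produces a well-defined conformally flat Lorentzian structure on $M$. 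This gives one direction of the equivalence with essentially no work.

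For the converse, suppose $M$ carries a conformally flat Lorentzian structure. Around each point there is, by definition of conformal flatness, an open set conformally diffeomorphic to an open subset of Minkowski spacetime, hence (via the standard conformal embedding of Minkowski space into Einstein universe, recalled in Section~\ref{sec: euu} as an affine chart $\Mink_0$) conformally diffeomorphic to an open subset of $Ein_{1,n-1}$. Collecting these gives an atlas of $M$ by charts into $Ein_{1,n-1}$; the key point is that each transition map is a conformal diffeomorphism between two open subsets of $Ein_{1,n-1}$, so by Theorem~\ref{Liouville theorem} (valid since $n \geq 3$) it extends to an element of $O(2,n)$. Thus the transition cocycle takes values in $O(2,n)$, which is exactly an $(O(2,n), Ein_{1,n-1})$-structure.

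The one genuine subtlety — and the step I expect to be the main obstacle — is the reduction of the structure group from $O(2,n)$ to its identity component $O_0(2,n)$, i.e.\ matching the orientation and time-orientation data. Here I would use that $M$ is a \emph{spacetime}, hence oriented and time-oriented; an element of $O(2,n)$ acting on $Ein_{1,n-1}$ preserves or reverses the orientation and the time-orientation according to which of the four components of $O(2,n)$ it lies in, and $O_0(2,n)$ is precisely the subgroup preserving both. By choosing each chart to be orientation- and time-orientation-preserving (which is possible after composing with a fixed element of $O(2,n)$ reversing the relevant data, since the charts are connected or can be taken connected), the transition maps automatically preserve both, hence lie in $O_0(2,n)$. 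Finally I would note that two such atlases defining the same conformal Lorentzian structure are compatible (their mutual transition maps are again conformal, hence in $O_0(2,n)$ by the same argument), so the correspondence is a genuine equivalence, which completes the proof.
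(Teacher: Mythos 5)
Your proposal is correct and follows essentially the same route as the paper, whose entire argument is the short paragraph preceding the proposition: Einstein universe is conformally flat because of the affine charts $\Mink_0$, giving one direction, and conversely Liouville's rigidity (Theorem~\ref{Liouville theorem}) forces the transition maps of an atlas of conformal charts into $Ein_{1,n-1}$ to lie in $O(2,n)$. The only difference is that you explicitly carry out the reduction from $O(2,n)$ to the identity component $O_0(2,n)$ via the orientation and time-orientation of the spacetime, a step the paper states in the proposition but leaves implicit; your treatment of it is correct and is a worthwhile addition rather than a deviation.
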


From the causal point of view, it is more relevant to consider as model space the universal cover of Einstein universe with its group of conformal diffeomorphisms. As a result, a conformally flat Lorentzian structure on a manifold $M$ of dimension $n \geq 3$ is encoded by the data of a development pair $(D,\rho)$ where $D: \tilde{M} \to \eeu$ is a developing map and $\rho: \pi_1(M) \to \Conf(\eeu)$ is the associated holonomy morphism~\footnote{We direct the reader not familiar with $(G,X)$-structures to \cite[Chapter 5]{goldman}.}.

\begin{lemma}\label{lemma: restriction of D to a causal curve}
The restriction of the developing map $D$ to a causal curve of $\tilde{M}$ is injective.
\end{lemma}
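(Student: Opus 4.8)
The plan is to argue by contradiction using the local model: if $D$ restricted to a causal curve $\gamma$ of $\tilde M$ fails to be injective, then there are two distinct parameters $s < t$ with $D(\gamma(s)) = D(\gamma(t))$, and the image $D \circ \gamma|_{[s,t]}$ is a causal curve of $\eeu$ that returns to its starting point — i.e. a causal loop in $\eeu$. Since $\eeu$ is globally hyperbolic (any sphere $\mathbb{S}^{n-1} \times \{t\}$ being a Cauchy hypersurface, as recalled in Section~\ref{sec: euu}), it is in particular causal, so it contains no causal loop. This gives the contradiction, provided I can make sense of "$D \circ \gamma$ is a causal curve" — which requires that $D$ is a local conformal diffeomorphism (hence sends future-directed causal vectors to future-directed causal vectors, the conformal factor being positive) and that causality of a curve is a purely local, conformally invariant notion.

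First I would recall that the developing map $D : \tilde M \to \eeu$ is, by definition of a $(G,X)$-structure, a local diffeomorphism which is conformal for the respective conformal Lorentzian structures; in particular its differential at every point carries the future cone to the future cone. Hence if $\gamma : I \to \tilde M$ is a causal curve — say future-directed, the past case being symmetric by time-reversal — then $D \circ \gamma : I \to \eeu$ is again a future-directed causal curve, since $(D\circ\gamma)'(u) = dD_{\gamma(u)}(\gamma'(u))$ is future-causal whenever $\gamma'(u)$ is. (One should remark that $\gamma$ being causal is not quite a pointwise condition if $\gamma$ is merely $C^0$ and only "causal" in the Lipschitz sense, but the standard convention here is that causal curves are piecewise $C^1$ with causal tangent, so no subtlety arises; alternatively one reduces to small subarcs contained in charts where $D$ is a genuine conformal embedding into $\eeu$.)

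Next, suppose $D|_\gamma$ is not injective: pick $s<t$ in $I$ with $D(\gamma(s)) = D(\gamma(t)) =: x_0$. Then $c := D\circ\gamma|_{[s,t]}$ is a future-directed causal curve in $\eeu$ with $c(s) = c(t) = x_0$. In the spatio-temporal decomposition $\eeu \cong \mathbb{S}^{n-1} \times \R$, write $c(u) = (\bar c(u), \tau(u))$; since $c$ is future-directed causal, the time coordinate $\tau$ is strictly increasing (the vector field $\partial_t$ is timelike and future-directed, so $d\tau$ is positive on future-causal vectors), which contradicts $\tau(s) = \tau(t)$. Equivalently, invoke directly that $\eeu$ is globally hyperbolic, hence causal, hence admits no closed causal curve and in fact no causal curve with two coinciding endpoints. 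Either way we reach a contradiction, so $D|_\gamma$ is injective.

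The only real point requiring care — and the place I'd expect the "main obstacle" to hide — is the claim that $D$ maps future-directed causal vectors to future-directed causal vectors globally along $\gamma$, i.e. that the time-orientation is consistent. This is immediate once one notes $\tilde M$ is simply connected and $D$ is a local conformal diffeomorphism onto a time-orientable target, so the pulled-back time-orientation agrees with the given one up to a global sign which we fix; there is no monodromy obstruction. After that the argument is the short contradiction above, and I would keep the written proof to essentially two sentences: $D$ sends causal curves to causal curves because it is conformal, and a causal curve of the causal (indeed globally hyperbolic) spacetime $\eeu$ cannot have two equal points.
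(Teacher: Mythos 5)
Your proposal is correct and follows exactly the paper's argument: since $D$ is conformal, $D\circ\gamma$ is a causal curve of $\eeu$, and a failure of injectivity would produce a causal loop in $\eeu$, which is impossible since $\eeu$ is globally hyperbolic (hence causal). The extra discussion of time-orientation consistency and the strictly increasing time coordinate is a fine elaboration but the core reasoning is identical to the paper's two-line proof.
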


\begin{proof}
Let $\gamma: I \subset \R \to \tilde{M}$ be a causal curve. Since $D$ is conformal, it follows that $D \circ \gamma: I \to \eeu$ is a causal curve. Then, if there exist $t_0, t_1 \in I$ such that $t_0 \neq t_1$ and $D(\gamma(t_0)) = D(\gamma(t_1))$, the curve $D \circ \gamma$ would be a causal loop. Contradiction.
\end{proof}

\begin{definition} \label{def: developable}
A conformally flat spacetime $M$ is said \emph{developable} if any developing map descends to the quotient, giving a local diffeomorphism from $M$ to $\eeu$, called again developing map.
\end{definition}

\begin{lemma} \label{lemma: a developable conf. flat spacetime is strongly causal}
Any developable conformally flat spacetime $M$ is \emph{strongly causal}.
\end{lemma}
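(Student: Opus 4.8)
The plan is to pull back to $M$ the causally convex neighborhoods that already exist in $\eeu$, exploiting that $\eeu$ is globally hyperbolic (hence strongly causal, cf.\ Section~\ref{sec: euu}) and that the developing map $D\colon M\to\eeu$ of a developable spacetime is a \emph{conformal local diffeomorphism}.

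First I would fix $p\in M$ and a neighborhood $U$ of $p$. Since $D$ is a local diffeomorphism, I would shrink $U$ to an open neighborhood $U_0\subseteq U$ of $p$ on which $D$ restricts to a diffeomorphism onto an open subset $W:=D(U_0)$ of $\eeu$; write $\phi:=(D|_{U_0})^{-1}\colon W\to U_0$. Because $\eeu$ is globally hyperbolic it is strongly causal, so there is an open neighborhood $W'$ of $D(p)$ with $W'\subseteq W$ and $W'$ causally convex in $\eeu$. Set $V:=\phi(W')$; this is an open neighborhood of $p$ contained in $U_0\subseteq U$, so it only remains to prove that $V$ is causally convex in $M$.

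To that end, let $a,b\in V$ and let $\gamma\colon[0,1]\to M$ be a causal curve with $\gamma(0)=a$ and $\gamma(1)=b$. Since $D$ is conformal, $D\circ\gamma$ is a causal curve of $\eeu$ joining $D(a),D(b)\in W'$, so causal convexity of $W'$ in $\eeu$ gives $D(\gamma([0,1]))\subseteq W'$. Hence $\sigma:=\phi\circ D\circ\gamma\colon[0,1]\to U_0$ is a well-defined continuous curve taking values in $\phi(W')=V$, with $D\circ\sigma=D\circ\gamma$ and $\sigma(0)=\phi(D(a))=a=\gamma(0)$. I would then conclude $\sigma=\gamma$ by a connectedness argument: the set $\{t:\sigma(t)=\gamma(t)\}$ is closed, contains $0$, and is open because at any agreement point both curves remain, for nearby parameters, inside a neighborhood on which $D$ is injective, where $D\circ\sigma=D\circ\gamma$ forces $\sigma=\gamma$. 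Therefore $\gamma([0,1])=\sigma([0,1])\subseteq V$, which is precisely causal convexity of $V$ in $M$, and strong causality follows.

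The only delicate point is this last step, namely that a causal curve of $M$ and its ``reprojection'' $\phi\circ D\circ\gamma$ coincide once they agree at a single point; here both the developability hypothesis (global existence of $D$ as a local diffeomorphism $M\to\eeu$, not merely on $\tilde M$) and the local injectivity of $D$ are essential. Everything else is routine. As an alternative to invoking strong causality of $\eeu$ abstractly, one could construct $W'$ explicitly as a small ``diamond'' neighborhood in the spatio-temporal decomposition $\mathbb{S}^{n-1}\times\R$ via Proposition~\ref{lemma: causally convex in ein 2}, but the argument above needs no such computation.
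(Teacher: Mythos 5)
Your proof is correct and follows essentially the same route as the paper: pull back a causally convex neighbourhood of $D(p)$ in $\eeu$ (which is globally hyperbolic, hence strongly causal) through the developing map, and use that $D$, being conformal, sends causal curves of $M$ to causal curves of $\eeu$. The only divergence is the final step, where the paper concludes by contradiction via a point of $\gamma \cap (U \backslash V)$ while you identify $\gamma$ with $\phi \circ D \circ \gamma$ by a unique-lifting/connectedness argument; your version is, if anything, the more watertight of the two, since it does not presuppose that a causal curve escaping $V$ first passes through $U \backslash V$.
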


\begin{proof}
Let $D: M \to \eeu$ be a developing map. Let $p \in M$ and let $U$ be a neighborhood of $p$. Without loss of generality, we suppose that the restriction of $D$ to $U$ is a diffeomorphism on its image. Then, $D(U)$ is a neighborhood of $D(p)$. Since $\eeu$ is GH, it is in particular strongly causal. Thus, there exists a neighborhooh $V'$ of $D(p)$ contained in $D(U)$ and causally convex in $\eeu$. Let $V$ be the preimage of $V'$ under $D_{|U}$. By definition, $V$ is a neighborhood of $p$ contained in $U$. Moreover, $V$ is causall convex in $M$. Indeed, let $\gamma$ be a causal curve of $M$ joining two points $q, q' \in V$. By Lemma \ref{lemma: restriction of D to a causal curve}, the image under $D$ of $\gamma$ is a causal curve of $\eeu$ joining $D(q)$ to $D(q')$. Since $V'$ is causally convex, $D(\gamma)$ is contained in $V$. If $\gamma$ is not contained in $V$, there exists $r \in \gamma \cap (U \backslash V)$. Hence, $D(r) \in D(\gamma) \backslash V'$. Contradiction.
\end{proof}

\subsection{Construction of an enveloping space} \label{sec: def enveloping space}

Let $V$ be a simply-connected globally hyperbolic conformally flat spacetime of dimension $n \geq 3$. In this section, we prove  Theorem \ref{intro: enveloping space theorem}. Our proof still hold if we weaken the assumption \emph{simply-connected} by \emph{developable}.\\

Let $(D, \varphi)$ be a pair where 
\begin{itemize}
\item $D: V \to \eeu$ is a developing map;
\item $\varphi: \eeu \to \mathbb{S}^{n-1} \times \R$ is a spatio-temporal decomposition of $\eeu$.
\end{itemize}
Throughout this section, we call $\pi$ the projection of $\eeu$ on $\mathbb{S}^{n-1}$ defined as $\pi_0 \circ \varphi$ where $\pi_0: \mathbb{S}^{n-1} \times \R \to \mathbb{S}^{n-1}$ is the projection on the first factor.

\paragraph{Timelike foliation on $V$.} Consider the vector field $T$ on $V$ defined as the pull-back by $D$ of $\partial_t$. The flow of $T$ defines a foliation of $V$ by smooth timelike curves. Let $\mathcal{B}$ be the leaf space, namely the quotient space of $V$ by the equivalence relation that identifies two points if they are on the same leaf. We denote by $\psi: V \to \mathcal{B}$ the canonical projection. 

\begin{fact}\label{lemma: leaf space}
The leaf space $\mathcal{B}$ is homeomorphic to a Cauchy hypersurface $S$ of $V$.
\end{fact}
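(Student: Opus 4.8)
The statement asserts that the leaf space $\mathcal{B}$ of the timelike foliation is homeomorphic to a Cauchy hypersurface $S$ of $V$. The natural strategy is to find a Cauchy hypersurface $S$ that is \emph{transverse to the foliation} and meets every leaf exactly once; then the restriction $\psi|_S : S \to \mathcal{B}$ is the desired homeomorphism. First I would produce $S$: since $V$ is globally hyperbolic, Bernal and Sanchez give a smooth spacelike Cauchy hypersurface $S_0$, but there is no reason $S_0$ is transverse to $T$. However, the leaves of the foliation are \emph{timelike} curves, hence causal curves, so each inextensible leaf meets \emph{every} Cauchy hypersurface at least once (by the defining property of a Cauchy hypersurface) and, being timelike and a Cauchy hypersurface being achronal, \emph{exactly} once. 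So in fact \emph{any} Cauchy hypersurface $S$ of $V$ already has the property that $\psi|_S : S \to \mathcal{B}$ is a bijection, provided each leaf is inextensible in $V$.

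\textbf{Key steps, in order.} (1) Show each leaf of the foliation is an inextensible timelike curve of $V$. This is where the developing map enters: a leaf is $D^{-1}$ of (an arc of) an integral curve of $\partial_t$ in $\eeu \cong \mathbb{S}^{n-1}\times\R$, i.e. a vertical line $\{x\}\times\R$, which is an inextensible timelike curve of $\eeu$; because $D$ is a local conformal diffeomorphism and $V$ is developable (in particular for $V$ simply connected $D$ is defined on all of $V=\tilde V$), and because by Lemma~\ref{lemma: restriction of D to a causal curve} the restriction of $D$ to any causal curve is injective, a maximal integral curve of $T$ cannot close up or have an endpoint in $V$ — an endpoint would contradict maximality via the local diffeomorphism property, using that $D$ maps a neighborhood of any point onto a neighborhood in $\eeu$ containing a genuine arc of the vertical line on both sides. (2) Fix a smooth spacelike Cauchy hypersurface $S$ of $V$ (Bernal–Sanchez). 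Since each leaf is an inextensible causal curve, it meets $S$; since $S$ is achronal and the leaf is timelike, it meets $S$ exactly once. Hence $\psi|_S$ is a bijection $S\to\mathcal{B}$. (3) Check $\psi|_S$ is a homeomorphism. Continuity is immediate since $\psi$ is continuous. For the inverse, one uses that the foliation by $T$ is smooth (hence locally trivial: near each point $V$ looks like $W\times(-\varepsilon,\varepsilon)$ with leaves $\{w\}\times(-\varepsilon,\varepsilon)$) and that $S$ is a smooth hypersurface transverse to $T$ (spacelike, hence transverse to the timelike $T$), so locally the projection along leaves onto a local slice of $S$ is a diffeomorphism; patching these gives continuity — in fact smoothness — of $\psi|_S^{-1}$, and in particular $\mathcal{B}$ inherits the structure of a smooth manifold.

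\textbf{Main obstacle.} The delicate point is step (1): ruling out that a maximal leaf is a half-open or closed (periodic) curve. Periodicity is excluded by Lemma~\ref{lemma: restriction of D to a causal curve} (a closed leaf would develop to a causal loop in $\eeu$). The open-endpoint case is the real work: if a maximal integral curve $\gamma:(a,b)\to V$ of $T$ had $\lim_{s\to b}\gamma(s)=p\in V$, then choosing a neighborhood $U$ of $p$ on which $D$ is a diffeomorphism onto its image, $D(\gamma)$ near $b$ is a sub-arc of the vertical line through $D(p)$, and $D^{-1}$ of a slightly longer sub-arc extends $\gamma$ past $b$, contradicting maximality — one must just be careful that this extension is still an integral curve of $T$, which holds because $T = D^*\partial_t$ and $D$ is a local conformal (hence smooth) diffeomorphism. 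Once leaves are known to be inextensible timelike curves, the rest is the standard interplay between a transverse foliation and a Cauchy hypersurface and is routine. A secondary point worth a line is that the transversality of $S$ to $T$ is automatic when $S$ is spacelike, since $T$ is timelike and a spacelike hypersurface contains no timelike vectors; this guarantees the local triviality used for the homeomorphism claim.
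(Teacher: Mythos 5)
Your proof is correct and follows essentially the same route as the paper: each leaf is an inextensible timelike curve, hence meets the Cauchy hypersurface $S$ exactly once, so $\psi|_S\colon S\to\mathcal{B}$ is a continuous bijection, and one then checks it is open (the paper via the saturation $\psi^{-1}(\psi|_S(U))$, you via the flow-box/transversality picture, which amounts to the same thing). Your extra care in ruling out endpoints of maximal leaves fills in a step the paper leaves implicit, but it is the same argument.
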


\begin{proof}
Every leaf is a timelike curve of $V$ and so meets $S$ in a unique point. Therefore, the restriction of $\psi$ to $S$ is a continuous bijection on $\mathcal{B}$. The restriction of $\psi$ to $S$ is open. Indeed, any open subset $U$ of $S$ coincides with $\psi_{|S}^{-1}(\psi_{|S}(U))$, so $\psi_{|S}(U)$ is open in $\mathcal{B}$. Then, the restriction of $\psi$ to $S$ is a homeomorphism on $\mathcal{B}$. 
\end{proof}

By definition, the map $\pi \circ D$ is constant on each leaf, therefore it induces a map $d$ from to $\mathcal{B}$ to $\mathbb{S}^{n-1}$ such that the following diagram commutes:
\[\begin{array}{ccc}
V & \overset{D}{\to} & \eeu  \\
\psi \downarrow & & \downarrow \pi\\
\mathcal{B} & \overset{d}{\to} & \mathbb{S}^{n-1}
\end{array}\]
that is $d \circ \psi = \pi \circ D$. Since $\pi \circ D$ and $\psi$ are submersions, $d$ is a local homeomorphism.

\paragraph{Fiber bundle over the leaf space.} Let $E(V)$ be the fiber bundle over $\mathcal{B}$ defined as the pullback by $d: \mathcal{B} \to \mathbb{S}^{n-1}$ of the trivial bundle $\pi : \eeu \to \mathbb{S}^{n-1}$, in other words:
\begin{align*}
E(V) &:= \{(p,b) \in \eeu \times \mathcal{B}:\ \pi(p) = d(b)\}.
\end{align*}
We denote by $\hat{\pi}: E(V) \to \mathcal{B}$ the projection on the second factor. 

\begin{fact}
The fiber bundle $E(V)$ is trivial. 
\end{fact}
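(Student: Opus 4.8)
The statement to prove is that the fiber bundle $E(V)$, defined as the pullback by $d : \mathcal{B} \to \mathbb{S}^{n-1}$ of the trivial bundle $\pi : \eeu \to \mathbb{S}^{n-1}$, is trivial.

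The plan is to exhibit an explicit global trivialization of $E(V)$ over $\mathcal{B}$ by exploiting the product structure of $\eeu \cong \mathbb{S}^{n-1} \times \R$ that we have fixed via $\varphi$. The key observation is that the bundle $\pi : \eeu \to \mathbb{S}^{n-1}$ is not merely locally trivial but \emph{globally} trivial, with an honest product structure $\eeu = \mathbb{S}^{n-1} \times \R$: the second coordinate $t$ is a globally defined function on $\eeu$. Pulling back a globally trivial bundle along any map yields a globally trivial bundle.

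\begin{proof}[Proof plan]
First I would spell out the product structure: identifying $\eeu$ with $\mathbb{S}^{n-1} \times \R$ via $\varphi$, the projection $\pi$ becomes $\pi_0 : \mathbb{S}^{n-1} \times \R \to \mathbb{S}^{n-1}$, and we have the global coordinate function $\tau := \mathrm{pr}_{\R} \circ \varphi : \eeu \to \R$, so that $p \mapsto (\pi(p), \tau(p))$ is a homeomorphism $\eeu \to \mathbb{S}^{n-1} \times \R$. Then I would write down the candidate trivialization of $E(V)$ explicitly: define $\Phi : E(V) \to \mathcal{B} \times \R$ by
\[
\Phi(p,b) = (b, \tau(p)).
\]
The main (and essentially only) step is to check that $\Phi$ is a homeomorphism commuting with the projections to $\mathcal{B}$. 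It commutes with the bundle projections by construction, since $\mathrm{pr}_{\mathcal{B}} \circ \Phi = \hat\pi$. For bijectivity, I would construct the inverse: given $(b,s) \in \mathcal{B} \times \R$, the unique point of $E(V)$ in the fiber over $b$ with second coordinate $s$ is $(\varphi^{-1}(d(b), s), b)$; this lies in $E(V)$ precisely because $\pi(\varphi^{-1}(d(b),s)) = d(b)$, which is exactly the defining condition of $E(V)$. So $\Phi^{-1}(b,s) = (\varphi^{-1}(d(b),s), b)$, and one checks $\Phi \circ \Phi^{-1} = \mathrm{id}$ and $\Phi^{-1} \circ \Phi = \mathrm{id}$ directly. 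Continuity of $\Phi$ follows from continuity of $\tau$ and $\hat\pi$; continuity of $\Phi^{-1}$ follows from continuity of $d$ and $\varphi^{-1}$. Hence $\Phi$ is a homeomorphism over $\mathcal{B}$, i.e. $E(V) \cong \mathcal{B} \times \R$ as bundles over $\mathcal{B}$.

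There is no serious obstacle here: the content is simply that pulling back a product bundle gives a product bundle. The only point requiring a word of care is that one should work with $\eeu$ rather than $Ein_{1,n-1}$ or $\mathsf{Ein}_{1,n-1}$, since it is the universal cover that carries the global product decomposition $\mathbb{S}^{n-1} \times \R$ (with $\R$ as second factor rather than $\mathbb{S}^1$); this is guaranteed because we fixed the spatio-temporal decomposition $\varphi$ at the outset. In fact the same argument shows more: $E(V)$ inherits from this trivialization a conformally flat Lorentzian structure, since under $\Phi$ the ambient structure transports to the pullback of $d\sigma^2 - dt^2$ by $d \times \mathrm{id}_{\R}$, where $\sigma$ denotes the round metric on $\mathbb{S}^{n-1}$ pulled back through the local homeomorphism $d$; but for the present fact only the topological triviality is needed.
\end{proof}
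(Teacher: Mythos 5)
Your proof is correct and is essentially identical to the paper's: both exhibit the explicit global trivialization $E(V) \cong \mathcal{B} \times \R$ using the second coordinate of the fixed product decomposition $\eeu \simeq \mathbb{S}^{n-1} \times \R$, with the same map and the same inverse (the paper simply writes the map in the direction $\mathcal{B} \times \R \to E(V)$). No difference in substance.
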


\begin{proof}
Let $f$ be the continuous map from $\mathcal{B} \times \R$ in $E(V)$ that sends $(b,t)$ on $(p,b)$ where $p$ is the point of $\eeu$ with coordinates $(d(b), t)$ in the decomposition $\mathbb{S}^{n-1} \times \R$. It is easy to see that $f$ is bijective. Indeed, the inverse is the continuous map that sends $(p,b) \in E(V)$ on $(b, t) \in \mathcal{B} \times \R$ where $t$ is the projection of $p \in \eeu \simeq \mathbb{S}^{n-1} \times \R$ on~$\R$. Therefore, $f$ is a homeomorphism. Clearly, the following diagram commutes
\[\begin{array}{ccc}
\R \times \mathcal{B} & \overset{f}{\to} & E(V) \\
\hat{\pi}_0 \searrow  &                  & \swarrow \hat{\pi} \\
                      & \mathcal{B}      &
\end{array}
\]
i.e. $\hat{\pi} \circ f = \hat{\pi}_0$. In other words, $f$ is an isomorphism of fiber bundles. The lemma follows.
\end{proof}

The projection on the first factor $\hat{D}: E(V)  \to \eeu$ is a local homeomorphism inducing a structure of conformally flat spacetime on $E(V)$. In particular, $E(V)$ is strongly causal (see Lemma \ref{lemma: a developable conf. flat spacetime is strongly causal}).

\begin{fact}
The fibers of $E(V)$ are inextensible timelike curves.
\end{fact}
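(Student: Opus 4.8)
The claim is that each fiber $\hat{\pi}^{-1}(b)$ of $E(V)$ is a timelike curve which is inextensible in $E(V)$. The plan is to use the trivialization $f : \R \times \mathcal{B} \to E(V)$ from the previous Fact together with the explicit description of $\hat{D}$. First I would fix $b \in \mathcal{B}$ and note that, by construction of $f$, the fiber $\hat{\pi}^{-1}(b)$ is exactly the image under $f$ of the line $\R \times \{b\}$, and that $\hat{D}$ maps this fiber bijectively onto the timelike line $\{d(b)\} \times \R \subset \mathbb{S}^{n-1} \times \R \simeq \eeu$, by the very definition of $f$ (the point $f(b,t)$ develops to the point with coordinates $(d(b),t)$). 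Parametrizing the fiber by $t \mapsto f(b,t)$, its image under $\hat{D}$ is the curve $t \mapsto (d(b),t)$, whose tangent vector is $\partial_t$, a timelike vector. Since $\hat{D}$ is a conformal local diffeomorphism, it pulls back timelike vectors to timelike vectors, so the fiber is a timelike curve of $E(V)$.

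Next I would argue inextensibility. The curve $t \mapsto f(b,t)$ is defined for all $t \in \R$. A causal curve in a strongly causal spacetime is inextensible as soon as it is inextensible as a curve in the parameter, provided it does not accumulate; more robustly, suppose the fiber had an endpoint $x \in E(V)$, say as $t \to +\infty$ (the case $t \to -\infty$ is symmetric). Then $\hat{D}(f(b,t)) = (d(b),t)$ would have to converge to $\hat{D}(x) \in \eeu \simeq \mathbb{S}^{n-1} \times \R$ as $t \to +\infty$; but the second coordinate $t$ diverges, so $(d(b),t)$ has no limit in $\mathbb{S}^{n-1} \times \R$. This contradiction shows the fiber has no future endpoint, and likewise no past endpoint, hence it is an inextensible timelike curve of $E(V)$.

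\textbf{Main obstacle.} The only subtle point is the passage from ``no endpoint of the parametrized curve $t \mapsto f(b,t)$'' to ``inextensible causal curve in the sense used in the causality definitions of Section~\ref{sec: GH spacetimes}''; one must make sure that an inextensible causal curve is not merely one defined on all of $\R$ but one with no limit point in the manifold, and that $\hat{D}$ being a local homeomorphism lets us detect such a limit point downstairs in $\eeu$. This is where the computation $\hat{D}(f(b,t)) = (d(b),t)$ does all the work: since $\hat{D}$ is continuous and a local homeomorphism, a limit point of the fiber upstairs would force a limit point of $(d(b),t)$ downstairs, which does not exist. Everything else is a direct unwinding of the definitions of $f$, $d$, $\hat{\pi}$, and $\hat{D}$, together with the already-established fact that $\hat{D}$ is a conformal local diffeomorphism.
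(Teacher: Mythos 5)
Your proof is correct and follows essentially the same route as the paper: the paper's (very terse) argument is precisely that $\hat{D}$ restricts to a homeomorphism from the fiber $E_b$ onto the fiber $\pi^{-1}(d(b)) = \{d(b)\}\times\R$ of $\eeu \to \mathbb{S}^{n-1}$, which is an inextensible timelike line, and you have simply spelled out the two points the paper leaves implicit (timelikeness via conformality of $\hat{D}$, and the non-existence of endpoints via the divergence of the $t$-coordinate).
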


\begin{proof}
The fiber $E_b$ of $E(V)$ over a point $b \in \mathcal{B}$ is the set of points $(p,b)$ such that $\pi(p) = d(b)$. It is then easy to see that the restriction of $\hat{D}$ to $E_b$ is a homeomorphism on the fiber $\pi^{-1}(d(b))$ of $\pi: \eeu \to \mathbb{S}^{n-1}$. The lemma follows. 
\end{proof}

\paragraph{Conformal embedding of $V$ in $E(V)$.} Let $i$ be the map from $V$ to $E(V)$ defined by 
\begin{align*}
i(p) &:= (D(p), \psi(p))
\end{align*}
for every $p \in V$.

\begin{fact}
The map $i$ is a conformal embedding of $V$ into $E(V)$.
\end{fact}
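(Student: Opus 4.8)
The goal is to show that $i\colon V \to E(V)$, $i(p) = (D(p),\psi(p))$, is a conformal embedding. The strategy is to peel the statement into three claims: (1) $i$ is well-defined and smooth; (2) $i$ is injective; (3) $i$ is an immersion which is conformal for the conformally flat structures; together with a final remark that $i$ is a homeomorphism onto its image (so that it is an embedding, not merely an injective immersion). Claims (1) and (3) are essentially formal: $\pi(D(p)) = \pi_0(\varphi(D(p))) = d(\psi(p))$ by the commuting diagram $d\circ\psi = \pi\circ D$, so $i(p)$ indeed lands in $E(V) = \{(p,b): \pi(p)=d(b)\}$; smoothness follows since $D$ and $\psi$ are smooth; and since $\hat D\colon E(V)\to\eeu$ is the projection on the first factor and is a local conformal diffeomorphism (by construction of the conformally flat structure on $E(V)$), we have $\hat D \circ i = D$, so $i$ is conformal wherever it is an immersion, and it is an immersion because $D = \hat D\circ i$ is already a local diffeomorphism.

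The only substantive point is \textbf{injectivity of $i$}, which I expect to be the main obstacle, and the crux is to argue that $\psi$ separates the points that $D$ fails to separate. Suppose $i(p) = i(q)$, i.e. $D(p) = D(q)$ and $\psi(p) = \psi(q)$. The second equation says $p$ and $q$ lie on the same leaf $L$ of the timelike foliation of $V$ generated by $T = D^*\partial_t$. The first says $D(p) = D(q)$. But $L$ is a timelike curve of $V$, so by Lemma~\ref{lemma: restriction of D to a causal curve} the restriction of $D$ to $L$ is injective; hence $p = q$. This is exactly the role of the foliation in the construction: the fibers $E_b$ of $E(V)$ are inextensible timelike curves on which $\hat D$ is a homeomorphism, mirroring the fact that $D$ restricted to a leaf of $V$ is injective with image inside a fiber of $\pi$.

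Finally, to upgrade the injective immersion $i$ to an embedding, I would check that $i$ is a homeomorphism onto its image with the subspace topology. Since $i$ is an injective local diffeomorphism, it is an open map onto $i(V)$ (it maps a neighborhood basis of each $p$ to a neighborhood basis of $i(p)$ in $i(V)$), hence a homeomorphism onto $i(V)$; combined with it being an immersion, this makes $i$ a conformal embedding. One could alternatively observe that under the trivialization $f\colon \mathcal B\times\R \to E(V)$, the map $f^{-1}\circ i\colon V\to\mathcal B\times\R$ sends $p$ to $(\psi(p), \tau(p))$ where $\tau$ is the $\R$-coordinate of $D(p)$; this is manifestly continuous with continuous local inverses, making the homeomorphism-onto-image statement transparent. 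No genuinely hard estimate is involved; the content is entirely in recognizing that the foliation was designed precisely so that $(\psi, \text{time coordinate of } D)$ is injective on leaves.
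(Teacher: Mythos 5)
Your proof is correct and follows essentially the same route as the paper: the paper likewise reduces everything to injectivity and settles it by noting that $\psi(p)=\psi(q)$ places $p$ and $q$ on the same timelike leaf, on which $D$ is injective by Lemma~\ref{lemma: restriction of D to a causal curve}. Your additional verifications (that $i$ lands in $E(V)$, that $\hat D\circ i=D$ gives conformality, and the openness argument upgrading the injective immersion to an embedding) are just more explicit versions of what the paper dispatches in one line.
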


\begin{proof}
Since the maps $D$ and $\psi$ are continuous, open and conformal, so does the map $i$. All we need to check is that $i$ is injective. Let $p,q  \in V$ such that $i(p) = i(q)$. Then, $D(p) = D(q)$ and $\psi(p) = \psi(q)$. This last inequality implies that $p$ and $q$ belongs to the same timelike leaf. Since the restriction of $D$ to a leaf is injective (see Lemma \ref{lemma: restriction of D to a causal curve}), it follows from $D(p) = D(q)$ that $p = q$.
\end{proof}

\begin{remark}
The restriction of $\hat{D}$ to $i(M)$ coincide with $D$, more precisely $\hat{D} \circ i = D$.
\end{remark}

Now we prove that the image of $i$ is causally convex in $E(M)$. The proof uses the following lemma.

\begin{lemma}\label{lemma: i(S) disconnects E(M)}
The image under $i$ of a Cauchy hypersurface $S$ of $V$ is a spacelike hypersurface of $E(V)$ which deconnects $E(V)$.
\end{lemma}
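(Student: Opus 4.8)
The plan is to show two things: that $i(S)$ is a spacelike hypersurface of $E(V)$, and that its complement in $E(V)$ has exactly two connected components (an obvious candidate for each being a ``future'' and a ``past'' side). For the first point, recall that $S$ is a smooth spacelike Cauchy hypersurface of $V$ and that $i$ is a conformal embedding; since $i$ is a local diffeomorphism onto its image and conformal maps preserve the causal character of submanifolds, $i(S)$ is automatically a spacelike hypersurface of $E(V)$. The only subtlety is that $i(S)$ should be closed (an embedded submanifold), which follows once we know the fibers $E_b$ meet $i(S)$ exactly once: indeed $i(S)$ is then the image of a section of the trivial bundle $\hat\pi : E(V) \to \mathcal B$.

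For the disconnection statement, the key observation is that each fiber $E_b \cong \pi^{-1}(d(b)) \cong \{d(b)\}\times\R$ is an inextensible timelike curve, and $i(S)$ meets $E_b$ in exactly one point. To see this last fact: if $b = \psi(p)$ for $p\in V$, then $p$ lies on a timelike leaf $L_b$ which meets $S$ in a unique point $s$ (since $S$ is a Cauchy hypersurface of $V$); then $i(s) \in E_b$, and since $\hat D$ restricted to $E_b$ is a homeomorphism onto $\pi^{-1}(d(b))$ while $\hat D\circ i = D$ is injective on the leaf $L_b$, the point $i(s)$ is the unique point of $i(S)\cap E_b$. Using the trivialization $f : \mathcal B \times \R \to E(V)$, write $i(S)$ as the graph $\{(b, \tau(b)) : b\in\mathcal B\}$ of the function $\tau : \mathcal B \to \R$ sending $b$ to the $\R$-coordinate of the unique intersection point $i(s)$; this $\tau$ is continuous because $i$, $\psi_{|S}^{-1}$, and the projection to $\R$ are all continuous. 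Then $E(V)\setminus i(S)$ is identified via $f$ with $\{(b,t) : t > \tau(b)\} \sqcup \{(b,t) : t < \tau(b)\}$, and each of these two sets is connected since $\mathcal B$ is connected (being homeomorphic to the Cauchy hypersurface $S$, which is connected) and the fibers $\{b\}\times(\tau(b),+\infty)$ and $\{b\}\times(-\infty,\tau(b))$ are intervals; a standard argument (each set is open, path-connected fiberwise, and glues over the connected base) shows connectedness. Neither set is empty, so the complement has exactly two components.

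The main obstacle I anticipate is purely bookkeeping: making sure $\mathcal B$ is genuinely connected (it is, by Fact~\ref{lemma: leaf space}, since $V$ is connected and $S$ is a Cauchy hypersurface hence connected) and that the function $\tau$ is well-defined and continuous, i.e. that the intersection $i(S)\cap E_b$ is a single point depending continuously on $b$. This rests on the Cauchy property of $S$ in $V$ combined with the injectivity of $D$ along leaves (Lemma~\ref{lemma: restriction of D to a causal curve}), both already available. Everything else — the conformal invariance of spacelikeness, the description of the complement of a graph in a trivial $\R$-bundle — is routine. I would therefore organize the proof as: (1) each fiber $E_b$ is an inextensible timelike curve meeting $i(S)$ in exactly one point; (2) hence $i(S) = \{(b,\tau(b))\}$ for a continuous $\tau : \mathcal B \to \R$, so it is a closed embedded spacelike hypersurface; (3) $E(V)\setminus i(S)$ is the disjoint union of the two connected open sets lying above and below the graph of $\tau$.
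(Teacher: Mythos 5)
Your proof is correct and takes essentially the same route as the paper: both arguments realize $i(S)$ as the image of a global continuous section of the trivial bundle $\hat{\pi}: E(V) \to \mathcal{B}$ (the paper via the map $i_{|S}\circ\psi_{|S}^{-1}$, you via the graph of $\tau$ in the trivialization) and conclude that the complement splits into the two pieces above and below the graph. Your version merely spells out details the paper leaves implicit, namely the uniqueness of the intersection of $i(S)$ with each fiber and the continuity of $\tau$.
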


\begin{proof}
Since $i$ is a conformal embedding, $i(S)$ is a spacelike embedded hypersurface of $E(V)$. Let $\psi: V \to \mathcal{B}$ be the canonical projection of $V$ on the leaf space $\mathcal{B}$. Recall that the restriction of $\psi$ to $S$ is a homeomorphism on $\mathcal{B}$. Clearly, the map $i_{|S} \circ \psi_{|S}^{-1}: \mathcal{B} \to E(V)$ is a section of $\hat{\pi}$. Hence, $i(S)$ is a global section of $E(V)$. The lemma follows.
\end{proof}

\begin{fact}\label{lemma: M is causally convex in E(M)}
The image $i(V)$ is causally convex in $E(V)$.
\end{fact}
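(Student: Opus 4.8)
The plan is to exploit Lemma~\ref{lemma: i(S) disconnects E(M)}, which tells us that $i(S)$ is a global section of the trivial bundle $\hat\pi : E(V) \to \mathcal{B}$, hence separates $E(V)$ into two connected components, an ``upper'' one $E^+$ and a ``lower'' one $E^-$, with each fiber $E_b \cong \R$ meeting $i(S)$ exactly once. I would first check that every inextensible causal curve of $E(V)$ that meets $i(V)$ actually crosses $i(S)$; this is where the structure of $V$ enters. Since $S$ is a Cauchy hypersurface of $V$ and $i$ is a conformal (hence causal) embedding, any causal curve $\gamma$ in $E(V)$ passing through a point of $i(V)$ has a connected piece inside $i(V)$ whose pullback to $V$ is a causal curve; extending it as far as it stays in $i(V)$ and using that it cannot stay forever in the chronological future (resp. past) of $i(S)$ without leaving $i(V)$ — because $i(S)$ meets the fiber once and $i(V)\cap E_b$ is an interval — forces $\gamma$ to meet $i(S)$.

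Next, the core step: take $p, q \in i(V)$ and a causal curve $\gamma$ of $E(V)$ from $p$ to $q$; I must show $\gamma \subset i(V)$. Suppose not, so $\gamma$ exits $i(V)$ through $\partial i(V)$. The key is that $i(V)$, being the image of a globally hyperbolic spacetime with Cauchy hypersurface $S$, is the union of the Cauchy development pieces ``above'' and ``below'' $i(S)$ inside $E(V)$ — more precisely, I would argue that $i(V) = \mathcal{C}(i(S)) \cap (\text{something})$, or at least that $i(V)$ is the set of points of $E(V)$ through which every inextensible causal curve meets $i(S)$ and stays appropriately bounded. A cleaner route: parametrize $\gamma$ and let $r$ be the first point where $\gamma$ leaves $i(V)$ (which exists since $i(V)$ is open and $q\in i(V)$, so $\gamma$ must also re-enter). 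On the sub-arc from $p$ to $r$, the curve is in $i(V)$; pulling back to $V$ we get an inextensible-in-$V$ causal curve (inextensible because it runs to $\partial i(V)$) which therefore meets $S$; likewise the sub-arc from $r$ to $q$ pulls back to a causal curve meeting $S$. So $\gamma$ meets $i(S)$ at two points $i(a)$, $i(b)$ with $a, b \in S$, in causal relation in $E(V)$, hence (via $\hat D$ and causality of $\eeu$, using Lemma~\ref{lemma: restriction of D to a causal curve}) in causal relation along a causal curve; but $S$ is acausal in $V$ and $i$ preserves causality, and one checks $i(S)$ is acausal in $E(V)$ because it is a graph over $\mathcal{B}$ that is $1$-Lipschitz in each local Einstein chart — contradiction, unless $i(a) = i(b)$, i.e. $\gamma$ is tangent to $i(S)$ at a single crossing point. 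Handling that degenerate case (a causal curve meeting the spacelike hypersurface $i(S)$ exactly once and otherwise staying on one side) shows $\gamma$ cannot leave $i(V)$: on the side it lies, say $E^+$, the portion of $i(V)$ in a fiber $E_b$ is an interval with lower endpoint on $i(S)$, and causal convexity of such fiberwise-interval configurations in $\eeu$ (Proposition~\ref{lemma: causally convex in ein 2}) transfers locally.

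I expect the main obstacle to be the global bookkeeping of how $i(V)$ sits relative to $i(S)$: one needs that $i(V)$ is precisely characterized by a causal/Cauchy-development condition with respect to $i(S)$ in $E(V)$, so that a causal curve cannot ``sneak around'' the Cauchy hypersurface through the boundary. The honest way to nail this is to show $i(V) \subset \mathcal{C}(i(S), E(V))$ (Cauchy development of $i(S)$ in $E(V)$) — indeed equality up to the obvious open/closed issues — because Cauchy developments of acausal sets are always causally convex. Concretely: a point $x = i(p)$ with $p \in V$ lies on an inextensible causal curve of $E(V)$ through $x$; restricting to where it stays in $i(V)$ gives, by the Cauchy property of $S$ in $V$, a crossing of $i(S)$; and one must rule out that after leaving $i(V)$ the curve avoids $i(S)$ — but it cannot, since to leave $i(V)$ it must cross $\partial i(V)$, and $\partial i(V)$ lies in $\hat\pi^{-1}(\overline{d(\mathcal{B})}) \setminus i(V)$, on which a fiberwise analysis (the fiber over $b$ meets $i(V)$ in an interval, $i(S)$ in its interior) shows the curve has already met $i(S)$ before exiting. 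Once $i(V) \subseteq \mathcal{C}(i(S), E(V))$ is established, causal convexity of $i(V)$ follows from causal convexity of the Cauchy development together with the fact, just proved fiberwise, that any diamond of $E(V)$ with endpoints in $i(V)$ cannot poke out through $\partial i(V)$ — equivalently, $i(V)$ is open and causally convex because it is an intersection of $\mathcal{C}(i(S), E(V))$ with the open fiberwise-interval region, both causally convex.
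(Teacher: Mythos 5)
Your core step is the paper's argument: extend the causal curve, use that $S$ is a Cauchy hypersurface of $V$ to produce two crossings of $i(S)$, and contradict acausality of $i(S)$. So the approach is essentially the same, but two points in your write-up need repair before it closes. First, the sub-arc of $\gamma$ from $p$ to the first exit point $r$ is inextensible in $V$ only at the $r$-end; at the $p$-end it simply stops at an interior point, and a merely future-inextensible causal curve of $V$ need not meet $S$ (take $p$ already in $I^+(S)$). The paper avoids this by first extending $\gamma$ to an inextensible causal curve $\hat\gamma$ of $E(V)$ and taking the \emph{full} connected components of $\hat\gamma\cap i(V)$ containing $p$ and $q$: these are inextensible causal curves of $i(V)\cong V$ in both directions, hence each meets $i(S)$ exactly once, at two automatically distinct points (the components are disjoint) --- which also makes your tangency/degenerate case vacuous. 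Second, your justification of acausality of $i(S)$ via ``$1$-Lipschitz in each local Einstein chart'' only yields \emph{local} achronality; what is actually needed is that $i(S)$ is a closed spacelike hypersurface which separates $E(V)$ (Lemma~\ref{lemma: i(S) disconnects E(M)}), whence it is acausal by the standard argument (O'Neill, Chap.~14, Lemmas 42 and 45): every crossing by a causal curve is transversal in the same direction, so there is at most one. Finally, the detour through $i(V)\subseteq\mathcal{C}(i(S),E(V))$ in your last paragraph is both unnecessary and circular as stated: being contained in a causally convex set proves nothing, and the claim that the ``open fiberwise-interval region'' is causally convex is precisely the kind of statement the Fact itself is establishing.
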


\begin{proof}
Let $p_0, p_1 \in V$ such that there is a causal curve $\gamma$ of $E(V)$ joining $p_0$ to $p_1$. Let $\hat{\gamma}$ be an inextensible causal curve of $E(V)$ containing $\gamma$. Each connected component of the intersection of $\hat{\gamma}$ with $i(V)$ is an inextensible causal curve of $i(V)$. We call $\gamma_0$ and $\gamma_1$ the connected components containing $p_0$ and $p_1$ respectively. To prove that $\gamma$ is contained in $i(V)$ is equivalent to prove that $\gamma_0 = \gamma_1$. Suppose that $\gamma_0$ and $\gamma_1$ are disjoint. Then, each one of them meets $i(S)$ in a single point, $x_0$ and $x_1$ respectively, which are distinct. Therefore, the curve $\hat{\gamma}$ intersects $i(S)$ in at least two distinct points. But, $i(S)$ is acausal in $E(M)$ (see Lemma \ref{lemma: i(S) disconnects E(M)} and \cite[chap. 14, Lemma 45 and Lemma 42]{oneill}). Contradiction.
\end{proof}

\begin{remark}
All the results of this section stated until now are based on the existence of a developing map, and so are still valid if $V$ is not-simply connected but developable.
\end{remark}

\paragraph{Embedding of the conformally flat Cauchy extensions of $V$ in $E(V)$.}

\begin{proposition}\label{propo: conf. flat ext. of V embeds in E(V)}
Let $W$ be a conformally flat Cauchy-extension of $V$. Then, there is a conformal embedding $i'$ of $W$ into $E(V)$ such that the following assertions hold:
\begin{itemize}
\item The image $i'(W)$ is causally convex in $E(V)$ and contains $i(V)$;
\item Every Cauchy hypersurface of $i(V)$ is a Cauchy hypersurface of $i'(W)$.
\end{itemize}
\end{proposition}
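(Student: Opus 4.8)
The plan is to run the construction of Section~\ref{sec: def enveloping space} on $W$ as well and then to identify the two resulting enveloping spaces. First I would observe that $W$ is again simply-connected, hence developable: by Geroch's splitting theorem~\cite{Geroch1970} a globally hyperbolic spacetime deformation retracts onto any of its Cauchy hypersurfaces, and since $W$ is a Cauchy-extension of $V$ a Cauchy hypersurface $S$ of $V$ is also a Cauchy hypersurface of $W$, so $\pi_1(W)\cong\pi_1(S)\cong\pi_1(V)=1$. Therefore $W$ carries a globally defined developing map, which, after post-composition by a suitable element of $\Conf(\eeu)$, we may take to be a map $D_W\colon W\to\eeu$ restricting to $D$ on $V$. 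Keeping the spatio-temporal decomposition $\varphi$ fixed and setting $T_W:=D_W^{*}\partial_t$, an extension of $T$, the constructions of Section~\ref{sec: def enveloping space} produce: a leaf space $\mathcal{B}_W$ of the $T_W$-foliation with projection $\psi_W\colon W\to\mathcal{B}_W$, a local homeomorphism $d_W\colon\mathcal{B}_W\to\mathbb{S}^{n-1}$ with $d_W\circ\psi_W=\pi\circ D_W$, the enveloping space $E(W)$ with its projection $\hat{D}_W\colon E(W)\to\eeu$, and the conformal embedding $i_W\colon W\to E(W)$.

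The heart of the proof is the identification of $\mathcal{B}$ with $\mathcal{B}_W$. Since $T_W$ extends $T$, each $T$-leaf of $V$ is contained in a unique $T_W$-leaf of $W$, which defines a map $\iota\colon\mathcal{B}\to\mathcal{B}_W$ satisfying $\iota\circ\psi=\psi_W$ on $V$ (hence $\iota$ is continuous) and $d_W\circ\iota=d$. I claim $\iota$ is a homeomorphism. It is injective: if a $T_W$-leaf contained two distinct $T$-leaves of $V$, these are disjoint timelike curves, inextensible in $V$, so they meet $S$ at two distinct points, whence that $T_W$-leaf --- a timelike curve of $W$ --- meets $S$ twice, contradicting the acausality of $S$ in $W$. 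It is surjective: $S$ is a Cauchy hypersurface of $W$, so $\psi_W$ maps it onto $\mathcal{B}_W$, and since $S\subset V$ we get $\mathcal{B}_W=\psi_W(S)\subseteq\psi_W(V)=\iota(\mathcal{B})$. Being a continuous injection between topological manifolds of dimension $n-1$, $\iota$ is open by invariance of domain, hence a homeomorphism; consequently $\Psi\colon(p,b)\mapsto(p,\iota(b))$ is a conformal diffeomorphism from $E(V)$ onto $E(W)$ with $\hat{D}_W\circ\Psi=\hat{D}$.

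It then suffices to take $i':=\Psi^{-1}\circ i_W\colon W\to E(V)$, i.e. $i'(p)=\bigl(D_W(p),\iota^{-1}(\psi_W(p))\bigr)$. This is a conformal embedding with open image: it is a local homeomorphism because $\hat{D}\circ i'=D_W$ and $\hat{D}$ are local homeomorphisms, and it is injective because $i'(p)=i'(q)$ forces $p,q$ onto a common $T_W$-leaf with $D_W(p)=D_W(q)$, so $p=q$ by Lemma~\ref{lemma: restriction of D to a causal curve}. On $V$ we have $D_W=D$ and $\iota^{-1}\circ\psi_W=\psi$, so $i'|_V=i$; in particular $i'(W)\supseteq i(V)$. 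Causal convexity of $i'(W)$ follows from the proof of Fact~\ref{lemma: M is causally convex in E(M)} applied verbatim to $W$ and $E(W)$ (equivalently to $i'$ and $E(V)$): the image $i'(S)$ of a Cauchy hypersurface $S$ of $W$ is a global section of $\hat{\pi}\colon E(V)\to\mathcal{B}$, hence an acausal embedded hypersurface disconnecting $E(V)$ and a Cauchy hypersurface of $i'(W)$, which forbids any causal curve of $E(V)$ between two points of $i'(W)$ from leaving $i'(W)$. Finally, if $C$ is any Cauchy hypersurface of $i(V)$, then $\Sigma:=i^{-1}(C)$ is a Cauchy hypersurface of $V$, hence of $W$, and since $i'\colon W\to i'(W)$ is a conformal diffeomorphism, $i'(\Sigma)=i(\Sigma)=C$ is a Cauchy hypersurface of $i'(W)$.

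The delicate point is the surjectivity of $\iota$: it says that passing from $V$ to a Cauchy-extension $W$ does not enlarge the leaf space, and this is precisely where the hypothesis that $W$ is a \emph{Cauchy}-extension of $V$ (rather than an arbitrary conformally flat extension) is used --- through the fact that $V$ and $W$ possess a common Cauchy hypersurface which lies inside $V$. Everything else amounts to transporting to $W$ the statements already proved for $V$ in Section~\ref{sec: def enveloping space}.
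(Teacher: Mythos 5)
Your proof is correct and follows essentially the same route as the paper: construct $E(W)$ from a developing map $D_W$ extending $D$, identify the leaf spaces $\mathcal{B}\cong\mathcal{B}_W$ to obtain an isomorphism $E(V)\cong E(W)$ (the paper's Lemma~\ref{lemma: E(V) isomorphic to E(W)}), and set $i'=\Psi^{-1}\circ i_W$. You merely supply more detail than the paper at two points --- the simple-connectedness of $W$ and the bijectivity of the leaf-space map, which the paper derives instead from the causal convexity of $f(V)$ in $W$ --- but the argument is the same.
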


The proof of Proposition \ref{propo: conf. flat ext. of V embeds in E(V)} uses the following lemma.

\begin{lemma}\label{lemma: E(V) isomorphic to E(W)}
Let $W$ be a conformally flat Cauchy-extension of $V$. Then, the enveloping spaces $E(V)$ and $E(W)$ are isomorphic.
\end{lemma}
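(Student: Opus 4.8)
The claim is that if $W$ is a conformally flat Cauchy-extension of $V$, then $E(V)$ and $E(W)$ are isomorphic (as conformally flat spacetimes, compatibly with their structure as trivial bundles over the leaf space). The strategy is to show that the data used to build an enveloping space — a developing map, a spatio-temporal decomposition, and the induced timelike foliation — can be \emph{transported} from $W$ to $V$ along the Cauchy-embedding, so that both $E(V)$ and $E(W)$ are realized as the pullback of the trivial bundle $\pi:\eeu\to\mathbb S^{n-1}$ along \emph{the same} base map $d$, up to the identification of the two leaf spaces.

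\medskip

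First I would fix a conformal Cauchy-embedding $\iota: V \hookrightarrow W$. Choose a developing map $D_W: \widetilde W \to \eeu$ for $W$; since $V$ is simply-connected (or, in the developable case, using Lemma~\ref{lemma: restriction of D to a causal curve} and Definition~\ref{def: developable}), $D_W$ restricts along $\iota$ to a developing map $D_V := D_W\circ\iota$ for $V$ (more precisely, $\iota$ lifts to the universal covers and one composes; here one uses that developing maps of $W$ pull back to developing maps of $V$ because the holonomy of $V$ is the restriction of that of $W$). Use the \emph{same} spatio-temporal decomposition $\varphi:\eeu\to\mathbb S^{n-1}\times\R$ for both. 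Then the timelike vector field $T_W$ on $W$ (pull-back of $\partial_t$) restricts on $\iota(V)$ to the timelike vector field $T_V$ used to build $E(V)$, so the foliation of $V$ by $T_V$-leaves is exactly the trace on $\iota(V)$ of the foliation of $W$ by $T_W$-leaves. This gives a natural map $\overline\iota:\mathcal B_V \to \mathcal B_W$ between leaf spaces, sending the leaf through $p$ to the leaf through $\iota(p)$.

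\medskip

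Next I would argue that $\overline\iota$ is a homeomorphism. Both $\mathcal B_V$ and $\mathcal B_W$ are homeomorphic to Cauchy hypersurfaces of $V$ and $W$ respectively (Fact~\ref{lemma: leaf space}), and since $\iota$ is a Cauchy-embedding, a Cauchy hypersurface $S$ of $V$ maps to a Cauchy hypersurface $\iota(S)$ of $W$; composing the homeomorphisms $S\cong\mathcal B_V$, $\iota(S)\cong\mathcal B_W$ with $\iota|_S$ shows $\overline\iota$ is bijective, and it is continuous and open for the same reason $\psi|_S$ is (the quotient argument in Fact~\ref{lemma: leaf space}). Crucially, the base maps match: $d_W\circ\overline\iota = d_V$, because $d\circ\psi = \pi\circ D$ holds for both and $D_V = D_W\circ\iota$, $\psi_W\circ\iota = \overline\iota\circ\psi_V$. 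Therefore
\begin{align*}
E(V) &= \{(p,b)\in\eeu\times\mathcal B_V : \pi(p)=d_V(b)\}\\
     &\cong \{(p,b')\in\eeu\times\mathcal B_W : \pi(p)=d_W(b')\} = E(W),
\end{align*}
the isomorphism being $(p,b)\mapsto(p,\overline\iota(b))$. One checks directly that this map is a homeomorphism intertwining the projections $\hat\pi$ and the developing maps $\hat D$ (it is the identity in the first coordinate), hence it is an isomorphism of conformally flat spacetimes as well as of trivial $\R$-bundles.

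\medskip

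\textbf{Main obstacle.} The only genuinely delicate point is the first step: making precise that a developing map of $W$ restricts to a developing map of $V$ along the Cauchy-embedding. This requires knowing that the holonomy of $V$ is (conjugate to) the restriction of the holonomy of $W$ to $\pi_1(V)$, i.e. that $\iota_*:\pi_1(V)\to\pi_1(W)$ is compatible with the development pairs; in the simply-connected case this is trivial (both developing maps are just maps from the spacetime itself), and in the developable case it follows from Definition~\ref{def: developable}. A secondary subtlety is checking that $\overline\iota$ really is \emph{open} — but this is handled exactly as in Fact~\ref{lemma: leaf space}, since every leaf of the $T_W$-foliation of $W$ that meets $\iota(V)$ does so in a connected set (the intersection being a leaf of the $T_V$-foliation, using that $\iota(V)$ is causally convex in $W$ by hypothesis, hence the leaves restrict nicely). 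Everything else is a routine verification that pullback bundles along equal base maps are canonically isomorphic.
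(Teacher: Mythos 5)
Your proposal is correct and follows essentially the same route as the paper: the paper also chooses compatible developing maps ($D'\circ f = D$), uses the causal convexity of $f(V)$ in $W$ to see that each leaf of $V$ is the trace of a unique leaf of $W$, descends $\psi'\circ f$ to a homeomorphism $\bar f$ of leaf spaces satisfying $d = d'\circ\bar f$, and takes $(p,b)\mapsto(p,\bar f(b))$ as the bundle isomorphism. The only cosmetic difference is the direction in which the developing maps are matched, which is immaterial.
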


\begin{proof}
Let $f: V \to W$ be a Cauchy embedding and let $D': W \to \eeu$ be a developing map such that $D' \circ f = D$. Consider the foliation of $W$ by inextensible timelike curves induced by the pull-back by $D'$ of the vector field $\partial_t$ on $\eeu \simeq \mathbb{S}^{n-1} \times \R$ and let $\psi': W \to \mathcal{B}'$ be the canonical projection on the leaf space. Since $f$ is a Cauchy-embedding, $f(V)$ is causally convex in $W$ (see \cite[Lemma 8]{Salvemini2013Maximal}). Then, the image under $f$ of every leaf of $V$ is the intersection of a unique leaf of $W$ with $f(V)$. It follows that the map $\psi' \circ f$ descends to the quotient in a diffeomorphism $\bar{f}: \mathcal{B} \to \mathcal{B}'$. Let $d': \mathcal{B}' \to \mathbb{S}^{n-1}$ be the developing map induced by $D'$. It is clear that $d:= d' \circ \bar{f}$ is the developing map induced by $D$. Therefore, the map $F: E(V) \to E(W)$ defined by $F(p,b) = (p, \bar{f}(p))$ is a conformal diffeomorphism that sends every fiber of $E(V)$ on a fiber of $E(W)$. The lemma follows.
\end{proof}

\begin{proof}[Proof of Proposition \ref{propo: conf. flat ext. of V embeds in E(V)}]
Let $j: W \hookrightarrow E(W)$ be the conformal embedding of $W$ into $E(W)$ and let $F: E(V) \to E(W)$ be the isomorphism defined in the proof of Lemma \ref{lemma: E(V) isomorphic to E(W)}. The map $i':= F^{-1} \circ j$ defines a conformal embedding of $W$ into $E(V)$. By Lemma \ref{lemma: M is causally convex in E(M)}, $i(V)$ and $i'(W)$ are causally convex in $E(V)$. Moreover, according to the proof of Lemma \ref{lemma: E(V) isomorphic to E(W)}, the following diagram commutes:
\[\begin{array}{ccc}
V           & \overset{i}{\hookrightarrow} & E(V) \\
f  \downarrow         &                              &  \downarrow F\\
W           & \overset{j}{\hookrightarrow} & E(W)
\end{array}
\]
that is $F \circ i = j \circ f$, i.e.  $ i = F^{-1} \circ j \circ f = i' \circ f$. It follows that:
\begin{itemize}
\item $i(V) = i' (f(V)) \subset i'(W)$.
\item Every Cauchy hypersurface of $i(V)$ is a Cauchy hypersurface of $i'(W)$. Indeed, since $f$ is a Cauchy-embedding, if $S$ is a Cauchy hypersurface of $V$, $f(S)$ is a Cauchy hypersurface of $W$. Then, $i(S) = i'(f(S))$ is a Cauchy hypersurface of $i'(W)$. 
\end{itemize}
The proposition follows.
\end{proof}

\begin{remark}
If $V$ is developable, it is easy to see that any conformally flat Cauchy extension of $W$ is also developable. As a result, Proposition \ref{propo: conf. flat ext. of V embeds in E(V)} is still true in this setting.
\end{remark}

\paragraph{The $\mathcal{C}_0$-maximal extension of $V$.} From now on, we identify $V$ and the conformally flat Cauchy-extensions of $V$ with their images in $E(V)$. Let $S$ be a Cauchy hypersurface of $V$. 

\begin{lemma}\label{lemma: Cauchy development of S}
The Cauchy development of $S$ in $E(V)$ contains all the Cauchy-extensions of $V$. In particular, it contains $V$.
\end{lemma}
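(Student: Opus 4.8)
The plan is to show that any conformally flat Cauchy-extension $W$ of $V$, viewed as a causally convex open subset of $E(V)$ containing $V$ (via Proposition~\ref{propo: conf. flat ext. of V embeds in E(V)}), lies inside the Cauchy development $\mathcal{C}(S)$ of $S$ in $E(V)$. Since $V$ is itself a Cauchy-extension of $V$, this in particular gives $V \subset \mathcal{C}(S)$. So the heart of the matter is a single inclusion: $W \subset \mathcal{C}(S)$.

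First I would recall the key facts already available. By Proposition~\ref{propo: conf. flat ext. of V embeds in E(V)}, the hypersurface $S$ (a Cauchy hypersurface of $V$) is also a Cauchy hypersurface of $W$; and $W$ is causally convex, hence globally hyperbolic (Proposition~\ref{prop: causally convex implies GH}), so $S$ being a Cauchy hypersurface means every inextensible causal curve \emph{of $W$} meets $S$ exactly once. The target statement, $W \subset \mathcal{C}(S)$, unwinds to: for every $p \in W$, every inextensible causal curve of $E(V)$ through $p$ meets $S$. The gap between "inextensible in $W$" and "inextensible in $E(V)$" is exactly what must be bridged.

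The key step is therefore: \textbf{any causal curve of $E(V)$ that meets $W$ is contained in $W$ near the portion of interest — more precisely, any inextensible causal curve of $E(V)$ passing through a point of $W$ meets $S$.} I would argue as follows. Let $p \in W$ and let $\gamma$ be an inextensible causal curve of $E(V)$ through $p$. Consider the connected component $\gamma_0$ of $\gamma \cap W$ containing $p$; this is a causal curve of $W$. I claim $\gamma_0$ is inextensible \emph{in $W$}: if an endpoint of $\gamma_0$ stayed inside $W$, then since $W$ is open, $\gamma$ would continue inside $W$ past that endpoint, contradicting maximality of the component. Hence $\gamma_0$ is an inextensible causal curve of $W$, so it meets $S$ (exactly once). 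Since $\gamma_0 \subset \gamma$, the curve $\gamma$ meets $S$. This shows $p \in \mathcal{C}(S)$, and as $p \in W$ was arbitrary, $W \subset \mathcal{C}(S)$. Finally, apply this to $W = V$ and to every Cauchy-extension $W$ of $V$ to conclude.

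The main obstacle is the claim that the component $\gamma_0$ of $\gamma \cap W$ through $p$ is inextensible in $W$. The subtle point is a future endpoint of $\gamma_0$ could fail to exist in $W$ because $\gamma$ itself stops there (i.e.\ $\gamma$ is inextensible in $E(V)$ but leaves $E(V)$ "at infinity"), or because $\gamma$ exits $W$ through $\partial W$; in the latter case one must know the limit point is not in $W$ (true, since $W$ is open and $\gamma_0$ is a maximal connected piece inside $W$) and that $\gamma_0$ genuinely has no endpoint in $W$ — which again follows from openness of $W$ and maximality of the component, together with the fact that a causal curve cannot re-enter $W$ and reconnect to $\gamma_0$ without $\gamma_0$ having been larger. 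One should also double-check that "inextensible" is used consistently (no affine-parametrization issues — inextensibility here is the topological notion of having no endpoint), which is fine since $E(V)$ is strongly causal. I would write these verifications out carefully but they are routine once the component argument is set up correctly.
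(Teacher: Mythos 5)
Your proposal is correct and follows essentially the same route as the paper: restrict an inextensible causal curve of $E(V)$ through a point of $W$ to (a component of) its intersection with $W$, observe that this is an inextensible causal curve of $W$, and conclude it meets $S$ because Proposition~\ref{propo: conf. flat ext. of V embeds in E(V)} makes $S$ a Cauchy hypersurface of $W$. The only cosmetic difference is that the paper invokes causal convexity of $W$ to say the whole intersection is a single inextensible causal curve of $W$, whereas you work with the connected component through the given point and justify its inextensibility by openness; both are fine.
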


\begin{proof}
Let $W$ be a Cauchy extension of $V$. Let $x \in W$ and let $\hat{\gamma}$ be an inextensible causal curve of $E(V)$ going through $x$. Since $W$ is causally convex in $E(V)$ (see Proposition \ref{propo: conf. flat ext. of V embeds in E(V)}), the intersection of $\hat{\gamma}$ with $W$ is an inextensible causal curve $\gamma$ of $W$. By Proposition \ref{propo: conf. flat ext. of V embeds in E(V)}, $S$ is a Cauchy hypersurface of $W$, then $\gamma$ intersects $S$ in a single point. It follows that $x$ belongs to the Cauchy development of $S$ in $E(V)$.
\end{proof}

\begin{proposition}\label{prop: max. ext. dev.}
The Cauchy development $\mathcal{C}(S)$ of $S$ in $E(V)$ is a $\mathcal{C}_0$-maximal extension of~$V$. 
\end{proposition}

\begin{proof}
By \cite[Theorem 38, p. 421]{oneill}, $\mathcal{C}(S)$ is a globally hyperbolic spacetime for which $S$ is a Cauchy hypersurface. According to Lemma \ref{lemma: Cauchy development of S}, it is a Cauchy extension of $V$. Let $W$ be a conformally flat Cauchy-extension of $\mathcal{C}(S)$. In particular, $W$ is a Cauchy-extension of $V$. Then, $W$ embeds conformally in $E(V)$ and the image is a causally convex open subset of $E(V)$ containing $\mathcal{C}(S)$. By Lemma \ref{lemma: Cauchy development of S}, $\mathcal{C}(S)$ is exactly $W$ (seen in $E(V)$). Hence, $\mathcal{C}(S)$ is $\mathcal{C}_0$-maximal.
\end{proof}

\begin{corollary} \label{cor: uniqueness of max. ext. dev.}
The $\mathcal{C}_0$-maximal extension $\mathcal{C}(S)$ is unique up to conformal diffeomorphism. 
\end{corollary}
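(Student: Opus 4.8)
}

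The statement to prove is that the $\mathcal{C}_0$-maximal extension $\mathcal{C}(S)$ is unique up to conformal diffeomorphism, where $S$ is a Cauchy hypersurface of $V$. The plan is first to reduce the claim to independence of two choices: the choice of Cauchy hypersurface $S$ inside $V$, and the choice of the development pair $(D,\varphi)$ used to build $E(V)$. Then I would show that any two $\mathcal{C}_0$-maximal extensions of $V$ are conformally equivalent by playing them off against each other through the defining universal property, exactly as in the proof of Proposition \ref{prop: max. ext. dev.}.

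First I would fix two $\mathcal{C}_0$-maximal Cauchy-extensions $N_1$ and $N_2$ of $V$, with conformal Cauchy-embeddings $\iota_k \colon V \hookrightarrow N_k$. Each $N_k$ is in particular a conformally flat Cauchy-extension of $V$, so by Proposition \ref{propo: conf. flat ext. of V embeds in E(V)} it embeds conformally into the enveloping space $E(V)$ as a causally convex open subset containing $i(V)$, in such a way that every Cauchy hypersurface of $i(V)$ is a Cauchy hypersurface of the image. Identifying everything with its image in $E(V)$, both $N_1$ and $N_2$ are then causally convex open subsets of $E(V)$ containing $V$ and sharing the common Cauchy hypersurface $i(S)$; hence each $N_k$ is a globally hyperbolic spacetime admitting $i(S)$ as Cauchy hypersurface, so $N_k \subset \mathcal{C}(i(S))$ by definition of the Cauchy development. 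Now $\mathcal{C}(S)$ is itself a conformally flat Cauchy-extension of $V$ (Proposition \ref{prop: max. ext. dev.}), hence a conformally flat Cauchy-extension of $N_k$ as well, and the $\mathcal{C}_0$-maximality of $N_k$ forces the inclusion $N_k \hookrightarrow \mathcal{C}(S)$ to be surjective. Therefore $N_1 = \mathcal{C}(S) = N_2$ as subsets of $E(V)$, and in particular $N_1$ and $N_2$ are conformally diffeomorphic. Since $\iota_1, \iota_2$ were arbitrary conformal Cauchy-embeddings of $V$ into $\mathcal{C}_0$-maximal spacetimes, this proves uniqueness up to conformal diffeomorphism.

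The main subtlety, and the step I would be most careful about, is the dependence on auxiliary choices: a priori $E(V)$, hence $\mathcal{C}(S)$, depends on the chosen developing map $D$, on the spatio-temporal decomposition $\varphi$, and on $S$. The argument above sidesteps this because it never compares $\mathcal{C}(S)$ to itself built differently — it only uses that \emph{some} $\mathcal{C}_0$-maximal extension exists (Proposition \ref{prop: max. ext. dev.}) and that any $\mathcal{C}_0$-maximal extension, viewed abstractly, embeds into any enveloping space; the conclusion $N_1 \cong N_2$ is then a purely order-theoretic consequence of the universal property and does not see the construction. Concretely, the one point that needs a line of justification is that $\mathcal{C}(S)$, being a Cauchy-extension of $V$, is automatically a Cauchy-extension of the intermediate spacetime $N_k$ as well: this holds because $V \subset N_k \subset \mathcal{C}(S)$ are causally convex inclusions sharing the Cauchy hypersurface $i(S)$, so the restriction of a Cauchy hypersurface of $\mathcal{C}(S)$ to $N_k$ is again Cauchy, which is the defining condition for a Cauchy-embedding. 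I would also remark that, since $V$ is simply-connected, any conformal diffeomorphism between two $\mathcal{C}_0$-maximal extensions restricting to the identity on $V$ is unique (rigidity via Theorem \ref{Liouville theorem}), which upgrades ``unique up to conformal diffeomorphism'' to the sharper statement that the identification is canonical — this is the only place where developability or simple-connectedness is genuinely used beyond Theorem \ref{intro: enveloping space theorem} itself.
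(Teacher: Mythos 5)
Your proposal is correct and follows essentially the same route as the paper: embed any $\mathcal{C}_0$-maximal extension into $E(V)$ via Proposition \ref{propo: conf. flat ext. of V embeds in E(V)}, deduce from Lemma \ref{lemma: Cauchy development of S} that its image lies in $\mathcal{C}(S)$, and then invoke its maximality to force the inclusion to be onto. The only differences are cosmetic — you compare two maximal extensions symmetrically where the paper compares one against $\mathcal{C}(S)$, and you spell out why the inclusion $N_k\subset\mathcal{C}(S)$ is a Cauchy-embedding (shared Cauchy hypersurface $i(S)$), a point the paper leaves implicit.
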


\begin{proof}
Let $\hat{V}$ another $\mathcal{C}_0$-maximal extension of $V$. By Lemma \ref{lemma: Cauchy development of S}, $\hat{V}$, seen in $E(V)$, is contained in $\mathcal{C}(S)$. If this inclusion is strict, $\mathcal{C}(S)$ would be a Cauchy extension of $\hat{V}$. This contradicts the $\mathcal{C}_0$-maximality of $\hat{V}$.
\end{proof}

We proved again the existence and the uniqueness of the $\mathcal{C}_0$-maximal extension for \emph{simply-connected} conformally flat globally hyperbolic flat spacetimes.

\begin{corollary} \label{cor: Cauchy-compact conformally flat spacetimes}
If $V$ is Cauchy-compact then the $\mathcal{C}_0$-maximal extension of $V$ is conformally equivalent to $\eeu$.
\end{corollary}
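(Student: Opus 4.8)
The plan is to combine Proposition \ref{prop: max. ext. dev.}, which identifies the $\mathcal{C}_0$-maximal extension of $V$ with the Cauchy development $\mathcal{C}(S)$ of a Cauchy hypersurface $S$ of $V$ inside the enveloping space $E(V)$, with the compactness hypothesis on $S$. First I would recall that $E(V)$ fibers trivially over the leaf space $\mathcal{B}$, which by Fact \ref{lemma: leaf space} is homeomorphic to $S$, hence compact; and that the developing map $d: \mathcal{B} \to \mathbb{S}^{n-1}$ is a local homeomorphism. A local homeomorphism from a compact space to the connected manifold $\mathbb{S}^{n-1}$ is a covering map, and since $n \geq 3$ the sphere $\mathbb{S}^{n-1}$ is simply connected, so $d$ is a homeomorphism onto $\mathbb{S}^{n-1}$ (here one uses that $\mathcal{B}$, being homeomorphic to a Cauchy hypersurface, is connected). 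Consequently $\mathcal{B} \cong \mathbb{S}^{n-1}$ and the trivialization of $E(V)$ from the second Fact of Section \ref{sec: def enveloping space} identifies $E(V)$ with $\mathbb{S}^{n-1} \times \R$, with $\hat D$ corresponding to a conformal diffeomorphism onto $\eeu$. In particular $\hat D : E(V) \to \eeu$ is already a global conformal diffeomorphism.

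Next I would show that the Cauchy development $\mathcal{C}(S)$ is all of $E(V)$. Under the identification $E(V) \simeq \mathbb{S}^{n-1} \times \R \simeq \eeu$, the image $i(S)$ is a global section of $\hat\pi$ (Lemma \ref{lemma: i(S) disconnects E(M)}), hence a graph over $\mathbb{S}^{n-1}$; being spacelike, it is the graph of a $1$-Lipschitz function $\mathbb{S}^{n-1} \to \R$ defined on the whole sphere. By Proposition \ref{prop: characterization of the dual} (or equivalently Lemma \ref{lemma: causal boundary of causally convex subsets of Ein}) an inextensible causal curve of $\eeu$ must cross such a graph, so every point of $\eeu$ lies in the Cauchy development of $i(S)$; thus $\mathcal{C}(S) = E(V)$. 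Combining this with Proposition \ref{prop: max. ext. dev.}, the $\mathcal{C}_0$-maximal extension of $V$ is conformally equivalent to $\eeu$.

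The main obstacle is the step asserting that the local homeomorphism $d: \mathcal{B} \to \mathbb{S}^{n-1}$ is in fact a global homeomorphism. Care is needed: local homeomorphisms from non-compact spaces need not be covering maps, so compactness of $\mathcal{B}$ (equivalently, of the Cauchy hypersurface $S$) is genuinely used, together with the standard fact that a proper local homeomorphism onto a connected, locally compact Hausdorff space is a covering. Once $d$ is known to be a covering of the simply connected $\mathbb{S}^{n-1}$ it must be trivial, and the rest follows formally. The only other point requiring a remark is that the argument as written presumes $\eeu$ is connected and that the trivialization $f$ of the second Fact transports the conformally flat structure correctly; both are immediate from the constructions in Section \ref{sec: def enveloping space}.
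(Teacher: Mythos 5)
Your proposal is correct and follows essentially the same route as the paper: compactness of the leaf space $\mathcal{B}$ makes the local homeomorphism $d:\mathcal{B}\to\mathbb{S}^{n-1}$ a covering of the simply connected sphere, hence a homeomorphism, so $\hat D: E(V)\to\eeu$ is a conformal diffeomorphism. Your additional paragraph verifying that $\mathcal{C}(S)$ is then all of $E(V)\simeq\eeu$ (via the global section $i(S)$ being the graph of a bounded $1$-Lipschitz function on the whole sphere) makes explicit a step the paper leaves implicit in ``the corollary follows,'' and is a welcome clarification rather than a deviation.
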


\begin{proof}
By Lemma \ref{lemma: leaf space}, the leaf space $\mathcal{B}$ is compact. Since $d$ is a local homeomorphism, it follows that $d$ is a covering. But, $\mathbb{S}^{n-1}$ is simply connected, so $d$ is a homeomorphism. As a result, $\hat{D}: E(V) \to \eeu$ is a conformal diffeomorphism. Indeed, let $p \in \eeu$. There exists a unique $b \in \mathcal{B}$ such that $d(b) = \pi(p)$. Thus, $(p,b)$ is the unique point of $E(V)$ such that $\hat{D}(p,b) = p$. The corollary follows.
\end{proof}

\begin{definition}
The trivial fiber bundle $\hat{\pi}: E(V) \to \mathcal{B}$ is called \textbf{an enveloping space} of $V$.
\end{definition}

The construction of this fiber bundle depends on the choice of a pair $(D, \varphi)$ where
\begin{itemize}
\item $D: V \to \eeu$ is a developing map ;
\item $\varphi: \mathbb{S}^{n-1} \times \R \to \eeu$ is a spatio-temporal decomposition of $\eeu$.
\end{itemize}
Given two such pairs $(D, \varphi)$, $(D', \varphi')$, there exists a conformal transformation $\phi$ of $\eeu$ such that $D' = \phi \circ D$. We say that $(D, \varphi)$ and $(D', \varphi')$ are \emph{equivalent} if $\varphi' = \varphi \circ \phi^{-1}$.

\begin{lemma}
If $(D, \varphi)$ and $(D', \varphi')$ are equivalent, the enveloping spaces $E(V)$ and $E'(V)$ defined by $(D, \varphi)$ and $(D', \varphi')$ are isomorphic, i.e. there exists a conformal diffeomorphism from $E(V)$ to $E'(V)$ which sends fiber on fiber. \qed
\end{lemma}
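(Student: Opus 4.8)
The statement asserts that equivalent pairs $(D,\varphi)$ and $(D',\varphi')$ produce isomorphic enveloping spaces. The plan is to build the isomorphism explicitly from the data of the equivalence and then check it is a fiber-bundle isomorphism covering the identity on the base (after identifying the two leaf spaces). Write $\phi$ for the conformal transformation of $\eeu$ with $D' = \phi \circ D$, so by hypothesis $\varphi' = \varphi \circ \phi^{-1}$; in particular the projections $\pi = \pi_0 \circ \varphi$ and $\pi' = \pi_0 \circ \varphi'$ satisfy $\pi' \circ \phi = \pi$.

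First I would observe that the timelike foliation on $V$ is the same for both pairs. Indeed, the vector field $T$ (resp. $T'$) is the pull-back by $D$ of $\varphi^*\partial_t$ (resp. by $D'$ of $\varphi'^*\partial_t$); since $\varphi' \circ \phi = \varphi$, the vector field $\varphi'^*\partial_t$ pulls back under $\phi$ to $\varphi^*\partial_t$, hence $T' = D'^*(\varphi'^*\partial_t) = (\phi \circ D)^*(\varphi'^*\partial_t) = D^*(\varphi^*\partial_t) = T$. Therefore the leaf spaces coincide: $\mathcal{B} = \mathcal{B}'$ and $\psi = \psi'$. Next, from $d \circ \psi = \pi \circ D$ and $d' \circ \psi = \pi' \circ D' = \pi' \circ \phi \circ D = \pi \circ D$, and surjectivity of $\psi$, I get $d = d'$ as maps $\mathcal{B} \to \mathbb{S}^{n-1}$ (both landing in $\mathbb{S}^{n-1}$ via the decomposition coordinates attached to their respective pair — but here the two $\mathbb{S}^{n-1}$'s are identified through $\phi$, see below).

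Then I would define the candidate isomorphism $\Phi: E(V) \to E'(V)$ by $\Phi(p,b) = (\phi(p), b)$. The key point to verify is that this is well-defined: if $(p,b) \in E(V)$ then $\pi(p) = d(b)$, and I need $\pi'(\phi(p)) = d'(b)$; this holds because $\pi' \circ \phi = \pi$ and $d' = d$. The inverse is $\Phi^{-1}(q,b) = (\phi^{-1}(q), b)$, so $\Phi$ is a homeomorphism. It covers the identity on $\mathcal{B}$ by construction, so $\hat{\pi}' \circ \Phi = \hat{\pi}$, i.e. it sends fiber to fiber. Conformality: on $E(V)$ the conformal structure is pulled back from $\eeu$ via $\hat{D}(p,b) = p$, and similarly $\hat{D}'(q,b) = q$ on $E'(V)$; since $\hat{D}' \circ \Phi = \phi \circ \hat{D}$ and $\phi$ is a conformal transformation of $\eeu$, the map $\Phi$ is conformal. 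This finishes the argument.

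The only subtlety — and the step I would be most careful about — is the bookkeeping around the two different identifications $\varphi, \varphi'$ of $\eeu$ with $\mathbb{S}^{n-1} \times \R$: the two projections $\pi$ and $\pi'$ target \emph{a priori} different copies of $\mathbb{S}^{n-1}$, and the developing maps $d$, $d'$ of the base likewise. The equivalence condition $\varphi' = \varphi \circ \phi^{-1}$ is exactly what reconciles them, giving $\pi' \circ \phi = \pi$ and hence $d' = d$ once the spheres are identified via the first-factor map induced by $\varphi' \circ \varphi^{-1}$, which is an isometry of $\mathbb{S}^{n-1}$. With that identification in place the computation above is routine, so I expect no real obstacle — the content of the lemma is essentially that the enveloping space is a natural construction and the proof is a diagram chase, which is presumably why the authors left it as \qed.
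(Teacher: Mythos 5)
Your construction is correct and is exactly the argument the paper leaves implicit behind the \qed: the equivalence $\varphi'=\varphi\circ\phi^{-1}$ gives $\pi'\circ\phi=\pi$, hence $T'=T$, the same leaf space with $d'=d$, and $\Phi(p,b)=(\phi(p),b)$ is the required fiber-preserving conformal diffeomorphism since $\hat{D}'\circ\Phi=\phi\circ\hat{D}$. Your closing worry about identifying "two copies" of $\mathbb{S}^{n-1}$ via a first-factor map induced by $\varphi'\circ\varphi^{-1}$ is unnecessary (and that map need not descend to the first factor in general): both $\pi$ and $\pi'$ already target the same $\mathbb{S}^{n-1}$, so $d'=d$ holds on the nose from $d'\circ\psi=\pi'\circ D'=\pi\circ D=d\circ\psi$.
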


\begin{example}[Enveloping space of Minkowski spacetime] \label{example: Minkowski}
Minkowski spacetime $\R^{1,n-1}$ is conformally equivalent to the set of points of $\eeu$ which are not causally related to a point $p \in \eeu$. Given a spatio-temporal decomposition $\varphi: \eeu \to \mathbb{S}^{n-1} \times \R$, the enveloping space of $\R^{1,n-1}$ is the complement in $\eeu$ of the fiber going through $p$ of the trivial bundle $\pi_0 \circ \varphi: \eeu \to \mathbb{S}^{n-1}$. This is a trivial fiber bundle over $\mathbb{S}^{n-1} \backslash \{\pi_0 \circ \varphi(p)\} \simeq \mathbb{R}^{n-1}$.
\end{example}

\begin{example}[Enveloping space of De-Sitter spacetime]
Given a spatio-temporal decomposition $\varphi: \eeu \to \mathbb{S}^{n-1} \times \R$, De-Sitter spacetime is conformally equivalent to $\mathbb{S}^{n-1} \times ]0,\pi[$. Therefore, the enveloping space of De-Sitter spacetime relatively to this decomposition is $\pi_0 \circ \varphi: \eeu \to \mathbb{S}^{n-1}$ where $\pi_0: \mathbb{S}^{n-1} \times \R \to \mathbb{S}^{n-1}$ is the projection on the first factor.
\end{example}

\subsection{Causally convex GH open subsets of the enveloping space} \label{sec: Causally convex subsets of E(M)}

In this section, we describe causally convex open subsets of the enveloping space $E(V)$ - defined in the previous section - which are globally hyperbolic. Notice that since $E(V)$ is, \emph{a priori}, not globally hyperbolic (see Example \ref{example: Minkowski}), causal convexity does not imply global hyperbolicity anymore.

By \cite[Theorem 10]{Salvemini2013Maximal}, causally convex GH open subsets of $E(V)$ which contain \emph{conjugate points}, i.e. points whose image under $\hat{D}$ are conjugate in $\eeu$, are conformally equivalent to causally convex open subsets of $\eeu$. These are described in Section~\ref{sec: causally convex open subsets of Ein}. This is why we only deal here with causally convex open subsets of $E(V)$ without conjugate points. We basically generalise the description of causally convex open subsets of $\eeu$.\\ \\ The following definition introduces a class of sections of the enveloping space $E(V) \to \mathcal{B}$, expressed in a global trivialization. 

\begin{definition}
A real-valued function $f$ defined on an open subset $U$ of $\mathcal{B}$ is said \emph{$1$-Lipschitz} if for every $x \in U$, there exists an open neighborhood $U_x$ of $x$ contained in $U$ such that the following hold:
\begin{enumerate}
\item the restriction of $d$ to $U_x$ is injective;
\item the map $f \circ d_{|U_x}^{-1}: d(U_x) \subset \mathbb{S}^{n-1} \to \R$ is $1$-Lipschitz.
\end{enumerate}
\end{definition}

This definition generalizes the (local) notion of $1$-Lipschitz functions defined on an open subset of the sphere $\mathbb{S}^{n-1}$. 

\begin{remark}
The function $f$ is $1$-Lipschitz if and only if its graph is \emph{locally achronal}.
\end{remark}

\begin{lemma}
Any $1$-Lipschitz real-valued function $f$ defined on an open subset $U \subsetneq \mathcal{B}$ admits a unique extension to $\overline{U}$. \qed
\end{lemma}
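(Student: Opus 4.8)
The statement asserts that a $1$-Lipschitz function $f$ on an open $U \subsetneq \mathcal{B}$ extends uniquely to $\overline{U}$. The plan is to reduce the global claim on $\mathcal{B}$ to the already-understood local situation on the sphere $\mathbb{S}^{n-1}$, using that $d\colon \mathcal{B} \to \mathbb{S}^{n-1}$ is a local homeomorphism. First I would fix a point $x_0 \in \partial U$ and a sequence $x_i \in U$ with $x_i \to x_0$, and prove that $f(x_i)$ converges; the candidate extension is then $\overline{f}(x_0) := \lim f(x_i)$, and the bulk of the work is showing this limit exists and is independent of the chosen sequence. To do this, choose a neighborhood $W$ of $x_0$ on which $d$ restricts to a homeomorphism onto an open ball $B \subset \mathbb{S}^{n-1}$; for $i$ large, $x_i \in W$, so $d(x_i) \to d(x_0)$ in $B$. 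The issue is that $f$ need not be globally $1$-Lipschitz on $W \cap U$ with respect to the pulled-back spherical distance — the Lipschitz condition in the definition is only local. However, one can cover the relevant region by finitely many small balls (using that spheres of geodesics in $\mathbb{S}^{n-1}$ behave well and $d$ is a local homeo), and chain the local Lipschitz estimates along a path from $d(x_i)$ to $d(x_j)$ inside $B$ to get $|f(x_i) - f(x_j)| \le \mathrm{length}$ of the path, which can be taken close to $d(d(x_i),d(x_j)) \to 0$. This makes $\{f(x_i)\}$ Cauchy, hence convergent, and the same chaining argument shows the limit does not depend on the sequence.

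Concretely, the key steps in order are: (1) record that the graph of $f$ being locally achronal (the preceding remark) means that near any point, after identifying via $d$ with a piece of $\mathbb{S}^{n-1}\times\R$, the graph of $f\circ d_{|U_x}^{-1}$ is genuinely $1$-Lipschitz in the spherical metric; (2) fix $x_0 \in \partial U$, a neighborhood $W \ni x_0$ with $d_{|W}$ a homeomorphism onto a ball $B$, and note $W \cap U$ is open with $d(W\cap U)$ an open subset of $B$ accumulating at $d(x_0)$; (3) for two sequences approaching $x_0$ within $W$, connect their $d$-images by short arcs in $B$ and propagate the local $1$-Lipschitz bound along a finite subcover to conclude the $f$-values form a single Cauchy class, giving a well-defined value $\overline{f}(x_0)$; (4) check that $\overline{f}$ so defined on $\partial U$ is itself $1$-Lipschitz (same chaining, now allowing boundary endpoints) and that $\overline{f}\restriction_U = f$, so $\overline{f}$ is a genuine extension to $\overline{U} = U \cup \partial U$; (5) uniqueness is immediate: any continuous extension must agree with $\lim f(x_i)$ along sequences from $U$, and since every point of $\overline{U}$ is such a limit, the extension is forced.

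The main obstacle I expect is step (3): the Lipschitz condition is only postulated locally, on neighborhoods $U_x$ that may shrink as $x \to \partial U$, so one cannot directly assert a uniform Lipschitz bound on $W \cap U$. The resolution is the finite-chaining argument — cover a fixed closed ball around $d(x_0)$ inside $B$ by finitely many sets $d(U_{x_k})$, use a Lebesgue-number argument so that any sufficiently short arc in $B$ lies in a bounded number of these, and sum the local estimates. This is routine once set up but is where the real content sits; everything else is bookkeeping about local homeomorphisms and limits. One should also note at the outset why $U \subsetneq \mathcal{B}$ is needed (so that $\partial U \ne \emptyset$ is the interesting case; if $U = \mathcal{B}$ there is nothing to extend, and if $\partial U = \emptyset$ the statement is vacuous), and that $\mathcal{B}$, being homeomorphic to a Cauchy hypersurface (Fact \ref{lemma: leaf space}), is a manifold, so $\overline{U}$ and $\partial U$ are well-behaved.
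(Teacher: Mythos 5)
The paper offers no argument for this lemma (it is stated with a \verb|\qed|), so there is nothing to compare your write-up against line by line; the only question is whether your argument closes, and it does not. The gap sits exactly where you located it, in step (3), and I do not think your proposed resolution repairs it. The local $1$-Lipschitz estimates live on neighborhoods $U_x$ that are by definition contained in $U$, so chaining them only controls $|f(x)-f(y)|$ by the length of a path from $x$ to $y$ that stays \emph{inside} $U$. Your ``fixed closed ball around $d(x_0)$ inside $B$'' is not contained in $d(W\cap U)$ (it contains $d(x_0)\notin d(U)$ and in general an open set of further points of $d(W\setminus U)$), so the family $\{d(U_{x_k})\}$ admits no finite subcover of that ball and the Lebesgue-number device has nothing to bite on. What chaining genuinely yields is a bound by the \emph{intrinsic} path distance of $U$, which near a boundary point need not be comparable to the ambient spherical distance, and this is not a technicality.

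Indeed, in the stated generality the lemma fails, so no proof from the local Lipschitz hypothesis alone can exist. Take $\mathcal{B}=\mathbb{S}^{n-1}\setminus\{\mathrm{pt}\}$ with $d$ the inclusion (the leaf space of Minkowski spacetime, cf.\ Example \ref{example: Minkowski}), identify a small ball of $\mathcal{B}$ with a disk $D\subset\R^2$, and let $U=D\setminus L$ where $L$ is a radius of $D$. In polar coordinates with $\theta\in(0,2\pi)$ on $U$, set $f(r,\theta)=\epsilon r\theta$; then $|\nabla f|^2=\epsilon^2(1+\theta^2)\le\epsilon^2(1+4\pi^2)<1$ for $\epsilon$ small, and every point of $U$ has a convex ball neighborhood contained in $U$ on which the mean value inequality makes $f$ $1$-Lipschitz, so $f$ is $1$-Lipschitz in the sense of the definition. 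Yet at an interior point $(r,0)$ of the slit the limits from the two sides are $0$ and $2\pi\epsilon r\neq 0$: there is no continuous extension to $\overline{U}$. The missing ingredient is therefore a hypothesis, not an argument: either a \emph{global} Lipschitz bound with respect to the ambient distance (as holds for the achronal graphs over subsets of $\mathbb{S}^{n-1}$ in Section \ref{sec: causally convex open subsets of Ein}, where uniform continuity gives the standard unique extension to the closure), or the causal-convexity input used in the surrounding propositions (the fiberwise control of $\overline{\Omega}$ coming from Lemma \ref{lemma: the closure of a causally convex is causally convex}), which is what actually forces the boundary limits of $f^{\pm}$ to exist in the intended applications. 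Your plan, which tries to extract the extension from local Lipschitzness alone, cannot be completed as written.
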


\begin{proposition}
Any causally convex GH open subset $\Omega$ of $E(V)$ is the domain bounded by the graphs of two $1$-Lipschitz real-valued functions $f^+$ and $f^-$ defined on an open subset $U$ of $\mathcal{B}$ such that:
\begin{enumerate}
\item $f^- < f^+$ on $U$;
\item the extensions of $f^+$ and $f^-$ to $\partial U$ coincide.
\end{enumerate}
\end{proposition}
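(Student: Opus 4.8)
The plan is to mimic, in the enveloping space $E(V)$, the argument used for causally convex open subsets of $\eeu$ in Proposition~\ref{lemma: causally convex in ein 1}, replacing the global trivialization $\mathbb{S}^{n-1} \times \R$ of $\eeu$ by the global trivialization $\mathcal{B} \times \R$ of $E(V)$ furnished by the fact that $E(V)$ is a trivial bundle. First I would fix such a trivialization $E(V) \simeq \mathcal{B} \times \R$ in which the fibers $\hat{\pi}^{-1}(b) = \{b\} \times \R$ are exactly the inextensible timelike curves of $E(V)$, and let $U := \hat{\pi}(\Omega) \subset \mathcal{B}$ be the projection of $\Omega$. Because $\Omega$ is causally convex in $E(V)$ and each fiber is an inextensible timelike curve, the intersection $\Omega \cap (\{b\} \times \R)$ is a connected sub-interval $\{b\} \times ]f^-(b), f^+(b)[$ for every $b \in U$; this defines the two functions $f^\pm : U \to \overline{\R}$ with $f^- < f^+$, and $\Omega$ is by construction the region they bound. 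The hypothesis that $\Omega$ is moreover GH will be used to rule out the pathology of points being conjugate (already excluded by assumption) and, more importantly, to make the Cauchy-development/achronality arguments go through even though $E(V)$ itself need not be GH.

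Next I would establish, exactly as in the three Facts of the proof of Proposition~\ref{lemma: causally convex in ein 1}, that $f^+$ (and dually $f^-$) is $1$-Lipschitz in the sense just defined. The key local reduction is that around any $b \in U$ we may choose a neighborhood $U_b$ on which $d$ is injective (recall $d$ is a local homeomorphism) and on which $\hat{D}$ restricts to a conformal diffeomorphism onto an open subset of $\eeu \simeq \mathbb{S}^{n-1} \times \R$; under this identification the portion of $\Omega$ over $U_b$ is a causally convex open subset of (an open subset of) $\eeu$, and $f^+ \circ d_{|U_b}^{-1}$ is precisely the upper boundary function of that local piece. Local achronality of the graph of $f^+$ — equivalently, the $1$-Lipschitz property — then follows from the identical argument as Fact~$2$: if two nearby points of the graph were timelike related, openness of $\Omega$ together with Lemma~\ref{lemma: the closure of a causally convex is causally convex} (applied locally, in the chart) would force a boundary point into $\Omega$, a contradiction. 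Similarly, if $f^+$ is $+\infty$ at one point of $U$, the future-completeness argument of Fact~$1$ — which only uses causal convexity and strong causality of $E(V)$, both available — propagates this along $U$ (note $U$ need not be connected, so the right statement is that $f^+ \equiv +\infty$ on the connected component containing that point; but the Lipschitz condition is local and unaffected).

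Finally, for the boundary-matching condition, I would argue as in Fact~$3$: suppose $f^+$ and $f^-$ are finite and $x \in \partial U$ is a boundary point at which the extensions $\overline{f^+}(x)$ and $\overline{f^-}(x)$ (which exist and are unique by the extension lemma) satisfy $\overline{f^-}(x) < \overline{f^+}(x)$. Then the nontrivial timelike segment $\{x\} \times ]\overline{f^-}(x), \overline{f^+}(x)[$ lies in $\partial \Omega$, and taking interior points of this segment together with a local chart given by $\hat{D}$ we get, via the local version of Lemma~\ref{lemma: the closure of a causally convex is causally convex}, a point of the segment that must belong to $\Omega$ — contradiction. I expect the main obstacle to be the bookkeeping needed to pass between the global picture (where $U \subset \mathcal{B}$ and $d$ is only a local homeomorphism, so there is no ambient metric on $\mathcal{B}$) and the local charts in $\eeu$ where the genuine $1$-Lipschitz estimates and the cited lemmas live; one must check that the locally defined boundary functions glue consistently into the globally defined $f^\pm$, and that the absence of conjugate points guarantees $\hat{D}$ is injective on the relevant diamonds so that the chart arguments are legitimate. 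Once this local-to-global dictionary is set up, every substantive step is a verbatim transcription of the proof of Proposition~\ref{lemma: causally convex in ein 1}.
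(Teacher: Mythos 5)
Your skeleton matches the paper's: project $\Omega$ to $U:=\hat{\pi}(\Omega)$, use causal convexity to get fiberwise intervals $]f^-(b),f^+(b)[$, reduce the $1$-Lipschitz property to a local statement in $\eeu$, and rerun the boundary-matching argument of Fact~3. But the step you yourself flag as ``bookkeeping'' is in fact the substantive content of the proof, and the mechanism you propose for it does not work as stated. Choosing $U_b$ so that $d_{|U_b}$ is injective does make $\hat{D}$ injective on the cylinder $\hat{\pi}^{-1}(U_b)$, but it does \emph{not} make $\hat{D}\bigl(\Omega\cap\hat{\pi}^{-1}(U_b)\bigr)$ a causally convex subset of $\eeu$: a causal (or timelike) curve of $\eeu$ joining two points of this image has no reason to stay in the cylinder $\pi^{-1}(d(U_b))$, hence no reason to lift to $E(V)$, so causal convexity of $\Omega$ in $E(V)$ gives you nothing about such curves. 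Consequently you cannot apply Proposition~\ref{lemma: causally convex in ein 1}, nor transcribe Fact~2 verbatim: the relation ``$p\in I^+(q)$'' in Fact~2 is a relation in $\eeu$, and the $1$-Lipschitz estimate you need is achronality of the local graph \emph{in all of} $\eeu$ (i.e.\ with respect to the round distance on $\mathbb{S}^{n-1}$), which is strictly stronger than achronality inside your chart. This is precisely where the paper uses the hypothesis that $\Omega$ is globally hyperbolic, which your argument never actually invokes: for $p\in\Omega$, the restriction of $\hat{D}$ to $I^+(p,\Omega)$ is injective \emph{and its image is causally convex in $\eeu$} (Rossi's thesis, Prop.~2.7 and Cor.~2.8); Proposition~\ref{lemma: causally convex in ein 1} then applies to that image, producing a genuinely $1$-Lipschitz function $g^+=f^+\circ d_{|U_x}^{-1}$ on an open subset of the sphere, with $U_x=\hat{\pi}(I^+(p,\Omega))$. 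Your approach could perhaps be salvaged by additionally requiring $d(U_b)$ to be geodesically convex and checking that all the witnessing timelike curves can be chosen inside the cylinder, but that verification is missing and is not a verbatim transcription of anything in Section~\ref{sec: causally convex open subsets of Ein}.

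A secondary point: the statement asks for \emph{real-valued} functions, and the section's standing assumption is that $\Omega$ contains no conjugate points. The paper disposes of infinite values in one line (an infinite fiber interval in a causally convex set forces conjugate points into $\Omega$), whereas you import the Fact~1 propagation argument; that argument relies on the fact that in $\eeu$ every future-inextensible timelike curve eventually enters the future of every point, which fails in $E(V)$ in general (cf.\ Example~\ref{example: Minkowski}), so it should simply be replaced by the conjugate-point observation.
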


\begin{proof}
Let $\hat{\pi}$ the natural projection of $E(V)$ on $\mathcal{B}$. We call $U$ the projection of $\Omega$ on $\mathcal{B}$. Since $\Omega$ is causally convex, the intersection of $\Omega$ with any fiber $\hat{\pi}^{-1}(x)$, where $x \in U$, is connected, i.e. it is a segment $\{x\} \times ]f^-(x), f^+(x)[$. Notice that $-\infty < f^-(x)$ and $f^+(x) < + \infty$, otherwise $\Omega$ would contain conjugate points. Contradiction. This defines two real-valued functions $f^+, f^-$ defined on $U$ such that:
\begin{align*}
\Omega &= \{(x,t) \in U \times \R:\ f^-(x) < t < f^+(x)\}.
\end{align*}
Let $x \in U$. Set $p_+ = (x, f^+(x))$. Let $p \in I^-(p^+) \cap \Omega$. Since $\Omega$ is GH, the restriction of $\hat{D}$ to $I^+(p, \Omega)$ is injective and its image is causally convex in $\eeu$ (see \cite[Prop. 2.7 and Cor. 2.8, p.151]{salveminithesis}).  It follows that:
\begin{itemize}
\item $\hat{D}(I^+(p, \Omega))$ is the domain bounded by the graphs of two $1$-Lipschitz real-valued functions $g^- < g^+$ defined on an open subset of $\mathbb{S}^{n-1}$ (see Proposition \ref{lemma: causally convex in ein 1});
\item $d$ is injective on $U_x := \hat{\pi}(I^+(p,\Omega))$ and $g^+ = f^+ \circ d_{|U_x}^{-1}$.
\end{itemize}
Thus, $f^+$ is $1$-Lipschitz. The proof is similar for $f^-$ with the reverse time-orientation.
Lemma \ref{lemma: the closure of a causally convex is causally convex} is still valid for causally convex open subsets of $E(V)$. Hence, the same arguments used in the proof of Proposition \ref{lemma: causally convex in ein 1} shows that the extensions of $f^+$ and $f^-$ to $\partial U$ coincide.
\end{proof}

\begin{corollary}
The graphs of $f^+$ and $f^-$ are achronal.
\end{corollary}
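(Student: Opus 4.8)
The plan is to deduce the corollary directly from the preceding Proposition together with the Lipschitz‑to‑achronal correspondence already established for $\eeu$. Recall that the Proposition gives us two $1$-Lipschitz real‑valued functions $f^+$ and $f^-$ on an open subset $U$ of $\mathcal{B}$ whose graphs bound $\Omega$. By the \emph{Remark} immediately following the definition of $1$-Lipschitz functions on $U$, a real‑valued function on $U$ is $1$-Lipschitz if and only if its graph is \emph{locally} achronal. So the graphs of $f^+$ and $f^-$ are automatically locally achronal; the only content of the corollary is the upgrade from \emph{local} achronality to \emph{global} achronality inside $E(V)$.

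First I would fix, say, the graph $\Sigma^+$ of $f^+$ and suppose, for contradiction, that there are two distinct points $p, q \in \Sigma^+$ and a timelike curve $\gamma$ of $E(V)$ from $q$ to $p$, i.e. $p \in I^+(q)$. The key geometric input is the same local model argument used in the proof of the Proposition: pick a point $r \in I^-(p) \cap \Omega$ close enough to $p$ (such an $r$ exists since $p$ is in the closure of $\Omega$ along its fiber), and use that $\Omega$ is globally hyperbolic so that the restriction of $\hat{D}$ to $I^+(r,\Omega)$ is injective with causally convex image in $\eeu$ (by \cite[Prop. 2.7 and Cor. 2.8, p.151]{salveminithesis}). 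Pushing $\gamma$ and the relevant portions of $\Sigma^+$ forward by $\hat{D}$ into $\eeu$, the image of $\Sigma^+$ near $p$ is a piece of the graph of the $1$-Lipschitz function $g^+ = f^+\circ d_{|U_x}^{-1}$ from Proposition~\ref{lemma: causally convex in ein 1}, which is achronal in $\eeu$; but a timelike curve in $\eeu$ cannot join two distinct points of an achronal graph. The only subtlety is that $p$ and $q$ need not lie in the same local chart where $\hat{D}$ is injective, so one cannot transport the \emph{whole} curve $\gamma$ at once.

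To handle that, I would argue along $\gamma$ rather than all at once. The function $t\mapsto (\hat\pi\circ\gamma)(t)$ traces a curve in $\overline{U}\subset\mathcal{B}$, and the portion of $\gamma$ strictly between $q$ and $p$ lies in $I^+(q)\cap I^-(p)$. Since $\Sigma^+$ is locally achronal and $\gamma$ is timelike, $\gamma$ leaves $\Sigma^+$ immediately after $q$; because $J^+(\Sigma^+)$ is disjoint from $\Omega$ (this is exactly the $E(V)$-analogue of Lemma~\ref{lemma: characterisation of the boundary of a causally convex subset of Ein}, which carries over verbatim since the statement is local in the fiber direction and was already used implicitly in the Proposition), the curve $\gamma$ cannot re-enter $\Omega$ either. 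Hence $\gamma$ stays on $\partial\Omega$ throughout; but $\partial\Omega$ restricted to any fiber over $U$ consists of just the two points $f^-(x), f^+(x)$, so $\gamma$ must stay inside $\Sigma^+\cup\Sigma^-$, and since a timelike curve meets each fiber at most once and $\Sigma^+$ is locally achronal, it is confined to $\Sigma^+$ near $q$ — contradicting that it is timelike. The same argument with reversed time‑orientation handles $\Sigma^-$, proving both graphs are achronal.

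The main obstacle is the global‑versus‑local achronality gap: unlike in $\eeu$, where achronality of a $1$-Lipschitz graph is immediate from the metric distance comparison $d(x,x_0)=|t-t_0|$, in $E(V)$ the developing map $\hat{D}$ is only a local homeomorphism, so one must rule out a timelike shortcut that passes through several fiber‑charts. I expect the cleanest way around this is precisely the boundary‑disjointness fact $J^+(\Sigma^\pm)\cap\Omega=\emptyset$ together with the fact that the only boundary points over $U$ are on $\Sigma^+\cup\Sigma^-$; this reduces the problem to the local statement, which is already known. I would be careful to note that over $\partial U$ the extensions of $f^+$ and $f^-$ agree, so the boundary there contributes a third ``vertical'' piece not causally related to $\Omega$ — exactly as in Lemma~\ref{lemma: causal boundary of causally convex subsets of Ein} — and therefore does not interfere with the argument above.
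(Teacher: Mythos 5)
There is a genuine gap. Your argument hinges on the claim that the $E(V)$-analogue of Lemma~\ref{lemma: characterisation of the boundary of a causally convex subset of Ein} (namely $J^+(\Sigma^+)\cap\Omega=\emptyset$) ``carries over verbatim since the statement is local in the fiber direction.'' It does not: the proof of that lemma in $\eeu$ is the global estimate $d(x,y)\leq s-f^+(x)<f^+(y)-f^+(x)\leq d(x,y)$, which uses the round distance on $\mathbb{S}^{n-1}$ and the \emph{global} $1$-Lipschitz property of $f^+$. In $E(V)$ the functions $f^\pm$ are only $1$-Lipschitz in the chart-by-chart sense, there is no global distance comparison, and $J^+(p)$ is itself a global object (a causal curve from $p$ may traverse many fiber-charts). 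So the one statement that would make your argument work is exactly the global fact you have not proved --- and, as you yourself observe, it is of the same nature as the achronality you are after. The closing step is also unjustified: from $J^+(\Sigma^+)\cap\Omega=\emptyset$ you conclude that the timelike curve $\gamma$ ``stays on $\partial\Omega$,'' but disjointness from $\Omega$ only puts $\gamma$ in $E(V)\setminus\Omega$; nothing prevents it from leaving $\overline{\Omega}$ altogether and returning to the graph at $p$. (Had the key lemma been available, you would not need this step anyway: $p\in I^+(q)$ with $p\in\overline{\Omega}$ forces $I^+(q)\cap\Omega\neq\emptyset$, contradicting $J^+(q)\cap\Omega=\emptyset$ directly.)

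The intended proof is much shorter and purely causal, which is why the paper simply refers to Fact~2 of Proposition~\ref{lemma: causally convex in ein 1}: suppose $p,q$ are distinct points of the graph of $f^+$ with $p\in I^+(q)$. Since $p\in\overline{\Omega}$ and $I^+(q)$ is an open neighborhood of $p$, there is $p'\in I^+(q)\cap\Omega$; since the fiber segment $\{x\}\times\left]f^-(x),f^+(x)\right[$ below $q=(x,f^+(x))$ lies in $\Omega\cap I^-(q)$, there is $q'\in I^-(q)\cap\Omega$. Then $q\in I(p',q')$, which is contained in $\Omega$ by causal convexity of $\Omega$ in $E(V)$ --- contradicting $q\notin\Omega$. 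No developing map, chart decomposition, or metric estimate is needed, so the local-versus-global difficulty you spend most of your effort on never arises.
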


\begin{proof}
The proof is similar than that of Fact 2 in the proof of Proposition \ref{lemma: causally convex in ein 1}.
\end{proof}

\section{$\mathcal{C}_0$-maximal extensions of conformally flat globally hyperbolic spacetimes} \label{sec: maximal ext.}

In this section, we give a new proof of the existence and the uniqueness of the $\mathcal{C}_0$-maximal extension of a globally hyperbolic conformally flat spacetime $V$ of dimension $n \geq 3$, using the notion of enveloping space introduced in Section \ref{sec: enveloping space}.

When $V$ is simply-connected, the proof is given in Section \ref{sec: def enveloping space} (see Proposition \ref{prop: max. ext. dev.} and Corollary \ref{cor: uniqueness of max. ext. dev.}). We deal here with the case where $V$ is not simply-connected. The proof consists to extend the action of $\pi_1 V$ on $\tilde{V}$ to a proper action on the $\mathcal{C}_0$-maximal extension of $\tilde{V}$. We prove then that the $\mathcal{C}_0$-maximal extension of $V$ is the quotient of the $\mathcal{C}_0$-maximal extension of $\tilde{V}$ by $\pi_1 V$. 

\paragraph{Notations.} Let $D: \tilde{V} \to \eeu$ be a developing map and let $\rho: \Gamma \to \Conf(\eeu)$ be the associated holonomy representation, where $\Gamma := \pi_1(V)$. 

Fix a decomposition $\varphi: \eeu \to \mathbb{S}^{n-1} \times \R$. Let $E(\tilde{V})$ be the enveloping space related to the pair $(D, \varphi)$ (see Section \ref{sec: def enveloping space}). We denote by $\hat{\pi}: E(\tilde{V}) \to \mathcal{B}$ the projection on the second factor and by $\hat{D}: E(\tilde{V}) \to \eeu$ the projection on the first factor.

\begin{fact}
If $\tilde{V}$ is Cauchy-compact, the $\mathcal{C}_0$-maximal extension of $V$ is a finite quotient of $\eeu$.
\end{fact}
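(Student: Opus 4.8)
The plan is to realize the $\mathcal{C}_0$-maximal extension of $V$ as a quotient of $\eeu$ by the extended action of $\Gamma := \pi_1(V)$, and to read off the finiteness of $\Gamma$ from the Cauchy-compactness of $\tilde V$. First I would observe that $\Gamma$ is finite: a covering $\tilde V \to V$ restricts to a covering between Cauchy hypersurfaces, and since the fundamental group of a globally hyperbolic spacetime is carried by its Cauchy hypersurfaces, a Cauchy hypersurface of $\tilde V$ is the universal cover of a Cauchy hypersurface $S$ of $V$; as $\tilde V$ is Cauchy-compact, the universal cover of $S$ is compact, so $\pi_1(S) \cong \Gamma$ is finite.

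Next I would apply Corollary \ref{cor: Cauchy-compact conformally flat spacetimes} to the simply-connected spacetime $\tilde V$: the projection $\hat D \colon E(\tilde V) \to \eeu$ is a conformal diffeomorphism, so, identifying $E(\tilde V)$ with $\eeu$, the developing map $D$ becomes a conformal embedding of $\tilde V$ onto a causally convex open subset of $\eeu$, and by Proposition \ref{prop: max. ext. dev.} the spacetime $\eeu$ is the $\mathcal{C}_0$-maximal extension of $\tilde V$, with $D(\tilde S)$ a Cauchy hypersurface of $\eeu$ whenever $\tilde S$ is a Cauchy hypersurface of $\tilde V$. The deck action of $\Gamma$ on $\tilde V$ is conjugated through $D$ to the holonomy action $\rho(\Gamma) \subset \Conf(\eeu)$; since $\Gamma$ is finite, $\rho(\Gamma)$ is a finite group of conformal transformations of $\eeu$ preserving the causally convex open set $D(\tilde V)$ and acting freely on it.

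The hard part will be to promote this to a free — hence, by finiteness, properly discontinuous — action of $\rho(\Gamma)$ on all of $\eeu$. I would argue by averaging over the finite group: averaging a metric of the conformal class gives a $\rho(\Gamma)$-invariant metric still in the conformal class, and averaging a Cauchy temporal function of $\eeu$ gives a $\rho(\Gamma)$-invariant one, $\tau$. Feeding these invariant data into the Bernal--Sanchez splitting produces a $\rho(\Gamma)$-equivariant identification $\eeu \cong \tau^{-1}(0) \times \R$ in which $\rho(\Gamma)$ acts trivially on the $\R$-factor (the slabs $\{a \le \tau \le b\}$ are compact, so the relevant gradient flow is complete). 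Since $D(\tilde V)$ contains the Cauchy hypersurface $D(\tilde S)$, every flow line, i.e. every fibre $\{x\} \times \R$, meets $D(\tilde V)$, so the projection of $D(\tilde V)$ to $\tau^{-1}(0)$ is onto; as $\rho(\Gamma)$ is trivial in the $\R$-direction and free on $D(\tilde V)$, it must act freely on $\tau^{-1}(0)$, hence on $\eeu$.

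Finally I would check that $X := \rho(\Gamma) \backslash \eeu$ is the $\mathcal{C}_0$-maximal extension of $V$. It is a conformally flat spacetime, globally hyperbolic with Cauchy hypersurface $\rho(\Gamma) \backslash D(\tilde S) \cong S$, and it contains $V = \rho(\Gamma) \backslash D(\tilde V)$ as a causally convex open subset sharing this Cauchy hypersurface, hence is a conformally flat Cauchy-extension of $V$. If $W$ is any conformally flat Cauchy-extension of $X$, then, a Cauchy-embedding being an isomorphism on $\pi_1$, we get $\pi_1(W) \cong \Gamma$, and $W$ lifts to a conformally flat Cauchy-extension of the universal cover $\eeu = \mathcal{C}_0(\tilde V)$; by maximality of $\mathcal{C}_0(\tilde V)$ this lift is all of $\eeu$, whence $W = X$. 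So $X$ is $\mathcal{C}_0$-maximal, and as $\Gamma$ is finite it is a finite quotient of $\eeu$, as asserted.
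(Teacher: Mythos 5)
Your proof is correct and follows the same route as the paper, which disposes of this Fact in a single line as an \emph{immediate consequence} of Corollary \ref{cor: Cauchy-compact conformally flat spacetimes}. The difference is only one of detail: you make explicit the two points the paper leaves implicit --- that $\Gamma$ is finite because the compact Cauchy hypersurface of $\tilde V$ is the universal cover of one of $V$, and that the finite holonomy group acts freely on all of $\eeu$ (not merely on $D(\tilde V)$), which you justify by averaging the metric and a Cauchy temporal function over the group and invoking the Bernal--Sanchez splitting.
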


\begin{proof}
This is an immediate consequence of Corollary \ref{cor: Cauchy-compact conformally flat spacetimes}.
\end{proof}

Suppose now that $\tilde{V}$ is not Cauchy-compact. Let $\tilde{S}$ be a non-compact Cauchy hypersurface of $\tilde{V}$. The $\mathcal{C}_0$-maximal extension of $\tilde{V}$ is the Cauchy development $\mathcal{C}(\tilde{S})$ of $\tilde{S}$ in $E(\tilde{V})$ (see Proposition \ref{prop: max. ext. dev.}).  In what follows, we extend the action of $\Gamma$ to $\mathcal{C}(\tilde{S})$.

\paragraph{Action of $\Gamma$ on $\mathcal{C}(\tilde{S})$.} The points of $\mathcal{C}(\tilde{S})$ are characterized by their shadows on $\tilde{S}$ (see Proposition \ref{prop: shadows}). As a result, we show that the action of $\Gamma$ on $\tilde{S}$ induces naturally an action of $\Gamma$ on $\mathcal{C}(\tilde{S})$: 

\begin{proposition}\label{prop: action of the Cauchy dev. of S}
Let $p \in \mathcal{C}(\tilde{S})$ and let $\gamma \in \Gamma$. There exists a unique point $\gamma.p$ in $\mathcal{C}(\tilde{S})$ such that its shadow on $\tilde{S}$ is exactly the image under $\gamma$ of the shadow of $p$ on $\tilde{S}$. This defines an action of $\Gamma$ on $\mathcal{C}(\tilde{S})$ which satisfies the following properties:
\begin{itemize}
\item the restriction of the action of $\Gamma$ on $\mathcal{C}(\tilde{S})$ to $\tilde{V}$ coincide with the usual action of $\Gamma$ on $\tilde{V}$;
\item the action of $\Gamma$ on $\mathcal{C}(\tilde{S})$ preserves the causality relations, i.e. for every $p \in \mathcal{C}(\tilde{S})$ and for every $\gamma \in \Gamma$, we have:
\begin{align*}
p \in J^-(q) \Leftrightarrow \gamma.p \in J^-(\gamma.q)\\
p \in I^-(q) \Leftrightarrow \gamma.p \in I^-(\gamma.q);
\end{align*}
\item the restriction of $\hat{D}$ to $\mathcal{C}(\tilde{S})$ is $\rho$-equivariant, i.e. for every $p \in \mathcal{C}(\tilde{S})$ and for every $\gamma \in \Gamma$, we have:
\begin{align*}
\hat{D}(\gamma.p) &= \rho(\gamma) \hat{D}(p).
\end{align*}
\end{itemize}
\end{proposition}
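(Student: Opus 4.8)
The plan is to use the shadow characterization of points in $\mathcal{C}(\tilde S)$ provided by Proposition~\ref{prop: shadows}. Since $\tilde S$ is non-compact, a point $p\in\mathcal{C}(\tilde S)$ lying in the chronological future (resp. past) of $\tilde S$ is completely determined by its shadow $O(p,\tilde S)\subset\tilde S$; moreover $p\in\tilde S$ is determined by $O(p,\tilde S)=\{p\}$. So first I would fix $\gamma\in\Gamma$ and $p\in\mathcal{C}(\tilde S)$, and note that since $\gamma$ acts on $\tilde V$ by a conformal Cauchy-automorphism it permutes the Cauchy hypersurfaces; in particular $\gamma(\tilde S)$ is again a Cauchy hypersurface of $\tilde V$ and of $\mathcal{C}(\tilde S)$. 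The key point is to promote the set $\gamma\bigl(O(p,\tilde S)\bigr)\subset\tilde S$ to the shadow of an actual point of $\mathcal{C}(\tilde S)$: I would use that $\gamma$ extends the causal relations on $\tilde V$ (it is conformal on $\tilde V$, hence causal), so if $p\in I^+(\tilde S)$, the set $O(p,\tilde S)=J^-(p)\cap\tilde S$ is a ``past shadow'' of the right combinatorial type, and $\gamma$ carries such sets to sets of the same type. Then I would invoke surjectivity of the shadow map onto the relevant class of compact subsets (implicit in the proof of Proposition~\ref{prop: shadows} in \cite{salveminithesis}), or argue directly: take any $q\in\mathcal{C}(\tilde S)$ with $O(q,\tilde S)$ sufficiently large to contain $\gamma(O(p,\tilde S))$ and then cut down, using that $\mathcal{C}(\tilde S)$ is globally hyperbolic with Cauchy hypersurface $\tilde S$. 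This yields the desired $\gamma.p$, and uniqueness is immediate from Proposition~\ref{prop: shadows}.

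Once the map $p\mapsto\gamma.p$ is defined, I would check the group-action axioms: $e.p=p$ is clear since $e$ fixes every shadow, and $(\gamma_1\gamma_2).p=\gamma_1.(\gamma_2.p)$ follows because both sides have shadow $\gamma_1\gamma_2\bigl(O(p,\tilde S)\bigr)$ on $\tilde S$, so they coincide by uniqueness. To see that the restriction to $\tilde V$ is the old action, I would use that for $p\in\tilde V$ the shadow $O(p,\tilde S)$ computed in $\mathcal{C}(\tilde S)$ agrees with the one computed in $\tilde V$ (because $\tilde V$ is causally convex in $\mathcal{C}(\tilde S)$, so causal curves between points of $\tilde V$ stay in $\tilde V$), and $\gamma$ acts on $\tilde V$ conformally, hence $O(\gamma\cdot p,\tilde S)=\gamma(O(p,\tilde S))$; uniqueness then forces $\gamma.p=\gamma\cdot p$.

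For the preservation of causality, I would argue as follows. Suppose $p\in J^-(q)$ in $\mathcal{C}(\tilde S)$. In a globally hyperbolic spacetime with Cauchy hypersurface $\tilde S$, causal relations are encoded by shadows: one has $p\in J^-(q)$ if and only if $O(p,\tilde S)$ and $O(q,\tilde S)$ are ``nested'' in the appropriate causal sense relative to $\tilde S$ — more concretely, using that $O(p,\tilde S)$ is (for $p$ in the future of $\tilde S$) exactly $J^-(p)\cap\tilde S$, so $p\in J^-(q)$ forces $J^-(p)\subset J^-(q)$ hence $O(p,\tilde S)\subset O(q,\tilde S)$, and conversely this inclusion of shadows together with global hyperbolicity recovers $p\in J^-(q)$; the mixed cases (one point in the future, one in the past of $\tilde S$) are handled similarly with diamonds. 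Since $\gamma$ maps shadows to shadows by a causal bijection of $\tilde S$ coming from a conformal map, these nesting conditions are preserved, giving the equivalence; the timelike version is identical replacing $J$ by $I$. Finally, for $\rho$-equivariance of $\hat D|_{\mathcal{C}(\tilde S)}$, I would use that on $\tilde V$ one already has $\hat D\circ i = D$ and $D(\gamma\cdot x)=\rho(\gamma)D(x)$, so $\hat D$ and $p\mapsto\rho(\gamma)^{-1}\hat D(\gamma.p)$ are two conformal maps $\mathcal{C}(\tilde S)\to\eeu$ agreeing on the open set $\tilde V$; by Theorem~\ref{Liouville theorem} (analyticity/rigidity of conformal maps), or simply by connectedness and the identity theorem for $(O(2,n),Ein)$-developing maps, they agree everywhere, i.e. $\hat D(\gamma.p)=\rho(\gamma)\hat D(p)$.

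The main obstacle I anticipate is the first step: rigorously producing the point $\gamma.p$ from the set $\gamma(O(p,\tilde S))$, i.e. showing that the image under $\gamma$ of a shadow is again a shadow of some point of $\mathcal{C}(\tilde S)$. This needs either a clean characterization of which compact subsets of $\tilde S$ arise as shadows (and invariance of that characterization under causal automorphisms of $\tilde S$), or a direct construction of $\gamma.p$ as an intersection of futures/pasts of points of $\gamma(O(p,\tilde S))$ followed by a verification that this lands in $\mathcal{C}(\tilde S)$ with the correct shadow. The other steps (action axioms, restriction to $\tilde V$, causality, equivariance) are then formal consequences of uniqueness in Proposition~\ref{prop: shadows} and the rigidity of developing maps.
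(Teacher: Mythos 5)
Your outline correctly reduces uniqueness, the group-action axioms, the compatibility with the old action on $\tilde{V}$, and the preservation of causality to Proposition~\ref{prop: shadows} and the nesting of shadows, and this part matches the paper's ``analysis'' step. But the heart of the proposition is the \emph{existence} of $\gamma.p$, and there you have a genuine gap that you yourself flag without closing: neither of your two suggested routes works as stated. There is no characterization in the paper (or an obvious one) of which compact subsets of $\tilde{S}$ arise as shadows, so ``surjectivity of the shadow map onto the relevant class'' is not something you can invoke; and ``take $q$ with a large shadow and cut down'' is not a construction. The paper closes this gap by leaving the world of shadows and using the developing map and the holonomy: for $p\in I^+(\tilde S)$ it fixes $q\in I^+(p)\cap\mathcal{C}(\tilde S)$, sets $\Sigma:=D(I^-(q)\cap\tilde S)$, and considers the Cauchy development $W$ of $\gamma(I^-(q)\cap\tilde S)$ in $E(\tilde V)$. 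Facts~\ref{lemma: neighborhood of p} and~\ref{lemma: neighborhood of gamma.p} show that $\hat D|_W$ is injective and that $\hat D(W)$ is the \emph{entire} Cauchy development of $\rho(\gamma)\Sigma$ in $\eeu$ — the surjectivity being the delicate point, which uses the $\mathcal{C}_0$-maximality of Cauchy developments in the enveloping space (Proposition~\ref{prop: max. ext. dev.}). Since $\rho(\gamma)\hat D(p)$ has shadow $\rho(\gamma)O(\hat D(p),\Sigma)$ on $\rho(\gamma)\Sigma$, it lies in $\hat D(W)$, and $\gamma.p$ is defined as its unique $\hat D|_W$-preimage; the identity $O(\gamma.p)=\gamma O(p)$ is then checked and uniqueness follows from Proposition~\ref{prop: shadows}. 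This transfer to $\eeu$ via $\rho(\gamma)$ is the missing idea.

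A secondary problem is your argument for $\rho$-equivariance. You propose to compare $\hat D$ with $p\mapsto\rho(\gamma)^{-1}\hat D(\gamma.p)$ and conclude by rigidity of conformal maps, but at that stage of your argument $p\mapsto\gamma.p$ is only a set-theoretic bijection characterized by shadows; you do not yet know it is conformal, or even continuous, so the second map is not known to be a local conformal diffeomorphism and the identity theorem does not apply. In the paper this issue does not arise: equivariance is built into the synthesis (the point $\gamma.p$ is \emph{defined} by $\hat D(\gamma.p)=\rho(\gamma)\hat D(p)$), and in the analysis it is derived purely from shadow computations inside the injectivity domains $I^\pm(q,\mathcal{C}(\tilde S))$ rather than from conformal rigidity.
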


We prove Proposition \ref{prop: action of the Cauchy dev. of S} by an analysis-synthesis reasoning. In the analysis, we suppose that $\gamma.p$ exists and is unique, and we look for properties satified by $\gamma.p$ that will characterize it. In the synthesis, we use the criteria found in the analysis to determine the point $\gamma.p$. In what follows, the shadow of a point $p \in \mathcal{C}(\tilde{S})$ on $\tilde{S}$ is denoted by $O(p)$. 

\subparagraph{Analysis.} Suppose that for every $\gamma \in \Gamma$ and every $p \in \mathcal{C}(\tilde{S})$, the point $\gamma.p$ exists and is unique. For every $\gamma \in \Gamma$ and every $p \in \tilde{V}$, we denote by $\gamma p$ (without the dot between $\gamma$ and $p$) the usual action of the deck transformation $\gamma$ on $p$. Let us start with this easy remark.

\begin{remark}\label{remark: the action on the Cauchy dev is an extension of the action on V univ}
Let $p \in \mathcal{C}(\tilde{S})$ and let $\gamma \in \Gamma$. If $p \in \tilde{V}$, then $\gamma.p = \gamma p$. Indeed, since the action of $\Gamma$ on $\tilde{V}$ respect the causality relations, we have $J^-(\gamma p) = \gamma J^-(p)$. Therefore,
\begin{align*}
O(\gamma p) := J^-(\gamma p) \cap \tilde{S} = \gamma J^-(p) \cap \tilde{S} = \gamma (J^-(p) \cap \gamma^{-1}\tilde{S}).
\end{align*}
Since $\tilde{S}$ is $\Gamma$-invariant, we deduce that $\gamma (J^-(p) \cap \gamma^{-1}\tilde{S}) = \gamma O(p)$. Hence, $O(\gamma p) = \gamma O(p)$, i.e. $\gamma.p = \gamma p$.
\end{remark}

\begin{lemma}\label{lemma: the action on the Cauchy dev. respect causality}
The map which associates to every $(\gamma, p) \in \Gamma \times \mathcal{C}(\tilde{S})$ the point $\gamma.p \in \mathcal{C}(\tilde{S})$ is a group action. Moreover, this action respect causality relations.

\end{lemma}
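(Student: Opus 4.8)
The plan is to verify the group action axioms directly from the defining property established in Proposition \ref{prop: action of the Cauchy dev. of S}, namely that $\gamma.p$ is the unique point of $\mathcal{C}(\tilde S)$ whose shadow is $\gamma O(p)$, and then to deduce that causality is preserved by exploiting Proposition \ref{prop: shadows}, which tells us that on $\mathcal{C}(\tilde S)$ (with $\tilde S$ non-compact) a point is completely determined by its shadow on $\tilde S$. First I would check the identity axiom: the shadow of $e.p$ is $e\cdot O(p) = O(p)$, so by uniqueness $e.p = p$. For associativity, I would compute the shadow of $\gamma_1.(\gamma_2.p)$: by definition it is $\gamma_1 \cdot O(\gamma_2.p) = \gamma_1\cdot(\gamma_2\cdot O(p)) = (\gamma_1\gamma_2)\cdot O(p)$, which is exactly the shadow of $(\gamma_1\gamma_2).p$; uniqueness gives $\gamma_1.(\gamma_2.p) = (\gamma_1\gamma_2).p$. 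In particular each $\gamma$ acts bijectively on $\mathcal{C}(\tilde S)$, with inverse $\gamma^{-1}$.

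Next I would establish that the action preserves the causal relations. The key observation is that for $p, q \in \mathcal{C}(\tilde S)$ in the chronological future of $\tilde S$ (the relevant case, since the general case reduces to it by also considering the reverse time-orientation and the past Cauchy development), one has $p \in J^-(q)$ if and only if $O(p) \subset O(q)$. The implication $p \in J^-(q) \Rightarrow O(p)\subset O(q)$ is immediate from transitivity of $J^-$; the converse is where I would invoke the characterization of points by shadows, together with the fact that in a globally hyperbolic spacetime $J^-(q)$ is the Cauchy development of $O(q) = J^-(q)\cap \tilde S$ restricted suitably — more precisely, if $O(p)\subset O(q)$ then every point of $O(p)$ lies in $J^-(q)$, hence $p$, being in the future Cauchy development of $\tilde S$ and having all its past-directed inextensible causal curves meeting $\tilde S$ inside $O(p)\subset O(q)\subset J^-(q)$, itself lies in $J^-(q)$. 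Granting the shadow criterion $p\in J^-(q)\Leftrightarrow O(p)\subset O(q)$, the equivariance is formal: $p\in J^-(q) \Leftrightarrow O(p)\subset O(q) \Leftrightarrow \gamma O(p)\subset \gamma O(q) \Leftrightarrow O(\gamma.p)\subset O(\gamma.q) \Leftrightarrow \gamma.p \in J^-(\gamma.q)$, using that $\gamma$ acts as a homeomorphism of $\tilde S$. The chronological version $I^-$ follows the same way, or alternatively from the fact that $I^-(q) = \mathrm{int}\, J^-(q)$ in a strongly causal spacetime, once one knows $\gamma$ acts by homeomorphisms of $\mathcal{C}(\tilde S)$ — which will itself be part of Proposition \ref{prop: action of the Cauchy dev. of S}, so here it is cleanest to argue via shadows.

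The main obstacle I anticipate is the equivalence $p\in J^-(q)\Leftrightarrow O(p)\subset O(q)$, specifically the "$\Leftarrow$" direction: it requires knowing that a point of the Cauchy development is recovered from its shadow not just set-theoretically (Proposition \ref{prop: shadows}) but in a way compatible with the causal order, i.e. that $q$ "causally dominates" exactly the points whose shadows it contains. I would handle this by taking an inextensible past-directed causal curve $\alpha$ through $p$; since $p\in\mathcal{C}^+(\tilde S)$, $\alpha$ meets $\tilde S$ at a point $x\in O(p)\subset O(q)$, so $x\in J^-(q)$; then the segment of $\alpha$ from $p$ back to $x$ together with a causal curve from $x$ to $q$ exhibits $p\in J^-(q)$ — one must check the concatenation is causal and future-directed, which is where the achronality/acausality of $\tilde S$ and global hyperbolicity of $\mathcal{C}(\tilde S)$ enter. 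This is essentially the content behind C. Rossi's shadow formalism (\cite[Chapitre 4]{salveminithesis}), so I would cite it rather than redo it in full. Everything else is bookkeeping.
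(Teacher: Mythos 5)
Your overall strategy is the paper's: deduce the group-action axioms from the uniqueness of the point with prescribed shadow, and transport causal relations through an equivalence between the causal order on $\mathcal{C}(\tilde S)$ and an inclusion of shadows, using that $\Gamma$ acts on $\tilde S$ by homeomorphisms. The identity/associativity computations are fine. However, your case reduction is incomplete: reversing the time orientation covers $p,q\in J^-(\tilde S)$ in addition to $p,q\in J^+(\tilde S)$, but not the mixed case $p\in I^-(\tilde S)$, $q\in I^+(\tilde S)$, where the two shadows are taken on opposite sides of $\tilde S$ (namely $O(p)=J^+(p)\cap\tilde S$ but $O(q)=J^-(q)\cap\tilde S$) and the correct criterion is not containment but $O(p)\cap O(q)\neq\emptyset$. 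The paper treats this case separately, using in addition that $\Gamma$ preserves $I^\pm(\tilde S)$ so that $\gamma.p$ and $\gamma.q$ stay on opposite sides.

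The more serious problem is your justification of the implication $O(p)\subset O(q)\Rightarrow p\in J^-(q)$. The two curves you produce --- the past-directed segment of $\alpha$ from $p$ down to $x\in\tilde S$, and a future-directed causal curve from $x$ up to $q$ --- both emanate from $x$ towards the future, so their union is never a causal curve from $p$ to $q$; what your argument actually establishes is only $x\in J^-(p)\cap J^-(q)$, which already holds whenever the shadows merely intersect. (In $\R^{1,1}$ with $\tilde S=\{t=0\}$, the points $p=(1,0)$ and $q=(1,3/2)$ have overlapping shadows yet are not causally related, so no repair of that concatenation can work.) The implication is genuinely nontrivial: it fails when the Cauchy hypersurface is compact --- in $\eeu$ the points $(x_0,\pi)$ and $(-x_0,\pi)$ have equal shadows on $\mathbb{S}^{n-1}\times\{0\}$ but are distinct and non-causally related --- so any proof must use the non-compactness of $\tilde S$, as in Proposition \ref{prop: shadows}, typically via the null-geodesic structure of the achronal boundary $\partial J^-(q)$. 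The paper itself invokes the equivalence $p\in J^-(q)\Leftrightarrow O(p)\subset O(q)$ without reproving it, deferring to \cite{salveminithesis}; citing Rossi is acceptable, but the explicit argument you offer in its place is not a correct substitute and should be removed or replaced by that citation.
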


\begin{proof}
The fact that the map $(\gamma, p) \in \Gamma \times \mathcal{C}(\tilde{S}) \mapsto \gamma.p$ is a group action follows easily from the fact that the restriction to $\Gamma \times \tilde{S}$ is the usual group action of $\Gamma$ on $\tilde{S}$.

Let $\gamma \in \Gamma$ and let $p,q \in \mathcal{C}(\tilde{S})$. Suppose that $p,q \in J^+(S)$. Then,
\begin{align*}
p \in J^-(q) \Leftrightarrow O(p) \subset O(q) \Leftrightarrow O(\gamma.p) := \gamma O(p) \subset \gamma O(q) =: O(\gamma.q) \Leftrightarrow \gamma.p \in J^-(\gamma.q).
\end{align*}
By symmetry, the same arguments still hold if $p,q \in J^-(\tilde{S})$. It remains the case where $q \in I^+(\tilde{S})$ and $p \in I^-(\tilde{S})$. In this case, we have
\begin{align*}
p \in J^-(q) \Leftrightarrow O(p) \cap O(q) \not = \emptyset \Leftrightarrow \gamma O(p) \cap \gamma O(q) \not = \emptyset \Leftrightarrow O(\gamma.p) \cap O(\gamma.q) \not = \emptyset.
\end{align*}
Since $\Gamma$ preserves $\tilde{S}$, it preserves the chronological future/past of $\tilde{S}$ in $\mathcal{C}(\tilde{S})$. Thus, $O(\gamma.p) \cap O(\gamma.q) \not = \emptyset \Leftrightarrow \gamma.p \in J^-(\gamma.q)$. The lemma follows.
\end{proof}

\begin{lemma}\label{lemma: the restriction of dev to the max. ext. is equivariant}
The restriction of the developing map $\hat{D}: E(\tilde{V}) \to \eeu$ to $\mathcal{C}(\tilde{S})$ is $\rho$-equivariant, i.e. for every $p \in \mathcal{C}(\tilde{S})$ and for every $\gamma \in \Gamma$, we have $\hat{D}(\gamma.p) = \rho(\gamma)\hat{D}(p)$.
\end{lemma}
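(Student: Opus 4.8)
The plan is to argue that both points $\hat{D}(\gamma.p)$ and $\rho(\gamma)\hat{D}(p)$ lie in $\mathcal{C}(\tilde{S})$ (or rather in a region where points are determined by shadows) and have the same shadow on the Cauchy hypersurface, then invoke Proposition \ref{prop: shadows} for uniqueness. First I would recall that by construction $\gamma.p$ is the unique point of $\mathcal{C}(\tilde{S})$ whose shadow on $\tilde{S}$ equals $\gamma O(p)$; the content to prove is that $\rho(\gamma)$ sends the developing image of $p$ to the developing image of the point with shadow $\gamma O(p)$. Since the statement is being proved inside the ``analysis'' part — where the existence and uniqueness of $\gamma.p$ is assumed — I may freely use that $\gamma.p$ is well-defined with the stated shadow, together with Lemma \ref{lemma: the action on the Cauchy dev. respect causality}.

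The key steps, in order, are as follows. First, reduce to the case $p \in I^+(\tilde{S})$ (the past case is symmetric, and for $p \in \tilde{S}$ the statement is already the $\rho$-equivariance of $D$ itself, since $\gamma.p = \gamma p$ by Remark \ref{remark: the action on the Cauchy dev is an extension of the action on V univ} and $D$ is equivariant by definition of the holonomy). Second, observe that for a point $p$ strictly in the future of $\tilde{S}$, the shadow $O(p) = J^-(p) \cap \tilde{S}$ determines $p$ as the unique point whose past cone ``hits'' $\tilde{S}$ exactly in $O(p)$; concretely, $p$ is recovered from $O(p)$ because $\hat{D}$ restricted to a small causally convex neighbourhood is a conformal diffeomorphism onto a causally convex open subset of $\eeu$, and in $\eeu$ a point is determined by the (non-compact) cross-section its lightcone cuts on a Cauchy hypersurface. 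Third, push everything through $\hat{D}$: write $q := \gamma.p$, so $O(q) = \gamma O(p)$ by definition. Applying $\hat{D}$ and using that $D$ is $\rho$-equivariant on $\tilde{S} \subset \tilde{V}$, we get that $\hat{D}$ sends the shadow of $q$ to $\rho(\gamma)\,\hat{D}(O(p))$, i.e. the lightcone of $\hat{D}(q)$ in $\eeu$ meets the Cauchy hypersurface $\hat{D}(\tilde{S})$ — which is $\rho(\gamma)$-invariant only in the sense that $\rho(\gamma)$ maps it to another global section — in exactly $\rho(\gamma)$ applied to the cross-section cut out by the lightcone of $\hat{D}(p)$. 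Since $\rho(\gamma)$ is a conformal transformation of $\eeu$, it maps lightcones to lightcones and shadows to shadows, so the lightcone of $\rho(\gamma)\hat{D}(p)$ cuts the same cross-section. By the uniqueness statement (Proposition \ref{prop: shadows} transported to $\eeu$, or directly because a causal diamond in $\eeu$ determines its vertices), we conclude $\hat{D}(q) = \rho(\gamma)\hat{D}(p)$, which is the claim.

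The main obstacle I anticipate is making precise the passage ``a point in $\mathcal{C}(\tilde{S})$ in the future of $\tilde{S}$ is determined by the cross-section its lightcone cuts on $\tilde{S}$, and this determination commutes with $\hat{D}$''. The subtlety is that $\hat{D}$ is only a local homeomorphism, not injective on all of $\mathcal{C}(\tilde{S})$, so one cannot simply say ``apply $\hat{D}$ and compute in $\eeu$''. The clean way around this is to use that $\hat{D}$ is injective in restriction to $I^+(p,\Omega)$ for suitable causally convex $\Omega$ (as already invoked in Section \ref{sec: Causally convex subsets of E(M)}, citing \cite[Prop. 2.7 and Cor. 2.8, p.151]{salveminithesis}), combined with the fact that $\tilde{S}$ is non-compact so Proposition \ref{prop: shadows} applies verbatim on $\mathcal{C}(\tilde{S})$; then one only needs that $\hat{D}$ intertwines shadows-on-$\tilde{S}$ with shadows-on-$\hat{D}(\tilde{S})$, which follows from Lemma \ref{lemma: the action on the Cauchy dev. respect causality} together with $\hat{D}$ being conformal (hence causality-preserving). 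With that bookkeeping in place the identity $\hat{D}(\gamma.p) = \rho(\gamma)\hat{D}(p)$ follows formally.
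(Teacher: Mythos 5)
Your strategy is correct and essentially the same as the paper's: the paper also reduces to $p \in I^+(\tilde{S})$, restricts $\hat{D}$ to a diamond $I^-(q,\mathcal{C}(\tilde{S}))$ with $q \in I^+(p)$ where it is injective with causally convex image, transports shadows via the $\rho$-equivariance of $D$ on $\tilde{S}$ and the conformality of $\rho(\gamma)$, and concludes by Proposition \ref{prop: shadows}. The only imprecision is your passing reference to ``the Cauchy hypersurface $\hat{D}(\tilde{S})$'' of $\eeu$ (which need not be achronal or embedded globally), but you correctly identify and resolve this in your final paragraph by working with the image $\Sigma = D(I^-(q)\cap\tilde{S})$ of a local piece, exactly as the paper does.
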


\begin{proof}
Let $\gamma \in \Gamma$ and let $p \in \mathcal{C}(\tilde{S})$. Suppose that $p \in I^+(\tilde{S})$. Let $q \in \mathcal{C}(\tilde{S}) \cap I^+(p)$. Since $\mathcal{C}(\tilde{S})$ is globally hyperbolic, the restriction of $\hat{D}$ to $I^-(q, \mathcal{C}(\tilde{S}))$ is injective and its image is causally convex in $\eeu$ (see \cite[Prop. 2.7 and Cor. 2.8, p. 151]{salveminithesis}). It follows that $D(I^-(q) \cap \tilde{S})$ is an achronal hypersurface of $\hat{D}(I^-(q, \mathcal{C}(\tilde{S})))$. 

Set $\Sigma:= D(I^-(q) \cap \tilde{S})$. Since $O(p) \subset I^-(q, \mathcal{C}(\tilde{S}))$, the restriction of $D$ to $O(p)$ is injective and its image is exactly $O(\hat{D}(p), \Sigma)$. Thus
\begin{align}\label{equality 1: proof lemma dev equivariant}
D(\gamma O(p)) = \rho(\gamma) D(O(p))
               = \rho(\gamma) O(\hat{D}(p), \Sigma)
               = O (\rho(\gamma) \hat{D}(p), \rho(\gamma) \Sigma).
\end{align}
By definition, $\gamma O(p) = O(\gamma. p)$. From Lemma \ref{lemma: the action on the Cauchy dev. respect causality}, we get $O(\gamma.p) \subset I^-(\gamma.q, \mathcal{C}(\tilde{S}))$. As above, we deduce that the restriction of $D$ to $O(\gamma.p)$ is injective and its image is exactly $O(\hat{D}(\gamma.p), D(I^-(\gamma.p) \cap \tilde{S}))$. But, $D(I^-(\gamma.p) \cap \tilde{S}) = \rho(\gamma) \Sigma$. Indeed, since $O(\gamma.q) = \gamma O(q)$ we have $I^-(\gamma.q) \cap \tilde{S} = \gamma (I^-(q) \cap \tilde{S})$. Thus, $D(I^-(\gamma.q) \cap \tilde{S}) = \rho(\gamma) \Sigma$. Hence,
\begin{align}\label{equality 2: proof lemma dev equivariant}
D(\gamma O(p)) = D(O(\gamma.p)) = O(\hat{D}(\gamma.p), \rho(\gamma) \Sigma).
\end{align}
It follows from (\ref{equality 1: proof lemma dev equivariant}) and (\ref{equality 2: proof lemma dev equivariant}) that $O (\rho(\gamma) \hat{D}(p), \rho(\gamma) \Sigma) = O(\hat{D}(\gamma.p), \rho(\gamma) \Sigma)$. By Proposition \ref{prop: shadows}, we deduce that $\hat{D}(\gamma.p) = \rho(\gamma) \hat{D}(p)$.

If $p \in I^-(\tilde{S})$, the same arguments hold with the reverse time-orientation. Lastly, if $p \in \tilde{S}$, the lemma follows from Remark \ref{remark: the action on the Cauchy dev is an extension of the action on V univ}.
\end{proof}

\begin{remark}\label{remark: appropriate neighborhood}
In the proof of Lemma \ref{lemma: the restriction of dev to the max. ext. is equivariant}, we can replace $I^-(q, \mathcal{C}(\tilde{S}))$ by any other causally convex open neighborhood $U$ of $p$ in $\mathcal{C}(\tilde{S})$ such that:
\begin{itemize}
\item $U$ intersects $\tilde{S}$;
\item the restriction of $\hat{D}$ to $U$ is injective;
\item the image $\hat{D}(U)$ is causally convex in $\eeu$.
\end{itemize}
\end{remark}

\subparagraph{Synthesis.} Let $p \in \mathcal{C}(\tilde{S})$ and let $\gamma \in \Gamma$. Without loss of generality, we suppose that $p \in I^+(\tilde{S})$. The idea is to reconstruct the proof of Lemma \ref{lemma: the restriction of dev to the max. ext. is equivariant} to determine $\gamma.p$. More precisely, we choose a relevant causally convex open neighborhood $U$ of $p$ in $\mathcal{C}(\tilde{S})$ satisfying the properties stated in Remark \ref{remark: appropriate neighborhood} and we construct the open subset that will turn out to be $\gamma.U$ and will therefore contain $\gamma.p$ (see Figure \ref{figure: maximal extension}).\\

Fix $q \in \mathcal{C}(\tilde{S}) \cap I^+(p)$. Let $U$ be the Cauchy development of $I^-(q) \cap \tilde{S}$ in $E(\tilde{V})$. 

\begin{lemma}\label{lemma: neighborhood of p in eeu}
The image under $D$ of $I^-(q) \cap \tilde{S}$ is achronal in $\eeu$.
\end{lemma}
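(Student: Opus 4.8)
The plan is to reduce the statement to the injectivity of the developing map on chronological pasts in a globally hyperbolic spacetime. Recall that $\mathcal{C}(\tilde{S})$ is globally hyperbolic by Proposition~\ref{prop: max. ext. dev.}, so --- exactly as in the proof of Lemma~\ref{lemma: the restriction of dev to the max. ext. is equivariant} --- the restriction of $\hat{D}$ to $I^-(q,\mathcal{C}(\tilde{S}))$ is injective with causally convex image in $\eeu$ (see \cite[Prop.~2.7 and Cor.~2.8, p.~151]{salveminithesis}); being moreover a conformal local diffeomorphism, $\hat{D}$ therefore restricts to a conformal diffeomorphism from $I^-(q,\mathcal{C}(\tilde{S}))$ onto an open subset $\Omega\subset\eeu$ which is causally convex in $\eeu$. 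Since $\tilde{S}\subset\tilde{V}$ and $\hat{D}$ agrees with $D$ on $\tilde{V}$ (through the embedding $i$, as $\hat{D}\circ i=D$), we have $D(I^-(q)\cap\tilde{S})=\hat{D}\bigl(\tilde{S}\cap I^-(q,\mathcal{C}(\tilde{S}))\bigr)$, so it is enough to show that the latter set is achronal in $\eeu$.

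Next I would observe that $\tilde{S}$ is \emph{acausal} in $\mathcal{C}(\tilde{S})$: it is a Cauchy hypersurface of $\mathcal{C}(\tilde{S})$, and a causal curve meeting $\tilde{S}$ at two distinct points extends to an inextensible causal curve meeting $\tilde{S}$ more than once, contradicting the definition of a Cauchy hypersurface. Hence $\tilde{S}\cap I^-(q,\mathcal{C}(\tilde{S}))$ is acausal in the open subspacetime $I^-(q,\mathcal{C}(\tilde{S}))$, since any causal curve of the latter is a causal curve of $\mathcal{C}(\tilde{S})$. As $\hat{D}$ maps the causal curves of $I^-(q,\mathcal{C}(\tilde{S}))$ bijectively onto the causal curves of $\Omega$, the image $\hat{D}\bigl(\tilde{S}\cap I^-(q,\mathcal{C}(\tilde{S}))\bigr)$ is acausal in $\Omega$.

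Finally I would promote acausality in $\Omega$ to acausality, hence achronality, in $\eeu$ using the causal convexity of $\Omega$: if a causal curve of $\eeu$ met $\hat{D}\bigl(\tilde{S}\cap I^-(q,\mathcal{C}(\tilde{S}))\bigr)$ at two distinct points $a,b$, then the sub-arc joining $a$ to $b$ would have both endpoints in the causally convex set $\Omega$, hence lie entirely in $\Omega$, contradicting acausality there. This yields the lemma. I do not anticipate a genuine obstacle; the only delicate points are the bookkeeping of which ambient spacetime each chronological past is taken in, the identification $\hat{D}|_{\tilde{V}}=D$ (so that $\hat{D}(\tilde{S}\cap I^-(q))=D(I^-(q)\cap\tilde{S})$), and checking that the cited injectivity statement applies verbatim to $I^-(q,\mathcal{C}(\tilde{S}))$.
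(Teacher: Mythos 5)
Your proof is correct and follows essentially the same route as the paper: injectivity of $\hat{D}$ on $I^-(q,\mathcal{C}(\tilde{S}))$ via the global hyperbolicity of $\mathcal{C}(\tilde{S})$, achronality (indeed acausality) of the image of $\tilde{S}\cap I^-(q,\mathcal{C}(\tilde{S}))$ inside $\hat{D}(I^-(q,\mathcal{C}(\tilde{S})))$, and then the causal convexity of that image in $\eeu$ to promote this to achronality in all of $\eeu$. Your write-up just spells out the steps the paper leaves implicit.
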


\begin{proof}
We have $I^-(q) \cap \tilde{S} \subset I^-(q, \mathcal{C}(\tilde{S}))$. Since $\mathcal{C}(\tilde{S})$ is GH, the restriction of $\hat{D}$ to $I^-(q, \mathcal{C}(\tilde{S}))$ is injective (see \cite[Prop. 2.7, p. 151]{salveminithesis}). Then, $D(I^-(p) \cap \tilde{S})$ is achronal in $\hat{D}(I^-(q, \mathcal{C}(\tilde{S})))$. This last one being causally convex in $\eeu$ (see \cite[Cor. 2.8, p. 151]{salveminithesis}), we deduce easily that $D(I^-(p) \cap \tilde{S})$ is achronal in $\eeu$.
\end{proof}

\begin{fact}\label{lemma: neighborhood of p}
The restriction of $\hat{D}$ to $U$ is injective. Moreover, the image under $\hat{D}$ of $U$ is equal to the Cauchy development of $D(I^-(q) \cap \tilde{S})$ in $\eeu$.
\end{fact}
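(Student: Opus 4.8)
The plan is to leverage the fact that $U$ is, by construction, the Cauchy development of the achronal set $\Sigma_0 := I^-(q) \cap \tilde S$ inside $E(\tilde V)$, and that $D(\Sigma_0)$ is achronal in $\eeu$ by Lemma \ref{lemma: neighborhood of p in eeu}. The key point is that $\hat D$ restricted to a Cauchy development of an achronal hypersurface behaves well precisely because Cauchy developments are globally hyperbolic and $\hat D$ is a conformal local diffeomorphism that is injective along causal curves (Lemma \ref{lemma: restriction of D to a causal curve}). First I would recall that $U = \mathcal{C}(\Sigma_0)$ is globally hyperbolic with $\Sigma_0$ as a Cauchy hypersurface (by \cite[Theorem 38, p.421]{oneill}), and that the restriction of $\hat D$ to any globally hyperbolic causally convex piece containing $\Sigma_0$ is governed by \cite[Prop. 2.7 and Cor. 2.8, p.151]{salveminithesis}: we actually want to check directly that $U$ itself is causally convex in $E(\tilde V)$, which follows because $\Sigma_0$, being of the form $I^-(q)\cap \tilde S$ with $\tilde S$ a Cauchy hypersurface of a GH spacetime, sits inside the GH spacetime $\mathcal{C}(\tilde S)$ and its Cauchy development there is causally convex; since $\mathcal{C}(\tilde S)$ is itself causally convex in $E(\tilde V)$, $U=\mathcal{C}(\Sigma_0)$ is causally convex in $E(\tilde V)$ as well.

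For \textbf{injectivity of $\hat D$ on $U$}: suppose $x,y \in U$ with $\hat D(x) = \hat D(y)$. Both $x$ and $y$ lie on inextensible causal curves of $U$ meeting $\Sigma_0$; let $x_0, y_0$ be the corresponding points of $\Sigma_0$. Now $\hat D$ maps the causal segment from $x_0$ to $x$ and the causal segment from $y_0$ to $y$ to causal curves of $\eeu$ starting on the achronal set $D(\Sigma_0)$ and ending at the common point $\hat D(x)=\hat D(y)$. Because $\mathcal{C}(\tilde S)$ is GH and $U \subset \mathcal{C}(\tilde S)$, the restriction of $\hat D$ to a suitable GH causally convex neighborhood of $x$ (e.g. $I^-(q,\mathcal{C}(\tilde S))$, which contains $U$) is injective by \cite[Prop. 2.7, p.151]{salveminithesis}; hence $x=y$ directly. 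I would phrase the argument this way — $U \subset I^-(q,\mathcal{C}(\tilde S))$ and $\hat D$ is already injective on the latter — which makes injectivity essentially immediate and avoids re-deriving it.

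For the \textbf{image computation $\hat D(U) = \mathcal{C}(D(\Sigma_0))$ in $\eeu$}: the inclusion $\hat D(U)\subset \mathcal{C}(D(\Sigma_0))$ follows since any inextensible causal curve of $\eeu$ through $\hat D(x)$, pulled back through the local diffeomorphism $\hat D$ along its causal lift (using that $U$ is causally convex in $E(\tilde V)$, so the lift stays in $U$ until it would exit, and exits only through $\partial U$), meets $\Sigma_0$; pushing forward, the original curve meets $D(\Sigma_0)$. The reverse inclusion $\mathcal{C}(D(\Sigma_0)) \subset \hat D(U)$ is the part requiring care: given $z \in \mathcal{C}(D(\Sigma_0))$, one picks a causal curve in $\eeu$ from $D(\Sigma_0)$ to $z$, lifts it through $\hat D$ starting from the (unique) preimage point in $\Sigma_0$ — the lift exists and stays causal because $\hat D$ is a conformal local homeomorphism and $E(\tilde V)$ fibers trivially, so causal curves lift as long as they do not leave $E(\tilde V)$, which they cannot since $E(\tilde V)$ is a full fiber bundle with inextensible timelike fibers — and then one checks the lifted endpoint lies in $U = \mathcal{C}(\Sigma_0)$ by verifying every inextensible causal curve through it meets $\Sigma_0$, which transfers from the corresponding statement about $z$ downstairs via $\hat D$ and causal convexity of $\mathcal{C}(\tilde S)$. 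The main obstacle I anticipate is precisely this surjectivity/lifting step: one must argue cleanly that causal curves of $\eeu$ emanating from $D(\Sigma_0)$ lift through $\hat D$ to causal curves of $E(\tilde V)$ staying in a neighborhood where $\hat D$ is controlled, and that the lifted endpoint genuinely lands in the Cauchy development $U$ and not merely in $E(\tilde V) \setminus U$ — this is where the global hyperbolicity of $\mathcal{C}(\tilde S)$, the causal convexity of $U$ inside it, and the triviality of the fibration all have to be used together.
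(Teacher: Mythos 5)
Your injectivity argument is correct and is exactly the paper's: $U\subset \mathcal{C}(\tilde{S})\cap I^-(q)=I^-(q,\mathcal{C}(\tilde{S}))$, and $\hat{D}$ is already injective there by \cite[Prop. 2.7, p.151]{salveminithesis}, so nothing more is needed.

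The image computation, however, contains a genuine gap, and it sits precisely at the step you flagged as the main obstacle. Your justification for lifting causal curves of $\eeu$ through $\hat{D}$ --- ``causal curves lift as long as they do not leave $E(\tilde V)$, which they cannot since $E(\tilde V)$ is a full fiber bundle with inextensible timelike fibers'' --- is not valid. The map $\hat{D}:E(\tilde{V})\to\eeu$ is a local diffeomorphism but \emph{not} a covering map: its fiber over $p$ is $\{p\}\times d^{-1}(\pi(p))$, and $d:\mathcal{B}\to\mathbb{S}^{n-1}$ is only a local homeomorphism. The completeness of the timelike fibers of $\hat\pi$ controls lifts of the vertical direction only; a causal curve with horizontal motion can fail to lift. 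Example \ref{example: Minkowski} makes this concrete: there $E(V)$ is $\eeu$ with one fiber removed and $\hat{D}$ is the inclusion, so any causal curve of $\eeu$ crossing the removed fiber has no lift at all. Hence your reverse inclusion $\mathcal{C}(D(I^-(q)\cap\tilde{S}))\subset\hat{D}(U)$ is not established, and the forward inclusion as you phrase it (lifting an \emph{inextensible} causal curve of $\eeu$ through $\hat{D}$) suffers from the same problem.

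The paper avoids lifting through $\hat{D}$ altogether: since $U\subset I^-(q,\mathcal{C}(\tilde{S}))$ and the restriction of $\hat{D}$ to $I^-(q,\mathcal{C}(\tilde{S}))$ is injective with causally convex image in $\eeu$ \cite[Prop. 2.7 and Cor. 2.8, p.151]{salveminithesis}, one has an honest conformal diffeomorphism between a causally convex neighborhood of $U$ and a causally convex open subset of $\eeu$, and causal curves and Cauchy developments can be transported in both directions through this diffeomorphism and its inverse. (For the companion statement, Fact \ref{lemma: neighborhood of gamma.p}, where no such global injectivity is available, the paper instead proves only the inclusion $\hat{D}(W)\subset\mathcal{C}(D(\gamma(I^-(q)\cap\tilde{S})))$, observes it is a Cauchy embedding, and upgrades it to an equality by the $\mathcal{C}_0$-maximality of $W$ from Proposition \ref{prop: max. ext. dev.} --- another route you could have taken that sidesteps the lifting problem.) To repair your proof you must replace the path-lifting step by one of these two mechanisms.
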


\begin{proof}
Clearly, $U \subset \mathcal{C}(\tilde{S}) \cap I^-(q)$. Since $\mathcal{C}(\tilde{S})$ is causally convex in $E(\tilde{V})$, we have $\mathcal{C}(\tilde{S}) \cap I^-(q) = I^-(q, \mathcal{C}(\tilde{S}))$. Since the restriction of $\hat{D}$ to $I^-(q, \mathcal{C}(\tilde{S}))$ is injective and its image is causally convex in $\eeu$ (see \cite[Prop. 2.7 and Cor. 2.8, p. 151]{salveminithesis}), the lemma follows.
\end{proof}

Let $W$ be the Cauchy development of $\gamma (I^-(q) \cap \tilde{S})$ in $E(\tilde{V})$. 

\begin{remark}\label{remark: neighborhood of gamma.p}
The image under $D$ of $\gamma (I^-(q) \cap \tilde{S})$ is achronal in $\eeu$. Indeed, $D(\gamma (I^-(q) \cap \tilde{S})) = \rho(\gamma) D(I^-(q) \cap \tilde{S})$. The assertion follows then from Lemma \ref{lemma: neighborhood of p in eeu}.
\end{remark}

\begin{fact}\label{lemma: neighborhood of gamma.p}
The restriction of $\hat{D}$ to $W$ is injective. Moreover, the image under $\hat{D}$ of $W$ is equal to the Cauchy development of $D(\gamma (I^-(q) \cap \tilde{S}))$ in $\eeu$.
\end{fact}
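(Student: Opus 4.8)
The plan is to follow the two-step scheme of the proof of Fact~\ref{lemma: neighborhood of p}, with one essential modification. In that proof $q$ lies in $\mathcal{C}(\tilde{S})$, so that $I^-(q)\cap\tilde{S}$ is contained in the causally convex globally hyperbolic open set $I^-(q,\mathcal{C}(\tilde{S}))$, on which $\hat{D}$ is a conformal diffeomorphism onto a causally convex open subset of $\eeu$; everything follows from that single chart. Here $\gamma$ does not act on $E(\tilde{V})$, so $B:=\gamma(I^-(q)\cap\tilde{S})$ is not the interior of a shadow of a point of $\mathcal{C}(\tilde{S})$ and no such chart containing $W$ is visible. Instead I would compare directly, through the conformal local homeomorphism $\hat{D}$, the two globally hyperbolic spacetimes $W=\mathcal{C}(B,E(\tilde{V}))$ and $\Omega:=\mathcal{C}(D(B),\eeu)$, the latter being well defined and causally convex in $\eeu$ because $D(B)$ is achronal by Remark~\ref{remark: neighborhood of gamma.p}.

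The first preliminary observation is that $\hat{D}$ is injective on $B$, with achronal image $D(B)=\rho(\gamma)D(I^-(q)\cap\tilde{S})$: indeed $I^-(q)\cap\tilde{S}\subset I^-(q,\mathcal{C}(\tilde{S}))$, on which $\hat{D}=D$ is injective (as used in the proof of Fact~\ref{lemma: neighborhood of p}), and $\hat{D}(\gamma a)=\rho(\gamma)\hat{D}(a)$ with $\rho(\gamma)\in O(2,n)$ a global conformal automorphism of $\eeu$; the same remarks show that $D(B)$ has compact closure (the image of the compact shadow $J^-(q)\cap\tilde{S}$), so $\Omega$ is relatively compact in $\eeu$ and admits the non-compact $D(B)$ as an open Cauchy hypersurface. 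The inclusion $\hat{D}(W)\subseteq\Omega$ is then obtained exactly as before: given $w\in W$ and an inextensible causal curve $c$ of $\eeu$ through $\hat{D}(w)$, a maximal $\hat{D}$-lift of $c$ issued from $w$ is, $\hat{D}$ being a local homeomorphism, an inextensible causal curve of $E(\tilde{V})$ through $w$, hence meets $B$; applying $\hat{D}$ shows that $c$ meets $D(B)$, so $\hat{D}(w)\in\Omega$.

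It remains to prove that $\hat{D}$ restricts to a bijection $W\to\Omega$; this is where the hypothesis that $\tilde{V}$ is not Cauchy-compact enters, for then $\tilde{S}$ — hence $B$ (open in $\tilde{S}$) and $D(B)\cong B$ — is non-compact, and $B$, $D(B)$ are non-compact Cauchy hypersurfaces of $W$ and $\Omega$. Being conformal, $\hat{D}$ preserves causal relations, and being injective on $B$ it carries the shadow on $B$ of any $w\in\mathcal{C}^+(B)$ bijectively onto the shadow on $D(B)$ of $\hat{D}(w)$; by Proposition~\ref{prop: shadows}, a point in the chronological future of a non-compact Cauchy hypersurface is determined by its shadow, which forces $\hat{D}|_{\mathcal{C}^+(B)}$ to be injective, the time-reversed argument covering $\mathcal{C}^-(B)$. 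For surjectivity I would prove that every inextensible causal curve of $\Omega$ through a point of $D(B)$ lifts entirely, via $\hat{D}$, to a causal curve of $W$ through the corresponding point of $B$; combined with the previous step this yields $\hat{D}(W)=\Omega$, and since $D(B)=\rho(\gamma)D(I^-(q)\cap\tilde{S})$, the set $\Omega$ is exactly the Cauchy development of $D(\gamma(I^-(q)\cap\tilde{S}))$ in $\eeu$, which is the assertion.

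The main obstacle is precisely this surjectivity, i.e. the completeness of the lifts into $W$: in Fact~\ref{lemma: neighborhood of p} it came for free because everything lived inside the single conformal chart $I^-(q,\mathcal{C}(\tilde{S}))$, whereas here one must instead invoke the rigidity of globally hyperbolic spacetimes expressed by Proposition~\ref{prop: shadows} — and hence genuinely use the non-compactness of $\tilde{S}$ — to recover the missing points.
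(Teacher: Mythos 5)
Your proposal does not close the two steps it identifies, and the gaps are real. For surjectivity you explicitly defer the key point — that every inextensible causal curve of $\Omega:=\mathcal{C}(D(\gamma(I^-(q)\cap\tilde{S})),\eeu)$ lifts entirely into $W$ — and such a completeness-of-lifts statement is exactly what a mere local homeomorphism does not provide for free; you name it as ``the main obstacle'' but do not overcome it. The paper avoids it altogether: once $\hat{D}|_W$ is known to be injective, the inclusion $\hat{D}(W)\subseteq\Omega$ is a Cauchy embedding, and since $W$ is itself $\mathcal{C}_0$-maximal (Proposition \ref{prop: max. ext. dev.}, applied to the Cauchy development of the achronal hypersurface $\gamma(I^-(q)\cap\tilde{S})$), this embedding must be onto. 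That maximality argument is the missing ingredient in your sketch.

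Your injectivity argument has the same hidden gap. You assert that $\hat{D}$ carries the shadow $O(w,B)$ of $w\in\mathcal{C}^+(B)$ \emph{bijectively} onto $O(\hat{D}(w),D(B))$; the inclusion $\hat{D}(O(w,B))\subseteq O(\hat{D}(w),D(B))$ is clear since $\hat{D}$ is conformal, but the reverse inclusion again requires lifting past causal curves of $\eeu$ from $\hat{D}(w)$ back into $E(\tilde{V})$ — the very point at issue. Without it, $\hat{D}(w)=\hat{D}(w')$ only gives that both shadows inject into the same subset of $D(B)$, not that they are equal, so Proposition \ref{prop: shadows} cannot be invoked. The paper's injectivity proof is different and self-contained: if $\hat{D}(r)=\hat{D}(r')$, the fibers of $E(\tilde{V})$ through $r$ and $r'$ project to the same timelike line $\Delta$ of $\eeu$; being inextensible timelike curves and $r,r'\in W=\mathcal{C}(B)$, these fibers meet $B=\gamma(I^-(q)\cap\tilde{S})$ in points $r_0,r_0'$ whose images lie in $\Delta\cap D(B)$; achronality of $D(B)$ (your Remark \ref{remark: neighborhood of gamma.p}) forces $D(r_0)=D(r_0')$, injectivity of $D$ on $B$ forces $r_0=r_0'$, hence the fibers coincide, and injectivity of $\hat{D}$ on causal curves (Lemma \ref{lemma: restriction of D to a causal curve}) gives $r=r'$. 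Note in particular that the paper needs neither Proposition \ref{prop: shadows} nor the non-compactness of $\tilde{S}$ for this Fact; the achronality of $D(B)$ and the fiber-bundle structure do all the work.
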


\begin{proof}
Let $r, r' \in W$ such that $\hat{D}(r) = \hat{D}(r')$. Let $E_{\hat{\pi}(r)}$ and $E_{\hat{\pi}(r')}$ be the fibers going through $r$ and $r'$. By definition, their images under $\hat{D}$ are the fibers of the trivial bundle $\eeu \to \mathbb{S}^{n-1}$ going through $\hat{D}(r)$ and $\hat{D}(r')$. Since $\hat{D}(r) = \hat{D}(r')$, these fibers coincide. We call this fiber $\Delta$.  Since$E_{\hat{\pi}(r)}$ and $E_{\hat{\pi}(r')}$ are inextensible timelike curves, they intersect $\gamma (I^-(q) \cap \tilde{S}$ in two points $r_0$ and $r_0'$. Hence, $D(r_0), D(r_0') \in \Delta \cap D(\gamma (I^-(q) \cap \tilde{S}))$. But, $D(\gamma (I^-(q) \cap \tilde{S}))$ is achronal in $\eeu$ (see Remark \ref{remark: neighborhood of gamma.p}). Thus, the timelike line $\Delta$ intersects $D(\gamma (I^-(q) \cap \tilde{S}))$ at most once. Hence, $D(r_0) = D(r_0')$. The restriction of $D$ to $\gamma (I^-(q) \cap \tilde{S})$ being injective, we deduce that $r_0 = r_0'$. Thus, $E_{\hat{\pi}(r)} = E_{\hat{\pi}(r')}$. Since the restriction of $\hat{D}$ to any causal curve is injective (see Lemma \ref{lemma: restriction of D to a causal curve}), we get $r = r'$.

We deduce easily from the injectivity of $\hat{D}$ on $W$ that $\hat{D}(W)$ is contained in the Cauchy development of $D(\gamma (I^-(q) \cap \tilde{S}))$ in $\eeu$. This inclusion is clearly a Cauchy-embedding. Since $W$ is $\mathcal{C}_0$-maximal (see Proposition \ref{prop: max. ext. dev.}), we deduce equality. 
\end{proof}

\begin{proof}[Proof of Proposition \ref{prop: action of the Cauchy dev. of S}]
Set $\Sigma := D(I^-(q) \cap \tilde{S})$. Since $O(p) \subset U$, it follows from Fact \ref{lemma: neighborhood of p} that $D(O(p))$ is equal to the shadow $O(\hat{D}(p), \Sigma)$ of $\hat{D}(p)$ on $\Sigma$. Hence
\begin{align*}
D(\gamma O(p)) = \rho(\gamma) D(O(p)) = \rho(\gamma) O(\hat{D}(p), \Sigma) = O(\rho(\gamma) \hat{D}(p), \rho(\gamma) \Sigma).
\end{align*}
Hence, $O(\rho(\gamma) \hat{D}(p), \rho(\gamma) \Sigma) \subset \hat{D}(W)$. Since $\hat{D}(W)$ is the Cauchy development of $\rho(\gamma) \Sigma$ (see Fact \ref{lemma: neighborhood of gamma.p}), it follows that $\rho(\gamma) \hat{D}(p) \in \hat{D}(W)$. Thus, there exists a unique point $p' \in U$ such that $\hat{D}(p') = \rho(\gamma) \hat{D}(p)$. It is clear that $O(p') = \gamma O(p)$. If there is another point $p'' \in \mathcal{C}(\tilde{S})$ such that $O(p'') = \gamma O(p)$, by Proposition \ref{prop: shadows}, we get $p' = p''$. Then, we set $\gamma.p := p'$. 
\end{proof}

\begin{figure}[h!]
\centering
\includegraphics[scale=1]{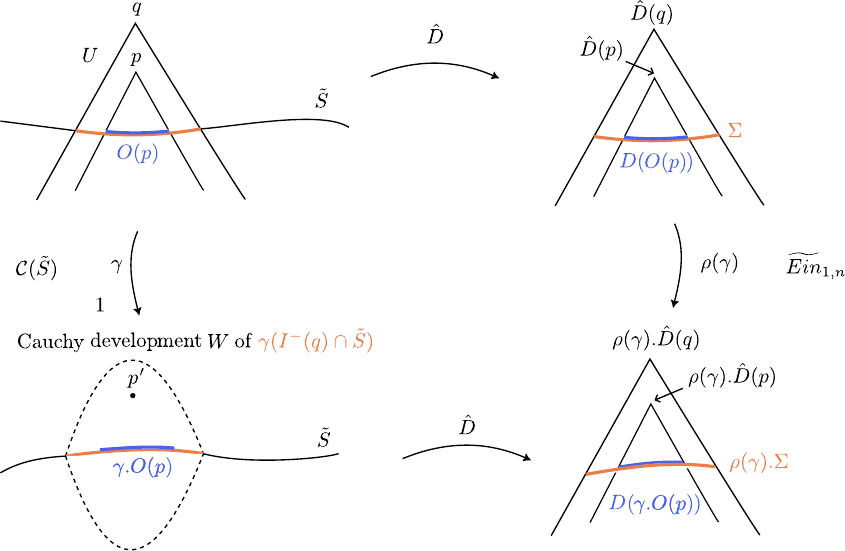}
\caption{Action of $\Gamma$ on the Cauchy development of $\tilde{S}$ in the enveloping space $E(\tilde{S})$.}
\label{figure: maximal extension}
\end{figure}

\paragraph{Dynamical properties of the action of $\Gamma$ on $\mathcal{C}(\tilde{S})$.} In this paragraph, we prove that the action of $\Gamma$ on $\mathcal{C}(\tilde{S})$ is free and properly discontinuous.

\begin{lemma}\label{lemma: the shadows are topological disks}
Let $p$ be a point in the complement of $\tilde{S}$ in $\mathcal{C}(\tilde{S})$. Then, the shadow of $p$ on $\tilde{S}$ is a topological disk of dimension~$n - 1$.
\end{lemma}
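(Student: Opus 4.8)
The plan is to reduce the statement to an explicit computation inside $\eeu$. Reversing the time orientation if necessary, we may assume $p \in I^+(\tilde S)$; recall that $O(p) = J^-(p) \cap \tilde S$ and that it is compact. Fix $q \in I^+(p)$ in $\mathcal{C}(\tilde S)$ with $q \neq p$, and set $U' := I^-(q,\mathcal{C}(\tilde S))$, a causally convex — hence globally hyperbolic — open subset of $\mathcal{C}(\tilde S)$ containing $p$ and, since $p \ll q$, the whole of $O(p)$. As in the proof of Lemma~\ref{lemma: the restriction of dev to the max. ext. is equivariant}, the restriction of $\hat D$ to $U'$ is injective with causally convex image $\Omega := \hat D(U') \subseteq \eeu$. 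Set $\Sigma := \hat D(\tilde S \cap U')$: by causal convexity of $\Omega$ this is an acausal topological hypersurface of $\eeu$, and $\hat D(p) \in I^+(\Sigma)$ (a timelike curve from $\tilde S$ to $p$ lies in $U'$). Using once more the causal convexity of $\Omega$ to compare the two causal orders, $\hat D$ restricts to a homeomorphism
\[
O(p) \;\xrightarrow{\ \hat D\ }\; \hat D(O(p)) \;=\; J^-(\hat D(p)) \cap \Sigma .
\]
So it suffices to show: for an acausal hypersurface $\Sigma$ of $\eeu$ and a point $x_0 \in I^+(\Sigma)$, the shadow $J^-(x_0) \cap \Sigma$ is a topological $(n-1)$-disk.

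Fix a spatio-temporal decomposition $\eeu = \mathbb{S}^{n-1} \times \R$. Then $\Sigma = \{(x,h(x)) : x \in U\}$ for an open $U \subseteq \mathbb{S}^{n-1}$ and a $1$-Lipschitz $h \colon U \to \R$, and $x_0 = (a,\tau)$. The component through $p$ of the intersection with $\mathcal{C}(\tilde S)$ of the fiber of $E(\tilde V)$ through $p$ is an inextensible timelike curve of $\mathcal{C}(\tilde S)$, hence meets $\tilde S$; since $p$ lies above $\tilde S$ that intersection point lies below $p$ on the fiber, which gives $a \in U$ and $h(a) < \tau$. From the description of the lightcone of $\eeu$ in Section~\ref{sec: euu},
\[
J^-(x_0) \cap \Sigma \;=\; \{\,(x,h(x)) : x \in U,\ d(x,a) + h(x) \le \tau\,\},
\]
which is the graph of $h$ over $\widehat U := \{x \in U : F(x) \le \tau\}$, where $F(x) := d(x,a) + h(x)$. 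Thus $J^-(x_0) \cap \Sigma \cong \widehat U$, and the problem becomes to prove $\widehat U \cong \overline{B}^{\,n-1}$.

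The key point I would establish is that $\widehat U$ is star-shaped about $a$ for the radial foliation of $\mathbb{S}^{n-1}$ by geodesics issuing from $a$. Writing $\gamma_w(s) = \exp_a(sw)$ for a unit tangent vector $w$ at $a$, wherever $\gamma_w$ stays in $U$ (and $s \le \pi$) we have $F(\gamma_w(s)) = s + h(\gamma_w(s))$, which is non-decreasing in $s$ because $s \mapsto h(\gamma_w(s))$ is $1$-Lipschitz, and it starts from $F(a) = h(a) < \tau$. I would then show that each radial geodesic stays in $U$ up to the first parameter $\rho(w)$ at which $F$ reaches $\tau$ — this is the delicate part of the argument, to be proved by lifting the past-null geodesics of $\eeu$ through $\hat D(p)$ back to $\mathcal{C}(\tilde S)$ via $\hat D^{-1}$ on $U'$, exploiting that $\Omega$ is causally convex in $\eeu$ and that a future causal curve issuing from $\tilde S$ stays in $J^+(\tilde S)$ (because $\tilde S$ is a Cauchy hypersurface). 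Granting this, $\widehat U \cap \gamma_w = \gamma_w([0,\rho(w)])$, the map $\rho \colon \mathbb{S}^{n-2} \to (0,\pi]$ is continuous, and
\[
\Phi \colon \mathbb{S}^{n-2} \times [0,1] \longrightarrow \widehat U, \qquad \Phi(w,u) = \gamma_w\big(u\,\rho(w)\big),
\]
is a continuous surjection collapsing $\mathbb{S}^{n-2} \times \{0\}$ to $a$ and injective elsewhere, exhibiting $\widehat U$ as the cone $C(\mathbb{S}^{n-2}) = \overline{B}^{\,n-1}$.

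The main obstacle is the degenerate configuration in which some radial geodesic reaches the antipode $-a$ of $a$ — i.e. the conjugate point $\sigma^{-1}(\hat D(p))$ — before $F$ reaches $\tau$, which would destroy the injectivity of $\Phi$ near $u = 1$. I would exclude it as follows: if it occurred along two distinct directions, injectivity of $\hat D$ on $U'$ would force the corresponding intersection points of $\Sigma$ with those null geodesics to coincide, so $\sigma^{-1}(\hat D(p)) \in \Sigma$; then all past-null geodesics through $\hat D(p)$ reconverge there, so $\rho \equiv \pi$, hence $\Sigma \subseteq J^-(\hat D(p))$ and $\Sigma$ is a compact hypersurface homeomorphic to $\mathbb{S}^{n-1}$ — forcing $\tilde S \cap U' = \tilde S$ to be compact and contradicting the standing hypothesis that $\tilde S$ is non-compact. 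A single such direction still leaves $\widehat U \cong \overline{B}^{\,n-1}$, the cone point $-a$ then lying in the interior of $\widehat U$. Finally, once $O(p) \cong \overline{B}^{\,n-1}$ is established, this is precisely what is needed to invoke Brouwer's fixed point theorem in the sequel (every self-homeomorphism of $O(p)$ has a fixed point), so that a relation $\gamma.p = p$ with $\gamma \in \G$ forces $\gamma$ to fix a point of $\tilde S$, hence $\gamma = \mathrm{id}$; thus the action of $\G$ on $\mathcal{C}(\tilde S)$ is free.
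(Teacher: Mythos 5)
Your reduction to $U' = I^-(q,\mathcal{C}(\tilde S))$, the injectivity of $\hat D$ there, and the identification $O(p)\cong J^-(\hat D(p))\cap\Sigma$ are exactly the paper's first step. From there the two arguments diverge. The paper first shows that $\hat D(U')$ contains no conjugate points --- otherwise $U'$ would contain a photon with conjugate points and, by Rossi's Theorem~10, $\mathcal{C}(\tilde S)$ would be all of $\eeu$, contradicting the non-compactness of $\tilde S$ --- so that everything sits inside a single affine chart $\Mink_-(\hat D(q))$; there the shadow is parametrized by the compact $(n-1)$-ball of past causal directions at $\hat D(p)$, each straight line meeting $\Sigma$ once. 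You instead compute globally in $\mathbb{S}^{n-1}\times\R$ and exhibit the shadow as the sublevel set $\{d(\cdot,a)+h\le\tau\}$, coned off by radial geodesics from $a$. This buys independence from the Minkowski chart and from the citation of Rossi's Theorem~10 in the main line of the argument (you only need a non-compactness contradiction in the degenerate antipodal configuration, which plays the role of the paper's conjugate-point exclusion), at the cost of having to control the radial foliation by hand.

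Two steps of your write-up are not yet proofs. First, the statement you yourself flag as ``the delicate part'' --- that each radial geodesic stays in $U$ until $F$ reaches $\tau$ --- is only sketched. The sketch is viable: the past null geodesic of $E(\tilde V)$ issued from $p$ above $\gamma_w$ is past-inextensible in $\mathcal{C}(\tilde S)$, hence meets $\tilde S$ at some parameter $s_0$ with $F(\gamma_w(s_0))=\tau$; the segment up to $s_0$ lies in $J^-(p)\cap J^+(\tilde S)\subset U'$, and the fibers through its points meet $\tilde S$ inside $U'$, which is what puts $\gamma_w([0,s_0])$ in $U$. But this chain must actually be written out, since it is the whole content of the star-shapedness claim. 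Second, the continuity of $\rho$ and the injectivity of $\Phi$ off the cone point are asserted, not proved; they fail for a merely achronal $\Sigma$ (if $h$ decreases at unit speed along a stretch of some ray, $F$ is constant there and the ``first hitting time'' is not the radial function of $\widehat U$, which then need not be a disk via your $\Phi$). You do assert that $\Sigma$ is \emph{acausal}, which gives strict monotonicity of $F$ along radial geodesics and hence both continuity of $\rho$ and injectivity of $\Phi$; but that acausality (coming from $\tilde S$ being a spacelike, not merely topological, Cauchy hypersurface, cf.\ Lemma~\ref{lemma: i(S) disconnects E(M)}) is doing real work and deserves a sentence. The closing remarks about Brouwer belong to Proposition~\ref{prop: properness of the action}, not to this lemma.
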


\begin{proof}
We can suppose without loss of generality that $p \in I^+(\tilde{S})$. Let $q \in I^+(p)$ and set $\Sigma := D(I^-(q) \cap \tilde{S})$. We prove that $\hat{D}(I^-(q, \mathcal{C}(\tilde{S})))$ is contained in the affine chart $\Mink_-(\hat{D}(q))$ (see Section \ref{sec: euu}). Suppose there exists $r \in I^-(q, \mathcal{C}(\tilde{S}))$ such that \mbox{$\hat{D}(r) \not \in \Mink_-(\hat{D}(q))$}. Then, $I(\hat{D}(q), \hat{D}(r))$ contains conjugate points. Since the image under $\hat{D}$ of $I^-(q, \mathcal{C}(\tilde{S}))$ is causally convex in $\eeu$ (see \cite[Cor. 2.8, p. 151]{salveminithesis}), we deduce that $I^-(q, \mathcal{C}(\tilde{S})$ admits a photon whose image under $\hat{D}$ contains conjugate points. Therefore, by \cite[Theorem 10]{Salvemini2013Maximal}, $\mathcal{C}(\tilde{S})$ is conformally equivalent to $\eeu$. Contradiction. Then, $\hat{D}(p)$ and $\Sigma$ are contained in $\Mink_-(\hat{D}(q))$. As a result, the shadow $O(\hat{D}(p), \Sigma)$ is the intersection in Minkowski spacetime of the past causal cone of $\hat{D}(p)$ with $\Sigma$. Thus, the map which associates to every past causal direction at $\hat{D}(p)$, the intersection of the straight line tangent to this direction with $\Sigma$ is a homeomorphism. Hence, $O(\hat{D}(p), \Sigma)$ is a topological $(n-1)$-disk. Since the restriction of $\hat{D}$ to $O(p)$ is a diffeomorphism on $O(\hat{D}(p), \Sigma)$ (see \cite[prop. 2.7, p. 151]{salveminithesis}), the lemma follows.  
\end{proof}

\begin{proposition}\label{prop: properness of the action}
The action of $\Gamma$ on $\mathcal{C}(\tilde{S})$ is free and properly discontinous.
\end{proposition}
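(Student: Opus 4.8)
The plan is to reduce the statement to the action of $\Gamma=\pi_1(V)$ on $\tilde S$, which is already known to be free and properly discontinuous (it is the restriction of the deck action on $\tilde V$ to the closed, $\Gamma$-invariant subset $\tilde S$). The bridge will be Proposition~\ref{prop: shadows}: a point $p$ of $\mathcal{C}(\tilde S)$ is determined by its shadow $O(p)\subset\tilde S$, and the action was constructed precisely so that $O(\gamma.p)=\gamma\,O(p)$.

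For freeness, I would suppose $\gamma.p=p$ and show $\gamma=\mathrm{id}$. If $p\in\tilde S$, then by Remark~\ref{remark: the action on the Cauchy dev is an extension of the action on V univ} the action agrees with the deck action, which is free, so $\gamma=\mathrm{id}$. If $p\notin\tilde S$, say $p\in I^+(\tilde S)$, then $\gamma.p=p$ gives $\gamma\,O(p)=O(p)$, so $\gamma$ restricts to a self-homeomorphism of $O(p)$; by Lemma~\ref{lemma: the shadows are topological disks} this shadow is a closed topological $(n-1)$-disk, so Brouwer's fixed point theorem yields a fixed point of $\gamma$ lying in $\tilde S\subset\tilde V$, contradicting freeness of the deck action. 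This step tacitly uses that the $\Gamma$-action on $\mathcal{C}(\tilde S)$ is by homeomorphisms, which should be checked; it follows from the $\rho$-equivariance of $\hat D$ and the fact that $\hat D$ is a local homeomorphism.

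For proper discontinuity, given a compact $K\subset\mathcal{C}(\tilde S)$, the heart of the matter is to produce \emph{one} compact set $\kappa\subset\tilde S$ with $O(p)\subset\kappa$ for every $p\in K$. Once this is done, if $\gamma.K\cap K\neq\emptyset$ I pick $p\in K$ with $\gamma.p\in K$; then $O(p)\subset\kappa$ and $\gamma\,O(p)=O(\gamma.p)\subset\kappa$, and since $O(p)\subset\kappa$ one also has $\gamma\,O(p)\subset\gamma\kappa$; as shadows of points of $\mathcal{C}(\tilde S)$ are nonempty, $\gamma\kappa\cap\kappa\neq\emptyset$. Proper discontinuity of $\Gamma$ on the compact set $\kappa\subset\tilde S$ then forces $\gamma$ into a finite set, so $\{\gamma:\gamma.K\cap K\neq\emptyset\}$ is finite.

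To build $\kappa$: writing $\mathcal{C}(\tilde S)=I^+(\tilde S)\sqcup\tilde S\sqcup I^-(\tilde S)$, one has $\bigcup_{p\in K}O(p)\subset(J^-(K)\cup J^+(K))\cap\tilde S$. Every point of $\mathcal{C}(\tilde S)$ lies in $I^-(z)$ for some $z\in I^+(\tilde S)$ (follow a future-directed timelike curve until it crosses $\tilde S$ and beyond) and, symmetrically, in $I^+(w)$ for some $w\in I^-(\tilde S)$; compactness of $K$ then yields finitely many such $z_i,w_j$ with $K\subset\bigcup_i I^-(z_i)$ and $K\subset\bigcup_j I^+(w_j)$. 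Since $a\le b\ll z_i$ implies $a\ll z_i$, we get $J^-(K)\subset\bigcup_i I^-(z_i)$ and likewise $J^+(K)\subset\bigcup_j I^+(w_j)$, hence $\bigcup_{p\in K}O(p)\subset\bigcup_i O(z_i,\tilde S)\cup\bigcup_j O(w_j,\tilde S)$, which is compact because shadows of points are compact; I take $\kappa:=(J^-(K)\cup J^+(K))\cap\tilde S$, which is closed since $\mathcal{C}(\tilde S)$ is globally hyperbolic, hence compact. The main obstacle I anticipate is exactly this compactness step: one must be careful to choose the covering points $z_i$ inside $I^+(\tilde S)$ (not merely in $\mathcal{C}(\tilde S)$) so that $J^-(z_i)\cap\tilde S$ is a genuine shadow of a point, and one relies on the standard facts — already invoked elsewhere in the paper — that $\mathcal{C}(\tilde S)$ is globally hyperbolic with Cauchy hypersurface $\tilde S$, so that $J^{\pm}$ of a compact set is closed and individual shadows are compact.
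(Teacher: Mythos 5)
Your proof is correct, and the freeness half is exactly the paper's argument: $\gamma.p=p$ forces $\gamma$ to preserve the shadow $O(p)$, which is a topological $(n-1)$-disk by Lemma \ref{lemma: the shadows are topological disks}, so Brouwer gives a fixed point in $\tilde S$ and freeness of the deck action concludes. For proper discontinuity the two arguments share the same core idea --- transfer properness from $\tilde S$ to $\mathcal{C}(\tilde S)$ by trapping shadows inside compact shadows of auxiliary points pushed off $\tilde S$ --- but they execute it through different characterizations of properness. The paper argues sequentially: assuming non-properness it extracts $p_i\to p_\infty$ and divergent $\gamma_i$ with $\gamma_i.p_i\to q_\infty$, then encloses $O(p_i)$ and $O(\gamma_i.p_i)$ in the compact shadows $O(p)$ and $O(q)$ of points $p\in I^+(p_\infty)$, $q\in I^+(q_\infty)$, and contradicts properness on $\tilde S$. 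You instead work with the compact-set definition directly, building a single compact $\kappa=(J^+(K)\cup J^-(K))\cap\tilde S$ containing all shadows of points of $K$, covered by finitely many shadows $O(z_i,\tilde S)$, $O(w_j,\tilde S)$; the implication $\gamma.K\cap K\neq\emptyset\Rightarrow\gamma\kappa\cap\kappa\neq\emptyset$ then finishes. Your route is marginally more work (the closedness of $J^\pm(K)$ and the finite covering argument) but is self-contained, whereas the paper's is shorter at the price of invoking the sequential criterion from the cited reference of Frances; both are sound.
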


\begin{proof}
Let $p \in \mathcal{C}(\tilde{S})$ and let $\gamma \in \Gamma$ such that $\gamma. p = p$.  Thus, $\gamma$ preserves $O(p)$. Since $O(p)$ is a topological disk (see Lemma \ref{lemma: the shadows are topological disks}), by Brouwer's theorem, $\gamma$ admits a fixed point in $O(p)$. Since the action of $\Gamma$ on $\tilde{S}$ is free, we deduce that $\gamma = id$. This proves that the action of $\Gamma$ on $\mathcal{C}(\tilde{S})$ is free.

Suppose the action of $\Gamma$ on $\mathcal{C}(\tilde{S})$ is not properly discontinuous. Then, by \cite[Proposition 1]{frances2005lorentzian}, there exist a sequence $\{p_i\}$ of points of $\mathcal{C}(\tilde{S})$ converging to some point $p_{\infty} \in \mathcal{C}(\tilde{S})$ and a divergent sequence $\{\gamma_i\}$ of elements of $\Gamma$ such that $\{\gamma_i.p_i\}$ converges to some point $q_{\infty} \in \mathcal{C}(\tilde{S})$. Without loss of generality, we can suppose that $p_{\infty}, q_{\infty} \in J^+(\tilde{S})$.

Let $p \in I^+(p_{\infty})$. Since $\lim p_i = p_{\infty}$, all the $p_i$ belong to $I^-(p)$ except a finite number. Hence, $O(p_i) \subset O(p)$ for every $i \geq i_0$ where $i_0$ is a natural integer. Let $x_i \in O(p_i)$. Up to extracting, $\{x_i\}$ converges to some $x_{\infty} \in O(p)$. Similarly, let $q \in I^+(q_{\infty})$; all the $\gamma_i p_i$ belong to $O(q)$ except a finite number. Hence, $\gamma_i O(p_i) = O(\gamma_i p_i) \subset O(q)$. Therefore, up to extracting, $\{\gamma_i x_i\}$ converges to some point $y_{\infty} \in O(q)$. This contradicts the properness of the action of $\Gamma$ on $\tilde{S}$ (see \cite[Proposition 1]{frances2005lorentzian}). 
\end{proof}

\paragraph{The $\mathcal{C}_0$-maximal extension of $V$.}

\begin{proposition}\label{prop: max. ext. of V}
The quotient space $\Gamma \backslash \mathcal{C}(\tilde{S})$ is a $\mathcal{C}_0$-maximal extension of $V$.
\end{proposition}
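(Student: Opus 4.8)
The plan is to verify the three things that together say ``$\Gamma \backslash \mathcal{C}(\tilde S)$ is a $\mathcal{C}_0$-maximal extension of $V$'': first, that the quotient is a well-defined conformally flat spacetime; second, that it is globally hyperbolic with a Cauchy hypersurface identified with a Cauchy hypersurface of $V$, so that it genuinely is a Cauchy-extension of $V$; and third, that it is $\mathcal{C}_0$-maximal.

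For the first point, I would invoke Proposition~\ref{prop: properness of the action}: the action of $\Gamma$ on $\mathcal{C}(\tilde S)$ is free and properly discontinuous, so the quotient $\Gamma \backslash \mathcal{C}(\tilde S)$ is a manifold and the projection $\mathcal{C}(\tilde S) \to \Gamma \backslash \mathcal{C}(\tilde S)$ is a covering. Since the action preserves the causality relations and, by Lemma~\ref{lemma: the restriction of dev to the max. ext. is equivariant}, $\hat D|_{\mathcal{C}(\tilde S)}$ is $\rho$-equivariant, the conformally flat structure (equivalently the developing pair $(\hat D, \rho)$ restricted to $\mathcal{C}(\tilde S)$) descends to the quotient; time-orientation and orientation descend as well because $\Gamma$ acts by conformal transformations preserving them (the holonomy lands in $\Conf(\eeu)$). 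One should also note that $\tilde V \subset \mathcal{C}(\tilde S)$ is $\Gamma$-invariant with the usual action (Remark~\ref{remark: the action on the Cauchy dev is an extension of the action on V univ}), so $V = \Gamma \backslash \tilde V$ embeds in $\Gamma \backslash \mathcal{C}(\tilde S)$ as an open subset, and this embedding is conformal.

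For the second point, I would push the Cauchy hypersurface $\tilde S$ down: $\tilde S$ is $\Gamma$-invariant and acausal in $\mathcal{C}(\tilde S)$, so $S := \Gamma \backslash \tilde S$ is an embedded hypersurface of the quotient. To see it is a Cauchy hypersurface, lift an inextensible causal curve of $\Gamma \backslash \mathcal{C}(\tilde S)$ to an inextensible causal curve of $\mathcal{C}(\tilde S)$ (possible since the projection is a covering and causal curves lift); it meets $\tilde S$ exactly once because $\tilde S$ is Cauchy in $\mathcal{C}(\tilde S)$, hence its projection meets $S$ exactly once (using that distinct points of a single $\Gamma$-orbit cannot both lie on one lift, which follows from acausality of $\tilde S$ together with the fact that the lifted curve meets $\tilde S$ only once). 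So $\Gamma \backslash \mathcal{C}(\tilde S)$ is globally hyperbolic with Cauchy hypersurface $S$, and since $S = \Gamma\backslash \tilde S$ is a Cauchy hypersurface of $V$ as well, the inclusion $V \hookrightarrow \Gamma \backslash \mathcal{C}(\tilde S)$ is a Cauchy-embedding. Thus $\Gamma \backslash \mathcal{C}(\tilde S)$ is a conformally flat Cauchy-extension of $V$.

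For $\mathcal{C}_0$-maximality, suppose $W$ is a conformally flat Cauchy-extension of $\Gamma \backslash \mathcal{C}(\tilde S)$; I must show the inclusion is onto. Pass to universal covers: $\tilde W$ is a simply-connected conformally flat Cauchy-extension of $\mathcal{C}(\tilde S)$ (a Cauchy-embedding of $\Gamma\backslash\mathcal{C}(\tilde S)$ into $W$ lifts to a Cauchy-embedding $\mathcal{C}(\tilde S) \hookrightarrow \tilde W$, since Cauchy hypersurfaces lift to Cauchy hypersurfaces). But $\mathcal{C}(\tilde S)$ is $\mathcal{C}_0$-maximal by Proposition~\ref{prop: max. ext. dev.}, so $\mathcal{C}(\tilde S) = \tilde W$. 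Hence $W$ and $\Gamma \backslash \mathcal{C}(\tilde S)$ have the same universal cover and the same fundamental group $\Gamma$ acting the same way (the deck action on $\tilde W = \mathcal{C}(\tilde S)$ is forced, up to conjugacy, by the fact that it extends the $\Gamma$-action on $\tilde V$ and commutes with a developing map, which by Proposition~\ref{prop: action of the Cauchy dev. of S} is exactly the action we constructed), so the Cauchy-embedding $\Gamma\backslash\mathcal{C}(\tilde S)\hookrightarrow W$ is a diffeomorphism. I expect the main obstacle to be precisely this last matching-of-deck-transformations step: one has to argue that the $\Gamma$-action on $\tilde W = \mathcal{C}(\tilde S)$ coming from $W$ coincides with the action built in Proposition~\ref{prop: action of the Cauchy dev. of S}, which is where the uniqueness clause in that proposition — a point of $\mathcal{C}(\tilde S)$ is determined by its shadow on $\tilde S$ — does the real work.
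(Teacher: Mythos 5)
Your proposal is correct and follows essentially the same route as the paper: descend the Cauchy-embedding $\tilde V\hookrightarrow\mathcal{C}(\tilde S)$ to a Cauchy-embedding $V\hookrightarrow\Gamma\backslash\mathcal{C}(\tilde S)$, then deduce $\mathcal{C}_0$-maximality of the quotient from that of $\mathcal{C}(\tilde S)$ --- the paper compresses this last step to ``it is easy to see'', but spells out exactly your lift-to-universal-covers argument in the analogous closing lemma of Section~\ref{sec: Maximal extensions respect inclusion}. The one remark is that your final matching-of-deck-transformations step is superfluous: once the lifted Cauchy-embedding $\mathcal{C}(\tilde S)\to\tilde W$ is surjective, surjectivity of $\Gamma\backslash\mathcal{C}(\tilde S)\to W$ follows at once from the commutative square with the two covering projections, so the uniqueness-via-shadows argument is not needed at that point.
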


\begin{proof}
Let $i: \tilde{V} to E(\tilde{V})$ the embedding of $\tilde{V}$ in $E(\tilde{V})$ defined in Section \ref{sec: enveloping space}. The co-restriction of $i$ to $\mathcal{C}(\tilde{S})$ is a Cauchy embedding, denoted by $\tilde{f}$. Let $\pi' : \mathcal{C}(\tilde{S}) \to \Gamma \backslash \mathcal{C}(\tilde{S})$ be the canonical projection. Since the action of $\Gamma$ on $i(\tilde{V})$ coincide with the usual action of $\Gamma$ on $\tilde{V}$, the map $\tilde{f}$ descends to quotient in a Cauchy embedding $f: V \to \Gamma \backslash \mathcal{C}(\tilde{S})$. It is easy to see that since $\mathcal{C}(\tilde{S})$ is $\mathcal{C}_0$-maximal (see Proposition \ref{prop: max. ext. dev.}), the quotient $\Gamma \backslash \mathcal{C}(\tilde{S})$ is also $\mathcal{C}_0$-maximal. The proposition follows.
\end{proof}

\begin{corollary}
The $\mathcal{C}_0$-maximal extension $\Gamma \backslash \mathcal{C}(\tilde{S})$ is unique up to conformal diffeomorphism.
\end{corollary}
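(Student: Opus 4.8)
The plan is to reproduce the argument of Corollary~\ref{cor: uniqueness of max. ext. dev.} one level up, on the universal cover, and then descend to the quotient. Let $\hat{V}$ be any $\mathcal{C}_0$-maximal extension of $V$; the goal is a conformal isomorphism $\hat{V}\simeq \Gamma\backslash\mathcal{C}(\tilde{S})$. If $\tilde{V}$ is Cauchy-compact this is immediate from Corollary~\ref{cor: Cauchy-compact conformally flat spacetimes} and the Fact preceding it, so assume $\tilde{S}$ non-compact. First I would pass to universal covers. The Cauchy-embedding $V\hookrightarrow\hat{V}$ restricts to a diffeomorphism between a Cauchy hypersurface of $V$ and one of $\hat{V}$, and Cauchy hypersurfaces are deformation retracts of globally hyperbolic spacetimes, so $V\hookrightarrow\hat{V}$ is a homotopy equivalence and induces an isomorphism $\pi_1(V)\xrightarrow{\ \sim\ }\pi_1(\hat{V})$; using it to identify both groups with $\Gamma$, the embedding lifts to a $\Gamma$-equivariant conformal Cauchy-embedding $\tilde{V}\hookrightarrow\tilde{\hat{V}}$. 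Hence $\tilde{\hat{V}}$ is a simply-connected conformally flat globally hyperbolic Cauchy-extension of $\tilde{V}$, on which $\Gamma$ acts by deck transformations preserving the lift $\tilde{S}$ of the Cauchy hypersurface.

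Next I would apply Proposition~\ref{propo: conf. flat ext. of V embeds in E(V)} to get a conformal embedding $\iota\colon\tilde{\hat{V}}\hookrightarrow E(\tilde{V})$ with causally convex image containing $\tilde{V}$ and admitting $\tilde{S}$ as a Cauchy hypersurface; by Lemma~\ref{lemma: Cauchy development of S} this image is contained in $\mathcal{C}(\tilde{S})$. The key step — and the one I expect to be the main obstacle — is to verify that $\iota$ is equivariant for the deck action on $\tilde{\hat{V}}$ and the action of $\Gamma$ on $\mathcal{C}(\tilde{S})$ constructed in Proposition~\ref{prop: action of the Cauchy dev. of S}. The point is that the latter action is uniquely determined by the shadow relation $O(\gamma\cdot q,\tilde{S})=\gamma\,O(q,\tilde{S})$ and restricts to the ordinary deck action on $\tilde{V}$; since $\iota$ has causally convex image, causal relations between its points (in particular shadows on $\tilde{S}$) are the same computed in $\tilde{\hat{V}}$ or in $\mathcal{C}(\tilde{S})$, and since $\gamma$ acts on $\tilde{\hat{V}}$ by a causal diffeomorphism preserving $\tilde{S}$ one obtains $O(\iota(\gamma\cdot x),\tilde{S})=\gamma\,O(\iota(x),\tilde{S})$; Proposition~\ref{prop: shadows} (applicable because $\tilde{S}$ is non-compact) then forces $\iota(\gamma\cdot x)=\gamma\cdot\iota(x)$.

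Granting equivariance, $\iota(\tilde{\hat{V}})$ is $\Gamma$-invariant in $\mathcal{C}(\tilde{S})$, and since the $\Gamma$-action on $\mathcal{C}(\tilde{S})$ is free and properly discontinuous (Proposition~\ref{prop: properness of the action}), taking quotients gives a conformal embedding $\hat{V}=\Gamma\backslash\tilde{\hat{V}}\hookrightarrow\Gamma\backslash\mathcal{C}(\tilde{S})$. I would then observe that its image is causally convex: a causal curve of $\Gamma\backslash\mathcal{C}(\tilde{S})$ with both endpoints in $\hat{V}$ lifts to a causal curve of $\mathcal{C}(\tilde{S})$ with both endpoints in the $\Gamma$-invariant set $\iota(\tilde{\hat{V}})$, which is causally convex there, so the lift and hence the original curve stay inside. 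Finally, $\hat{V}$ and $\Gamma\backslash\mathcal{C}(\tilde{S})$ share the Cauchy hypersurface $S=\Gamma\backslash\tilde{S}$ — it is Cauchy in the target by Proposition~\ref{prop: max. ext. of V} — so, exactly as in the proof of Corollary~\ref{cor: uniqueness of max. ext. dev.}, $\Gamma\backslash\mathcal{C}(\tilde{S})$ is a conformally flat Cauchy-extension of $\hat{V}$. By $\mathcal{C}_0$-maximality of $\hat{V}$ the embedding is onto, whence $\hat{V}\simeq\Gamma\backslash\mathcal{C}(\tilde{S})$, which is a $\mathcal{C}_0$-maximal extension of $V$ by Proposition~\ref{prop: max. ext. of V}.
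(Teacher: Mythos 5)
Your proposal is correct and follows essentially the same route as the paper: lift the Cauchy-embedding $V\hookrightarrow\hat V$ to the universal covers, use Proposition~\ref{propo: conf. flat ext. of V embeds in E(V)} to realize $\tilde{\hat V}$ as a causally convex subset of $E(\tilde V)$ contained in $\mathcal{C}(\tilde S)$, descend to the quotient, and conclude by $\mathcal{C}_0$-maximality of $\hat V$. The only difference is that you spell out the $\Gamma$-equivariance of the embedding via the shadow characterization (Proposition~\ref{prop: shadows}), a step the paper leaves implicit in the phrase ``descends to the quotient''; this is a correct and worthwhile elaboration, not a change of method.
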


\begin{proof}
Let $f: V \to W$ be a Cauchy embedding of $V$ in a $\mathcal{C}_0$-maximal globally hyperbolic conformally flat spacetime $W$. It lifts to a Cauchy embedding $\tilde{f}: \tilde{V} \to \tilde{W}$. By Proposition \ref{propo: conf. flat ext. of V embeds in E(V)}, $\tilde{W}$ admits a conformal copy in $E(\tilde{V})$ contained in $\mathcal{C}(\tilde{S})$, where $\tilde{S}$ is a Cauchy hypersurface of $\tilde{V}$ (seen in $E(\tilde{V})$). The inclusion of $\tilde{W}$ in $\mathcal{C}(\tilde{S})$ is a Cauchy embedding which descends to the quotient in a Cauchy embedding from $W$ to $\Gamma \backslash \mathcal{C}(\tilde{S})$. By Proposition \ref{prop: max. ext. of V}, this last one is surjective. The corollary follows.
\end{proof}

The proof of Proposition \ref{prop: max. ext. of V} is based on the fact that if the universal covering of a globally hyperbolic spacetime is maximal then this spacetime is maximal. \emph{A priori}, the converse assertion is not true in general. However, Proposition \ref{prop: max. ext. of V} allows to prove that it is true in the conformally flat setting.

\begin{corollary} \label{cor: maximality of the universal covering}
Let $V$ be a $\mathcal{C}_0$-maximal spacetime. Then, the universal covering of $V$ is $\mathcal{C}_0$-maximal.
\end{corollary}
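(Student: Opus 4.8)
The plan is to run the machinery of Section~\ref{sec: maximal ext.} with $V$ itself, and then use the hypothesis that $V$ is $\mathcal{C}_0$-maximal to force the Cauchy-embedding of Proposition~\ref{prop: max. ext. of V} to be onto, which pulls back to the statement that $\tilde V$ is already maximal.

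First I would fix a developing pair $(D,\varphi)$ for $V$, set $\Gamma=\pi_1(V)$, and choose a Cauchy hypersurface $\tilde S$ of $\tilde V$. By Proposition~\ref{prop: max. ext. dev.}, the Cauchy development $\mathcal{C}(\tilde S)$ of $\tilde S$ in the enveloping space $E(\tilde V)$ is \emph{the} $\mathcal{C}_0$-maximal extension of $\tilde V$; by Propositions~\ref{prop: action of the Cauchy dev. of S} and~\ref{prop: properness of the action}, $\Gamma$ acts freely and properly discontinuously on $\mathcal{C}(\tilde S)$ extending its deck action on $\tilde V\subset\mathcal{C}(\tilde S)$, so in particular $\tilde V$ is $\Gamma$-invariant. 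Proposition~\ref{prop: max. ext. of V} then yields that $\Gamma\backslash\mathcal{C}(\tilde S)$ is a $\mathcal{C}_0$-maximal extension of $V$, and that the inclusion $\tilde V\hookrightarrow\mathcal{C}(\tilde S)$ descends to a conformal Cauchy-embedding $f\colon V\to\Gamma\backslash\mathcal{C}(\tilde S)$. (If $\tilde V$ is Cauchy-compact, one argues identically with $\eeu$ in place of $\mathcal{C}(\tilde S)$, using Corollary~\ref{cor: Cauchy-compact conformally flat spacetimes}.)

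Now I would invoke the hypothesis: since $V$ is $\mathcal{C}_0$-maximal, the Cauchy-embedding $f$ is surjective, hence a conformal diffeomorphism. Writing $q\colon\mathcal{C}(\tilde S)\to\Gamma\backslash\mathcal{C}(\tilde S)$ for the quotient map, the image $f(V)$ is exactly $q(\tilde V)$, and since $\tilde V$ is $\Gamma$-invariant one has $q(\tilde V)=\Gamma\backslash\tilde V$ as a subset of $\Gamma\backslash\mathcal{C}(\tilde S)$. From $f(V)=\Gamma\backslash\mathcal{C}(\tilde S)$ we get $\Gamma\cdot\tilde V=\mathcal{C}(\tilde S)$, and $\Gamma$-invariance of $\tilde V$ then forces $\tilde V=\mathcal{C}(\tilde S)$. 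Thus $\tilde V$ coincides with its own $\mathcal{C}_0$-maximal extension, i.e.\ $\tilde V$ is $\mathcal{C}_0$-maximal.

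The only delicate point is the identification $q(\tilde V)=\Gamma\backslash\tilde V$, which relies on the fact — part of Proposition~\ref{prop: action of the Cauchy dev. of S} — that the extended $\Gamma$-action restricts to the ordinary deck action on $\tilde V$, so that $\tilde V$ is genuinely $\Gamma$-stable and no point of $\mathcal{C}(\tilde S)\setminus\tilde V$ is $\Gamma$-equivalent to a point of $\tilde V$. Equivalently, one may phrase the last step through covering theory: $\mathcal{C}(\tilde S)\cong\tilde S\times\R$ is simply connected (because $\tilde S$ is, being a Cauchy hypersurface of the simply-connected $\tilde V\cong\tilde S\times\R$), so $q$ is a universal covering of $\Gamma\backslash\mathcal{C}(\tilde S)$; composing with $f^{-1}$ exhibits the Cauchy-embedding $\tilde V\hookrightarrow\mathcal{C}(\tilde S)$ as a morphism of universal coverings of $V$ over $\mathrm{id}_V$, hence an isomorphism. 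No new estimate is needed; the entire content is already contained in the enveloping-space construction.
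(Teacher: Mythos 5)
Your proposal is correct and follows essentially the same route as the paper: both apply Proposition~\ref{prop: max. ext. of V} to get the Cauchy-embedding $f\colon V\to\Gamma\backslash\mathcal{C}(\tilde S)$, use $\mathcal{C}_0$-maximality of $V$ to make $f$ surjective, and deduce that $\tilde V$ fills all of $\mathcal{C}(\tilde S)$, which is $\mathcal{C}_0$-maximal by Proposition~\ref{prop: max. ext. dev.}. Your unwinding of ``$f$ surjective $\Rightarrow$ $\tilde f$ surjective'' via $\Gamma$-invariance of $\tilde V$ is just a more explicit rendering of the paper's one-line lifting step.
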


\begin{proof}
Let $\tilde{S}$ be a Cauchy hypersurface of $\tilde{V}$. By Proposition \ref{prop: max. ext. of V}, there is a Cauchy embedding $f$ from $V$ to $\Gamma \backslash \mathcal{C}(\tilde{S})$. This last one lifts to a Cauchy embedding $\tilde{f}: \tilde{V} \to \mathcal{C}(\tilde{S})$. Since $V$ is maximal, $f$ is surjective. Therefore, $\tilde{f}$ is surjective. Since $\mathcal{C}(\tilde{S})$ is $\mathcal{C}_0$-maximal (see Proposition \ref{prop: max. ext. dev.}), $\tilde{V}$ is $\mathcal{C}_0$-maximal.
\end{proof}

\section{$\mathcal{C}_0$-maximal extensions respect inclusion} \label{sec: Maximal extensions respect inclusion}

In this section, we show that the \emph{functor maximal extension} respect inclusion in the setting of conformally flat spacetimes. More precisely, we establish the following result:

\begin{theorem}\label{main theorem}
Let $V$ be a conformally flat globally hyperbolic spacetime and let $U$ be a causally convex open subset of $V$. Then, there is a conformal embedding from the $\mathcal{C}_0$-maximal extension $\hat{U}$ of $U$ into the $\mathcal{C}_0$-maximal extension $\hat{V}$ of $V$. Moreover, the image of this embedding is causally convex in $\hat{V}$.
\end{theorem}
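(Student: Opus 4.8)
The strategy is to reduce everything to the simply-connected case handled by the enveloping-space machinery, then descend to quotients. First I would pass to universal covers: let $\tilde V$ be the universal cover of $V$ and $\tilde U$ the preimage of $U$ in $\tilde V$. Since $U$ is causally convex in $V$, each connected component $\tilde U_0$ of $\tilde U$ is causally convex in $\tilde V$, and one checks that $\tilde U_0$ is a universal cover of $U$ (its deck group is $\pi_1(U)$, a subgroup of $\Gamma := \pi_1(V)$ up to conjugacy; more precisely $\pi_1(U)$ is the stabilizer of $\tilde U_0$ in $\Gamma$). Fix a developing pair $(D,\varphi)$ for $\tilde V$; restricting $D$ to $\tilde U_0$ gives a developing map for $\tilde U_0$, so we may build both enveloping spaces with compatible data.

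\textbf{Step 1: compare the two enveloping spaces.} With the compatible developing data, I claim $E(\tilde U_0)$ embeds naturally and fiberwise as an open subset of $E(\tilde V)$. Indeed, the timelike foliation of $\tilde V$ restricts to the timelike foliation of $\tilde U_0$ (because $\tilde U_0$ is causally convex, each leaf of $\tilde V$ meets $\tilde U_0$ in a connected arc), so the leaf space $\mathcal B_U$ of $\tilde U_0$ maps injectively into the leaf space $\mathcal B_V$ of $\tilde V$, and this map commutes with the developing maps $d_U,d_V$ to $\mathbb{S}^{n-1}$. Pulling back the trivial bundle $\eeu\to\mathbb S^{n-1}$ over the inclusion $\mathcal B_U\hookrightarrow\mathcal B_V$ then identifies $E(\tilde U_0)$ with $\hat\pi^{-1}(\mathcal B_U)\subset E(\tilde V)$, an open, causally convex (since it is a union of fibers over an open set — actually one must be slightly careful, it is a saturated open set, hence causally convex) subset. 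Under this identification $\tilde U_0$ sits inside $\tilde V$ inside $E(\tilde V)$ compatibly with the embeddings $i_U,i_V$ of Section~\ref{sec: def enveloping space}.

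\textbf{Step 2: identify the maximal extensions inside $E(\tilde V)$.} Let $\tilde S_U$ be a Cauchy hypersurface of $\tilde U_0$. By Proposition~\ref{prop: max. ext. dev.}, the $\mathcal C_0$-maximal extension of $\tilde U_0$ is the Cauchy development $\mathcal C(\tilde S_U)$ of $\tilde S_U$ inside $E(\tilde U_0)$. Since $E(\tilde U_0)$ is causally convex in $E(\tilde V)$, an inextensible causal curve of $E(\tilde U_0)$ through a point is a connected piece of an inextensible causal curve of $E(\tilde V)$; one deduces that the Cauchy development of $\tilde S_U$ computed in $E(\tilde U_0)$ is exactly (the trace in $E(\tilde U_0)$ of) its Cauchy development computed in $E(\tilde V)$, intersected appropriately — so $\mathcal C(\tilde S_U)$ is a causally convex open subset of $\mathcal C(\tilde S_V)$, where $\tilde S_V$ is a Cauchy hypersurface of $\tilde V$ chosen through $\tilde S_U$ (possible since $\tilde S_U$ is acausal in $E(\tilde V)$ and one can extend it, e.g. using Proposition~\ref{prop: characterization of the dual}/the Lipschitz-graph description of causally convex subsets of Section~\ref{sec: Causally convex subsets of E(M)}, by taking any $1$-Lipschitz completion). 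Thus at the level of universal covers, $\widehat{\tilde U_0}$ is conformally a causally convex open subset of $\widehat{\tilde V}$.

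\textbf{Step 3: descend to the quotient.} The action of $\Gamma$ on $\mathcal C(\tilde S_V)$ constructed in Proposition~\ref{prop: action of the Cauchy dev. of S} is free and properly discontinuous (Proposition~\ref{prop: properness of the action}), and $\hat V=\Gamma\backslash\mathcal C(\tilde S_V)$ by Proposition~\ref{prop: max. ext. of V}. The subgroup $\pi_1(U)\subset\Gamma$ (the stabilizer of $\tilde U_0$) preserves $\mathcal C(\tilde S_U)\subset\mathcal C(\tilde S_V)$ — here one uses the shadow characterization: $\gamma\in\pi_1(U)$ sends the shadow on $\tilde S_U$ of a point of $\mathcal C(\tilde S_U)$ to the shadow of another such point, because $\gamma$ preserves $\tilde S_U$ — and the induced action is again free and properly discontinuous, with quotient $\pi_1(U)\backslash\mathcal C(\tilde S_U)=\hat U$ by Proposition~\ref{prop: max. ext. of V} applied to $U$. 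The inclusion $\mathcal C(\tilde S_U)\hookrightarrow\mathcal C(\tilde S_V)$ is $\pi_1(U)$-equivariant, so it descends to a map $\hat U\to\Gamma\backslash\mathcal C(\tilde S_V)=\hat V$; this map is injective because two points of $\mathcal C(\tilde S_U)$ in the same $\Gamma$-orbit are already in the same $\pi_1(U)$-orbit (the $\Gamma$-translate landing back in $\mathcal C(\tilde S_U)$ must stabilize $\tilde U_0$, again by the shadow argument), and it is a local conformal diffeomorphism, hence a conformal embedding. Finally its image is causally convex in $\hat V$: causal convexity of $\mathcal C(\tilde S_U)$ in $\mathcal C(\tilde S_V)$ passes to the quotient because causal curves of $\hat V$ lift to causal curves of $\mathcal C(\tilde S_V)$.

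\textbf{Main obstacle.} The delicate point is Step 2 together with the compatibility of the $\Gamma$-action with the inclusion in Step 3: one must verify that $\mathcal C(\tilde S_U)$ is genuinely causally convex in $\mathcal C(\tilde S_V)$ (not merely open), which rests on the causal convexity of $E(\tilde U_0)$ in $E(\tilde V)$ and on choosing the Cauchy hypersurface $\tilde S_V$ to restrict to $\tilde S_U$; and that the stabilizer of $\tilde U_0$ in $\Gamma$ acts on $\mathcal C(\tilde S_U)$ exactly as $\pi_1(U)$ does on the abstract maximal extension of $U$ — this is where the shadow description of the $\Gamma$-action (Proposition~\ref{prop: action of the Cauchy dev. of S}) does the essential bookkeeping, since it lets one track orbits purely in terms of subsets of the Cauchy hypersurface.
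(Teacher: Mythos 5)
Your overall architecture (handle the simply-connected case with enveloping spaces, then descend to a quotient by the stabilizer of a component of $\pi^{-1}(U)$) is the same as the paper's, but three steps as written do not hold up. First, in Step 1 you justify causal convexity of $E(\tilde U_0)=\hat\pi^{-1}(\mathcal B_U)$ in $E(\tilde V)$ by saying it is a saturated open set; that is false. A union of fibers over an open subset of the base is not causally convex in general: already in $\eeu\simeq\mathbb S^{n-1}\times\R$, a causal curve joining two points of $W\times\R$ can leave $W\times\R$ whenever $W$ is not geodesically convex. Fortunately the conclusion you want in Step 2 does not need this: each connected component of the intersection of an inextensible causal curve of $E(\tilde V)$ with the \emph{open} set $E(\tilde U_0)$ is an inextensible causal curve of $E(\tilde U_0)$, hence meets $\tilde S_U$; this is exactly how the paper shows that $\hat U$ equals the Cauchy development of $\tilde S_U$ taken in $E(\tilde V)$ (Proposition \ref{prop: max. ext. respect inclusion dev. case}). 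Second, your Step 2 hinges on extending $\tilde S_U$ to a Cauchy hypersurface $\tilde S_V$ of $\tilde V$; this ``$1$-Lipschitz completion'' is asserted, not proved, and it is delicate (the extension must stay strictly between the graphs bounding $\tilde V$ in $E(\tilde V)$, which is problematic near points where $\partial\tilde U_0$ touches $\partial\tilde V$). The paper avoids it entirely: since $\tilde V$ is causally convex in $E(\tilde V)$ and $\tilde S_U\subset\tilde V$, any inextensible causal curve of $E(\tilde V)$ meeting $\tilde S_U$ meets $\tilde V$ along an inextensible causal curve of $\tilde V$, hence meets \emph{any} Cauchy hypersurface $\tilde S_V$ of $\tilde V$; so $\mathcal C(\tilde S_U)\subset\mathcal C(\tilde S_V)$ with no extension needed.

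Third, and most seriously, your descent in Step 3 rests on the claim that $\tilde U_0$ is the universal cover of $U$ with deck group $\pi_1(U)$. That is false in general: the deck group of $\tilde U_0\to U$ is the stabilizer $\Gamma'$, i.e.\ the image of $\pi_1(U)\to\pi_1(V)$, and this morphism need not be injective even for causally convex $U$. For instance, take $V$ a quotient of $\R^{1,2}$ by a spacelike translation and $U$ a causally convex open set whose Cauchy hypersurface is an annulus contained in a contractible disk of the cylinder $S_V$; then $\pi_1(U)=\mathbb Z$ maps to $0$ in $\pi_1(V)$. Consequently you cannot invoke Proposition \ref{prop: max. ext. of V} ``applied to $U$'' to identify $\pi_1(U)\backslash\mathcal C(\tilde S_U)$ with $\hat U$: that proposition concerns the genuine universal cover. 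The paper closes this gap with a separate argument: it proves directly that $\Gamma'\backslash\widehat{U'}$ is $\mathcal C_0$-maximal by lifting a putative Cauchy embedding $f\colon\Gamma'\backslash\widehat{U'}\to W$ to the intermediate cover of $W$ corresponding to the subgroup $(f\circ\pi')_*\pi_1(\widehat{U'})$ and applying the maximality of $\widehat{U'}$ there. You need this (or an equivalent) argument; without it the identification of your quotient with $\hat U$ is unproven.
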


We first prove this result in the case where $V$ is simply-connected in Section \ref{sec: developable case} before dealing with the general case in Section \ref{sec: general case}. %The key idea is to realize the $\mathcal{C}_0$-maximal extensions of $U$ and $V$ into an enveloping space $E(V)$ of $V$ so we can compare them. This strategy allows us give a constructive proof of Rossi's result, namely the existence and the uniqueness of the $\mathcal{C}_0$-maximal extension of a conformally flat globally hyperbolic spacetime.

\subsection{The simply-connected case} \label{sec: developable case}

We prove Theorem \ref{main theorem} in the case where $V$ is simply-connected:

\begin{proposition}\label{prop: max. ext. respect inclusion dev. case}
Let $V$ be a \textbf{simply-connected} conformally flat globally hyperbolic spacetime and let $U$ be a causally convex open subset of $V$. Then, there is a conformal embedding from the $\mathcal{C}_0$-maximal extension $\hat{U}$ of $U$ into the $\mathcal{C}_0$-maximal extension $\hat{V}$ of $V$. Moreover, the image is causally convex in $\hat{V}$.
\end{proposition}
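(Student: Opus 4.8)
The plan is to exploit the enveloping space machinery of Section~\ref{sec: enveloping space}. Since $V$ is simply-connected, so is $U$ (being causally convex open in $V$, $U$ is globally hyperbolic by Proposition~\ref{prop: causally convex implies GH}, and one checks it is simply-connected — or one works with $\tilde{U}$ embedded in $V$). Fix a developing map $D\colon V\to\eeu$ and a spatio-temporal decomposition $\varphi$; these restrict to a development pair for $U$. The first step is to compare the enveloping spaces: the timelike foliation on $V$ (pull-back of $\partial_t$ by $D$) restricts to the timelike foliation on $U$, and since $U$ is causally convex in $V$, each leaf of $U$ is the intersection of a leaf of $V$ with $U$. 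Hence the leaf space $\mathcal{B}_U$ of $U$ maps to the leaf space $\mathcal{B}_V$ of $V$; this map is an open topological embedding (it is a local homeomorphism onto its image because $d_U$ and $d_V$ factor through the same projection $\pi\circ D$). Consequently $E(U)$ is canonically identified with the restriction of the trivial bundle $E(V)\to\mathcal{B}_V$ over the open subset $\mathcal{B}_U\subset\mathcal{B}_V$; in particular $E(U)$ embeds conformally in $E(V)$ as an open subset, compatibly with the embeddings $i_U\colon U\hookrightarrow E(U)$ and $i_V\colon V\hookrightarrow E(V)$.

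The second step is to check causal convexity of $E(U)$ inside $E(V)$. Here I would use that $E(U)=\hat{\pi}^{-1}(\mathcal{B}_U)$ is a union of full fibers, and a causal curve of $E(V)$ with both endpoints over $\mathcal{B}_U$ projects to a curve in $\mathcal{B}_U$; but the projection $\hat{\pi}$ is not causal-monotone in general, so this needs a little care. The right argument is local: near any point, $\hat{D}$ is a conformal chart into $\eeu$, and one transfers the causal-convexity question to $\eeu$, where $D(U)$ is causally convex (restriction of causal convexity is automatic since $U$ is causally convex in $V$ and $D$ is a local conformal diffeomorphism respecting causal curves by Lemma~\ref{lemma: restriction of D to a causal curve}). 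Alternatively — and this is cleaner — one observes that $i_U(U)$ is causally convex in $E(U)$ (Fact~\ref{lemma: M is causally convex in E(M)}) and in $E(V)$ (apply Fact~\ref{lemma: M is causally convex in E(M)} to $V$), and then runs the $i(S)$-disconnection argument of Fact~\ref{lemma: M is causally convex in E(M)} with a Cauchy hypersurface $S_U$ of $U$: the image $i_U(S_U)$ is a global section of $E(U)\to\mathcal{B}_U$, hence acausal in $E(U)$, and I claim acausal in $E(V)$ as well because $E(U)$ is a union of fibers of $E(V)$ and $i_U(S_U)$ meets each such fiber exactly once.

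The third step assembles the conclusion. By Proposition~\ref{prop: max. ext. dev.}, $\hat{U}=\mathcal{C}(S_U)$ computed in $E(U)$, and $\hat{V}=\mathcal{C}(S_V)$ computed in $E(V)$ for Cauchy hypersurfaces $S_U$ of $U$ and $S_V$ of $V$. Now choose $S_U$ and $S_V$ compatibly: take a Cauchy hypersurface $S_V$ of $V$ and inside it the achronal set $S_U := $ a Cauchy hypersurface of $U$; since $U$ is causally convex in $V$, one can arrange $S_U\subset S_V$ (a Cauchy hypersurface of $V$ restricts, over $\mathcal{B}_U$, to a Cauchy hypersurface of $U$ — this uses that causally convex open subsets of a GH spacetime inherit Cauchy hypersurfaces). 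Since $E(U)$ is causally convex in $E(V)$, a causal curve of $E(V)$ through a point of $\mathcal{C}_{E(V)}(S_U)$ that also meets $\mathcal{C}_{E(V)}(S_U)$ stays in $E(U)$ — no, rather: the Cauchy development of $S_U$ computed in $E(V)$ equals the one computed in $E(U)$, because inextensible causal curves of $E(U)$ are exactly the connected components of intersections of inextensible causal curves of $E(V)$ with $E(U)$ (causal convexity), and any inextensible causal curve of $E(V)$ through a point of $\mathcal{C}_{E(V)}(S_U)$ must meet $S_U\subset E(U)$ hence enters $E(U)$. Therefore $\hat{U}=\mathcal{C}_{E(V)}(S_U)$, an open subset of $E(V)$; it is causally convex in $E(V)$ (Cauchy developments of achronal sets are causally convex). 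Finally $\hat{U}=\mathcal{C}_{E(V)}(S_U)\subset\mathcal{C}_{E(V)}(S_V)=\hat{V}$: indeed $\mathcal{C}(S_U)$ is a Cauchy-extension of $U$, hence (by Proposition~\ref{propo: conf. flat ext. of V embeds in E(V)} applied to $V$, or directly since both sit in $E(V)$ with $S_U\subset S_V$) its points lie over $\mathcal{B}_U\subset\mathcal{B}_V$ and their inextensible causal curves meet $S_V$. A causally convex subset of a causally convex subset of $\hat V$ is causally convex in $\hat V$, so the image of $\hat U$ is causally convex in $\hat V$, as required.

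\textbf{Main obstacle.} The delicate point is the interplay between "Cauchy hypersurface of $U$" and "Cauchy hypersurface of $V$": these are genuinely different notions (a Cauchy hypersurface of $U$ need not extend to one of $V$, and the restriction of one of $V$ need not be Cauchy for $U$ unless $U$ is causally convex), and the whole argument hinges on choosing them so that $S_U\subset S_V$ and then tracking that Cauchy developments computed in $E(U)$ versus $E(V)$ agree. Establishing that $E(U)$ is causally convex (not merely open) in $E(V)$ is the second subtle step; once these two facts are in hand, everything else is formal manipulation of the constructions in Section~\ref{sec: def enveloping space}.
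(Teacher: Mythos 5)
Your overall architecture is the paper's: embed $E(U)$ into $E(V)$ as the union of fibers over $\mathcal{B}_U\subset\mathcal{B}_V$, realize $\hat U$ and $\hat V$ as Cauchy developments inside $E(V)$, and compare them. But the two steps you yourself single out as delicate are exactly where the argument breaks. First, $E(U)$ is \emph{not} causally convex in $E(V)$ in general: take $V=\eeu$ (so $E(V)\cong\eeu$ by Corollary \ref{cor: Cauchy-compact conformally flat spacetimes}) and $U$ a small diamond; then $E(U)=B\times\R$ for a small ball $B\subset\mathbb{S}^{n-1}$, and a causal curve $(x(t),t)$ from $(x_0,0)$ to $(x_0,T)$ with $T$ large can leave $B$ and return. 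Neither of your two justifications repairs this, and the acausality of $i_U(S_U)$ in all of $E(V)$ (as opposed to in $E(U)$) is likewise unproved: $i_U(S_U)$ is a section only over $\mathcal{B}_U$, so the edge-free disconnection argument of Lemma \ref{lemma: i(S) disconnects E(M)} does not apply to it inside $E(V)$. The good news is that the claim is not needed: the paper only uses that $E(U)$ is \emph{open} and a union of full fibers, so that the connected component through $x\in\hat U$ of $\hat\gamma\cap E(U)$ is an inextensible causal curve of $E(U)$ and hence meets $\Sigma$; this gives $\hat U\subset\mathcal{C}_{E(V)}(\Sigma)$, and equality then follows from the $\mathcal{C}_0$-maximality of $\hat U$ --- a step you assert but do not carry out; your component argument yields one inclusion only, since an inextensible curve of $E(V)$ through $x$ may meet $S_U$ outside the component containing $x$.

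Second, the reduction to $S_U\subset S_V$ is both unjustified and unnecessary. A Cauchy hypersurface of $V$ need not meet $U$ at all (take $U$ entirely in the future of $S_V$), so ``restricting $S_V$ over $\mathcal{B}_U$'' does not produce a Cauchy hypersurface of $U$. The paper's route needs no compatibility between $\Sigma$ and $S$: an inextensible causal curve of $E(V)$ through a point of $\mathcal{C}_{E(V)}(\Sigma)$ meets $\Sigma\subset V$; since $V$ is causally convex in $E(V)$ (Fact \ref{lemma: M is causally convex in E(M)}), its trace on $V$ is a single inextensible causal curve of $V$ and therefore meets $S$; hence $\mathcal{C}_{E(V)}(\Sigma)\subset\mathcal{C}_{E(V)}(S)=\hat V$, and causal convexity of the image comes from $\mathcal{C}_{E(V)}(\Sigma)$ being a Cauchy development. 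A last, smaller point: $U$ need not be simply connected (its Cauchy hypersurface can be an annulus even when $V$ is Minkowski space), but it is developable via $D|_U$, which is all the enveloping-space construction requires.
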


The key idea is to realize the $\mathcal{C}_0$-maximal extensions of $U$ and $V$ in the enveloping space $E(V)$ so we can compare them. The proof of Proposition \ref{prop: max. ext. respect inclusion dev. case} uses the following lemma.\\

Let $D: V \to \eeu$ be a developing map. 

\begin{lemma}\label{lemma: E(U) included in E(V)}
The inclusion map $i: U \hookrightarrow V$ induces a conformal embedding of $E(U)$ into $E(V)$ that sends every fiber of $E(U)$ on a fiber of $E(V)$ and such that the restriction to every fiber of $E(U)$ is surjective. 
\end{lemma}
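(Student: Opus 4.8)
The plan is to construct the embedding $E(U) \hookrightarrow E(V)$ explicitly from the construction of enveloping spaces, using the fact that $U$ and $V$ can be developed into $\eeu$ compatibly. Recall that $E(V)$ is built from a pair $(D, \varphi)$: a developing map $D : V \to \eeu$ and a spatio-temporal decomposition $\varphi : \eeu \to \mathbb{S}^{n-1} \times \R$; it is the pullback by $d_V : \mathcal{B}_V \to \mathbb{S}^{n-1}$ of the trivial bundle $\pi : \eeu \to \mathbb{S}^{n-1}$, where $\mathcal{B}_V$ is the leaf space of the timelike foliation $T_V = D^*\partial_t$ on $V$. Since $U$ is an open subset of $V$, the restriction $D_U := D|_U$ is a developing map for $U$, and the timelike foliation $T_U$ on $U$ induced by $D_U^*\partial_t$ is simply the restriction of the foliation $T_V$ to $U$.

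First I would analyze the leaf spaces. Because $U$ is causally convex in $V$, the intersection of $U$ with any leaf of $T_V$ is connected (a sub-arc of that leaf), so each leaf of $T_U$ is contained in a unique leaf of $T_V$; this gives a natural \emph{injective} map $\iota_{\mathcal{B}} : \mathcal{B}_U \to \mathcal{B}_V$ on leaf spaces. One checks that $\iota_{\mathcal{B}}$ is an open topological embedding (using that $\psi_U$, $\psi_V$ are open and that $U$ is open in $V$), and that the developing maps on leaf spaces are compatible: $d_V \circ \iota_{\mathcal{B}} = d_U$. This last identity is exactly what is needed to define the bundle map. Indeed, set
\[
\Phi : E(U) \longrightarrow E(V), \qquad \Phi(p, b) = (p, \iota_{\mathcal{B}}(b)),
\]
which is well-defined because $\pi(p) = d_U(b) = d_V(\iota_{\mathcal{B}}(b))$. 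Since $\iota_{\mathcal{B}}$ is injective, $\Phi$ is injective; it is continuous and open because $\iota_{\mathcal{B}}$ is; and it is conformal because the conformal structures on $E(U)$ and $E(V)$ are both pulled back from $\eeu$ via the projection on the first factor, which $\Phi$ commutes with ($\hat{D}_V \circ \Phi = \hat{D}_U$). By construction $\Phi$ sends the fiber $\hat{\pi}_U^{-1}(b)$ to the fiber $\hat{\pi}_V^{-1}(\iota_{\mathcal{B}}(b))$, and the restriction to each fiber is a homeomorphism onto the corresponding fiber, since both fibers map homeomorphically onto the same fiber $\pi^{-1}(d_U(b))$ of $\pi : \eeu \to \mathbb{S}^{n-1}$ under $\hat{D}_U$, resp. $\hat{D}_V$; this proves the surjectivity-on-fibers claim.

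The main obstacle I anticipate is verifying carefully that $\iota_{\mathcal{B}}$ is well-defined and injective, i.e.\ that the assignment "leaf of $T_U$ $\mapsto$ the leaf of $T_V$ containing it" is both single-valued and one-to-one; the causal convexity of $U$ in $V$ is precisely what makes each leaf of $T_U$ sit inside exactly one leaf of $T_V$ and distinct leaves of $T_U$ land in distinct leaves of $T_V$ (two points of $V$ on the same $T_V$-leaf and both in $U$ are joined by a timelike arc of that leaf, which lies in $U$ by causal convexity, hence they are on the same $T_U$-leaf). A secondary routine point is the compatibility $d_V \circ \iota_{\mathcal{B}} = d_U$, which follows by chasing the two commuting squares defining $d_U$ and $d_V$ together with $D_U = D_V|_U$. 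Once Lemma~\ref{lemma: E(U) included in E(V)} is in hand, Proposition~\ref{prop: max. ext. respect inclusion dev. case} follows by transporting the $\mathcal{C}_0$-maximal extension of $U$, which by Proposition~\ref{prop: max. ext. dev.} is the Cauchy development in $E(U)$ of a Cauchy hypersurface $S_U$ of $U$, into $E(V)$ via $\Phi$, and then comparing it with $\mathcal{C}(S_V)$ inside $E(V)$ where $S_V$ is a Cauchy hypersurface of $V$ extending (or compatible with) $S_U$; causal convexity of the image will come from the fact that Cauchy developments of acausal hypersurfaces are causally convex in the ambient strongly causal spacetime $E(V)$, together with the descriptions in Section~\ref{sec: Causally convex subsets of E(M)}.
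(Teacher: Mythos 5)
Your proposal is correct and follows essentially the same route as the paper: pass to the leaf spaces of the timelike foliations, use causal convexity of $U$ to see that each leaf of $V$ meets $U$ in a single (inextensible) leaf of $U$, obtain the embedding $\bar{i}:\mathcal{B}_U\to\mathcal{B}_V$ with $d_V\circ\bar{i}=d_U$, and define the bundle map $(p,b)\mapsto(p,\bar{i}(b))$. The fiberwise surjectivity argument via $\hat{D}$ identifying both fibers with the same fiber of $\pi:\eeu\to\mathbb{S}^{n-1}$ also matches the paper's reasoning.
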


\begin{proof}
Consider the foliation of $V$ by inextensible timelike curves induced by the pull-back by the developing map $D$ of the vector field $\partial_t$ on $Ein_{1,n} \simeq \mathbb{S}^{n-1} \times \R$. Since $U$ is causally convex in $V$, the intersection of every leaf of $V$ with $U$ is an inextensible timelike curve of $U$. Therefore, the foliation of $V$ induces a foliation of $U$ by inextensible timelike curves. Notice that this foliation coincides with that induced by the pull-back of $\partial_t$ by the restriction of $D$ to $U$. Let $\psi: V \to \mathcal{B}$ and $\psi_U  : U \to \mathcal{B}_U$ be the canonical projections on the leaf spaces. Then, the map $\psi \circ i$ descends to the quotient in an embedding $\bar{i}: \mathcal{B}_U \to \mathcal{B}$. Let $d: \mathcal{B} \to \mathbb{S}^{n-1}$ (resp. $d_U: \mathcal{B}_U \to \mathcal{S}^{n-1}$) be the developing map induced by the developing map $D$ (resp. the restriction of $D$ to $U$). It is clear that $d \circ \bar{i} = d_U$. It follows that the map from $E(U)$ to $E(V)$ that sends $(p,b)$ on $(p, \bar{i}(b))$ is a conformal embedding that sends every fiber of $E(U)$ on a fiber of $E(V)$. Moreover, the restriction to every fiber of $E(U)$ is clearly surjective.
\end{proof}

In other words, Lemma \ref{lemma: E(U) included in E(V)} says that $E(U)$ can be seen as the union of the fibers of $E(V)$ over some open subset of $\mathcal{B}$.

\begin{proof}[Proof of Proposition \ref{prop: max. ext. respect inclusion dev. case}]
We identify $V$ (resp. $U$) with its image in $E(V)$ (resp. $E(U)$), then we identify $E(U)$ with its image in $E(V)$. Let $\hat{V}$ (resp. $\hat{U}$) be the $\mathcal{C}_0$-maximal extension of $V$ (resp. $U$). By Proposition \ref{prop: max. ext. dev.}, $\hat{V}$ (resp. $\hat{U}$) is the Cauchy development $\mathcal{C}(S)$ of a Cauchy hypersurface $S$ of $V$ (resp. $\Sigma$ of $U$) in $E(V)$ (resp. $E(U)$). \\
We prove that the Cauchy development $\mathcal{C}(\Sigma)$ of $\Sigma$ in $E(V)$ is exactly $\hat{U}$, then we prove that $\mathcal{C}(\Sigma) \subset \mathcal{C}(S)$:

Let $x \in \hat{U}$ and let $\hat{\gamma}$ an inextensible causal curve of $E(V)$ going through $x$. The intersection of $\hat{\gamma}$ with $E(U)$ is a union of connected components. The component containing $x$ is an inextensible causal curve of $E(U)$, then it intersects $\Sigma$ in a single point. Hence, $x \in \mathcal{C}(\Sigma)$. This proves that $\hat{U} \subset \mathcal{C}(\Sigma)$. Actually, this inclusion is a Cauchy embedding. Since $U$ is $\mathcal{C}_0$-maximal, we deduce that $\hat{U} = \mathcal{C}(\Sigma)$.

Let $x \in \mathcal{C}(\Sigma)$ and let $\hat{\gamma}$ an inextensible causal curve of $E(V)$ going through $x$. By definition, the curve $\hat{\gamma}$ meets $\Sigma$, hence $V$. Since $V$ is causally convex in $E(V)$, the intersection of $\hat{\gamma}$ with $V$ is an inextensible causal curve of $V$, thus it intersects $S$ in a single point. It follows that $x \in \mathcal{C}(S)$. Hence, $\mathcal{C}(\Sigma) \subset \mathcal{C}(S)$. The proposition follows.
\end{proof}

\subsection{The general case} \label{sec: general case}

In the previous section, we proved Theorem \ref{main theorem} in the case where $V$ is simply-connected. In this section, we prove it for any conformally flat globally hyperbolic spacetime $V$. Without loss of generality, we suppose that $V$ is $\mathcal{C}_0$-maximal. 
Let $\pi: \tilde{V} \to V$ be the universal cover of $V$. Set $\Gamma := \pi_1(V)$. Let $U'$ be a connected component of $\pi^{-1}(U)$ and let $\Gamma'$ be the stabilizer of $U'$ in $\Gamma$.

\begin{lemma}
The open set $U'$ is causally convex in $\tilde{V}$.
\end{lemma}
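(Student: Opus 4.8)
The plan is to show that $U'$, as a connected component of the preimage $\pi^{-1}(U)$, inherits causal convexity from $U$ via the covering map $\pi$. The essential point is that $\pi$ is a local conformal diffeomorphism, hence sends causal curves of $\tilde V$ to causal curves of $V$, and conversely causal curves of $V$ lift to causal curves of $\tilde V$.

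First I would take two points $p, q \in U'$ and a causal curve $\tilde\gamma$ of $\tilde V$ joining them; without loss of generality $q \in J^+(p)$. Applying $\pi$, we get a causal curve $\gamma := \pi \circ \tilde\gamma$ of $V$ joining $\pi(p)$ to $\pi(q)$, both of which lie in $U$. Since $U$ is causally convex in $V$, the curve $\gamma$ is entirely contained in $U$. Therefore $\tilde\gamma$ is contained in $\pi^{-1}(U)$.

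It remains to upgrade ``$\tilde\gamma \subset \pi^{-1}(U)$'' to ``$\tilde\gamma \subset U'$''. This follows from connectedness: $\tilde\gamma$ is a connected subset of $\pi^{-1}(U)$ which meets $U'$ (it contains $p$), so it is contained in the connected component $U'$ of $\pi^{-1}(U)$ that contains $p$. Hence every causal curve of $\tilde V$ between two points of $U'$ stays in $U'$, which is exactly causal convexity of $U'$ in $\tilde V$.

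I do not expect a serious obstacle here; the only point requiring a word of care is that $\gamma = \pi \circ \tilde\gamma$ is genuinely a causal curve of $V$ (so that causal convexity of $U$ applies), which is immediate because $\pi$ is a local isometry up to conformal factor and conformal maps preserve the causal type of tangent vectors, together with the elementary topological fact that a connected subset of $\pi^{-1}(U)$ meeting $U'$ lies in $U'$.
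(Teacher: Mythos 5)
Your proof is correct and follows exactly the same route as the paper: project the causal curve by $\pi$, invoke causal convexity of $U$ in $V$, and conclude by connectedness that the curve stays in the component $U'$. The only difference is that you spell out the connectedness step, which the paper leaves implicit.
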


\begin{proof}
Let $p,q \in U'$ and let $\gamma$ be a causal curve in $\tilde{V}$ joining $p$ to $q$. The projection $\pi(\gamma)$ of $\gamma$ in $V$ is a causal curve joining the points $\pi(p)$ and $\pi(q)$ of $U$. Since $U$ is causally convex in $V$, the curve $\pi(\gamma)$ is contained in $U$. Hence, $\gamma$ is contained in $U'$. The lemma follows.
\end{proof}

Let $\widehat{U'}$ be the $\mathcal{C}_0$-maximal extension of $U'$. By Proposition \ref{prop: max. ext. respect inclusion dev. case},     $\widehat{U'}$ can be conformally identified to a causally convex open subset of $\tilde{V}$. 

\begin{lemma}
The stabilizer of $\widehat{U'}$ in $\Gamma$ is equal to $\Gamma'$.
\end{lemma}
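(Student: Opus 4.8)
### Proof plan for the lemma ``The stabilizer of $\widehat{U'}$ in $\Gamma$ is equal to $\Gamma'$''

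The plan is to prove the two inclusions separately, using the canonical identification of $\widehat{U'}$ with a causally convex open subset of $\tilde V$ (which is legitimate because $\tilde V$ is $\mathcal{C}_0$-maximal, hence plays the role of an ambient maximal space, and Proposition \ref{prop: max. ext. respect inclusion dev. case} places $\widehat{U'}$ there as a causally convex open set containing $U'$). Throughout I will use that a deck transformation $\gamma \in \Gamma$ acts conformally on $\tilde V$ and preserves the causal structure, so it sends causally convex sets to causally convex sets and Cauchy-embeddings to Cauchy-embeddings.

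First I would show $\Gamma' \subseteq \mathrm{Stab}(\widehat{U'})$. Let $\gamma \in \Gamma'$, so $\gamma U' = U'$. Then $\gamma$ restricts to a conformal automorphism of $U'$, hence carries the $\mathcal{C}_0$-maximal extension of $U'$ to the $\mathcal{C}_0$-maximal extension of $\gamma U' = U'$; by uniqueness of the $\mathcal{C}_0$-maximal extension (Corollary following Proposition \ref{prop: max. ext. dev.}, together with the concrete realization inside $\tilde V$), $\gamma \widehat{U'}$ is again a $\mathcal{C}_0$-maximal Cauchy-extension of $U'$ sitting as a causally convex open subset of $\tilde V$. The point is that inside the fixed maximal ambient $\tilde V$ this realization is unique: if $S'$ is a Cauchy hypersurface of $U'$, then $\widehat{U'}$ is the Cauchy development $\mathcal{C}(S')$ of $S'$ in $\tilde V$ (this is exactly the content of the proof of Proposition \ref{prop: max. ext. respect inclusion dev. case}), and $\gamma$ sends $\mathcal{C}(S')$ to $\mathcal{C}(\gamma S')$. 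Since $\gamma S'$ is again a Cauchy hypersurface of $\gamma U' = U'$, and Cauchy developments of two Cauchy hypersurfaces of the same globally hyperbolic spacetime inside a fixed ambient coincide, we get $\gamma \mathcal{C}(S') = \mathcal{C}(S')$, i.e. $\gamma \widehat{U'} = \widehat{U'}$.

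Next I would show $\mathrm{Stab}(\widehat{U'}) \subseteq \Gamma'$. Let $\gamma \in \Gamma$ with $\gamma \widehat{U'} = \widehat{U'}$. I want to conclude $\gamma U' = U'$. Recall $U'$ is a connected component of $\pi^{-1}(U)$, and it is causally convex in $\tilde V$, hence globally hyperbolic; moreover $U'$ is $\mathcal{C}_0$-maximal \emph{within $\widehat{U'}$}? No — rather, $U'$ is recovered from $\widehat{U'}$ intrinsically. The cleanest route: $U'$ is exactly the union of those leaves (fibers) of the timelike foliation used to build the enveloping space that meet $U'$; equivalently, in the realization of $\widehat{U'}$ as $\mathcal{C}(S')\subset\tilde V$, the subset $U'$ is the connected component of $\pi^{-1}(U)\cap \mathcal{C}(S')$ determined by $S'$. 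Since $\gamma$ preserves $\pi^{-1}(U)$ (because $\gamma$ is a deck transformation and $\pi \circ \gamma = \pi$), and $\gamma$ preserves $\widehat{U'}=\mathcal{C}(S')$ by hypothesis, $\gamma$ permutes the connected components of $\pi^{-1}(U)\cap\widehat{U'}$. It therefore suffices to see that $\pi^{-1}(U)\cap\widehat{U'}$ is connected — equal to $U'$ — or at least that $\gamma$ cannot move $U'$ to a different component. I would argue $\pi^{-1}(U)\cap\widehat{U'} = U'$: any point of $\widehat{U'}$ lies on an inextensible causal curve of $\widehat{U'}$ meeting $S'\subset U'$, and if such a point also projects into $U$ then, by causal convexity of $U'$ in $\tilde V$ and of $U$ in $V$, the whole sub-curve joining it to $S'$ stays in $U'$, forcing the point into $U'$. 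Hence $\pi^{-1}(U)\cap\widehat{U'}=U'$, $\gamma U' = \gamma(\pi^{-1}(U)\cap\widehat{U'}) = \pi^{-1}(U)\cap\gamma\widehat{U'} = \pi^{-1}(U)\cap\widehat{U'} = U'$, so $\gamma \in \Gamma'$.

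The main obstacle is the step $\pi^{-1}(U)\cap\widehat{U'}=U'$: one must be careful that $\widehat{U'}$, although a Cauchy-extension of $U'$, does not ``stick out'' of $\pi^{-1}(U)$ into regions projecting outside $U$ and then come back — this is ruled out by combining causal convexity of $U$ in $V$ with the fact that every point of $\widehat{U'}$ is causally connected to $S'$ through a curve internal to $\widehat{U'}$, but making this airtight requires a short argument that the relevant causal segment projects into $U$. Once that is in place, equivariance of $\pi$ under $\Gamma$ gives both inclusions cleanly.
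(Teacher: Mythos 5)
Your proof is correct, and it is worth separating the two directions. For the inclusion $\Gamma' \subseteq \mathrm{Stab}(\widehat{U'})$ you essentially make explicit what the paper compresses into one sentence (``since $U' \subset \widehat{U'}$\dots''): the point is indeed that the realization of $\widehat{U'}$ inside $\tilde V$ is canonical, namely $\widehat{U'} = \mathcal{C}(S')$ for a Cauchy hypersurface $S'$ of $U'$, and $\gamma\,\mathcal{C}(S') = \mathcal{C}(\gamma S') = \mathcal{C}(S')$, the last equality because the Cauchy developments in $\tilde V$ of any two Cauchy hypersurfaces of the causally convex set $U'$ coincide. For the converse inclusion your route is genuinely different from the paper's: the paper fixes $p \in U'$, a Cauchy hypersurface $\Sigma$ of $U'$ through $p$ and an inextensible causal curve $\varphi$ of $\widehat{U'}$ through $p$, and identifies the unique point of $\gamma\varphi \cap \Sigma$ with $\gamma p$ by projecting to $V$, using that $\pi(\Sigma)$ is a Cauchy hypersurface of the causally convex set $\pi(\widehat{U'})$ together with the absence of closed causal curves to pin down the deck transformation. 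You instead establish the set identity $\pi^{-1}(U) \cap \widehat{U'} = U'$ --- every point of $\widehat{U'}$ projecting into $U$ is joined to $S'$ by a causal arc inside $\widehat{U'}$ whose projection stays in $U$ by causal convexity of $U$ in $V$, so by connectedness the arc stays in the component $U'$ of $\pi^{-1}(U)$ --- and then conclude by $\Gamma$-equivariance of $\pi^{-1}(U)$. Your version yields a slightly stronger intermediate statement (it exhibits $U'$ as an intrinsically $\gamma$-invariant intersection) and bypasses the closed-causal-curve argument; both approaches ultimately rest on the same two ingredients, the causal convexity of $U$ in $V$ and the fact that a Cauchy hypersurface of $U'$ is one of $\widehat{U'}$.
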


\begin{proof}
Since $U' \subset \widehat{U'}$, the stabilizer of $U'$ is contained in the stabilizer of $\widehat{U'}$. Conversely, let $\gamma$ be an element of $\Gamma$ stabilizing $\widehat{U'}$. Let $p \in U'$. Consider a Cauchy hypersurface $\Sigma$ of $U'$ going through $p$. Let $\varphi$ be an inextensible causal curve of $\widehat{U'}$ going through $p$. Then, $\gamma \varphi$ is an inextensible causal curve of $\widehat{U'}$. Therefore, $\gamma \varphi$ intersects $\Sigma$ in a unique point $q$. We prove that $q = \gamma p$.

Since $\widehat{U'}$ is causally convex in $\tilde{V}$, the projection $\pi(\widehat{U'})$ is causally convex in $V$ and $\pi(\Sigma)$ is a Cauchy hypersurface of $\pi(\widehat{U'})$. Moreover, $\pi(\varphi)$ is an inextensible causal curve of $\pi(\widehat{U'})$, then it meets $\pi(\Sigma)$ in a single point. Since $\pi(p), \pi(q) \in \pi(\varphi) \cap \pi(\Sigma)$, we deduce that $\pi(p) = \pi(q)$. Then, $q = \gamma' p$ with $\gamma' \in \Gamma$. Since $\gamma p, \gamma' p \in \gamma \varphi$, if $\gamma \not = \gamma'$, the causal curve $\pi(\varphi)$ would be closed. Contradiction. Hence, $\gamma = \gamma'$, so $q = \gamma p$. This shows that $\gamma p \in U'$. Thus, $\gamma \in \Gamma'$.
\end{proof}

The inclusion $\widehat{U'} \subset \tilde{V}$ descends to the quotient in a conformal embedding from $\Gamma' \backslash \widehat{U'}$ to $V$. Since $\widehat{U'}$ is causally convex in $\tilde{V}$, the image of this embedding is causally convex in $V$. It remains to prove the following assertion to conclude.

\begin{lemma}
The quotient space $\Gamma' \backslash \widehat{U'}$ is the $\mathcal{C}_0$-maximal extension of $U$.
\end{lemma}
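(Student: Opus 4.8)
The plan is to identify $\Gamma' \backslash \widehat{U'}$ as a $\mathcal{C}_0$-maximal Cauchy-extension of $U$, which suffices by uniqueness of the $\mathcal{C}_0$-maximal extension. There are two things to check: first, that the natural map $U \to \Gamma' \backslash \widehat{U'}$ is a Cauchy-embedding; second, that $\Gamma' \backslash \widehat{U'}$ is itself $\mathcal{C}_0$-maximal.

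For the first point, I would start from the composition $U' \hookrightarrow \widehat{U'} \to \Gamma' \backslash \widehat{U'}$. Since $\Gamma'$ is exactly the stabilizer of $U'$ in $\Gamma$ and acts freely and properly discontinuously on $\widehat{U'}$ (this is the analogue for $U'$ of Proposition \ref{prop: properness of the action}, applied to the simply-connected spacetime $U'$ and its maximal extension $\widehat{U'}$), the quotient $\Gamma' \backslash \widehat{U'}$ is a conformally flat globally hyperbolic spacetime. The action of $\Gamma'$ on $U'$ is the restriction of the deck action, so $\Gamma' \backslash U'$ is conformally identified with the image $\pi(U')$, which is precisely $U$ (a connected component of $\pi^{-1}(U)$ quotiented by its stabilizer recovers $U$). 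Hence $U \to \Gamma' \backslash \widehat{U'}$ is a conformal embedding. That it is a \emph{Cauchy}-embedding follows because a Cauchy hypersurface $\Sigma$ of $U'$ which is $\Gamma'$-invariant descends to a Cauchy hypersurface of $\Gamma' \backslash \widehat{U'}$: indeed $\Sigma$ is a Cauchy hypersurface of $\widehat{U'}$ by $\mathcal{C}_0$-maximality of $\widehat{U'}$, so its image is a Cauchy hypersurface downstairs, and it is also a Cauchy hypersurface of the subspacetime $U$; one concludes as in the proof of Proposition \ref{prop: max. ext. of V}, using that a causally convex open subset which contains a Cauchy hypersurface of the ambient space is a Cauchy-extension.

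For the second point, I would argue exactly as in Proposition \ref{prop: max. ext. of V}: if a globally hyperbolic conformally flat spacetime $W$ is a Cauchy-extension of $\Gamma' \backslash \widehat{U'}$, lift to the universal cover to get a Cauchy-extension $\widetilde W$ of $\widehat{U'}$; since $\widehat{U'}$ is $\mathcal{C}_0$-maximal (Proposition \ref{prop: max. ext. dev.}), the Cauchy-embedding $\widehat{U'} \hookrightarrow \widetilde W$ is onto, hence so is $\Gamma' \backslash \widehat{U'} \hookrightarrow W$. Therefore $\Gamma' \backslash \widehat{U'}$ is $\mathcal{C}_0$-maximal. Combining with the first point, $\Gamma' \backslash \widehat{U'}$ is a $\mathcal{C}_0$-maximal Cauchy-extension of $U$, and by uniqueness it is \emph{the} $\mathcal{C}_0$-maximal extension of $U$.

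The main obstacle I anticipate is the verification that $U \to \Gamma' \backslash \widehat{U'}$ is a Cauchy-\emph{embedding} rather than merely a conformal embedding — concretely, producing a Cauchy hypersurface of $\widehat{U'}$ that is $\Gamma'$-invariant and whose image is a Cauchy hypersurface of $U$. The natural candidate is to take a $\Gamma'$-invariant Cauchy hypersurface $\Sigma$ of $U'$ (which exists because $U'$ is a $\Gamma'$-cover of the globally hyperbolic $U$, so one may pull back a Cauchy hypersurface of $U$); then one must check $\Sigma$ remains a Cauchy hypersurface after enlarging $U'$ to $\widehat{U'}$, which is precisely the defining property of the $\mathcal{C}_0$-maximal extension (every inextensible causal curve of $\widehat{U'}$ meets $\Sigma$ once). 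The $\Gamma'$-equivariance of the whole construction is then routine given the preceding lemma identifying the stabilizer of $\widehat{U'}$ with $\Gamma'$.
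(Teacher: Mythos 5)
Your overall strategy is the one the paper follows (show $U \to \Gamma' \backslash \widehat{U'}$ is a Cauchy-embedding, then show the quotient is $\mathcal{C}_0$-maximal by lifting a putative Cauchy-extension $W$ to a cover), and your first part is fine, if more elaborate than necessary. The gap is in the second part, in the sentence ``lift to the universal cover to get a Cauchy-extension $\widetilde W$ of $\widehat{U'}$.'' This silently assumes that $\widehat{U'}$ is simply connected, i.e.\ that $\widehat{U'} \to \Gamma' \backslash \widehat{U'}$ is the universal covering. In general it is not: $U'$ is a connected component of $\pi^{-1}(U)$, so $\pi_1(U') \cong \ker\bigl(\pi_1(U) \to \pi_1(V)\bigr)$, and a causally convex open subset of a simply connected globally hyperbolic spacetime can have nontrivial fundamental group (e.g.\ the dual of a spacelike round circle in $\R^{1,2}$ has a component that retracts onto an annulus). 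Since $\widehat{U'}$ is a Cauchy-extension of $U'$ it has the same fundamental group, so the preimage of $f(\Gamma'\backslash\widehat{U'})$ in the universal cover $\widetilde W$ is the \emph{universal cover of} $\widehat{U'}$, not $\widehat{U'}$ itself, and the $\mathcal{C}_0$-maximality of $\widehat{U'}$ cannot be applied directly to that lift. For the same reason your appeal to Proposition \ref{prop: properness of the action} ``applied to the simply-connected spacetime $U'$'' is not literally available; freeness and properness of the $\Gamma'$-action on $\widehat{U'}$ are better obtained from the fact that $\widehat{U'}$ is realized as a $\Gamma'$-invariant subset of $\tilde V$, on which the deck group already acts freely and properly discontinuously.

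The paper avoids this issue by not going all the way up to the universal cover: it takes the intermediate cover $p : W' \to W$ corresponding to the subgroup $(f \circ \pi')_*(\pi_1(\widehat{U'}))$ of $\pi_1(W)$ (using that $f_*$ is an isomorphism for a Cauchy-embedding), lifts $f$ to a Cauchy-embedding $f' : \widehat{U'} \to W'$, and concludes by maximality of $\widehat{U'}$. Your route can be repaired without changing its shape: lift $f$ to universal covers, observe that the lift is a Cauchy-embedding of the universal cover of $\widehat{U'}$ into $\widetilde W$, and invoke Corollary \ref{cor: maximality of the universal covering} (the universal cover of a $\mathcal{C}_0$-maximal spacetime is $\mathcal{C}_0$-maximal, applied to $\widehat{U'}$) to get surjectivity upstairs, hence surjectivity of $f$. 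As written, though, the identification of the lift with $\widehat{U'}$ is incorrect, and the missing hypothesis is exactly the possible nontriviality of $\ker(\pi_1(U) \to \pi_1(V))$.
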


\begin{proof}
The inclusion $U' \subset \widehat{U'}$ is a Cauchy embedding which descends to the quotient in a Cauchy embedding from $U$ to $\Gamma' \backslash \widehat{U'}$. We have to prove that $\Gamma' \backslash \widehat{U'}$ is $\mathcal{C}_0$-maximal.

Let $f$ be a Cauchy embedding from $\Gamma' \backslash \widehat{U'}$ in a globally hyperbolic conformally flat spacetime $W$. Then, the morphism $f_*: \pi_1(\Gamma' \backslash \widehat{U'}) \to \pi_1(W)$ induced by $f$ is an isomorphism. Let $\pi': \widehat{U'} \to \Gamma' \backslash \widehat{U'}$ be the canonical projection. It induces an injective morphism $\pi'_* : \pi_1(\widehat{U'}) \to \pi_1(\Gamma' \backslash \widehat{U'})$. Hence, the morphism $(f \circ \pi')_* : \pi_1 (\widehat{U'}) \to \pi_1(W)$ is injectif. Let $p: W' \to W$ the cover such that $p_*(\pi_1(W')) = (f \circ \pi')_*(\pi_1(\widehat{U'}))$. Then, $f$ lifts to a Cauchy embedding $f': \widehat{U'} \to W'$. Since $\widehat{U'}$ is $\mathcal{C}_0$-maximal, $f'$ is surjective. Hence, $f$ is surjective. The lemma follows.
\end{proof}

\section{Eikonal functions and $\mathcal{C}_0$-maximality} \label{sec: Eikonal functions}

\subsection{Eikonal functions on the sphere}

In this section, we characterize causally convex open subsets of $\eeu$ which are $\mathcal{C}_0$-maximal in a spatio-temporal decomposition $\mathbb{S}^{n-1} \times \R$. We denote by $d$ the distance on $\mathbb{S}^{n-1}$ induced by the round metric. By \cite[Theorem 10]{Salvemini2013Maximal}, causally convex open subsets of $\eeu$ with conjugate points are conformally equivalent to $\eeu$. For this reason, we consider a causally convex open subset $\Omega$ \emph{without conjugate points}. 

Let $f^+$ and $f^-$ be the two $1$-Lipschitz real-valued functions defined on the projection $U$ of $\Omega$ in $\mathbb{S}^{n-1}$ such that:
\begin{align*}
\Omega &= \{(x,t) \in U \times \R,\ f^-(x) < t < f^+(x)\}.
\end{align*}
We call $f$ the common extension of $f^+$ and $f^-$ to $\partial U$. Let $g^+, g^-$ be the real-valued functions defined for every $x \in U$ by:
\begin{align} \label{g^+}
g^+(x) &= \inf_{x_0 \in \partial U} \{f(x_0) + d(x,x_0)\}   
\end{align}
\begin{align}\label{g^-}
g^-(x) &= \sup_{x_0 \in \partial U} \{f(x_0) - d(x,x_0)\}.
\end{align}

\begin{proposition} \label{prop: characterization of maximal causally convex subsets in a spatio-temporal decomposition}
The causally convex open subset $\Omega$ of $\eeu$ is $\mathcal{C}_0$-maximal if and only if $f^+$ equals $g^+$ and $f^-$ equals $g^-$.
\end{proposition}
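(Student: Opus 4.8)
The plan is to realize $\Omega$ and its candidate $\mathcal{C}_0$-maximal extension simultaneously inside $\eeu$ and compare them via the results of Section~\ref{sec: enveloping space}. First I would observe that $\Omega$, being causally convex and without conjugate points in $\eeu$, is globally hyperbolic (Proposition~\ref{prop: causally convex implies GH}), and that a natural Cauchy hypersurface of $\Omega$ is the graph of any $1$-Lipschitz function $h$ with $f^- < h < f^+$ on $U$ whose extension to $\partial U$ is $f$ (Proposition~\ref{prop: Cauchy hypersurfaces of causally convex subsets of Ein}); in particular $S := \text{graph}(f^+)$ restricted to the interior is \emph{not} a Cauchy hypersurface, but its boundary values $f$ on $\partial U$ are what matter. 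The key point is that the functions $g^+$ and $g^-$ of \eqref{g^+}--\eqref{g^-} are exactly the functions produced by Proposition~\ref{prop: characterization of the dual}: namely, $\Lambda := \text{graph}(f|_{\partial U})$ is a closed achronal subset of $\eeu$ (it is the ``horizon'' of $\Omega$, $1$-Lipschitz over the closed set $\partial U \subset \mathbb{S}^{n-1}$), and its dual $\Lambda^\circ$ is precisely
\[
\widehat{\Omega} := \{(x,t) \in \eeu :\ g^-(x) < t < g^+(x)\},
\]
which by Lemma~\ref{lemma: the dual of a closed subset is open} is a causally convex open subset of $\eeu$ containing $\Omega$. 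So the whole statement reduces to: $\Omega$ is $\mathcal{C}_0$-maximal $\iff$ $\Omega = \widehat{\Omega}$.

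For the direction ($\Leftarrow$), I would show $\widehat{\Omega}$ is always $\mathcal{C}_0$-maximal. The natural argument: $\widehat{\Omega}$ is the Cauchy development in $\eeu$ of the graph of $h$ (any intermediate $1$-Lipschitz function with boundary values $f$), because an inextensible causal curve of $\eeu$ meeting $\widehat{\Omega}$ cannot escape through the ``lateral'' part of $\partial\widehat{\Omega}$ — points of $\Lambda$ are non-causally related to all of $\widehat{\Omega}$ by construction of the dual — so it must cross the graphs of $g^+$ and $g^-$, hence cross $\text{graph}(h)$ exactly once (this is the analogue of Lemma~\ref{lemma: causal boundary of causally convex subsets of Ein} for $\widehat{\Omega}$). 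Then $\eeu$ itself plays the role of an enveloping space: any conformally flat Cauchy-extension $W$ of $\widehat{\Omega}$ embeds conformally into the enveloping space $E(\widehat{\Omega})$, and since $\widehat{\Omega}$ has no conjugate points one checks that $E(\widehat{\Omega})$ embeds in $\eeu$ with $\widehat{\Omega} = \mathcal{C}(\text{graph}(h))$ there; by the uniqueness argument of Corollary~\ref{cor: uniqueness of max. ext. dev.} (or directly by the maximality argument in Proposition~\ref{prop: max. ext. dev.}), $W = \widehat{\Omega}$. For the direction ($\Rightarrow$), by contraposition: if $\Omega \neq \widehat{\Omega}$, then since $\Omega \subset \widehat{\Omega}$ and both are causally convex in $\eeu$ with the same Cauchy hypersurface $\text{graph}(h)$ (one must check $\text{graph}(h)$ is still Cauchy in $\Omega$ and in $\widehat{\Omega}$ — true by Proposition~\ref{prop: Cauchy hypersurfaces of causally convex subsets of Ein} applied in each), the inclusion $\Omega \hookrightarrow \widehat{\Omega}$ is a non-surjective conformal Cauchy-embedding, so $\Omega$ is not $\mathcal{C}_0$-maximal.

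The remaining arithmetic is to verify $g^- < g^+$ on $U$ and $f^- \le g^- \le g^+ \le f^+$ with the correct boundary behaviour — the second chain because every point of $\partial U$ at which the sup/inf in \eqref{g^+}--\eqref{g^-} is built lies on $\partial U$ over which $f^\pm$ agree, combined with the $1$-Lipschitz (achronality) bounds; and that $g^\pm$ are themselves $1$-Lipschitz and extend to $f$ on $\partial U$ (immediate from the infimal-convolution formulas, exactly as in the computation preceding Proposition~\ref{prop: characterization of the dual}). The main obstacle I anticipate is the ($\Leftarrow$) direction: one must argue carefully that $\mathcal{C}_0$-maximality of a causally convex $\Omega \subset \eeu$ without conjugate points really is detected \emph{within} $\eeu$ — i.e. that no Cauchy-extension of $\widehat{\Omega}$ can fail to embed in $\eeu$. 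This is where one genuinely needs the enveloping-space machinery of Section~\ref{sec: def enveloping space}: the absence of conjugate points forces, via \cite[Theorem 10]{Salvemini2013Maximal}, that any such extension stays developable and in fact injects into $\eeu$, so that $\widehat{\Omega}$ coincides with the concretely-described Cauchy development and maximality is automatic.
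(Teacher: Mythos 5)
Your proposal is correct in substance but follows a genuinely different route from the paper. The paper's proof is shorter and less self-contained: it first checks, via Lemma \ref{tech lemma: maximal causally convex subsets of eeu} and Proposition \ref{prop: Cauchy hypersurfaces of causally convex subsets of Ein}, that $\Omega' := \{(x,t):\ g^-(x) < t < g^+(x)\}$ is a Cauchy-extension of $\Omega$; it then invokes Theorem \ref{main theorem} (maximal extensions respect inclusion, applied with $V = \eeu$) to realize the $\mathcal{C}_0$-maximal extension $\hat{\Omega}$ of $\Omega$ as a causally convex open subset of $\eeu$ containing $\Omega'$, bounded by $1$-Lipschitz graphs with boundary values $f$, and squeezes $\hat{\Omega} \subset \Omega'$ by applying the same technical lemma to those bounding functions. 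You instead identify $\Omega'$ with (a union of connected components of) the dual of the graph of $f|_{\partial U}$ and prove its maximality directly by exhibiting it as the Cauchy development of $\mathrm{graph}(h)$ in the enveloping space $\pi^{-1}(U) \subset \eeu$, then invoke Proposition \ref{prop: max. ext. dev.}. This stays entirely within the Section \ref{sec: enveloping space} machinery, avoids importing Theorem \ref{main theorem}, and makes explicit the duality interpretation that the paper only records as a remark after its proof. Both approaches ultimately rest on the enveloping space; yours buys a concrete description of the maximal extension at the cost of a slightly longer verification.

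One step of your $(\Leftarrow)$ direction needs to be completed. Your lateral-escape argument only yields $\Omega' \subset \mathcal{C}(\mathrm{graph}(h))$; for maximality you also need the reverse inclusion $\mathcal{C}(\mathrm{graph}(h)) \subset \Omega'$, i.e.\ that the Cauchy development computed in $\pi^{-1}(U)$ does not overshoot the graphs of $g^{\pm}$. This is where the infimal-convolution form of $g^+$ is actually used: given $(x,t)$ with $t \geq g^+(x)$, choose $x_0 \in \partial U$ attaining the infimum in \eqref{g^+} and follow the past-directed lightlike geodesic from $(x,t)$ toward $x_0$; along it one has $s = t - d(x,y) \geq g^+(x) - d(x,y) \geq g^+(y) > h(y)$, and the curve is past-inextensible in $\pi^{-1}(U)$ since it limits onto the fiber over $x_0 \notin U$, so it never meets $\mathrm{graph}(h)$. (This is precisely the eikonal mechanism of Proposition \ref{prop: geometric charac. of eikonal functions}, and it is the counterpart of the paper's final application of Lemma \ref{tech lemma: maximal causally convex subsets of eeu}.) With that inserted, and the small caveat that the dual of $\mathrm{graph}(f|_{\partial U})$ may have components over $\mathbb{S}^{n-1} \setminus \overline{U}$ so that $\Omega'$ is only a union of its components rather than all of it, your argument goes through.
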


The proof of Proposition \ref{prop: characterization of maximal causally convex subsets in a spatio-temporal decomposition} uses the following lemma.

\begin{lemma} \label{tech lemma: maximal causally convex subsets of eeu}
Any $1$-Lipschitz real-valued function $g$ defined on $U$, whose extension to $\partial U$ equals $f$, is bounded by $g^-$ and $g^+$.
\end{lemma}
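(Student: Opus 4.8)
The plan is to show that $g^- \leq g \leq g^+$ pointwise on $U$, by exploiting the fact that $g$ and $f$ are both $1$-Lipschitz (for the appropriate distances) and that $g$ extends $f$ continuously across $\partial U$. By symmetry it suffices to prove $g \leq g^+$ on $U$; the inequality $g^- \leq g$ then follows by applying the same argument to $-g$, whose extension to $\partial U$ is $-f$, and noting that $\sup_{x_0} \{f(x_0) - d(x,x_0)\} = -\inf_{x_0}\{-f(x_0) + d(x,x_0)\}$.

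Fix $x \in U$ and let $x_0 \in \partial U$ be arbitrary. The key step is to produce, for each $\varepsilon > 0$, a point of $U$ close to $x_0$ at which $g$ is close to $f(x_0)$, and then chain the Lipschitz estimate along a minimizing geodesic of $\mathbb{S}^{n-1}$ from $x$ toward $x_0$. Concretely, since $x_0$ lies in the closure of $U$ and $g$ extends continuously by $f$ there, pick $y \in U$ with $d(y,x_0) < \varepsilon$ and $|g(y) - f(x_0)| < \varepsilon$. The segment from $x$ to $y$ need not stay in $U$, so instead I would argue as follows: by the definition of the (local, path-based) $1$-Lipschitz condition on $U$, for any two points of $U$ joined by a path in $U$ the value of $g$ changes by at most the length of that path; more robustly, one shows directly that $g(x) - g(y) \leq d_U(x,y)$ where $d_U$ is the intrinsic distance in $U$, and since $U$ is open in $\mathbb{S}^{n-1}$ with $x_0$ on its boundary, $d_U(x,y) \to d(x,x_0)$ as $y \to x_0$ appropriately. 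Hence $g(x) \leq g(y) + d_U(x,y) \leq f(x_0) + \varepsilon + d(x,x_0) + \varepsilon$. Letting $\varepsilon \to 0$ gives $g(x) \leq f(x_0) + d(x,x_0)$, and taking the infimum over $x_0 \in \partial U$ yields $g(x) \leq g^+(x)$.

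The main obstacle is the control of paths near the boundary: the straight (geodesic) segment from $x$ to a boundary point $x_0$ may exit $U$, so the naive Lipschitz estimate does not apply directly, and one must either work with the intrinsic distance $d_U$ and verify $d_U(x,y) \to d(x,x_0)$ as $y$ approaches $x_0$ from within $U$, or alternatively approximate $x_0$ by a sequence in $U$ along which $g$ converges to $f(x_0)$ and the geodesics from $x$ stay in $U$ up to a negligible error. Since $\Omega$ is causally convex and $f^\pm$ are the Lipschitz functions furnished by Proposition \ref{lemma: causally convex in ein 1}, the geometry of $U$ is tame enough that $\partial U$ is approached by geodesics lying in $U$ except near the endpoint, so this technical point can be handled; I would spell it out using the continuity of the common extension $f$ and a short covering argument. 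Once this is established, the statement of the lemma is immediate, and it will feed into the proof of Proposition \ref{prop: characterization of maximal causally convex subsets in a spatio-temporal decomposition} by showing $g^+$ is the largest admissible upper function, so that $\Omega$ is $\mathcal{C}_0$-maximal exactly when $f^+$ already equals this maximal function $g^+$ (and dually for $f^-$).
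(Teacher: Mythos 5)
Your overall plan is the right one and ends at the same inequality as the paper, but you have manufactured an obstacle that is not there and then proposed to overcome it with a claim that is false in general. In this paper, ``$1$-Lipschitz'' for a function on an open subset $U$ of $\mathbb{S}^{n-1}$ means the \emph{global} condition $|g(x)-g(y)|\leq d(x,y)$ with respect to the ambient round distance $d$; this is exactly the statement that the graph of $g$ is achronal in $\eeu$ (see Fact~2 in the proof of Proposition~\ref{lemma: causally convex in ein 1}), and achronality is a condition on the ambient causal structure, which compares $|t-s|$ to the ambient distance $d(x,y)$, not to any intrinsic distance in $U$. With this reading the lemma is immediate: the continuous extension $\bar{g}$ of $g$ to $\overline{U}$ is still globally $1$-Lipschitz for $d$, so for any $x\in U$ and $x_0\in\partial U$ one has $g(x)-f(x_0)=\bar{g}(x)-\bar{g}(x_0)\leq d(x,x_0)$, hence $g(x)\leq f(x_0)+d(x,x_0)$, and taking the infimum over $x_0$ gives $g\leq g^+$; the bound $g^-\leq g$ is the symmetric estimate. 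This is precisely the paper's proof.

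The problematic part of your write-up is the passage through the intrinsic distance $d_U$ and the assertion that $d_U(x,y)\to d(x,x_0)$ as $y\to x_0$ in $U$. For a general open subset of the sphere this is simply false: take $U$ to be the sphere minus a slit (a closed geodesic arc) and let $x_0$ be an interior point of the slit; points $y$ near $x_0$ on the far side of the slit from $x$ have $d_U(x,y)$ bounded below by the distance around the slit, which can exceed $d(x,x_0)$ by a definite amount. Nothing in the hypotheses forces $U$ to be ``tame'' in the sense you invoke -- $U$ is just the projection of a causally convex open set, and such projections can have complicated boundaries. So if the Lipschitz hypothesis really were only intrinsic or local, your argument would not close, and in fact the lemma would be in doubt. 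The fix is not a covering argument but simply to use the definition actually in force: the global ambient Lipschitz bound, under which the minimizing geodesic from $x$ to $x_0$ never needs to stay in $U$ at all.
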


\begin{proof}
Let $x \in U$ and let $x_0 \in \partial U$. We denote by $\bar{g}$ the extension of $g$ to $\partial U$. Since $\bar{g}$ is $1$-Lipschitz, we have $\bar{g}(x) - \bar{g}(x_0) \leq d(x,x_0)$, i.e. $g(x) - f(x_0) \leq d(x,x_0)$. Hence, $g(x) \leq f(x_0) + d(x,x_0)$. Therefore, $g(x) \leq f^+(x)$. From $-d(x,x_0) \leq \bar{g}(x) - \bar{g}(x_0)$, we deduce similarly that $f^-(x) \leq g(x)$. The lemma follows.
\end{proof}

\begin{proof}[Proof of Proposition \ref{prop: characterization of maximal causally convex subsets in a spatio-temporal decomposition}]

Let us denote $\Omega'$ the set of points $(x,t)$ of $\eeu$ such that $g^-(x) < t < g^+(x)$. By Lemma \ref{tech lemma: maximal causally convex subsets of eeu} and Proposition \ref{prop: Cauchy hypersurfaces of causally convex subsets of Ein}, $\Omega'$ is a Cauchy-extension of $\Omega$. According to Theorem \ref{main theorem}, the $\mathcal{C}_0$-maximal extension of $\Omega$ is conformally equivalent to a causally convex open subset $\hat{\Omega}$ of $\eeu$ such that $\Omega \subset \Omega' \subset \hat{\Omega}$ where each inclusion is a Cauchy embedding. Thus, there exist two $1$-Lipschitz real-valued functions $h^-, h^+$ defined on $U$, whose extensions to $\partial U$ equal $f$, such that $\hat{\Omega}$ is the set of points $(x,t)$ such that $h^-(x) < t < h^+(x)$. Hence, $g^-(x) \leq h^-(x) < h^+(x) \leq g^+(x)$ for every $x \in U$ (see \ref{tech lemma: maximal causally convex subsets of eeu}). In other words, $\hat{\Omega} \subset \Omega'$. The proposition follows.
\end{proof}

The domain $\hat{\Omega}$ in the proof of Proposition \ref{prop: characterization of maximal causally convex subsets in a spatio-temporal decomposition} is a union of connected component of the dual of the graph of $f$ (see Proposition \ref{prop: characterization of the dual}). Therefore, Proposition \ref{prop: characterization of maximal causally convex subsets in a spatio-temporal decomposition} can be reformulated as: \emph{$\mathcal{C}_0$-maximal causally convex open subsets of $\eeu$ are exactly unions of connected components of duals of closed achronal subsets of $\eeu$.}

\begin{definition}
The function $g^+$ (resp. $g^-$) is called \emph{future eikonal} (resp. \emph{past eikonal}).
\end{definition}

\begin{remark}
This definition is motivated by the classical notion of eikonal function in analysis: a real-valued function $f$ defined on an open subset $U$ of $\R^n$ is called eikonal if $f$ is differentiable almost everywhere and satisfies the eikonal equation $||\triangledown f|| = 1$. It turns out that if $U$ has a piecewise smooth boundary, then the function $f(x) := d(x, \partial U)$, where $d$ is the usual distance on $\R^n$, is eikonal in this sense.
\end{remark}

The following proposition gives a geometrical characterisation of eikonal functions.

\begin{proposition}[Geometrical criterion of eikonality]\label{prop: geometric charac. of eikonal functions}
A real-valued function $f^+$ defined on an open subset $U$ of the sphere $\mathbb{S}^{n-1}$ is future eikonal if and only if for every $x \in U$, there exists a past-directed lightlike geodesic starting from $(x, f^+(x))$, entirely contained in the graph of $f^+$ and with no past endpoint in the graph of $f^+$.
\end{proposition}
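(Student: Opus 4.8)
The strategy is to prove both implications by directly unpacking the definitions of $g^+$ and of lightlike geodesics of $\eeu$. Recall that a lightlike geodesic of $\eeu \simeq \mathbb{S}^{n-1}\times\R$ is, up to reparametrization, a curve $t\mapsto (x(t),t)$ where $x$ is a geodesic of the round sphere; a past-directed one is $t\mapsto (\gamma(-t), -t)$ for an arclength geodesic $\gamma$ of $\mathbb{S}^{n-1}$. So a past-directed lightlike geodesic issued from $(x_0, f^+(x_0))$ is a curve $s\mapsto (\gamma(s), f^+(x_0)-s)$ with $\gamma(0)=x_0$, $\gamma$ a unit-speed sphere geodesic, and the claim ``it stays in the graph of $f^+$'' is exactly $f^+(\gamma(s)) = f^+(x_0)-s$ for $s$ in the relevant range. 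The key quantitative link is: since $f^+=g^+$ and $g^+$ is $1$-Lipschitz with extension $f$ to $\partial U$, for each $x$ the infimum $g^+(x)=\inf_{x_0\in\partial U}\{f(x_0)+d(x,x_0)\}$ is attained at some $x_0$, and along a minimizing sphere geodesic from $x$ to this $x_0$ the function $g^+$ decreases at unit speed (one direction is Lipschitz, the reverse is the optimality of $x_0$ together with the triangle inequality $d(x,x_0)=d(x,y)+d(y,x_0)$ for $y$ on the segment).

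\textbf{Necessity.} Assume $f^+=g^+$. Fix $x\in U$ and pick $x_0\in\partial U$ realizing $g^+(x)=f(x_0)+d(x,x_0)$ (possible since $\partial U$ is closed in the compact sphere, hence compact, when $\partial U\neq\emptyset$; the case $U=\mathbb{S}^{n-1}$ forces $f^+\equiv+\infty$, excluded since $\Omega$ has no conjugate points). Let $\gamma:[0,d(x,x_0)]\to\mathbb{S}^{n-1}$ be a unit-speed minimizing geodesic from $x$ to $x_0$, which we know stays in $U$ for parameter values strictly less than $d(x,x_0)$ (if it left $U$ earlier at a boundary point $y$, then by the triangle inequality $g^+(x)\le f(y)+d(x,y)$, and $f(y)+d(x,y) = \overline{g^+}(y)+d(x,y)$; one shows using $\overline{g^+}$ being $1$-Lipschitz that this cannot undercut the value at $x_0$ unless $y$ itself is a minimizer, so we may as well take $x_0$ to be the first boundary point hit). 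For $y=\gamma(s)$ with $0\le s< d(x,x_0)$ one has, on the one hand, $g^+(y)\le f(x_0)+d(y,x_0) = g^+(x)-s$ by definition of $g^+$ and $d(y,x_0)=d(x,x_0)-s$; on the other hand, $g^+(y)\ge g^+(x)-d(x,y)=g^+(x)-s$ because $g^+$ is $1$-Lipschitz. Hence $f^+(\gamma(s))=g^+(\gamma(s))=f^+(x)-s$ for all such $s$, i.e. the past lightlike geodesic $s\mapsto(\gamma(s),f^+(x)-s)$ lies in the graph of $f^+$; and it has no past endpoint in the graph because its past limit point projects to $x_0\in\partial U\notin U$.

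\textbf{Sufficiency.} Assume the geometric condition. By Lemma~\ref{tech lemma: maximal causally convex subsets of eeu}, $f^+\le g^+$ always, so it suffices to show $f^+\ge g^+$, equivalently that for each $x\in U$ and each $x_0\in\partial U$ we have $f^+(x)\ge f(x_0)+d(x,x_0)$ — wait, that is the wrong inequality; rather $g^+(x)=\inf_{x_0}\{f(x_0)+d(x,x_0)\}$, so $f^+(x)\ge g^+(x)$ means $f^+(x)\ge f(x_0)+d(x,x_0)$ fails in general; instead we must exhibit \emph{one} $x_0\in\partial U$ with $f^+(x)= f(x_0)+d(x,x_0)$ and $f^+(x)\le f(x_0')+d(x,x_0')$ for all others. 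Start the past lightlike geodesic $s\mapsto(\gamma(s), f^+(x)-s)$ guaranteed by hypothesis; it stays in the graph of $f^+$ and has no past endpoint there, so (the graph being closed in $U\times\R$ and $f^+$ finite) the projection $\gamma(s)$ must exit $U$, reaching some $x_0\in\partial U$ at parameter $s_0=\lim$, with $f(x_0)=\lim_{s\to s_0}f^+(\gamma(s))=f^+(x)-s_0$ and $d(x,x_0)\le s_0$; moreover $f^+$ being $1$-Lipschitz gives $f(x_0)\ge f^+(x)-d(x,x_0)$, i.e. $s_0\le d(x,x_0)$, so $s_0=d(x,x_0)$ and $f(x_0)+d(x,x_0)=f^+(x)$. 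Then for any other $x_0'\in\partial U$, the $1$-Lipschitz extension $\overline{f^+}$ gives $f^+(x)\le \overline{f^+}(x_0')+d(x,x_0')=f(x_0')+d(x,x_0')$, so indeed $f^+(x)=g^+(x)$. The argument for $f^-$ and $g^-$ is the time-reverse.

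\textbf{Main obstacle.} The delicate point is the boundary bookkeeping: ensuring the minimizing geodesic in the necessity argument, and the lightlike geodesic in the sufficiency argument, actually reach $\partial U$ exactly at parameter $d(x,x_0)$ and not sooner, and that the relevant infimum over $\partial U$ is attained — this is where compactness of $\partial U$ (equivalently, $f^+$ finite, i.e. $\Omega$ without conjugate points, so $U\subsetneq\mathbb{S}^{n-1}$) and the $1$-Lipschitz control on the extension $\overline{f^+}$ are used in tandem. One also has to handle the degenerate possibility that $\gamma$ revisits $U$ after touching $\partial U$: this is ruled out because the lightlike geodesic stays in the graph of $f^+$ over $U$ by hypothesis while its defining sphere geodesic has already exited, forcing consistency only up to the first exit time, which by the Lipschitz estimate equals $d(x,x_0)$ for the first boundary point hit.
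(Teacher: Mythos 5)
Your proof is correct and follows essentially the same route as the paper: both directions hinge on identifying the boundary point $x_0\in\partial U$ realizing $f^+(x)=f(x_0)+d(x,x_0)$ and translating that equality into a past lightlike geodesic segment lying in the graph. You simply spell out the details the paper leaves implicit (why the segment stays in the graph via the two-sided $1$-Lipschitz estimate, and why the exit parameter equals $d(x,x_0)$), which is a faithful elaboration rather than a different argument.
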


\begin{proof}
Suppose $f^+$ is eikonal. We call $f$ the extension of $f^+$ to $\partial U$. Let $x \in U$. There exists $x_0 \in \partial U$ such that $f^+(x) = f(x_0) + d(x,x_0)$. Hence, $f^+(x) -  f(x_0) = d(x,x_0)$, i.e. the points $(x, f^+(x))$ and $(x_0, f(x_0))$ are joined by a past lightlike geodesic $\varphi$. This last one is contained in the graph of $f^+$. Conversely, let us prove that $f^+$ equals the function $g^+$ defined by (\ref{g^+}). By Lemma \ref{tech lemma: maximal causally convex subsets of eeu}, we have $f^+ \leq g^+$. Let $x \in U$. There exists a past-directed lightlike geodesic $\varphi$ starting from $(x, f^+(x))$, entirely contained in the graph of $f^+$ and with no past endpoint in the graph of $f^+$. In $\eeu$, the geodesic $\varphi$ admits a past endpoint $(x_0, f(x_0))$ with $x_0 \in \partial U$. Thus, $d(x,x_0) = f^+(x) -  f(x_0)$. Hence, $f^+(x) = f(x_0) + d(x,x_0) \geq g^+(x)$. Then, $f^+ = g^+$.
\end{proof}

Now, we show that eikonal is \emph{a local property}.

\begin{definition}
A real-valued function $f^+$ defined on an open subset $U$ of the sphere $\mathbb{S}^{n-1}$ is \emph{locally future eikonal} if every point $x$ of $U$ admits an arbitrarily small neighborhood $V_x$ such that the restriction of $f^+$ to $V_x$ is future eikonal.
\end{definition}

\begin{remark}
A locally future eikonal function is locally $1$-Lipschitz, thus $1$-Lipschitz.
\end{remark}

Let $f^+$ be a $1$-Lipschitz real-valued function defined on an open subset $U$ of $\mathbb{S}^{n-1}$. We call $f$ its extension to $\partial U$.

\begin{lemma}
If $f^+$ is future eikonal then $f^+$ is locally future eikonal.
\end{lemma}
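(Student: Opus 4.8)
The plan is to show that if $f^+$ is future eikonal on $U$, then for every $x \in U$ and every sufficiently small neighborhood $V_x$ of $x$, the restriction $f^+|_{V_x}$ is future eikonal in the sense of Proposition~\ref{prop: geometric charac. of eikonal functions}; that is, from every point $(y, f^+(y))$ with $y \in V_x$ there issues a past-directed lightlike geodesic contained in the graph of $f^+|_{V_x}$ with no past endpoint in that graph. By the geometric criterion, it suffices to exhibit, for each $y \in V_x$, a past lightlike geodesic ray in the graph of $f^+$ over $U$ that meets the boundary $\partial V_x$ before it could reach a point lying over $\overline{V_x}$ — in other words, a geodesic segment in the graph that exits $V_x$ through $\partial V_x$. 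The key observation is that the future eikonal hypothesis already hands us, for each $y \in U$, a full past-directed lightlike geodesic $\varphi_y$ emanating from $(y,f^+(y))$, contained in the graph of $f^+$, with past endpoint over $\partial U$; the only issue is that its restriction to the part lying over $V_x$ might fail to ``reach the boundary'' of $V_x$ correctly, or that $V_x$ is so large that $\varphi_y$ does not exit it.

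First I would fix $x \in U$ and choose $V_x$ to be a small geodesically convex ball around $x$ in $\mathbb{S}^{n-1}$, with $\overline{V_x} \subset U$ and with radius small enough that, say, $V_x$ is contained in a single affine chart $\Mink_0$ of $\eeu$ and the graph of $f^+$ over $V_x$ is a connected achronal piece. Since $\overline{V_x} \subset U$, the extension of $f^+|_{V_x}$ to $\partial V_x$ is just $f^+|_{\partial V_x}$, a genuinely $\mathbb{S}^{n-1}$-valued restriction. Now take any $y \in V_x$. The geodesic $\varphi_y$ given by eikonality of $f^+$ on $U$ starts at $(y, f^+(y))$, travels in the past along the graph, and has past endpoint over $\partial U$; since $y \in V_x$ but the endpoint projects to $\partial U$ which is disjoint from $\overline{V_x}$, the projection of $\varphi_y$ to $\mathbb{S}^{n-1}$ starts in $V_x$ and ends outside $\overline{V_x}$, hence crosses $\partial V_x$. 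Let $(x_1, f^+(x_1))$ be the first point of $\varphi_y$ (going into the past) whose projection lies on $\partial V_x$. The sub-ray of $\varphi_y$ from $(y,f^+(y))$ to $(x_1,f^+(x_1))$ lies in the graph of $f^+|_{\overline{V_x}}$, is past-directed lightlike, and has its past endpoint $(x_1,f^+(x_1))$ over $\partial V_x$ — so it is a past lightlike geodesic in the graph of $f^+|_{V_x}$ with no past endpoint in the (open) graph of $f^+|_{V_x}$. Applying Proposition~\ref{prop: geometric charac. of eikonal functions} in the other direction gives that $f^+|_{V_x}$ is future eikonal.

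Since $x \in U$ was arbitrary and $V_x$ can be taken arbitrarily small, this establishes that $f^+$ is locally future eikonal, as claimed.

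The main obstacle I anticipate is purely a matter of bookkeeping about the behaviour of the geodesic $\varphi_y$ as it crosses $\partial V_x$: one must make sure that the ``first exit point'' $(x_1, f^+(x_1))$ is well-defined (which follows from continuity of $\varphi_y$ and compactness of $\overline{V_x}$, i.e. the exit time is the infimum of a nonempty closed set of parameter values), and that one does not need more than one geodesic — here it is clean because $f^+$ being $1$-Lipschitz makes its graph achronal, so a past-lightlike curve in the graph cannot re-enter $V_x$ after leaving it without violating achronality. One should also check that the chosen $V_x$ can be shrunk freely, which is immediate since any smaller convex ball around $x$ works by the same argument. No genuinely hard point arises; the content is entirely that the global geodesic ray furnished by eikonality on $U$ restricts to a local one, and the geometric criterion of Proposition~\ref{prop: geometric charac. of eikonal functions} lets us read this off directly.
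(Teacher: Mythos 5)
Your argument is correct, but it takes a genuinely different route from the paper. The paper does not touch the geodesics at all: it fixes an arbitrary open $V \subset U$, lets $g^+$ be the eikonal function built from the boundary values of $f^+$ on $\partial V$, and compares the two maximal causally convex domains $\Omega$ (bounded by $f^\pm$) and $W$ (bounded by $g^\pm$); since $W \cap \Omega$ is causally convex in $\Omega$ with $\mathcal{C}_0$-maximal extension $W$, Theorem~\ref{main theorem} forces $W \subset \Omega$, hence $g^+ \le f^+_{|V}$, and Lemma~\ref{tech lemma: maximal causally convex subsets of eeu} gives the reverse inequality. That proof is shorter but leans on the heaviest results of the paper (Theorem~\ref{main theorem} and the identification of eikonality with $\mathcal{C}_0$-maximality in Proposition~\ref{prop: characterization of maximal causally convex subsets in a spatio-temporal decomposition}), and it yields eikonality of the restriction to \emph{every} open $V \subset U$. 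Your proof instead works entirely with the geometric criterion of Proposition~\ref{prop: geometric charac. of eikonal functions}: you truncate the past lightlike ray $\varphi_y$ furnished by eikonality on $U$ at its first exit through $\partial V_x$, using that its terminal point lies over $\partial U$, which is disjoint from $\overline{V_x}$. This is more elementary and self-contained (it needs nothing beyond Proposition~\ref{prop: geometric charac. of eikonal functions} and the $1$-Lipschitz property), at the price of being restricted to neighborhoods with compact closure in $U$ --- which is all the lemma asks for. The only point to state cleanly is the one you already flag: the first-exit parameter exists because the set of parameters where the projected geodesic meets $\partial V_x$ is closed, nonempty and bounded by the initial parameter, and before that parameter the projection stays in $V_x$ by connectedness; the achronality remark is not actually needed once you truncate at the first exit.
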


\begin{proof}
Let $V$ be an open subset of $U$. We call $g$ the extension of $f^+_{|V}$ to $\partial V$. Let $g^+$ (resp. $g^-$) be the function defined by the expression (\ref{g^+}) (resp. (\ref{g^-}) after replacing $U$ by $V$. By Lemma \ref{tech lemma: maximal causally convex subsets of eeu}, we have $f^+_{|V} \leq g$. Let us prove that $f^+_{|V} \geq g$ so we get the equality. 
Let $\Omega$ (resp. $W$) be the $\mathcal{C}_0$-maximal causally convex open subset of $\eeu$ bounded by the graphs of $f^+$ and $f^-$ (resp. $g^+$ and $g^-$). The intersection $W \cap \Omega$ is a causally convex open subset of $\Omega$; its $\mathcal{C}_0$-maximal extension is $W$. By theorem \ref{main theorem}, we get $W \subset \Omega$. Hence, $g^+ \leq f^+_{|V}$. Thus, $g^+ = f^+_{|V}$, then $f^+_{|V}$ is future eikonal. 
\end{proof}

\begin{lemma}
If $f^+$ is locally future eikonal then $f^+$ is future eikonal.
\end{lemma}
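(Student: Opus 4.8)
The plan is to derive the statement from the geometrical criterion of eikonality (Proposition~\ref{prop: geometric charac. of eikonal functions}). Since a locally future eikonal function is $1$-Lipschitz (see the preceding remark), its graph is an achronal subset of $\eeu$, and the whole argument will rest on the following uniqueness property of null geodesics lying inside the graph. \emph{Claim: if a point $p=(y,f^+(y))$ with $y\in U$ lies on a lightlike geodesic segment $\ell_0\subset\mathrm{graph}(f^+)$ which is not reduced to $\{p\}$ and has points in $J^+(p)$, then every past-directed lightlike geodesic issued from $p$ and contained in $\mathrm{graph}(f^+)$ is the past geodesic prolongation of $\ell_0$.} Indeed, let $\psi$ be such a geodesic. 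Either $\psi$ has the same tangent line as $\ell_0$ at $p$, and then, being past-directed, it is the past prolongation of $\ell_0$; or it does not, and then, choosing $q\in\ell_0\cap J^+(p)$ and $q'\in\psi\cap J^-(p)$ with $q,q'\neq p$, the concatenation $q\to p\to q'$ is a causal curve with a genuine null corner at $p$, hence $q'\in I^-(q)$ (a standard fact; see e.g.\ \cite{oneill}), contradicting the achronality of $\mathrm{graph}(f^+)$.

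Granting the Claim, fix $x\in U$; I will produce a past-directed lightlike geodesic issued from $(x,f^+(x))$, contained in $\mathrm{graph}(f^+)$ and with no past endpoint in $\mathrm{graph}(f^+)$, which by Proposition~\ref{prop: geometric charac. of eikonal functions} proves that $f^+$ is future eikonal. By local eikonality there is a neighbourhood $V_x\subset U$ of $x$ on which $f^+$ is future eikonal, so Proposition~\ref{prop: geometric charac. of eikonal functions} provides a past-directed lightlike geodesic issued from $(x,f^+(x))$ inside $\mathrm{graph}(f^+|_{V_x})$; let $\sigma\colon\R\to\mathbb S^{n-1}$ be the ($2\pi$-periodic) great-circle geodesic carrying its projection, with $\sigma(0)=x$. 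One then sets
\[
A=\sup\bigl\{a\ge 0:\ \sigma([0,a))\subset U\ \text{ and }\ f^+(\sigma(s))=f^+(x)-s\ \text{ for all }s\in[0,a)\bigr\}.
\]
The local geodesic above gives $A>0$, and $A<\infty$ since otherwise the defining condition would hold on $[0,2\pi]$, whereas $\sigma(2\pi)=x$ would force $f^+(x)=f^+(x)-2\pi$.

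It then remains to check that $\sigma(A)\in\partial U$. Suppose not, i.e.\ $\sigma(A)\in U$. Continuity of $f^+$ and $\sigma$ yields $f^+(\sigma(A))=f^+(x)-A$, so the lift of $\sigma|_{[0,A]}$ is a lightlike geodesic segment $\ell_0\subset\mathrm{graph}(f^+)$ through $p_A:=(\sigma(A),f^+(\sigma(A)))$, not reduced to $\{p_A\}$ and with points in $J^+(p_A)$. Applying local eikonality at $\sigma(A)$ together with Proposition~\ref{prop: geometric charac. of eikonal functions} furnishes a past-directed lightlike geodesic from $p_A$ inside $\mathrm{graph}(f^+)$, which by the Claim is the past prolongation of $\ell_0$; hence $\sigma(A+\delta)\in U$ and $f^+(\sigma(A+\delta))=f^+(x)-A-\delta$ for all small $\delta>0$, contradicting the definition of $A$. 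Therefore $\sigma(A)\in\partial U$, and
\[
\varphi:=\{(\sigma(s),\,f^+(x)-s):\ s\in[0,A)\}
\]
is the required geodesic: it is a past-directed lightlike geodesic issued from $(x,f^+(x))$, contained in $\mathrm{graph}(f^+)$, and its only possible past endpoint in $\eeu$ is $(\sigma(A),f^+(x)-A)$, which does not belong to $\mathrm{graph}(f^+)$ since $\sigma(A)\notin U$.

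The delicate point is the Claim: one must prevent local eikonality from producing, at an interior point of $U$ already lying on a null geodesic of the graph, a second null geodesic of the graph issued in a new direction — this is exactly where achronality of the graph, together with the "a causal curve with a genuine null corner is chronal" principle, is used. Everything else is a one-dimensional maximality argument carried along the great circle $\sigma$.
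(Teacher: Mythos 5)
Your proof is correct and takes essentially the same route as the paper: both reduce to the geometric criterion of Proposition~\ref{prop: geometric charac. of eikonal functions} and extend the local null geodesic segments through the graph, ruling out corners because a broken null concatenation would make two graph points chronologically related, contradicting $1$-Lipschitzness/achronality. Your formulation via the supremum $A$ along the great circle $\sigma$, with the explicit check that $\sigma(A)\in\partial U$, is in fact tidier than the paper's step-by-step iteration, which leaves implicit why the successive extensions must actually reach $\partial U$ rather than accumulate at an interior point of $U$.
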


\begin{proof}
We use the criterion given by Proposition \ref{prop: geometric charac. of eikonal functions} to prove that $f^+$ is eikonal. Let $x \in U$ and let $V \subset U$ be a neighborhood of $x$ such that $f^+_{|V}$ is future eikonal. Then, there is a past-directed lightlike geodesic $\varphi$ starting from $(x, f^+(x))$ with the properties of Proposition \ref{prop: geometric charac. of eikonal functions}. In $\eeu$, the geodesic $\varphi$ admits a past endpoint $(x_0, f^+(x_0))$ where $x_0 \in \partial V$ (we still denote by $f^+$ its extension to $\overline{U}$). If $x_0 \in \partial U$, the lemma is proved. Otherwise, we choose a neighborhood $V_0 \subset U$ of $x_0$ such that $f^+_{|V_0}$ is future eikonal. Again, there is a past-directed lightlike geodesic $\varphi_0$ starting from $(x_0, f(x_0))$, entirely contained in the graph of $f^+_{|V_0}$, with past endpoint $(x_1, f^+(x_1))$ where $x_1 \in \partial V_0$. The geodesic $\varphi_0$ extends $\varphi$; indeed, otherwise $(x_1, f^+(x_1))$ would be in the chronological past of $(x, f^+(x))$, i.e. we would have $d(x,x_1) < f^+(x) - f^+(x_1)$. This contradicts the fact that $f^+$ is $1$-Lipschitz. Consequently, we extend $\varphi$ in a lightlike geodesic, entirely contained in the graph of $f^+$, with no past endpoint in the graph of~$f^+$. The lemma follows.
\end{proof}

We proved the following statement:

\begin{proposition}
A real-valued function defined on an open subset of the sphere $\mathbb{S}^{n-1}$ is future eikonal if and only if it is \emph{locally} future eikonal. \qed
\end{proposition}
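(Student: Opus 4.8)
The plan is short: the statement is exactly the conjunction of the two lemmas just proved. The implication ``future eikonal $\Rightarrow$ locally future eikonal'' is the first lemma, and ``locally future eikonal $\Rightarrow$ future eikonal'' is the second, so the proof reduces to one sentence combining them. Still, let me recall the shape of each half, since that is where the actual content sits, and indicate which ingredient does the real work.

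For the forward direction, to see that the restriction $f^+_{|V}$ to an open $V \subseteq U$ is future eikonal one must check that it coincides with the function $g^+$ attached (via the infimum over $\partial V$) to the boundary data of $V$. One inequality, $f^+_{|V} \le g^+$, is purely metric and is exactly the technical lemma~\ref{tech lemma: maximal causally convex subsets of eeu}: any $1$-Lipschitz function with the prescribed boundary values is dominated by $g^+$. The reverse inequality $g^+ \le f^+_{|V}$ is the substantive point and is obtained causally: letting $\Omega$, resp. $W$, be the $\mathcal{C}_0$-maximal causally convex domains of $\eeu$ bounded by $(f^+,f^-)$, resp. by $(g^+,g^-)$, the intersection $W\cap\Omega$ is a causally convex open subset of $\Omega$ whose $\mathcal{C}_0$-maximal extension is $W$; Theorem~\ref{main theorem} (maximal extensions respect inclusion) then gives $W \subseteq \Omega$, i.e. $g^+ \le f^+_{|V}$. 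The genuine input here is Theorem~\ref{main theorem}; the rest is bookkeeping about Lipschitz graphs in $\eeu$.

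For the converse direction I would invoke the geometric criterion of eikonality, Proposition~\ref{prop: geometric charac. of eikonal functions}: it suffices to exhibit, from each $x \in U$, a past-directed lightlike geodesic issuing from $(x,f^+(x))$ that stays in the graph of $f^+$ and has no past endpoint there. Choosing a neighborhood $V \ni x$ on which $f^+$ is eikonal produces such a segment $\varphi$ inside the graph of $f^+_{|V}$; continued in $\eeu$ it reaches a past endpoint lying over $\partial V$. If that endpoint already lies over $\partial U$ we stop; otherwise we repeat at the new point, obtaining a further segment $\varphi_0$, and the key verification is that $\varphi_0$ prolongs $\varphi$ as a single lightlike geodesic — if it did not, its past endpoint would lie in the chronological past of $(x,f^+(x))$, contradicting that $f^+$ is $1$-Lipschitz. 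Concatenating the segments yields a past-directed lightlike geodesic in the graph of $f^+$ with no past endpoint in that graph, as required; the only delicate point is precisely this concatenation and the fact that each new segment genuinely extends the previous one.

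Combining the two implications proves the Proposition. The main obstacle is hidden upstream: the forward implication rests essentially on Theorem~\ref{main theorem}, so the locality of the eikonal property is ultimately a reflection of the fact that $\mathcal{C}_0$-maximal extensions respect inclusion.
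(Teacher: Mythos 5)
Your proposal is correct and coincides with the paper's treatment: the paper states this proposition with an immediate \qed, as it is precisely the conjunction of the two lemmas just established, and your recollection of how each lemma is proved (the technical Lipschitz bound plus Theorem~\ref{main theorem} for one direction, the geometric criterion with concatenation of lightlike segments for the other) matches the paper's arguments. Nothing further is needed.
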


\subsection{Eikonal functions on a conformally flat Riemannian manifold}

Let $\mathcal{B}$ be a conformally flat Riemannian manifold of dimension $(n-1) \geq 2$ and let $d: \mathcal{B} \to \mathbb{S}^{n-1}$ be a developing map. The notion of eikonal function on an open subset of the sphere $\mathbb{S}^{n-1}$ being \emph{local}, it naturally generalizes to functions on an open subset of $\mathcal{B}$:

\begin{definition}
A real-valued function $f^+$ defined on an open subset $U$ of $\mathcal{B}$ is said \emph{future eikonal} if for every $x \in U$, there exists an open neighborhood $U_x$ of $x$ contained in $U$ such that the following hold:
\begin{enumerate}
\item the restriction of $d$ to $U_x$ is injective;
\item the map $f^+ \circ d_{|U_x}^{-1}: d(U_x) \subset \mathbb{S}^{n-1} \to \R$ is future eikonal.
\end{enumerate}
\end{definition}

Past eikonal functions on $U$ are defined similarly with the reverse time orientation. The geometrical criterion of eikonality given in Proposition \ref{prop: geometric charac. of eikonal functions} is still valid in this setting. \\

Eikonality is closely related to $\mathcal{C}_0$-maximality. Indeed, let $E$ be the pull-back of the trivial fiber bundle $\pi: \eeu \to \mathbb{S}^{n-1}$ by $d$:
\begin{align*}
E &:= \{(p,b) \in \eeu \times \mathcal{B}:\ \pi(p) = d(b)\}.
\end{align*} 
We denote by $\pi: E \to \mathcal{B}$ the projection on the second factor. The construction of this fiber bundle is exactly the same than that of the enveloping space in Section \ref{sec: def enveloping space}. Then, $E$ is a conformally flat spacetime of dimension $n \geq 3$ sharing the same properties than that of the enveloping space. Let $\hat{D}: E \to \eeu$ be the developing map defined as the projection on the first factor. A causally convex GH open subset $\Omega$ of $E$ is given by
\begin{align*}
\Omega &= \{(x,t) \in U \times \R:\ f^-(x) < t < f^+(x)\}
\end{align*}
where $f^+$ and $f^-$ are $1$-Lipschitz functions defined on an open subset $U$ of $\mathcal{B}$ such that their extensions to $\partial U$ coincide (see Section \ref{sec: Causally convex subsets of E(M)}). 

\begin{proposition}
The GH conformally flat spacetime $\Omega$ is $\mathcal{C}_0$-maximal if and only if $f^+$ is future-eikonal and $f^-$ past-eikonal.
\end{proposition}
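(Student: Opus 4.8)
The plan is to transpose to the ambient $E$ the scheme used over $\eeu$ in Proposition~\ref{prop: characterization of maximal causally convex subsets in a spatio-temporal decomposition}, with the explicit formulas for $g^{\pm}$ replaced by the geometrical criterion of eikonality. As in Section~\ref{sec: Causally convex subsets of E(M)} I assume $\Omega$, hence below its $\mathcal{C}_0$-maximal extension, has no conjugate points; otherwise $\Omega$ is conformally a causally convex subset of $\eeu$ and the statement follows from Proposition~\ref{prop: characterization of maximal causally convex subsets in a spatio-temporal decomposition}. First I would realize the $\mathcal{C}_0$-maximal extension $\hat\Omega$ of $\Omega$ \emph{inside} $E$. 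Since $E$ enjoys all the properties of an enveloping space, the arguments of Proposition~\ref{prop: max. ext. dev.}, Lemma~\ref{lemma: E(U) included in E(V)} and Proposition~\ref{prop: max. ext. respect inclusion dev. case} apply to $\Omega$: for a smooth Cauchy hypersurface $\Sigma$ of $\Omega$, the enveloping space $E(\Omega)$ sits in $E$ as the union of the fibres over $U=\hat\pi(\Omega)$, and $\hat\Omega$ is the Cauchy development $\mathcal{C}(\Sigma)$ computed in $E$. (The point is that $\Sigma$ is acausal in $E$ — because $\Omega$ is causally convex in $E$ and $\Sigma$ is acausal in $\Omega$ — so $\mathcal{C}(\Sigma)$ is globally hyperbolic with Cauchy hypersurface $\Sigma$ by \cite[Theorem~38]{oneill}, hence a Cauchy extension of $\hat\Omega$, hence equal to it by $\mathcal{C}_0$-maximality; and, as in Fact~\ref{lemma: M is causally convex in E(M)}, $\hat\Omega$ is causally convex in $E$.) By the description of causally convex GH subsets of $E$ from Section~\ref{sec: Causally convex subsets of E(M)}, $\hat\Omega=\{(x,t):f^{*-}(x)<t<f^{*+}(x)\}$ for $1$-Lipschitz $f^{*\pm}$; and since $\Sigma\subset\hat\Omega$ is a Cauchy hypersurface of $\hat\Omega$, it lies over $U$ with boundary extension $f$, which forces $f^{*\pm}$ to be defined on the same $U$, to have boundary extension $f$ on $\partial U$, and to satisfy $f^{*-}\le f^-<f^+\le f^{*+}$. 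Hence $\Omega$ is $\mathcal{C}_0$-maximal iff $f^{*\pm}=f^{\pm}$, and the task reduces to showing this is equivalent to ``$f^+$ future eikonal and $f^-$ past eikonal''.

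For the ``if'' direction, assume $f^+$ is future eikonal; I claim $f^+=f^{*+}$ (the argument for $f^-=f^{*-}$ is the time reverse). Fix $x_0\in U$ and put $p^+=(x_0,f^+(x_0))$. By the geometrical criterion (Proposition~\ref{prop: geometric charac. of eikonal functions}, which remains valid over $\mathcal{B}$) there is a past-directed lightlike geodesic $\varphi$ issued from $p^+$, contained in the graph of $f^+$, with no past endpoint in that graph. Suppose $p^+\in\hat\Omega$. As the graph of $f^+$ is disjoint from $\Sigma$, $\varphi$ must leave $\hat\Omega$ toward the past — otherwise $\varphi$ would be a past-inextensible causal curve of the globally hyperbolic spacetime $\hat\Omega$ missing its Cauchy hypersurface $\Sigma$; let $r$ be its first exit point. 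Then $r$ lies over $U$ on the graph of $f^+$, and since $f^{*-}<f^+$ we must have $r\in\mathrm{graph}(f^{*+})=\partial^+\hat\Omega$. Taking $r'$ on the fibre of $r$ just below $r$, we get $r'\in\hat\Omega$ with $r'\le r\le p^+$, so causal convexity of $\hat\Omega$ in $E$ forces $r\in\hat\Omega$, a contradiction. Therefore $p^+\notin\hat\Omega$, i.e. $f^+(x_0)\ge f^{*+}(x_0)$, and with the previous paragraph $f^+(x_0)=f^{*+}(x_0)$. Hence $f^{*\pm}=f^{\pm}$, $\hat\Omega=\Omega$, and $\Omega$ is $\mathcal{C}_0$-maximal.

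For the ``only if'' direction, assume $\Omega$ is $\mathcal{C}_0$-maximal, so $f^{\pm}=f^{*\pm}$ and $\Omega=\hat\Omega=\mathcal{C}(\Sigma)$ in $E$. Over $U$ the future topological boundary $\mathrm{graph}(f^+)$ of $\mathcal{C}(\Sigma)$ coincides with the future Cauchy horizon $H^+(\Sigma)$, while $\overline{\mathrm{edge}(\Sigma)}$ is the rim $\mathrm{graph}(f)$ over $\partial U$. Now $E$ is strongly causal, so the classical structure theorem for Cauchy horizons (see e.g. \cite[Chapter~14]{oneill}) applies: $H^+(\Sigma)$ is ruled by past-inextensible lightlike geodesics whose past endpoints, when they exist, lie on $\overline{\mathrm{edge}(\Sigma)}$. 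The generator through $p^+=(x_0,f^+(x_0))$, $x_0\in U$, is thus a past-directed lightlike geodesic contained in $\mathrm{graph}(f^+)$ while over $U$, with no past endpoint in $\mathrm{graph}(f^+)$ — which is exactly future eikonality of $f^+$ at $x_0$ by the geometrical criterion. The time-reversed statement gives that $f^-$ is past eikonal, completing the proof.

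The main obstacle is this last step: running the Cauchy-horizon generator theorem in the ambient $E$, which is strongly causal but \emph{not} globally hyperbolic, and verifying carefully that $H^+(\Sigma)$ over $U$ is $\mathrm{graph}(f^{*+})$ and that $\overline{\mathrm{edge}(\Sigma)}$ is the rim over $\partial U$. A self-contained substitute would be to prove, by a limit-curve argument on maximizing causal segments from $\Sigma$ to interior points of $\hat\Omega$ approaching a boundary point, that $\mathrm{graph}(f^{*+})$ does carry such null generators; or, dually, to show that a failure of eikonality of $f^+$ at some $x_0$ lets one push the future boundary of $\Omega$ strictly up near $x_0$ — keeping a $1$-Lipschitz boundary function with the same extension $f$ on $\partial U$ — hence producing a proper Cauchy extension of $\Omega$ and contradicting $\mathcal{C}_0$-maximality by way of Theorem~\ref{main theorem}.
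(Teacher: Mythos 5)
Your reduction to the identity $f^{*\pm}=f^{\pm}$ and your proof of the ``if'' half are sound and essentially coincide with the paper's argument: the paper likewise realizes the $\mathcal{C}_0$-maximal extension as a causally convex $\hat\Omega\subset E$ containing $\Omega$ with the same Cauchy hypersurface (via Proposition~\ref{prop: max. ext. dev.}), and derives the contradiction from the fact that the past-directed lightlike geodesic supplied by the geometric criterion stays in $\mathrm{graph}(f^+)$ and therefore never meets the Cauchy hypersurface. Your extra care with the first exit point $r$ and causal convexity is a legitimate way of making precise the paper's terse ``$\varphi$ does not intersect $S$, contradiction.''

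The genuine gap is in the ``only if'' half, and you have located it yourself: everything rests on running the Cauchy-horizon generator theorem in $E$ and on the identifications $H^+(\Sigma)\cap\hat{\pi}^{-1}(U)=\mathrm{graph}(f^{*+})$ and $\mathrm{edge}(\overline{\Sigma})=\mathrm{graph}(f)$ over $\partial U$, together with the compatibility of the (unparametrized, conformally invariant) null generators with the geometric criterion of Proposition~\ref{prop: geometric charac. of eikonal functions} transported to $\mathcal{B}$. None of this is carried out, and it is the entire content of that direction; the applicability of the generator theorem in a non-globally-hyperbolic ambient is actually the lesser worry (the classical statement for closed achronal sets needs no global hyperbolicity), whereas the two identifications are exactly where the work lies. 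The paper sidesteps all of this by localizing: for $p\in I^-(p_+)\cap\Omega$, the $\mathcal{C}_0$-maximality of $\Omega$ is inherited by $I^+(p,\Omega)$ (Rossi's thesis, Prop.~3.6), whose image under $\hat D$ is a causally convex subset of $\eeu$ to which Proposition~\ref{prop: characterization of maximal causally convex subsets in a spatio-temporal decomposition} already applies, giving that $g^+=f^+\circ d_{|U_x}^{-1}$ is future eikonal; since eikonality over $\mathcal{B}$ is by definition local, and locally eikonal is equivalent to eikonal (established at the end of Section 6.1), $f^+$ is future eikonal. If you wish to keep your global horizon argument you must supply the missing identifications; otherwise the localization argument is shorter and is fully supported by results already proved in the paper.
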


\begin{proof}
Suppose $\Omega$ is $\mathcal{C}_0$-maximal. Let $x \in U$. Set $p_+ = (x, f^+(x))$. Let $p \in I^-(p^+) \cap \Omega$. Since $\Omega$ is GH, the restriction of $\hat{D}$ to $I^+(p, \Omega)$ is injective and its image is causally convex in $\eeu$ (see \cite[Prop. 2.7 and Cor. 2.8, p.151]{salveminithesis}). The $\mathcal{C}_0$-maximality of $\Omega$ implies that $\hat{D}(I^+(p,\Omega))$ is also $\mathcal{C}_0$-maximal (see \cite[Prop. 3.6, p.156]{salveminithesis}). It follows that:
\begin{itemize}
\item $\hat{D}(I^+(p, \Omega))$ is the domain bounded by the graph of a future-eikonal function $g^+$ and the graph of a past-eikonal function $g^-$ defined on an open subset of $\mathbb{S}^{n-1}$ (see Proposition \ref{prop: characterization of maximal causally convex subsets in a spatio-temporal decomposition});
\item $d$ is injective on $U_x := \hat{\pi}(I^+(p,\Omega))$ and $g^+ = f^+ \circ d_{|U_x}^{-1}$.
\end{itemize}
Thus, $f^+$ is future-eikonal. The proof is similar for $f^-$ with the reverse time-orientation.

Conversely, suppose that $f^+$ is future-eikonal and $f^-$ past-eikonal. Let $S$ be a Cauchy hypersurface in $\Omega$. By Proposition \ref{prop: max. ext. dev.}, the $\mathcal{C}_0$-maximal extension of $\Omega$ is conformally equivalent to a causally convex open subset $\hat{\Omega}$ of $E$, containing $\Omega$ and for which $S$ is a Cauchy hypersurface. Suppose that $\Omega$ is stricly contained in $\hat{\Omega}$. Then, there exists $x \in U$ such that $(x, f^+(x)) \in \hat{\Omega}$ or $(x, f^-(x)) \in \hat{\Omega}$. Suppose that $(x, f^+(x)) \in \hat{\Omega}$. The proof is symmetric if $(x, f^-(x)) \in \hat{\Omega}$. Since $f^+$ is future-eikonal, there exists a past-directed lightlike geodesic $\varphi$ starting from $(x, f^+(x))$, entirely contained in the graph of $f^+$ and with no endpoint in the graph of $f^+$. Then, $\varphi$ does not intersects $S$. Contradiction. Hence, $\Omega = \hat{\Omega}$, in other words $\Omega$ is $\mathcal{C}_0$-maximal.
\end{proof}

\bibliographystyle{plain}
\bibliography{bibliography}

\end{document}